  \let\thanks\@gobble
\def\RSthmtxt{theorem~}\newref{thm}{name = \RSthmtxt}}
\def\RSlemtxt{lemma~}\newref{lem}{name = \RSlemtxt}}
\numberwithin{equation}{section}
\numberwithin{figure}{section}
\numberwithin{table}{section}
\theoremstyle{plain}
\newtheorem{thm}{\protect\theoremname}[section]
\theoremstyle{remark}
\newtheorem{rem}[thm]{\protect\remarkname}
\theoremstyle{plain}
\newtheorem{lem}[thm]{\protect\lemmaname}
\theoremstyle{plain}
\newtheorem{cor}[thm]{\protect\corollaryname}
\theoremstyle{plain}
\newtheorem{prop}[thm]{\protect\propositionname}
\providecommand{\corollaryname}{Corollary}
\providecommand{\lemmaname}{Lemma}
\providecommand{\propositionname}{Proposition}
\providecommand{\remarkname}{Remark}
\providecommand{\theoremname}{Theorem}
\begin{document}
\global\long\def\e{\mathrm{e}}

\global\long\def\anon{\cdot}

\global\long\def\al{\alpha}

\global\long\def\R{\mathbf{R}}

\global\long\def\norm#1{\left\Vert #1\right\Vert }

\global\long\def\abs#1{\left|#1\right|}

\global\long\def\dist{\operatorname{dist}}

\global\long\def\diam{\operatorname{diam}}

\global\long\def\Hess{\operatorname{Hess}}

\global\long\def\Law{\operatorname{Law}}

\global\long\def\supp{\operatorname{supp}}

\global\long\def\spn{\operatorname{span}}

\global\long\def\tr{\operatorname{tr}}

\global\long\def\Re{\operatorname{Re}}

\global\long\def\Leb{\operatorname{Leb}}

\global\long\def\interior{\operatorname{interior}}

\global\long\def\Im{\operatorname{Im}}

\global\long\def\dif{\mathrm{d}}

\global\long\def\e{\mathrm{e}}

\global\long\def\p{\mathrm{p}}

\global\long\def\q{q}

\global\long\def\i{\mathrm{i}}

\global\long\def\Dif{\mathrm{D}}

\global\long\def\eps{\varepsilon}

\global\long\def\Cov{\operatorname{Cov}}

\global\long\def\Var{\operatorname{Var}}

\global\long\def\sgn{\operatorname{sgn}}

\global\long\def\width{\operatorname{width}}

\global\long\def\height{\operatorname{height}}

\global\long\def\AR{\operatorname{AR}}

\global\long\def\Euc{\mathrm{E}}

\global\long\def\Bernoulli{\operatorname{Bernoulli}}

\global\long\def\Uniform{\operatorname{Uniform}}

\global\long\def\Lip{\operatorname{Lip}}
\newcommand{\gu}{\textcolor{blue}}

 \title{The random heat equation in dimensions three and higher: the homogenization
viewpoint}
\author{Alexander Dunlap\thanks{Department of Mathematics, Stanford University, Stanford, CA 94305 USA. Current address: Courant Institute of Mathematical Sciences, New York University, New York, NY 10012 USA; \url{alexander.dunlap@cims.nyu.edu}.}\and Yu
Gu\thanks{Department of Mathematics, Carnegie Mellon University, Pittsburgh, PA 15213 USA; \url{yug2@andrew.cmu.edu}.}\and  Lenya
Ryzhik\thanks{Department of Mathematics, Stanford University, Stanford, CA 94305 USA; \url{ryzhik@stanford.edu}.}\and Ofer
Zeitouni\thanks{Department of Mathematics, Weizmann Institute of Science, POB 26, Rehovot 76100, Israel; \url{ofer.zeitouni@weizmann.ac.il}.}}
\maketitle

\begin{abstract}
We consider the stochastic heat equation 
$\partial_{s}u =\frac{1}{2}\Delta u +(\beta V(s,y)-\lambda)u$, with
a smooth space-time-stationary Gaussian random field $V(s,y)$,
in dimensions $d\geq 3$, with an initial condition~$u(0,x)=u_0(\eps x)$ 
and a suitably chosen $\lambda\in{\mathbb R}$. 
It is known that, for $\beta$ small enough,  the diffusively rescaled solution
$u^{\eps}(t,x)=u(\eps^{-2}t,\eps^{-1}x)$ converges weakly to
a scalar multiple of the solution~$\bar u(t,x)$ of the
heat equation with an effective diffusivity $a$, and that fluctuations converge, 
also in a weak sense, to the solution of the 
Edwards-Wilkinson equation with an effective noise strength~$\nu$ and the same
    effective diffusivity. 
    In this paper, we derive a pointwise approximation 
    $w^\eps(t,x)=\bar u(t,x)\Psi^\eps(t,x)+
    \eps u_1^\eps(t,x)$, where~$\Psi^\eps(t,x)=\Psi(t/\eps^2,x/\eps)$, 
    $\Psi$
    is a solution of the SHE with constant initial conditions, and  $u^\eps_1$ is
    an explicit corrector. We
    show that~$\Psi(t,x)$ converges to a stationary process
    $\tilde \Psi(t,x)$ as $t\to\infty$, that $\mathbf{E}|u^\eps(t,x)-w^\eps(t,x)|^2$ converges 
    pointwise  to~$0$ as~$\eps\to 0$, and that
    $\eps^{-d/2+1}(u^\eps-w^\eps)$ converges weakly to $0$ for fixed $t$.
    As a consequence, we derive new representations of the diffusivity $a$ and 
    effective noise strength~$\nu$. Our approach uses a
    Markov chain in the space of trajectories introduced in \cite{GRZ17}, as
  well as tools from homogenization theory. The corrector $u_1^\eps(t,x)$ is constructed using a seemingly new
  approximation scheme on a mesoscopic time scale.%
\end{abstract}

\section{Introduction}

We consider the long-time and large-space behavior of the solutions
$u(s,y)$ of the random heat equation with slowly varying initial
conditions
\begin{align}
\partial_{s}u & =\frac{1}{2}\Delta u+(\beta V(s,y)-\lambda)u,\label{eq:uPDE}\\
u(0,y) & =u_{0}(\eps y),\label{eq:uIC}
\end{align}
with $y\in\mathbb{R}^{d}$, $d\ge3$. Here, $u_{0}$ is a smooth, compactly-supported
initial condition, and the potential~$V(s,y)$ is a smooth, isotropic,
space-time-homogeneous, mean-zero Gaussian random field with a finite
correlation length. These assumptions are stronger than we truly need, but we make them to avoid distracting from the focus of the paper. 
We assume that $V(s,y)$ has the form
\[
V(s,y)=\int_{\mathbb{R}^{d+1}}\mu(s-s')\nu(y-y')\,\dif W(s',y'),
\]
where $\mu$ and $\nu$ are deterministic nonnegative functions of compact support,
such that $\nu$ is isotropic,
\[
\supp\mu\subset[0,1],\qquad\supp\nu\subset\{y\in\mathbb{R}^{d}\mid|y|\le1/2\},
\]
and $\dif W$ is a space-time white noise. From this, we see that
the covariance function is
\begin{equation}
R(s,y)\coloneqq\mathbf{E}V(s+s',y+y')V(s',y')=\int_{\mathbb{R}}\mu(s+t)\mu(t)\,\dif t\int_{\mathbb{R}^{d}}\nu(y+z)\nu(z)\,\dif z.\label{eq:Rdef}
\end{equation}
The constant $\lambda$ in \eqref{uPDE} will be chosen -- see \thmref{stationarity}
and \eqref{alphadef}--\eqref{alphaconverges} below -- so that $\mathbf{E}u(t,x)$
does not grow exponentially as $t\to\infty$. The small parameter
$\eps\ll1$ measures the ratio of the typical length scale of the
initial condition to the correlation length of the random potential.
As we are interested in the long-time behavior of $u$, we consider
its macroscopic rescaling
\[
u^{\eps}(t,x)=u(\eps^{-2}t,\eps^{-1}x),
\]
which satisfies the rescaled problem
\begin{align}
\partial_{t}u^{\eps} & =\frac{1}{2}\Delta u^{\eps}+\frac{1}{\eps^{2}}\left(\beta V(\eps^{-2}t,\eps^{-1}x)-\lambda\right)u^{\eps}\label{eq:uepsPDE}\\
u^{\eps}(0,x) & =u_{0}(x).\label{eq:uepsIC}
\end{align}
Here and throughout the paper, we use $s,y$ for the ``microscopic'' variables and $t=\eps^{2}s,x=\eps^{1}y$ for the rescaled ``macroscopic'' variables.
It was shown in \cite{GRZ17,magnen2017diffusive}, and also in \cite{mukherjee2017central}
at the level of the expectation, that there exists a $\beta_0>0$ so
that, if $0<\beta<\beta_{0}$, then there exists $\lambda$, depending
on $\beta,\mu,\nu$, and $a,\overline{c}>0$ so that, for any $t>0$,
\begin{equation}
v^{\eps}(t,\cdot)=\overline{c}u^{\eps}(t,\cdot)\label{eq:vepsdef}
\end{equation}
converges in probability and weakly in space as $\eps\to0$ to the solution $\overline{u}$ to
the homogenized problem
\begin{align}
\partial_{t}\overline{u} & =\frac{1}{2}a\Delta\overline{u}\label{eq:homogenizedPDE}\\
\overline{u}(0,x) & =u_{0}(x),\label{eq:homogenizedIC}
\end{align}
with an effective diffusivity $a\neq 1$.
It may come as a surprise that $\overline{c}\ne1$ in general; see \remref{cbar}
below. It was also shown that the fluctuations
\begin{equation}
\frac{1}{\eps^{d/2-1}}\left(v^{\eps}(t,\cdot)-\mathbf{E}v^{\eps}(t,\cdot)\right)\label{eq:fluctsconverge}
\end{equation}
converge in law and weakly in space as $\eps\to0$ to the solution
$\mathscr{U}$ of the Edwards--Wilkinson equation
\begin{align}
&\partial_{t}\mathscr{U}  =\frac{1}{2}a\Delta\mathscr{U}+\beta\nu\overline{u}\dif W\label{eq:EWPDE}\\
&\mathscr{U}(0,x)  \equiv0,\label{eq:EWIC}
\end{align}
with an effective noise strength $\nu>0$.

The results of \cite{GRZ17,magnen2017diffusive} concern weak convergence,
after  integration against a macroscopic test function. We note that the restriction to dimension~$d\geq 3$ is crucial: for $d= 2$ the behavior is different, as discussed in 
\cite{CSZ17} and \cite{CSZ18}.
In this work, we seek to understand the microscopic behavior of the
solutions, in the spirit of the classical random homogenization theory,
and explain how the microscopic behavior leads to the macroscopic
results of \cite{GRZ17,magnen2017diffusive}. We are also interested
in a more explicit interpretation of the macroscopic parameters: the
renormalization constant $\lambda$, the effective diffusivity $a$
in \eqref{homogenizedPDE}, the renormalization constant $\overline{c}$,
and the effective noise strength $\nu$ in~\eqref{EWPDE}. In particular,
we would like to connect these parameters to the classical objects
of stochastic homogenization.

As is standard in  PDE homogenization theory, we
introduce fast variables and consider a formal asymptotic expansion
for the solutions $u^{\eps}$ to \eqref{uepsPDE}--\eqref{uepsIC}
in the form
\begin{equation}
u^{\eps}(t,x)=u^{(0)}(t,x,\eps^{-2}t,\eps^{-1}x)+\eps u^{(1)}(t,x,\eps^{-2}t,\eps^{-1}x)+\eps^{2}u^{(2)}(t,x,\eps^{-2}t,\eps^{-1}x)+\cdots.\label{eq:expansion}
\end{equation}
Two issues commonly arise in such expansions. First, it may be hard
to prove, or even false, that the correctors
exist as stationary random fields. Second, the correlations of the higher-order correctors may
decay more slowly (in space) than those of lower-order correctors.
Thus, after integration against a test function, all terms in the
expansion may actually be of the same order, so including more correctors
may not improve the expansion from the perspective of the weak approximation.
We refer to \cite{gloriaotto,gumourrat} for a discussion of random
fluctuations in elliptic homogenization, and \cite{duerinckx2019higher,gu2017high} for a proof that stationary higher-order
correctors exist in sufficiently high dimensions.

In the present case, it is easy to see that the leading order term
in \eqref{expansion} should have the form
\begin{equation}
  u^{(0)}(t,x,\eps^{-2}t,\eps^{-1}x)=\overline{\overline{u}}(t,x)\Psi(\eps^{-2}t,\eps^{-1}x),\label{eq:leadingorder}
\end{equation}
where $\Psi$ is a solution to \eqref{uPDE} and 
does not depend on the initial condition $u_{0}$ in \eqref{uIC}, and $\overline{\overline{u}}$ is deterministic but depends on the initial condition $u_0$. We will see later that $\overline{\overline{u}}=\overline{u}$ with $\overline{u}$ taken to be the solution of the homogenized problem \eqref{homogenizedPDE}--\eqref{homogenizedIC}.
In the context of the usual homogenization theory, one would like
to think of $\Psi$ as being statistically stationary in space and
time. In the context of the Cauchy problem \eqref{uPDE}--\eqref{uIC}, it turns out that
better error bounds are achieved by letting $\Psi$ solve
the Cauchy problem with constant initial condition
\begin{equation}
\begin{aligned}
&\partial_{s}\Psi  =\frac{1}{2}\Delta\Psi+(\beta V-\lambda)\Psi\\
&\Psi(0,\cdot)  \equiv1.
\end{aligned}
\label{eq:Psiproblem}
\end{equation}
However, the intuition of a space-time-stationary $\Psi$ is still
justified, as we will see in \thmref{stationarity} below that~$\Psi$ in fact converges to a space-time-stationary
solution $\widetilde{\Psi}$ to \eqref{uPDE}.

As this paper was being written, we
learned of the very interesting recent paper \cite{CometsMukh} (see also the subsequent \cite{CCM19}), which considers (in our notation) the pointwise error $(\Psi(s,y)-\widetilde{\Psi}(s,y))/\Psi(s,y)$ in the case where the random potential $V$ is white in time, and shows that it is asymptotically Gaussian. This result is related but orthogonal to ours, and the proof techniques are quite different. 

\subsection*{Existence of a stationary solution and the leading-order term in
the expansion}

The renormalization constant $\lambda$ was understood in \cite{GRZ17}
as the unique value that keeps bounded the expectation of the solution
to \eqref{uPDE}. Our first result refines this explanation by showing
that, with this choice of $\lambda$, $\Psi(s,\cdot)$ in fact approaches
a space-time-stationary solution, which we call $\widetilde{\Psi}$, as $s\to\infty$. 
As remarked above, this shows that it is reasonable to take $\Psi^{\eps}(t,x)=\Psi(\eps^{-2}t,\eps^{-1}x)$
as a proxy for the stationary solution in the leading-order term for
the asymptotic expansion \eqref{expansion}. Note that neither~$\Psi$
nor its stationary limit $\widetilde{\Psi}$ depends on the initial condition $u_{0}$,
so both are ``universal'' objects.
\begin{thm}
\label{thm:stationarity}There is a $\beta_{0}>0$ so that for all
$0\le\beta<\beta_{0}$, there exists a $\lambda=\lambda(\beta)>0$ and
a space-time-stationary random function $\widetilde{\Psi}=\widetilde{\Psi}(s,y)>0$
that solves
\begin{equation}
\partial_{s}\widetilde{\Psi}(s,y)=\frac{1}{2}\Delta\widetilde{\Psi}(s,y)+(\beta V(s,y)-\lambda)\widetilde{\Psi}(s,y),\qquad s\in\mathbb{R},y\in\mathbb{R}^d,\label{eq:PsitildePDE}
\end{equation}
and there is a constant $C<\infty$ so that for any $y\in\mathbb{R}^{d}$ and $s>0$,
we have %
\begin{equation}
\mathbf{E}|\Psi(s,y)-\widetilde{\Psi}(s,y)|^{2}\le Cs^{-d/2+1}.\label{eq:Psiconvergence}
\end{equation}
\end{thm}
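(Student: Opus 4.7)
The plan is to build $\widetilde\Psi$ as the $L^2$-limit of solutions to \eqref{PsitildePDE} initialized further and further in the past, and then to extract the convergence rate via a forward Feynman--Kac argument that exploits the transience of Brownian motion in $d\ge3$. Extend $V$ to a space-time-stationary Gaussian field on $\R\times\R^d$. For each $T\ge0$, let $\widetilde\Psi_T$ denote the solution of \eqref{PsitildePDE} with $\widetilde\Psi_T(-T,\cdot)\equiv1$, so that $\widetilde\Psi_0=\Psi$ and, by stationarity of $V$, $\widetilde\Psi_T(s,y)$ has the law of $\Psi(s+T,y)$. The Feynman--Kac representation reads
\begin{equation*}
\widetilde\Psi_T(s,y)=\mathbf{E}_B\!\left[\exp\!\Bigl(\beta\!\int_0^{s+T}\!V(s-r,y+B_r)\,\dif r-\lambda(s+T)\Bigr)\right],
\end{equation*}
with $B$ a Brownian motion starting at $0$.

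To show $\{\widetilde\Psi_T(s,y)\}_T$ is Cauchy in $L^2$, I compute $\mathbf{E}|\widetilde\Psi_{T_2}(s,y)-\widetilde\Psi_{T_1}(s,y)|^2$ for $T_2>T_1$ using two independent Brownian motions $B,B'$ and the Gaussian identity $\mathbf{E}_V\exp(\beta\int f\,\dif V)=\exp(\tfrac{\beta^2}{2}\mathbf{E}_V(\int f\,\dif V)^2)$. With $\lambda$ chosen to cancel the self-energies, the key remaining quantity is the cross-interaction $\beta^2\iint R(r-r',B_r-B'_{r'})\,\dif r\,\dif r'$ restricted to the tail region, whose $B,B'$-expectation is $O(T_1^{-d/2+1})$ by the transience of Brownian motion in $d\ge3$. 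Define $\widetilde\Psi(s,y):=\lim_{T\to\infty}\widetilde\Psi_T(s,y)$ in $L^2$. Space-time stationarity of $\widetilde\Psi$ follows from the joint-law identity $(\widetilde\Psi_T(s_i+h,y_i+z))_i=(\widetilde\Psi_{T+h}(s_i,y_i))_i\circ\tau_{(-h,-z)}V$ by passage to the $T\to\infty$ limit, while $\widetilde\Psi$ solves \eqref{PsitildePDE} by passage to the $L^2$-limit in the PDE satisfied by $\widetilde\Psi_T$, with parabolic regularity (using smoothness of $V$) upgrading the limit to a classical solution. Positivity $\widetilde\Psi>0$ a.s.\ follows from $\widetilde\Psi_T>0$ (parabolic maximum principle) together with a lower-tail estimate on the renormalized Feynman--Kac exponential ensuring the limit does not vanish. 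The choice of $\lambda$ should be made so that the normalization $\mathbf{E}\widetilde\Psi=1$ is enforced, matching $\mathbf{E}\Psi(0,\cdot)=1$.

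For the rate \eqref{Psiconvergence}, since both $\Psi$ and $\widetilde\Psi$ solve \eqref{PsitildePDE} on $[0,s]$, linearity and Feynman--Kac give
\begin{equation*}
\widetilde\Psi(s,y)-\Psi(s,y)=\mathbf{E}_B\!\left[\bigl(\widetilde\Psi(0,y+B_s)-1\bigr)\exp\!\Bigl(\beta\!\int_0^s\!V(s-r,y+B_r)\,\dif r-\lambda s\Bigr)\right].
\end{equation*}
Squaring and averaging with two independent Brownian motions $B,B'$ starting at $0$,
\begin{equation*}
\mathbf{E}|\widetilde\Psi-\Psi|^2(s,y)=\mathbf{E}_{B,B'}\,\mathbf{E}_V\!\Bigl[\bigl(\widetilde\Psi(0,y+B_s)-1\bigr)\bigl(\widetilde\Psi(0,y+B'_s)-1\bigr)\cdot(\text{exponential factor})\Bigr].
\end{equation*}
Because $\widetilde\Psi(0,\cdot)$ is $V|_{(-\infty,0]}$-measurable while the exponential depends only on $V|_{[0,s]}$, and these fields are only weakly coupled across a unit-width boundary region near time $0$, the $V$-expectation approximately factors, yielding
\begin{equation*}
\mathbf{E}|\widetilde\Psi-\Psi|^2(s,y)\lesssim \mathbf{E}_{B,B'}\bigl[g(|B_s-B'_s|)\bigr],\qquad g(w):=\Cov\bigl(\widetilde\Psi(0,0),\widetilde\Psi(0,w)\bigr),
\end{equation*}
times a uniformly bounded second-moment factor; here the normalization $\mathbf{E}\widetilde\Psi=1$ is crucial, since otherwise an extra $(\mathbf{E}\widetilde\Psi-1)^2$ term would survive and spoil the rate. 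A parallel Feynman--Kac computation gives $g(w)\lesssim |w|^{-(d-2)}\wedge1$, since the covariance of $\widetilde\Psi$ at two points at distance $|w|$ is controlled by the intersection probability of two Brownian motions at offset $w$ in $\R^d$, $d\ge3$. A direct Gaussian integral, rescaling $B_s-B'_s=\sqrt{2s}\,\xi$ with $\xi\sim N(0,I_d)$, then yields $\mathbf{E}_{B,B'}[g(|B_s-B'_s|)]\lesssim s^{-d/2+1}$.

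The main difficulties I expect are: (i) establishing the correlation decay $g(w)\lesssim |w|^{-(d-2)}$ uniformly for small $\beta$, which requires a careful polymer-type expansion controlling intersection geometries of random walks in $d\ge3$ and is most cleanly handled via the Markov-chain-in-trajectory-space framework of \cite{GRZ17}; and (ii) handling the unit-width boundary coupling between $V|_{(-\infty,0]}$ and $V|_{[0,s]}$ rigorously, since this produces bounded error terms that must be absorbed into uniform second-moment estimates without disrupting the factorization that produces the $s^{-d/2+1}$ rate.
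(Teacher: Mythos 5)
Your construction of $\widetilde\Psi$ as the $L^2$-limit of the solutions $\widetilde\Psi_T$ started from constant data at time $-T$, together with the Cauchy estimate $\mathbf{E}|\widetilde\Psi_{T_2}(s,y)-\widetilde\Psi_{T_1}(s,y)|^2\le CT_1^{-d/2+1}$, is exactly the paper's route (this estimate is \propref{PsiS1PsiS2}, proved there via the tilted measure and the Markov chain, which you correctly identify as the mechanism needed to handle the non-constant self-energies in the colored-noise case). However, your separate forward-in-time argument for the rate \eqref{Psiconvergence} has a genuine gap, and the proposed repair is not available. The factorization step requires the first factor to reduce to $\Cov\bigl(\widetilde\Psi(0,y+B_s),\widetilde\Psi(0,y+B'_s)\bigr)$, which, as you note, forces $\mathbf{E}\widetilde\Psi(0,\cdot)=1$; but $\mathbf{E}\widetilde\Psi=\e^{\alpha_\infty}=\overline{c}$, which is in general \emph{not} equal to $1$ (see \remref{cbar}), and $\lambda$ cannot be re-chosen to enforce this normalization: it is already uniquely determined by the requirement \eqref{alphaconverges} that $\alpha_s=\log Z_s-\lambda s$ converge, and any other choice destroys the existence of a bounded, nondegenerate stationary limit altogether. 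Moreover, the ``unit-width boundary coupling'' you flag as difficulty (ii) is not a perturbative error: already at the level of first moments, the exact identity $\mathbf{E}[\widetilde\Psi(s,y)-\Psi(s,y)]=\e^{\alpha_\infty}-\e^{\alpha_s}\to0$ must be compared with the factorized prediction $(\e^{\alpha_\infty}-1)\e^{\alpha_s}\not\to0$; the $O(1)$ coupling across the correlation-length window near time $0$ is responsible for an exact cancellation of the mean mismatch, so no argument that treats this coupling as an absorbable error can produce the rate.

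The rate is, however, already contained in the first half of your proposal, so the detour is unnecessary. Since $V$ is stationary in time, the pair $(\Psi(s,\cdot),\widetilde\Psi(s,\cdot))=(\widetilde\Psi_0(s,\cdot),\widetilde\Psi(s,\cdot))$ has the same law as $(\widetilde\Psi_s(0,\cdot),\widetilde\Psi(0,\cdot))$, and hence
\begin{equation*}
\mathbf{E}|\Psi(s,y)-\widetilde\Psi(s,y)|^{2}=\lim_{T\to\infty}\mathbf{E}|\widetilde\Psi_{s}(0,y)-\widetilde\Psi_{T}(0,y)|^{2}\le Cs^{-d/2+1}
\end{equation*}
directly from your Cauchy estimate with $T_1=s$ and $T_2=T\to\infty$. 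This is precisely how the paper deduces \eqref{Psiconvergence} from \propref{PsiS1PsiS2}; the correlation-decay bound $\Cov(\widetilde\Psi(0,0),\widetilde\Psi(0,w))\lesssim|w|^{-(d-2)}$ that you list as difficulty (i) is then not needed for this theorem at all (it reappears only later, in \thmref{noiseexpr}).
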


Throughout the paper, we will always assume that $\lambda=\lambda(\beta)$
is chosen as in the statement of \thmref{stationarity}. \thmref{stationarity}
can also be seen as an extension of \cite[Theorem 2.1]{MSZ16} to
the colored-noise setting, even though that result was formulated
in different terms. Some other relevant results in the literature
are \cite{DS80,TZ98}, which show the existence of stationary solutions
and convergence along subsequences in weighted $L^{2}$ spaces, also
in the white-noise setting.

The proof of \thmref{stationarity} is similar in spirit to that of
\cite[Theorem 2.1]{MSZ16}, but uses the framework of \cite{GRZ17}
to deal with the necessary renormalization parameter $\lambda$. For
the case of elliptic operators in divergence form, the existence of
stationary correctors in high dimensions was studied in \cite{armstrong,gloriaotto1,gloriaotto2},
and we refer the reader to the recent monograph \cite{armstrong1}
for a more complete list of references.

As an application of the existence of the stationary solution, we
will show in \secref{noisestrength} that the effective noise strength
$\nu$ in \eqref{EWPDE}, which has a complicated expression given
in \cite[(5.6)]{GRZ17}, has a more intuitive expression in terms
of the stationary solution. Let
\begin{equation}
G_{a}(t,x)=(2\pi at)^{-d/2}\exp\left\{ -|x|^{2}/(2at)\right\} \label{eq:Gadef}
\end{equation}
be the heat kernel with diffusivity $a$, and note that there exists
a constant $c$ so that
\begin{equation}
\int_{0}^{\infty}\int_{\mathbb{R}^{d}}G_{a}(r,z)G_{a}(r,z+x)\,\dif z\,\dif r=\frac{c}{a|x|^{d-2}}.\label{eq:cbardef}
\end{equation}

\begin{thm}
\label{thm:noiseexpr}For $0\le\beta<\beta_{0}$, with $\lambda$ taken as in \thmref{stationarity}, the effective noise
strength $\nu$ in \eqref{EWPDE} has the expression
\begin{equation}
\nu^{2}=\frac{a\lim\limits _{\eps\to0}\int\int g(x)g(\tilde{x})\eps^{-(d-2)}\Cov\left(\widetilde{\Psi}(0,\eps^{-1}x),\widetilde{\Psi}(0,\eps^{-1}\widetilde{x})\right)\,\dif x\,\dif\tilde{x}}{c\beta^{2}\e^{2\alpha_{\infty}}\int\int g(x)g(\tilde{x})|x-\tilde{x}|^{-(d-2)}\,\dif x\,\dif\tilde{x}}\label{eq:nu2expr}
\end{equation}
for any test function $g\in\mathcal{C}_{\mathrm{c}}^{\infty}(\mathbb{R}^{d})$.
The deterministic constant $\alpha_{\infty}$ is defined in \eqref{alphaconverges}
below.
\end{thm}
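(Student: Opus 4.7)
The plan is to compute the quantity $\mathbf{E}\bigl[\int g(x)\mathscr{U}(t,x)\,\dif x\bigr]^{2}$, for a test function $g\in\mathcal{C}_{\mathrm{c}}^{\infty}(\mathbb{R}^{d})$, in two different ways. First, Duhamel's formula applied to the Edwards--Wilkinson equation \eqref{EWPDE}--\eqref{EWIC} gives
\[
\mathbf{E}\Bigl[\int g(x)\mathscr{U}(t,x)\,\dif x\Bigr]^{2} = \beta^{2}\nu^{2}\int_{0}^{t}\int_{\mathbb{R}^{d}}\Bigl(\int g(x)G_{a}(t-s,x-y)\,\dif x\Bigr)^{2}\overline{u}(s,y)^{2}\,\dif y\,\dif s.
\]
Second, by the fluctuation convergence \eqref{fluctsconverge}, the same quantity equals $\lim_{\eps\to 0}\eps^{-(d-2)}\int\int g(x)g(\tilde x)\Cov(v^{\eps}(t,x),v^{\eps}(t,\tilde x))\,\dif x\,\dif\tilde x$.

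To connect the second expression to the covariance of $\widetilde\Psi$, I would invoke the pointwise approximation $u^{\eps}\approx\overline u\,\Psi^{\eps}+\eps u_{1}^{\eps}$ announced in the abstract, together with \thmref{stationarity} and the space-time-stationarity of $\widetilde\Psi$. The weak convergence $\eps^{-(d/2-1)}(u^{\eps}-w^{\eps})\to 0$ ensures that the corrector $\eps u_{1}^{\eps}$ and the approximation error do not contribute to the rescaled variance, while the $L^{2}$ bound \eqref{Psiconvergence} on the macroscopic time scale $\eps^{-2}t$ allows one to replace $\Psi^{\eps}(t,x)=\Psi(\eps^{-2}t,\eps^{-1}x)$ by $\widetilde\Psi(0,\eps^{-1}x)$ in the limit. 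This produces the identity
\[
\lim_{\eps\to 0}\eps^{-(d-2)}\int\!\int g(x)g(\tilde x)\Cov(v^{\eps}(t,x),v^{\eps}(t,\tilde x))\,\dif x\,\dif\tilde x = \overline c^{\,2}\lim_{\eps\to 0}\int\!\int g(x)g(\tilde x)\overline u(t,x)\overline u(t,\tilde x)\eps^{-(d-2)}\Cov\bigl(\widetilde\Psi(0,\eps^{-1}x),\widetilde\Psi(0,\eps^{-1}\tilde x)\bigr)\,\dif x\,\dif\tilde x.
\]

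To solve cleanly for $\nu^{2}$, I would specialize to the constant initial condition $u_{0}\equiv 1$ (justified by approximation with compactly supported data equal to $1$ on a large ball), so that $\overline u\equiv 1$ and $u^{\eps}=\Psi^{\eps}$, and then send $t\to\infty$. On the EW side, Plancherel's theorem gives $\beta^{2}\nu^{2}\int|\hat g(\xi)|^{2}(1-\e^{-a|\xi|^{2}t})/(a|\xi|^{2})\,\dif\xi$, whose limit as $t\to\infty$ is identified via the identity \eqref{cbardef} as $(\beta^{2}\nu^{2}c/a)\int\int g(x)g(\tilde x)|x-\tilde x|^{-(d-2)}\,\dif x\,\dif\tilde x$. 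On the $\widetilde\Psi$ side, the limit is already $t$-independent by time-stationarity. Equating the two and using $\overline c=\e^{-\alpha_{\infty}}$ (which follows from the normalization $\overline c\,\mathbf{E}\widetilde\Psi=1$ required by $\overline c u^{\eps}\to\overline u$ and the definition of $\alpha_{\infty}$ in \eqref{alphaconverges}) yields the formula \eqref{nu2expr}. As a byproduct, the numerator limit is proportional to $\int\int g(x)g(\tilde x)|x-\tilde x|^{-(d-2)}\,\dif x\,\dif\tilde x$, which guarantees that the ratio on the right-hand side of \eqref{nu2expr} is indeed independent of $g$, as required.

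The main obstacle is the step of replacing $\Cov(\Psi^{\eps},\Psi^{\eps})$ by $\Cov(\widetilde\Psi^{\eps},\widetilde\Psi^{\eps})$ inside the rescaled integral. A naive pointwise bound derived from \eqref{Psiconvergence} produces an error on the covariance of order $\eps^{(d-2)/2}$, which does not beat the $\eps^{-(d-2)}$ prefactor and so fails to vanish in the limit. The replacement must be carried out weakly, tested against $g(x)g(\tilde x)$, using the full strength of the weak-convergence statement $\eps^{-(d/2-1)}(u^{\eps}-w^{\eps})\to 0$; this in turn relies on the mesoscopic-time-scale approximation scheme built on the Markov chain framework of \cite{GRZ17}, and the additional averaging supplied by integration against $g\otimes g$ is precisely what provides the cancellation that makes the limiting replacement valid.
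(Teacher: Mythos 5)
Your overall architecture matches the paper's: compute the limiting rescaled variance once via the Edwards--Wilkinson side (this is \eqref{distconv}, quoted from \cite{GRZ17}, together with \lemref{spdestat} and the Riesz-kernel identity \eqref{cbardef}), once via $\widetilde{\Psi}$, specialize to $\overline{u}\equiv1$, and keep track of the factor $\e^{2\alpha_\infty}$. Those pieces of your proposal are correct.

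The gap is exactly at the step you yourself flag as the main obstacle, and the tool you reach for cannot close it. \thmref{weakconvergence} controls $u^{\eps}-\Psi^{\eps}\overline{u}-\eps u_{1}^{\eps}$; with constant data one has $u^{\eps}=\Psi^{\eps}$, $\overline{u}\equiv1$, $\nabla\overline{u}=0$, hence $u_{1}^{\eps}=0$, and the statement becomes $0\le C\eps^{2\zeta}$ --- it is vacuous and says nothing about $\Psi^{\eps}$ versus $\widetilde{\Psi}$. No corrector or mesoscopic/Markov-chain machinery enters this comparison at all. What actually closes the step (the paper's \lemref{varuniformconv}) is elementary: compare the variances of the \emph{integrated} quantities $X_\eps=\eps^{1-d/2}\int g\,\Psi(\eps^{-2}t,\eps^{-1}\cdot)$ and $Y_\eps=\eps^{1-d/2}\int g\,\widetilde{\Psi}(0,\eps^{-1}\cdot)$. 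By Jensen and \eqref{Psiconvergence} at time $s=\eps^{-2}t$,
\[
\mathbf{E}|X_\eps-Y_\eps|^{2}\le\eps^{2-d}\|g\|_{L^{1}}^{2}\sup_{y}\mathbf{E}|\Psi(\eps^{-2}t,y)-\widetilde{\Psi}(\eps^{-2}t,y)|^{2}\le C\eps^{2-d}\cdot\eps^{d-2}t^{1-d/2}=Ct^{1-d/2},
\]
i.e.\ the power of $\eps$ cancels \emph{exactly}, and since $\Var(Y_\eps)=O(1)$ (this is where the $g\otimes g$ averaging enters, making the cross term $O(t^{(1-d/2)/2})$ rather than $O(\eps^{-(d-2)/2})$), one gets $|\Var X_\eps-\Var Y_\eps|\le Ct^{1-d/2}$ \emph{uniformly in} $\eps$. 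The order of limits is then essential and is the opposite of what you sketch: the replacement of $\Psi^{\eps}$ by $\widetilde{\Psi}$ is \emph{not} performed in the limit $\eps\to0$ for fixed $t$ (as you note, that fails pointwise, and it is not rescued by any weak-convergence theorem for fixed $t$ either); instead one runs a three-term triangle inequality, first choosing $t$ large so that both the EW-side tail and the $\Psi$-vs-$\widetilde{\Psi}$ error are below $\delta/3$ uniformly in $\eps$, and only then sending $\eps\to0$ in \eqref{distconv}. Your write-up should replace the appeal to \thmref{weakconvergence} by this uniform-in-$\eps$ estimate and the interleaved double limit; with that substitution the rest of your argument goes through. (Minor point: the bookkeeping $\overline{c}=\e^{-\alpha_\infty}$ you use is consistent with the normalization $\e^{-\alpha_{t/\eps^{2}}}$ appearing in \eqref{distconv}, which is what the paper's proof actually uses.)
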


\thmref{noiseexpr} should be read as a weak formulation of the asymptotics
\[
\Cov(\widetilde{\Psi}(0,0),\widetilde{\Psi}(0,y))\sim\frac{c\beta^{2}\nu^{2}\e^{2\alpha_{\infty}}}{a|y|^{d-2}},\qquad|y|\gg1.
\]
In this sense, the effective noise strength in the Edwards--Wilkinson
equation \eqref{EWPDE} is directly related to the decay of the covariance
of the stationary solution. On the other hand, in \corref{Psicov}, we provide an expression for
the covariance term in \eqref{nu2expr} in terms of the Markov chain
introduced in~\cite{GRZ17} and reviewed in \secref{GRZreview} below.

Returning to the expansion \eqref{expansion}, the leading order term
in \eqref{leadingorder} is justified by the following microscopic
convergence result.
\begin{thm}
\label{thm:convergence}For $0\leq \beta <\beta_{0}$, with $\lambda$ taken as in \thmref{stationarity}, 
set $\Psi^{\eps}(t,x)=\Psi(\eps^{-2}t,\eps^{-1}x)$. If $u_{0}\in\mathcal{C}_{\mathrm{c}}^{\infty}(\mathbb{R}^{d})$,
then for all $t\ge 0$ and $x\in\R^d$ we have
\begin{equation}
\lim_{\eps\to0}\mathbf{E}|u^{\eps}(t,x)-\overline{u}(t,x)\Psi^{\eps}(t,x)|^{2}=0.\label{eq:convergence}
\end{equation}
\end{thm}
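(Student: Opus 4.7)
My plan is to use the Feynman--Kac formula to rewrite $\mathbf{E}|u^{\eps}-\bar{u}\Psi^{\eps}|^{2}$ as a two-replica Brownian integral and then identify the limit via a two-particle version of the Markov chain homogenization from \cite{GRZ17}.

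By Feynman--Kac applied to \eqref{uepsPDE} and \eqref{Psiproblem},
$$u^{\eps}(t,x)=\mathbf{E}_{x}^{B}\!\left[u_{0}(B_{t})\,\Phi_{t}^{\eps}(B)\right],\qquad \Psi^{\eps}(t,x)=\mathbf{E}_{x}^{B}\!\left[\Phi_{t}^{\eps}(B)\right],$$
where $B$ is a Brownian motion starting at $x$, independent of $V$, and $\Phi_{t}^{\eps}(B)$ is the rescaled Feynman--Kac weight built from $V$ and $\lambda$. Taking the difference and computing the $L^{2}$ norm using two independent replicas $B^{1},B^{2}$ (and swapping the order of the $V$- and $B$-expectations by Fubini),
$$\mathbf{E}\bigl|u^{\eps}-\bar{u}\Psi^{\eps}\bigr|^{2}=\mathbf{E}_{x}^{B^{1}\otimes B^{2}}\!\left[(u_{0}(B_{t}^{1})-\bar{u}(t,x))(u_{0}(B_{t}^{2})-\bar{u}(t,x))\,K^{\eps}(B^{1},B^{2})\right],$$
with $K^{\eps}(B^{1},B^{2})\coloneqq\mathbf{E}_{V}[\Phi_{t}^{\eps}(B^{1})\Phi_{t}^{\eps}(B^{2})]$. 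By Gaussianity of $V$ this kernel is an explicit exponential whose self-interactions in the exponent are cancelled by the choice of $\lambda$, so $K^{\eps}$ depends only on the cross-interaction between the two replicas.

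The heart of the argument is a two-replica limit: for every bounded continuous $g:\mathbb{R}^{d}\times\mathbb{R}^{d}\to\mathbb{R}$,
$$\mathbf{E}_{x}^{B^{1}\otimes B^{2}}\!\left[g(B_{t}^{1},B_{t}^{2})\,K^{\eps}(B^{1},B^{2})\right]\xrightarrow[\eps\to0]{}\mathbf{E}[\widetilde{\Psi}(0,0)^{2}]\int g(y_{1},y_{2})\,G_{a}(t,x-y_{1})\,G_{a}(t,x-y_{2})\,\dif y_{1}\,\dif y_{2}.$$
In words, the tilted two-replica endpoint law decouples on the macroscopic scale into a product of Gaussians with the effective diffusivity $a$, with the cross-interaction contributing only to the overall normalization $\mathbf{E}[\widetilde{\Psi}(0,0)^{2}]$. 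To derive this I would apply the GRZ17 Markov chain framework to a coupled pair of trajectories, using \thmref{stationarity} to identify the prefactor. As a consistency check, taking $g\equiv 1$ recovers exactly $\mathbf{E}[\Psi^{\eps}(t,x)^{2}]\to\mathbf{E}[\widetilde{\Psi}(0,0)^{2}]$, which itself follows from \thmref{stationarity}.

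The theorem then follows by choosing $g(y_{1},y_{2})=(u_{0}(y_{1})-\bar{u}(t,x))(u_{0}(y_{2})-\bar{u}(t,x))$, since $\int(u_{0}(y)-\bar{u}(t,x))\,G_{a}(t,x-y)\,\dif y=0$ by the definition of $\bar{u}$. The main obstacle is the two-replica homogenization itself: although the corresponding single-replica statement is essentially contained in \cite{GRZ17}, the two-replica version requires controlling the $V$-mediated attraction between the two paths over a macroscopically long time. This is where $d\ge 3$ and $\beta<\beta_{0}$ enter crucially, through the transience of Brownian motion and the moment bounds on $\Phi_{t}^{\eps}$ implicit in \thmref{stationarity}, which together render the cross-interaction an integrable, macroscopically negligible perturbation.
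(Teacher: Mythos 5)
Your proposal follows essentially the same route as the paper: Feynman--Kac representations for both $u^{\eps}$ and $\Psi^{\eps}$, a two-replica second-moment computation under the tilted (Markov-chain) measure of \cite{GRZ17}, and the observation that in $d\ge3$ the cross-interaction $\exp\{\beta^{2}\mathscr{R}_{\eps^{-2}t}[B,\widetilde{B}]\}$ is effectively supported on a microscopically finite time window, so the two endpoint laws decouple into independent Gaussians with diffusivity $a$, against which the centered test function $u_{0}(\cdot)-\overline{u}(t,x)$ integrates to zero. The only presentational difference is that the paper carries out this decoupling mode-by-mode in Fourier space (via the centered factors $\mathscr{E}^{\eps}_{t,\omega}$ and dominated convergence in $\omega,\tilde{\omega}$) rather than for a general bounded continuous two-point test function.
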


\begin{rem}
\label{rem:cbar}We can now explain the non-divergent renormalization
constant $\overline{c}$ in \eqref{vepsdef}. The function~$\Psi(s,\cdot)$
approaches a stationary solution~$\widetilde{\Psi}$
as~$s\to\infty$, that is, on a ``microscopically large'' time scale.   However, even
though $\Psi(0,\cdot)\equiv1$, it is not necessarily the case that
$\mathbf{E}\widetilde{\Psi}(s,\cdot)\equiv1$. (This \emph{would} be the case by the property of the It\^o integral if $V$ were white in time.) Thus we need to divide
by the factor of $\overline{c} =\mathbf{E}\widetilde{\Psi}(s,\cdot)$ to see convergence to the effective diffusion problem \eqref{homogenizedPDE}--\eqref{homogenizedIC}
with initial condition $u_{0}$ rather than $\overline{c}u_{0}$.%
\end{rem}

\subsection*{A higher-order approximation}

In order to obtain higher-order corrections in the asymptotic expansion,
if we plug \eqref{expansion} into \eqref{uepsPDE} and group terms
by powers of $\eps$, we obtain the following equations for $u_{1}$
and $u_{2}$:
\begin{equation}
\partial_{s}u_{1}(t,x,s,y)=\frac{1}{2}\Delta_{y}u_{1}(t,x,s,y)+(\beta V(s,y)-\lambda)u_{1}(t,x,s,y)+\nabla_{y}\Psi(s,y)\cdot\nabla_{x}\overline{u}(t,x),\label{eq:u1problem}
\end{equation}
and
\begin{equation}
\begin{aligned}\partial_{s}u_{2}(t,x,s,y) & =\frac{1}{2}\Delta_{y}u_{2}(t,x,s,y)+(\beta V(s,y)-\lambda)u_{2}(t,x,s,y)+\nabla_{y}\cdot\nabla_{x}u_{1}(t,x,s,y)\\
 & \qquad+\frac{1}{2}(1-a)\Psi(s,y)\Delta_{x}\overline{u}(t,x).
\end{aligned}
\label{eq:u2problem}
\end{equation}
As we will show in \secref{effective-diffusivity}, the effective
diffusivity $a$ can be recovered from a formal solvability condition
for \eqref{u2problem} to have a solution $u_{2}$ that is stationary
in the fast variables $s$ and $y$, which is a rather standard situation
in homogenization theory. However, here, as stationary correctors
are not expected to exist in low dimensions, justifying this expression
requires a construction of approximate correctors and passage to a
large-time limit, similar to the ``large box'' limit in elliptic
homogenization theory. In particular, \thmref{aSTthm} below shows how to evaluate the effective diffusivity in terms of objects familiar from the theory of homogenization.

Our last result concerns the connection between the local expansion
\eqref{expansion} and the weak approximation of the solution. As
we have mentioned, typically, the leading-order terms in such expansions
in stochastic homogenization only provide local approximations, while a 
control of the weak error (after integration against a test function)
requires extra terms. This is partly because the higher the order
of the corrector, the slower the decay of its covariance function,
leading to the accumulation of errors from terms of all orders. We
circumvent this issue in a way reminiscent of the ``straight-line''
approximation of trajectories on a mesoscopic time scale that is ``long but not too long'', such as
is used for models of particles in random velocity
fields or subject to random forces in \cite{KP79,KP80}.

If we look at \eqref{u1problem} for each macroscopic $t>0$ and $x\in\mathbb{R}^{d}$
fixed, as an evolution problem in $s$, we would have a ``complete
separation of scales'' factorization
\begin{equation}
u_{1}(t,x,s,y)=\sum_{k=1}^{d}\zeta^{(k)}(s,y)\frac{\partial\overline{u}(t,x)}{\partial x_{k}},\label{eq:separationofscales}
\end{equation}
where $\zeta^{(k)}$ solves the microscopic problem
\begin{equation}
\partial_{s}\zeta^{(k)}=\frac{1}{2}\Delta\zeta^{(k)}+(\beta V(s,y)-\lambda)\zeta^{(k)}+\frac{\partial\Psi}{\partial y_{k}}.\label{eq:zetakPDE}
\end{equation}
Instead of using \eqref{zetakPDE} directly, we consider {mesoscopic}
time intervals in $s$ of size $\eps^{-\gamma}$, with~$\gamma\in(0,2)$.
To be precise, for each $j\ge1$, let $\theta_{j}^{(k)}=\theta_{j}^{(k)}(s,y)$,
$1\le k\le d$, be the solution to
\begin{equation}
\begin{aligned}
&\partial_{s}\theta_{j}^{(k)}  =\frac{1}{2}\Delta_{y}\theta_{j}^{(k)}+(\beta V-\lambda)\theta_{j}^{(k)}+\frac{\partial\Psi}{\partial y_{k}},\qquad s>\eps^{-\gamma}(j-1),\\
&\theta_{j}^{(k)}(\eps^{-\gamma}(j-1),\cdot)  =0.
\end{aligned}
\label{eq:thetajproblem}
\end{equation}
Then, define $u_{1;j}=u_{1;j}(s,y)$ to be the solution to
\begin{equation}
\begin{aligned}
&\partial_{s}u_{1;j}  =\frac{1}{2}\Delta u_{1;j}+(\beta V-\lambda)u_{1;j},\qquad s>\eps^{-\gamma}j\\
&u_{1;j}(\eps^{-\gamma}j,y)  =\sum_{k=1}^{d}\theta_{j}^{(k)}(\eps^{-\gamma}j,y)\frac{\partial\overline{u}}{\partial x_{k}}(\eps^{2-\gamma}j,\eps y),
\end{aligned}
\label{eq:u1jproblem}
\end{equation}
and finally put
\begin{equation}
u_{1}^{\eps}(t,x)=\sum_{j=1}^{\lfloor\eps^{\gamma-2}t\rfloor}u_{1;j}(\eps^{-2}t,\eps^{-1}x)+\theta_{\lfloor\eps^{\gamma-2}t\rfloor+1}(\eps^{-2}t,\eps^{-1}x)\cdot\nabla\overline{u}(t,x).\label{eq:u1epsdef}
\end{equation}
This is similar to putting $s=\eps^{-2}t$, $y=\eps^{-1}x$ in the
formal PDE \eqref{u1problem}, except that rather than multiplying
the forcing by the ``current'' value of $\nabla\overline{u}$, we
multiply it by an ``out-of-date'' value of $\nabla\overline{u}$
that is only updated to the correct current value of $\nabla\overline{u}$
at times of the form $\eps^{-\gamma}j$, $j\in\mathbb{N}$. With this
definition of $u_{1}^{\eps}$, we have a weak convergence theorem
for the fluctuations. Recall that $\Psi^{\eps}(t,x)=\Psi(\eps^{-2}t,\eps^{-1}x)$ with $\Psi$ solving \eqref{Psiproblem}.
\begin{thm}
\label{thm:weakconvergence}Suppose that $0\le\beta<\beta_{0}$ and take $\lambda$ as in \thmref{stationarity}. Let $g\in\mathcal{C}_{\mathrm{c}}^{\infty}(\mathbb{R}^{d})$. Let $\gamma \in(0,2)$ and define $u_1^\eps$ as in \eqref{u1epsdef}. For
any $\zeta<(1-\gamma/2)\vee(\gamma-1)$ and any $t>0$, there exists
a $C>0$ (also depending on $\|u_0\|_{\mathcal{C}^3(\mathbb{R}^d)}$) so that
\begin{equation}
\mathbf{E}\left(\eps^{-d/2+1}\int g(x)[u^{\eps}(t,x)-\Psi^{\eps}(t,x)\overline{u}(t,x)-\eps u_{1}^{\eps}(t,x)]\,\dif x\right)^{2}\le C\eps^{2\zeta}.\label{eq:weakfluctsbound}
\end{equation}
\end{thm}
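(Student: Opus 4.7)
The plan is to express the error $e^{\eps}:=u^{\eps}-w^{\eps}$ with $w^{\eps}:=\Psi^{\eps}\overline{u}+\eps u_{1}^{\eps}$ through a Duhamel formula for the operator $L^{\eps}:=\partial_{t}-\tfrac{1}{2}\Delta-\eps^{-2}(\beta V^{\eps}-\lambda)$, compute the residual $L^{\eps}w^{\eps}$ on each mesoscopic time interval, and exploit the averaging built into the scheme \eqref{thetajproblem}--\eqref{u1epsdef}.

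First observe that $e^{\eps}(0,\cdot)\equiv 0$: indeed $\Psi^{\eps}(0,\cdot)\equiv 1$, $\overline{u}(0,\cdot)=u_{0}$, and $u_{1}^{\eps}(0,\cdot)=\theta_{1}(0,\cdot)\cdot\nabla u_{0}\equiv 0$ by the zero initial datum in \eqref{thetajproblem}. Denoting by $\mathcal{P}^{\eps}(t,x;s,y)$ the random Green's function of $L^{\eps}$, Duhamel and duality give
\[
\int g(x)\,e^{\eps}(t,x)\,\dif x \;=\; -\int_{0}^{t}\!\!\int R^{\eps}(s,y)\,h^{\eps}(s,y)\,\dif y\,\dif s,\qquad R^{\eps}:=L^{\eps}w^{\eps},
\]
with $h^{\eps}(s,y):=\int g(x)\mathcal{P}^{\eps}(t,x;s,y)\,\dif x$ solving the same random PDE backwards in time with terminal datum $g$. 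A direct calculation on each mesoscopic interval $((J-1)\eps^{2-\gamma},J\eps^{2-\gamma}]$ using \eqref{Psiproblem}, \eqref{thetajproblem}, \eqref{u1jproblem}, and the homogenized equation \eqref{homogenizedPDE}, together with the crucial cancellation of the two $\eps^{-1}(\nabla_{y}\Psi)^{\eps}\cdot\nabla\overline{u}$ contributions (one from $L^{\eps}[\Psi^{\eps}\overline{u}]$, the other from $\eps L^{\eps}[\theta_{J}^{\eps}\cdot\nabla\overline{u}]$), yields
\[
R^{\eps}(t,x) \;=\; \tfrac{1}{2}(a-1)\Psi^{\eps}(t,x)\Delta\overline{u}(t,x)\;-\;[\nabla_{y}\theta_{J}]^{\eps}(t,x):\nabla^{2}\overline{u}(t,x)\;+\;O(\eps),
\]
with $J=J(t):=\lfloor\eps^{\gamma-2}t\rfloor+1$. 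The function $w^{\eps}$ is continuous in $t$ across the mesoscopic boundaries (the initial value of $u_{1;J}$ at $s=\eps^{-\gamma}J$ matches $\theta_{J}^{\eps}\cdot\nabla\overline{u}$ just before), so no Dirac contributions appear.

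Next, I pair $R^{\eps}$ with $h^{\eps}$ and show that the leading residual $R_{0}^{\eps}:=\tfrac{1}{2}(a-1)\Psi^{\eps}\Delta\overline{u}-[\nabla_{y}\theta_{J}]^{\eps}:\nabla^{2}\overline{u}$ yields the advertised rate. The PDE characterization of $a$ in \thmref{aSTthm}, equivalent to the solvability analysis of \eqref{u2problem} carried out in \secref{effective-diffusivity}, amounts to the fact that if $\theta_{J}$ were replaced by a stationary corrector $\zeta$ solving \eqref{zetakPDE} forced by $\nabla_{y}\widetilde\Psi$, then the symmetrized tensor $\tfrac{1}{2}(a-1)\widetilde\Psi\,\delta_{kl}-\tfrac{1}{2}(\partial_{y_{k}}\zeta^{(l)}+\partial_{y_{l}}\zeta^{(k)})$ would be mean zero, so $R_{0}^{\eps}$ becomes a mean-zero spacetime-stationary field contracted against the smooth Hessian $\nabla^{2}\overline{u}$. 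The discrepancy $\theta_{J}-\zeta$ is a transient whose $L^{2}$ norm decays like $s^{-d/2+1}$ after the start of each interval, by the mechanism of \thmref{stationarity}. Using the Markov chain in trajectory space of \cite{GRZ17} to bound $\mathbf{E}[R^{\eps}(s_{1},y_{1})R^{\eps}(s_{2},y_{2})h^{\eps}(s_{1},y_{1})h^{\eps}(s_{2},y_{2})]$ then produces two competing rates after multiplying by the prefactor $\eps^{-d+2}$: (i) order $\eps^{2(1-\gamma/2)}$ from the stationary fluctuations of $R_{0}^{\eps}$ summed over the $\eps^{\gamma-2}$ mesoscopic intervals; and (ii) order $\eps^{2(\gamma-1)}$ from the freezing of $\nabla\overline{u}$ within each interval (where the frozen and current arguments of $\nabla\overline{u}$ differ by $O(\eps^{2-\gamma})$). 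The better of the two rates gives $C\eps^{2\zeta}$ for any $\zeta<(1-\gamma/2)\vee(\gamma-1)$.

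The hardest part is the second step: converting the PDE-level solvability definition of $a$ into a genuinely mean-zero and fast-decorrelating residual, quantitatively controlling the transient $\theta_{J}-\zeta$ inside each mesoscopic interval via \thmref{stationarity}, and propagating these bounds through the random backward test function $h^{\eps}$ using the moment and decorrelation estimates for $\mathcal{P}^{\eps}$ provided by the trajectory-space Markov chain of \cite{GRZ17}. Without these ingredients, the pointwise $O(1)$ size of $R^{\eps}$ would give only a trivial estimate; with them, the double integral in Duhamel compresses to the target $\eps^{\zeta}$ rate.
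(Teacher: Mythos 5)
Your Duhamel strategy is genuinely different from the paper's proof, which never forms the residual $L^{\eps}w^{\eps}$: instead it writes a single Feynman--Kac formula for the whole error $q^{\eps}=u^{\eps}-\Psi^{\eps}\overline{u}-\eps u_{1}^{\eps}$ as $\mathbb{E}_{B}^{\eps^{-1}x}[u_{0}(\eps B_{\eps^{-2}t})-\overline{u}(t,x)-\mathscr{I}_{t}^{\eps}[B]]\exp\{\mathscr{V}_{\eps^{-2}t}[B]-\lambda\eps^{-2}t\}$, so that the covariance of $q^{\eps}(t,x)$ and $q^{\eps}(t,\tilde x)$ involves only two tilted paths and factors as (H\"older) a product of an $L^{p}$ bound on the path functional $u_{0}(\eps W_{\eps^{-2}t})-\overline{u}(t,x)-\mathscr{I}_{t}^{\eps}[W]$ (obtained by replacing the mesoscopic times by regeneration times, Taylor expanding $\overline{u}$, and applying Burkholder--Gundy to the martingale built from the regeneration increments, whose covariance is exactly $a$ by \eqref{adef}) and an $L^{\chi}$ bound on $\exp\{\beta^{2}\mathscr{R}_{\eps^{-2}t}[W,\widetilde W]\}-1$ from \propref{hittingprob}. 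Your computation of the residual, including the cancellation of the $\eps^{-1}(\nabla_{y}\Psi)^{\eps}\cdot\nabla\overline{u}$ terms and the continuity of $w^{\eps}$ across mesoscopic boundaries, is correct and is a nice way to see why the scheme \eqref{thetajproblem}--\eqref{u1epsdef} is designed as it is.

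However, the analytic core of your argument has two genuine gaps. First, your treatment of $R_{0}^{\eps}$ rests on comparing $\theta_{J}$ to a space-time-stationary corrector $\zeta$ solving \eqref{zetakPDE} forced by $\nabla_{y}\widetilde\Psi$, with $\theta_{J}-\zeta$ decaying like $s^{-d/2+1}$ ``by the mechanism of \thmref{stationarity}.'' But the paper emphasizes (\secref{effective-diffusivity}) that such a stationary corrector need not exist for $d\ge3$; \thmref{stationarity} applies to the unforced equation with constant data and does not transfer to the forced corrector equation, which is precisely why the paper works with the finite-horizon approximations $a_{S,T}$ rather than \eqref{oura}. Second, pairing $R^{\eps}$ against the backward random solution $h^{\eps}$ requires controlling the four-point function $\mathbf{E}[R^{\eps}(s_{1},y_{1})R^{\eps}(s_{2},y_{2})h^{\eps}(s_{1},y_{1})h^{\eps}(s_{2},y_{2})]$, in which $h^{\eps}$ and $R^{\eps}$ are strongly correlated through $V$; you assert the two rates $\eps^{2(1-\gamma/2)}$ and $\eps^{2(\gamma-1)}$ but provide no mechanism for extracting the crucial factor $\eps^{d-2}$ (which in the paper comes from the two-path intersection estimate, \lemref{intersectionmoment}) from this four-path object. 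Finally, your closing logic is inverted: with both error sources present the admissible $\zeta$ is bounded by the \emph{minimum} $(1-\gamma/2)\wedge(\gamma-1)$ (as in \corref{triangleineq} and \propref{uniformconv}, and consistent with the optimal $\zeta<1/3$ at $\gamma=4/3$), not by the maximum; ``the better of the two rates'' cannot be selected.
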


The optimal bound in \thmref{weakconvergence} is achieved when $\gamma=4/3$,
in which case $\zeta$ is required to be less than $1/3$.

We note that it would be hopeless to get a convergence-of-fluctuations
result like \thmref{weakconvergence}, even with an error of size
$\eps^{d/2-1}$ as in \eqref{fluctsconverge}, using only the first
term of the expansion \eqref{expansion} as in \thmref{convergence}.
This is because at that scale, \cite{GRZ17} gives different central
limit theorem statements for~$u$ and for $\Psi\overline{u}$: the
rescaled and renormalized fluctuations of $u$ converge to a solution
of the SPDE
\begin{equation}
\partial_{t}\mathscr{U}=\frac{1}{2}a\Delta\mathscr{U}+\beta\nu\overline{u}\dot{W},\label{eq:uSPDE}
\end{equation}
while the rescaled and renormalized fluctuations of $\Psi$ converge
to a solution of the SPDE
\begin{equation}
\partial_{t}\psi=\frac{1}{2}a\Delta\psi+\beta\nu\dot{W},\label{eq:psispde}
\end{equation}
and so the rescaled and renormalized fluctuations of $\Psi\overline{u}$
converge to a solution of the SPDE
\begin{align}
\partial_{t}(\psi\overline{u}) & =\frac{1}{2}a\overline{u}\Delta\psi+\beta\nu\overline{u}\dot{W}+\frac{1}{2}a\psi\Delta\overline{u}\nonumber \\
 & =\frac{1}{2}a\Delta(\psi\overline{u})-a\nabla\psi\cdot\nabla\overline{u}+\beta\nu\overline{u}\dot{W}.\label{eq:upsispde}
\end{align}
The limiting SPDEs \eqref{uSPDE} and \eqref{upsispde} are not the
same, so an extra correction, besides the first term $\Psi^\eps(t,x)\overline{u}$ of the homogenization expansion, is needed. This phenomenon is not new in the study of random fluctuations in homogenization, and has been discussed e.g. in \cite{gloriaotto,gumourrat}.

The definitions \eqref{thetajproblem}--\eqref{u1jproblem} sit midway
between two natural ways of interpreting the formal problem \eqref{u1problem}.
On one hand, \eqref{u1problem}, for fixed $x$ and $t$, can be solved
as in \eqref{separationofscales}--\eqref{zetakPDE}. However, defining
the corrector $u_{1}$ by \eqref{separationofscales}, with initial
condition $0$, and then evaluating at time $s=\eps^{-2}t$ does not
seem to yield a good convergence result, because $\nabla_{x}\overline{u}(\tau,x)$
is not constant on the time scale from~$\tau=0$ to $\tau=\eps^{2}s=t$.
On the other hand, \eqref{u1problem} could also be solved by 
plugging~$t=\eps^{2}s$,~$x=\eps y$ into \eqref{u1problem}, yielding the
PDE
\begin{equation}
\partial_{s}u_{1}(s,y)=\frac{1}{2}\Delta_{y}u_{1}(s,y)+(\beta V(s,y)-\lambda)u_{1}(s,y)+\nabla_{y}\Psi(s,y)\cdot\nabla_{x}\overline{u}(\eps^{2}s,\eps y).\label{eq:u1updateconstant}
\end{equation}
However, using a solution to \eqref{u1updateconstant} with initial condition $0$ also fails to yield a result along the lines of \thmref{weakconvergence}.
This is because the Feynman--Kac formula that arises from the
solution to~\eqref{u1updateconstant} involves the behavior of the
Markov chain of \cite{GRZ17} on microscopically short time scales,
while the limits appear to arise from the averaged behavior of the
Markov chain on long time scales. The delay in multiplying by $\nabla_{x}\overline{u}$
introduced by only updating its value at mesoscopic intervals allows
the short-time fluctuations to be averaged out, leaving only the averaged
behavior of the Markov chain, which allows us to deduce the limiting
behavior.

\subsection*{Proof strategies and the organization of the paper}

Although our study is in part motivated by the goal of understanding
results in the vein of \cite{GRZ17} from the perspective of PDE theory
and stochastic homogenization, our proofs remain probabilistic, relying
entirely on the Feynman--Kac formula. In particular, we extensively
use a certain Markov chain, introduced in \cite{GRZ17}, representing
the tilting of the measure on Brownian paths induced by the time-correlations
of the random potential $V$. Because this Markov chain is somewhat technical,
we start the paper by explaining how it appears via the Feynman-Kac
representation of the solution, and provide the definition
and properties of the Markov chain in \secref{GRZreview}, including
a few properties which were not needed in \cite{GRZ17}, and then
complete the proof of \thmref{stationarity} in Section~\ref{sec:stationarity}. 

The next two sections of the paper are devoted to the parameters $a$
and $\nu$ obtained in \cite{GRZ17}. In \secref{noisestrength},
we prove \thmref{noiseexpr} regarding the effective noise strength
$\nu$, showing that it is directly related to the spatial decay of
correlations of the stationary solution. In Section~\ref{sec:effective-diffusivity},
we first show how the effective diffusivity can be recovered from
the formal asymptotic expansion \eqref{expansion}; see expression
\eqref{oura} below. However, as the correctors that appear in the
asymptotic expansion may not be stationary (especially in lower dimensions),
this formula does not necessarily make sense directly. Instead, we
devise an approximation procedure via a sequence of problems on long
but finite time intervals and then pass to the limit. This is the
content of \thmref{aSTthm}. Finally, in the last two sections we
establish our convergence results for the formal asymptotic expansion:
the strong convergence (\thmref{convergence}) in \secref{convergence},
and the weak convergence (\thmref{weakconvergence}) in \secref{weakconvergence}.

As we have emphasized above, the Markov chain introduced in \cite{GRZ17}
plays a key technical role in the analysis throughout the paper. However,
the key observation of \cite{GRZ17} is that on microscopically
long time scales, the Markov chain mixes exponentially fast so that its partial sum essentially behaves like a Brownian motion. Our results still hold when the noise $V$ is taken to
be white rather than colored in time. In that case the Brownian motion is not tilted by the environment, and the Markov
chain is just its i.i.d.\ Gaussian increments, even on microscopically short
time scales. Thus, the reader may find it helpful on first reading
to ignore the time correlations and pretend that the Markov chain
is in fact a sequence of i.i.d.\ Gaussian random increments, which eliminates the need for most of
the technicalities introduced in \secref{GRZreview}. The analysis of \cite{GRZ17} constructing the Markov chain is orthogonal to the new applications of this chain in the present paper. %

\subsection*{Acknowledgments}

We would like to thank the anonymous referees for multiple helpful comments and suggestions which helped improve the presentation of the paper. AD was supported by an NSF Graduate Research Fellowship under grant
DGE-1147470, YG by NSF grant DMS-1613301/1807748/1907928 and the Center for Nonlinear Analysis at CMU, LR by NSF grant
DMS-1613603 and ONR grant N00014-17-1-2145, and OZ by an Israel Science
Foundation grant and funding from the European Research Council (ERC)
under the European Union's Horizon 2020 research and innovation program
(grant agreement number 692452). We would like to thank S.~Chatterjee,
F. Hernandez, G. Papanicolaou, and M. Perlman for helpful comments
and discussions.

\section{The tilted Brownian motion and the Markov chain\label{sec:GRZreview}}

All of the proofs in this paper rely heavily on a Markov chain introduced
in \cite{GRZ17} representing a tilted Wiener measure arising in the
Feynman--Kac representation of solutions to the stochastic heat equation.
In order to recall this Feynman--Kac representation, we first introduce
some notation. By $\mathbb{E}_{B}^{y}$ we denote expectation with
respect to the probability measure in which $B=(B^{1},\ldots,B^{d})$
is a standard $d$-dimensional Brownian motion with $B_{0}=y$, which we will always assume to be two-sided (i.e. running both forward and backward from time $0$) since this will be convenient in some formulas.
We use~$\mathbf{E}$ for expectation
with respect to the randomness in $V$, and use $\mathbb{E}$,
with various adornments, for expectation with respect to auxiliary
Brownian motions or Markov chains used in some way in the Feynman-Kac formula. Also, whenever we denote an expectation with a letter ``E,'' we will use the letter ``P'' with the same font and adornments to represent the corresponding probability measure. For any $s\in\mathbb{R}$ and $\mathfrak{A}\subset\mathbb{R}$,
we set
\begin{equation}
\mathscr{V}_{s;\mathfrak{A}}[B]=\int_{\mathfrak{A}}V(s-\tau,B_{\tau})\,\dif\tau.\label{eq:Vsadef}
\end{equation}
We will often use the shorthand $\mathscr{V}_{s}=\mathscr{V}_{s;[0,s]}$.
Thus, for example, the solution to \eqref{Psiproblem} can be expressed
in the Feynman--Kac representation
\begin{equation}
\Psi(s,y)=\mathbb{E}_{B}^{y}\exp\{\beta\mathscr{V}_{s}[B]-\lambda s\}.\label{eq:PsiFKpreliminary}
\end{equation}
There are, of course, also Feynman--Kac formulas for solutions to
the other equations in the introduction, which we will
write as they are needed.

\subsection{The tilted Brownian motion}

In computing moments of $\Psi(s,y)$, due to the Gaussianity of $\mathscr{V}_{\mathfrak{A}}[B]$,
it becomes necessary to evaluate the covariances of the latter. Recall the definition \eqref{Rdef} of the covariance kernel $R$ of the noise. We define,
for any pair of sets $\mathfrak{A},\widetilde{\mathfrak{A}}\subset\mathbb{R}$,
the quantity 
\begin{equation}
\mathscr{R}_{\mathfrak{A},\widetilde{\mathfrak{A}}}[B,\widetilde{B}]=\mathbf{E}\left(\mathscr{V}_{s;\mathfrak{A}}[B]\mathscr{V}_{s;\widetilde{\mathfrak{A}}}[\widetilde{B}]\right)=\int_{\widetilde{\mathfrak{A}}}\int_{\widetilde{\mathfrak{A}}}R(\tau-\tilde{\tau},B_{\tau}-\widetilde{B}_{\tilde{\tau}})\,\dif\tau\,\dif\tilde{\tau},\label{eq:scrRdef}
\end{equation}
which is independent of the choice of $s$ due to the stationarity of $V$, 
and use the
abbreviations 
\[
\mathscr{R}_{s,\tilde{s}}=\mathscr{R}_{[0,s],[0,\tilde{s}]},~~
\mathscr{R}_{\mathfrak{A}}=\mathscr{R}_{\mathfrak{A},\mathfrak{A}},
~~\mathscr{R}_{s}=\mathscr{R}_{s,s}.
\]
We will also abbreviate
$\mathscr{R}_{\bullet}[B]=\mathscr{R}_{\bullet}[B,B]$, where the
$\bullet$ can be replaced by any allowable subscript for $\mathscr{R}$, so that,
for example, 
\[
\mathscr{R}_{s,\tilde{s}}[B]=\mathscr{R}_{s,\tilde{s}}[B,B]
=\mathscr{R}_{[0,s],[0,\tilde{s}]}[B,B].
\]
As an example of the use of this notation, we have by Fubini's theorem
and the formula for the expectation of the integral of a Gaussian
that
\begin{equation}
\mathbf{E}\Psi(s,y)=\mathbb{E}_{B}^{y}\mathbf{E}\exp\{\beta\mathscr{V}_{s}[B]-\lambda s\}=\mathbb{E}_{B}^{y}\exp\left\{ \frac{\beta^{2}}{2}\mathscr{R}_{s}[B]-\lambda s\right\} .\label{eq:EPsi}
\end{equation}
Similarly, we can compute
\begin{align}
\mathbf{E}\Psi(s,y)\Psi(\tilde{s},\tilde{y}) & =\mathbb{E}_{B}^{y}\mathbb{E}_{\tilde{B}}^{\tilde{y}}\mathbf{E}\exp\left\{ \beta\mathscr{V}_{s}[B]+\beta\mathscr{V}_{\tilde{s}}^{\tilde{y}}[\widetilde{B}]-\lambda(s+\tilde{s})\right\} \nonumber \\
 & =\mathbb{E}_{B}^{y}\mathbb{E}_{\tilde{B}}^{\tilde{y}}\exp\left\{ \left(\frac{\beta^{2}}{2}\mathscr{R}_{s}[B]-\lambda s\right)+\beta^{2}\mathscr{R}_{s,\tilde{s}}[B,\widetilde{B}]+\left(\frac{\beta^{2}}{2}\mathscr{R}_{\tilde{s}}[\widetilde{B}]-\lambda\tilde{s}\right)\right\} .\label{eq:covariancecomputation}
\end{align}

We recognize the first and third terms in the last exponential from
the exponential in \eqref{EPsi}. This motivates the definition of
the tilted path measure $\widehat{\mathbb{P}}_{B;\bullet}^{y}$ by
\begin{equation}
\widehat{\mathbb{E}}_{B;\bullet}^{y}\mathscr{F}[B]=\frac{1}{Z_{\bullet}}\mathbb{E}_{B}^{y}\left[\mathscr{F}[B]\exp\left\{ \frac{1}{2}\beta^{2}\mathscr{R}_{\bullet}[B]\right\} \right],\qquad\qquad Z_{\bullet}=\mathbb{E}_{B}^{y}\exp\left\{ \frac{1}{2}\beta^{2}\mathscr{R}_{\bullet}[B]\right\} \label{eq:tilting}
\end{equation}
for any measurable functional $\mathscr{F}$ on the space $\mathcal{C}([0,\infty);\mathbb{R}^{d})$,
where $\bullet$ can be taken to be any of the allowable subscripts
for $\mathscr{R}$. We also define $\widehat{\mathbb{P}}_{B,\widetilde{B};\bullet}^{y,\tilde{y}}=\widehat{\mathbb{P}}_{B;\bullet}^{y}\otimes\widehat{\mathbb{P}}_{\widetilde{B};\bullet}^{\tilde{y}}$
and denote by $\widehat{\mathbb{E}}_{B,\widetilde{B};\bullet}^{y,\tilde{y}}$
the corresponding expectation. Finally, we define
\begin{equation}
\alpha_{s}=\log Z_{s}-\lambda s,\label{eq:alphadef}
\end{equation}
and note that, according to \cite[Lemma A.1]{GRZ17} and its proof,
there exists a unique $\lambda=\lambda(\beta)$ so that
\begin{equation}
|\alpha_{s}-\alpha_{\infty}|\le C\e^{-cs}\label{eq:alphaconverges}
\end{equation}
for some $\alpha_{\infty}\in(0,\infty)$, $c>0$, and $C<\infty$.
This is where the constant $\lambda$ comes from, and we fix it for
the rest of the paper. This definition of $\lambda$ should be interpreted
in terms of \eqref{EPsi}: $\lambda$ is chosen so that $\mathbf{E}\Psi(s,y)$
remains of order $O(1)$ as $s\to+\infty$. Equivalently, it is the exponential
rate of growth of the unrenormalized, that is, with $\lambda=0$, 
multiplicative stochastic
heat equation with noise strength $\beta$. We note that a  consequence of \thmref{stationarity} is that $\e^{\alpha_\infty}=\mathbf{E}\widetilde{\Psi}(s,y)=\overline{c}$, where $\overline{c}$ is as in \eqref{vepsdef}. Another consequence is that
\begin{equation}
 \lambda = \frac{\beta \mathbf{E} \widetilde{\Psi}(t,x)V(t,x)}{\mathbf{E} \widetilde{\Psi}(t,x)}.\label{eq:lambda}
\end{equation}
This allows $\lambda$ to be recovered directly from the law of the stationary solution. The problem \eqref{PsitildePDE} already depends on $\lambda$, so we cannot use this expression as a definition of $\lambda$. However, if as in \eqref{Psiconvergence} we approximate $\tilde{\Psi}$ by $\Psi$ evaluated at a large time, then the right side of the resulting version of \eqref{lambda} does not depend on the choice of $\lambda$ in \eqref{uPDE}. Thus we could \emph{define}

\begin{equation}
 \lambda = \frac{\lim\limits_{t\to\infty}\beta \mathbf{E} \Psi(t,x)V(t,x)}{\lim\limits_{t\to\infty}\mathbf{E} \Psi(t,x)}.\label{eq:lambda2}
\end{equation}

An example of the utility of this tilted measure is that it lets us
rewrite \eqref{covariancecomputation} by
\begin{equation}
\mathbf{E}\Psi(s,y)\Psi(\tilde{s},\tilde{y})=\e^{\alpha_{s}+\alpha_{\tilde{s}}}\widehat{\mathbb{E}}_{B,\widetilde{B};s,\tilde{s}}^{y,\tilde{y}}\exp\left\{ \beta^{2}\mathscr{R}_{s,\tilde{s}}[B,\widetilde{B}]\right\} .\label{eq:Psicov}
\end{equation}
In light of \eqref{alphaconverges}, the factor $\e^{\alpha_{s}+\alpha_{\tilde{s}}}$
should be thought of, for (``microscopically'') large $s,\tilde{s}$,
as essentially a constant. This expression is
analogous to the computation in \cite[Lemma 3.1]{MSZ16}, to which
it indeed reduces if $V$ is taken to be white in time rather than
colored as it is in our setting. Indeed, in the white-in-time case,
the kernel $R(s,y)$ becomes a delta mass in $s$ at $s=0$, and thus
the quantity $\mathscr{R}_{s}[B]$ becomes the constant $\lambda s$
(the Itô--Stratonovich correction), not depending on $B$, so also
$\alpha_{\infty}=0$. In particular, in the white-in-time case
the tilting \eqref{tilting} becomes trivial: we use the tilting 
to account for the time-correlations of the noise. Then from \eqref{Psicov}
we recover exactly the first display in the proof of \cite[Lemma 3.1]{MSZ16}.

\subsection{The Markov chain}

A key point of \cite{GRZ17} is that a Brownian motion tilted according
to \eqref{tilting} can be approximately represented by a Markov chain.
Since $R(s,y)$ is supported on times $s\in[-1,1]$, the 
functional~$\mathscr{R}_{\bullet}[B]$ that appears in \eqref{tilting} only
involves interactions between the values of $B$ at times of distance
at most~$2$ from each other. Thus, if we ``chunk'' the Brownian
motion into segments of length $1$, the tilting only takes into account
the interactions between each segment and the immediate preceding
and succeeding segments. One can then represent the tilted Brownian
motion as a Markov chain on the chunks, with the caveat that another, ultimately small,
tilting is needed to account for the edge effects at time $T$.

It
is shown in \cite{GRZ17} that the Markov chain satisfies the \emph{Doeblin
condition}, which is to say that the transition measures uniformly
majorize a (small) multiple of the stationary measure. This condition
is an elementary tool in the theory of Markov chains; see e.g. \cite{MT09}
for an introduction. Therefore, at every step of the chain
corresponding to a length-$1$ chunk of the Brownian motion, there
is a probability bounded away from zero that the next step of the
chain can be considered to be sampled from the stationary distribution.
Conditional on this event occurring at a particular step,
the chain is then at its stationary distribution.
Therefore, the chain converges to its stationary distribution exponentially quickly. %

We state these ideas precisely in the following theorem, which summarizes
several results and discussions in \cite{GRZ17}. We let $\Xi_{T}=\{\omega\in\mathcal{C}([0,T])\mid\omega(0)=0\}$,
and, given $W_{i}\in\Xi_{T_{i}}$, we define $[W_{1},\ldots,W_{k}]\in\Xi_{\sum_{i}T_{i}}$
by concatenating the increments, as in \cite[(4.2)]{GRZ17}.
\begin{thm}[\cite{GRZ17}]
\label{thm:BstoWs}Let $T>1$ and $N=\lfloor T\rfloor-1$. There
is a Markov chain $w_{0},w_{1},\ldots,w_{N},w_{N+1}$, with $w_{0}\in\Xi_{T-[T]}$
and $w_{j}\in\Xi_{1}$ for $1\le j\le N+1$, which has the following
properties.
\begin{enumerate}
\item (Time-homogeneity.) The transition probability measure
\[
\widehat{\pi}(w_{j},\cdot)=\Law(w_{j+1}\mid w_{j})
\]
does not depend on $j$ for $j=1,\ldots,N-1$.
\item (Relationship with the tilted Brownian motion.) There is a bounded, measurable, even functional $\mathscr{G}:\Xi_1\to\mathbf{R}$ such that,if we put $W=[w_{0},\ldots,w_{N+1}]\in\Xi_{T}$, and let $\widetilde{\mathbb{E}}_W$ denote expectation with respect to the measure in which $W$ is obtained from the Markov chain,
then we have, for any bounded continuous function $\mathscr{F}$ on
$\Xi_{T}$, that
\begin{equation}
\widehat{\mathbb{E}}_{B;T}\mathscr{F}[B]=\widetilde{\mathbb{E}}_{W}[\mathscr{F}[W]\mathscr{G}[w_{N}]].\label{eq:Markovchainexpectation}
\end{equation}
\item (Doeblin condition.) There is a sequence of i.i.d. Bernoulli random
variables $\eta_{j}^{W}$, $j=1,2,\ldots$, with success probability
not depending on $T$, so that
\[
\Law(w_{j}\mid\eta_{j}^{W}=1,\{w_{i}\;:\;i<j\})=\overline{\pi},
\]
where $\overline{\pi}$ is the invariant measure of $\widehat{\pi}$.
\end{enumerate}
\end{thm}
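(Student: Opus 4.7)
The plan is to realize the tilted measure \eqref{tilting} as a Gibbs-type measure on a chain of Brownian chunks and then extract from it a genuine time-homogeneous Markov chain together with a Doeblin minorization. I would partition $[0,T]$ into the initial fractional interval of length $T-\lfloor T\rfloor$ followed by $N+1=\lfloor T\rfloor$ unit intervals, and let $w_0\in\Xi_{T-\lfloor T\rfloor}$ and $w_1,\dots,w_{N+1}\in\Xi_1$ be the corresponding Brownian increments. Because $R(\tau,\cdot)$ vanishes for $|\tau|>1$, the quadratic form $\mathscr{R}_T[B]$ decomposes into self-interactions of each chunk and pair-interactions of consecutive chunks only:
\[
\tfrac{1}{2}\beta^{2}\mathscr{R}_{T}[B]=\sum_{j\ge0}U_{j}(w_{j})+\sum_{j\ge0}V_{j}(w_{j},w_{j+1}),
\]
with $U_{j},V_{j}$ measurable, uniformly bounded by a constant multiple of $\beta^{2}\|R\|_{\infty}$, and translation-invariant in $j$ for interior indices. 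Since $w_0,\dots,w_{N+1}$ are independent Wiener increments under $\mathbb{P}_B^0$, the tilt in \eqref{tilting} gives them the law of a nearest-neighbor Gibbs chain on path space.

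Second, to replace this finite-horizon Gibbs chain with a genuinely time-homogeneous kernel plus a boundary correction, I would seek a positive measurable function $h:\Xi_1\to(0,\infty)$, bounded above and below, such that
\[
\widehat{\pi}(w,\dif w')\;\propto\;\e^{U(w')+V(w,w')}\,\tfrac{h(w')}{h(w)}\,\mathbb{P}_{B}^{0}(\dif w')
\]
defines a probability kernel not depending on the position in the chain. Substituting this rewriting into the product $\prod_j\e^{U(w_j)+V(w_j,w_{j+1})}$ produces telescoping $h$-factors, so all $T$-dependent normalizations collapse onto a single residual weight that depends only on the last complete interior chunk $w_N$; I would take $\mathscr{G}[w_N]$ to be this residual factor, which is even because $R$ is isotropic and hence $V$ is symmetric in its two arguments. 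This simultaneously yields (1) and (2).

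Existence of the function $h$ (and thus of $\widehat{\pi}$ itself) comes from a Perron-Frobenius argument applied to the transfer operator $\mathcal{L}f(w)=\int\e^{U(w')+V(w,w')}f(w')\,\mathbb{P}_{B}^{0}(\dif w')$: since $U$ and $V$ are bounded, $\mathcal{L}$ strictly contracts Hilbert's projective metric on the cone of bounded positive functions, delivering a principal eigenvalue $\Lambda>0$ and a unique (up to scale) eigenfunction $h$ with $\e^{-C}\le h(w)/h(\tilde w)\le \e^{C}$ uniformly. The same bounds then give $\e^{-C'}\le \Lambda^{-1}\e^{U(w')+V(w,w')}h(w')/h(w)\le \e^{C'}$ pointwise, so that $\widehat{\pi}(w,\cdot)\ge \rho\,\overline{\pi}(\cdot)$ for some $\rho>0$ independent of $T$ and $w$, where $\overline{\pi}$ is the (unique) invariant measure of $\widehat{\pi}$. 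This is exactly the Doeblin minorization, and the standard splitting/coupling construction then produces i.i.d.\ $\eta_{j}^{W}\sim\Bernoulli(\rho)$ with the required property: on $\{\eta_j^W=1\}$ sample $w_j$ from $\overline{\pi}$, otherwise from the residual kernel $(\widehat{\pi}-\rho\overline{\pi})/(1-\rho)$.

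The main technical obstacle is the Perron-Frobenius step on the infinite-dimensional space $\Xi_{1}$: unlike in the finite-dimensional setting, one cannot simply invoke compactness of $\mathcal{L}$. The cleanest route is to work with Hilbert's projective metric on the cone of positive bounded measurable functions, where uniform boundedness of $U+V$ forces $\mathcal{L}$ to be a strict contraction with gap independent of $T$, yielding existence, uniqueness, and the two-sided bound on $h$ in a single stroke. Once $h$ is in hand with these uniform bounds, both the boundedness of $\mathscr{G}$ and the Doeblin minorization are immediate, and the remaining ingredients (chunking, telescoping, splitting construction) are routine.
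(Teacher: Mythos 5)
First, note that the paper does not prove this theorem: it is imported verbatim from \cite{GRZ17}, with the construction of the chain in [GRZ17, Section 4.1], identity \eqref{eq:Markovchainexpectation} as [GRZ17, (4.25)], and the Doeblin condition as [GRZ17, (4.18)]. So there is no in-paper proof to match against; what can be checked is whether your sketch is a viable reconstruction of that construction, and in broad strokes it is. The chunking into unit intervals, the observation that the compact time-support of $R$ reduces $\tfrac12\beta^2\mathscr{R}_T[B]$ to self-interactions plus nearest-neighbour pair interactions of consecutive increments, the transfer-operator/Doob $h$-transform to produce a time-homogeneous kernel, the Birkhoff--Hilbert projective contraction (valid here because the kernel density $\e^{U(w')+V(w,w')}$ is pinched between two positive constants uniformly in $w,w'$, which also yields $\widehat{\pi}(w,\cdot)\ge\rho\,\overline{\pi}(\cdot)$ and hence the Nummelin splitting giving the i.i.d.\ $\eta_j^W$), and the evenness of the correction by isotropy of $R$ --- all of this is consistent with the description the paper gives of the \cite{GRZ17} construction and would work.

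The one step you assert rather than derive is that the residual weight ``depends only on the last complete interior chunk $w_N$.'' A naive telescoping of the $h$-factors through $\prod_{j}\Lambda\,\tfrac{h(w_j)}{h(w_{j+1})}\,\widehat{\pi}(w_j,\dif w_{j+1})$ leaves a ratio $h(w_1)/h(w_{N+1})$ together with uncompensated boundary terms from the fractional initial chunk $w_0$ and from the final chunk $w_{N+1}$ (which has a self-interaction $U(w_{N+1})$ but no successor interaction). Getting the entire discrepancy to collapse onto a single bounded functional of $w_N$ --- as the statement requires, and as the rest of the paper uses via \lemref{cutofftail} --- requires a more careful arrangement: the initial terms can be absorbed into the (inhomogeneous) law of $(w_0,w_1)$, but the terminal terms must be integrated out over $w_{N+1}$ conditionally on $w_N$, and it is that conditional integral (the ratio of the finite-horizon future partition function to its infinite-horizon limit $h$) that produces $\mathscr{G}[w_N]$. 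As written, your telescoping does not deliver \eqref{eq:Markovchainexpectation} in the stated form; this is a fixable but genuine gap in the bookkeeping at the right endpoint.
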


\thmref{BstoWs} summarizes several results of \cite{GRZ17}. The
Markov chain $(w_{k})$ is constructed in \cite[Section 4.1]{GRZ17}.
Equation \eqref{Markovchainexpectation} is \cite[(4.25)]{GRZ17},
where we use the notation $\mathscr{G}$ instead of $\mathscr{G}_{\eps}$
because the functional in fact does not depend on the $\eps$ of \cite{GRZ17}
(which is the same as the $\eps$ in the present paper, but is playing
no role in the present discussion). The functional $\mathscr{G}$
represents the additional tilting to account for edge effects at time
$T$. This additional tilting should be thought of as an error term
and in our arguments we will always strive to show that it does not
play an important role; the reader who pretends that $\mathscr{G}\equiv1$
will not miss the thrust of the arguments in the paper. The Doeblin
condition is established in \cite[(4.18)]{GRZ17}, as explained in
the discussion surrounding \cite[(4.27)]{GRZ17}.

We note again that \thmref{BstoWs} is trivial in the case
when $V$ is white in time: then the Markov chain is simply given
by the independent increments of the Brownian motion, and is always
at its stationary distribution.

We will use the notation
\begin{equation}
\widetilde{\mathbb{E}}_{W}^{y}\mathscr{F}[W]=\widetilde{\mathbb{E}}_{W}\mathscr{F}[y+W].\label{eq:Etildey}
\end{equation}
Define the stopping times $\sigma_{0}^{W}=0$, $\sigma_{n}^{W}=\min\{t\ge\sigma_{n-1}^{W}\mid\eta_{t}^{W}=1\}$
and put, for $n\ge0$,
\begin{equation}
\mathbf{W}_{n}^{W}=W_{\sigma_{n+1}^{W}}-W_{\sigma_{n}^{W}}.\label{eq:independentchunks}
\end{equation}
This is the construction in \cite[(4.27)]{GRZ17}. The following lemma
summarizes some results of \cite{GRZ17} about these stopping times.
\begin{lem}
\label{lem:Ws}The family $\{\mathbf{W}_{n}^{W}\}_{n\ge0}$ is a collection
of independent, statistically isotropic random variables with exponential
tails. Moreover, the elements of $\{\mathbf{W}_{n}^{W}\}_{n\ge1}$
are identically distributed.
\end{lem}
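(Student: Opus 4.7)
The plan is to read the construction of \cite{GRZ17} as a classical regeneration/renewal scheme. The Doeblin condition in part~(3) of \thmref{BstoWs} can be rephrased as a convex decomposition of the transition kernel,
\[
\widehat{\pi}(w,\cdot)=\delta\,\overline{\pi}(\cdot)+(1-\delta)Q(w,\cdot),
\]
for some $\delta>0$ and some transition kernel $Q$, where $\delta$ is exactly the success probability of $\eta_j^W$. Conditional on $\eta_j^W=1$, the chunk $w_j$ is freshly sampled from $\overline{\pi}$, independently of the past; conditional on $\eta_j^W=0$, it is sampled from $Q(w_{j-1},\cdot)$. Hence each $\sigma_n^W$ is a bona fide regeneration time for the Markov chain $(w_j)$.

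Independence and the identical-distribution statement for $n\ge 1$ then follow from the standard strong Markov/excursion argument. After time $\sigma_n^W$ with $n\ge 1$, the chain starts from $\overline{\pi}$ independently of $(w_j)_{j\le \sigma_n^W}$ and evolves under the time-homogeneous kernel $\widehat{\pi}$ until the next Bernoulli success, so the ``excursion block'' $(w_{\sigma_n^W},\dots,w_{\sigma_{n+1}^W-1})$ has a law not depending on $n$. Since $\mathbf{W}_n^W$ is a measurable functional of this block (the sum of the increments in it), the increments $\{\mathbf{W}_n^W\}_{n\ge 1}$ are i.i.d., and are independent of $\mathbf{W}_0^W$, which depends only on $w_0,\ldots,w_{\sigma_1^W-1}$ and may have a different distribution because $w_0\in\Xi_{T-[T]}$ has a different length. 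Statistical isotropy is preserved at every step of the construction: the kernel $R(s,y)$ is rotationally invariant since $\nu$ is isotropic, so $\mathscr{R}_\bullet[B,\widetilde B]$ is invariant under simultaneous rotations, so the tilted measure $\widehat{\mathbb{P}}_{B;\bullet}^{y}$ is isotropic, and hence so are $\widehat{\pi}$, $\overline{\pi}$, and the resulting chunks.

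For the exponential tails, the geometric variable $\sigma_{n+1}^W-\sigma_n^W$ has success probability $\delta$ and so its tail is $\le (1-\delta)^m$. For a single chunk $w_j\in\Xi_1$ sampled from either $\overline{\pi}$ or $\widehat{\pi}(w,\cdot)$, the Radon--Nikodym derivative with respect to the standard Wiener measure on $\Xi_1$ is a bounded exponential of a quadratic functional of $\mathscr{R}$, which is uniformly bounded because the correlation kernel $R$ is compactly supported and the chunk has length $1$. This yields Gaussian tails for $\sup_{t\in[0,1]}|w_j(t)|$ under either measure, uniform in $j$ and in the conditioning. A standard estimate for the tail of a sum of a geometric number of independent sub-Gaussian increments then produces exponential tails for $\mathbf{W}_n^W=\sum_{j=\sigma_n^W}^{\sigma_{n+1}^W-1}w_j(\mathrm{end})$.

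The main obstacle is the uniform-in-$w$ sub-Gaussian tail bound for a single chunk under $\widehat{\pi}(w,\cdot)$, since the tilting depends on the previous chunk through interactions at the boundary. However, because $R$ has temporal support of length at most $1$, these interactions are bounded (they involve the integral of $R$ against one chunk of length $1$), so the tilting factor relative to the unconditional law of a Brownian increment is bounded uniformly in $w$, and the Gaussian tail of the underlying Brownian increment passes through. Once this uniform control is in hand, all four claims follow from the renewal picture above, which is essentially the construction in \cite[Section 4]{GRZ17}.
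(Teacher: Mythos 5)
Your proof is correct and follows the same route as the paper: independence and the identical distribution for $n\ge1$ from the regeneration structure implied by the Doeblin condition and time-homogeneity, isotropy from the rotational invariance of the construction, and exponential tails from the geometric block lengths combined with uniform tail control on individual chunks. The only difference is that the paper simply cites \cite[Lemma A.2]{GRZ17} for the exponential tails, whereas you reconstruct the standard argument (uniformly bounded tilting of each length-$1$ chunk thanks to the compact support and boundedness of $R$, plus a Chernoff bound for a geometric number of sub-Gaussian increments), which is a faithful expansion of the cited result rather than a different method.
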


\begin{proof}
The fact that $\{\mathbf{W}_{n}^{W}\}_{n\ge0}$ is independent, and
that the elements of $\{\mathbf{W}_{n}^{W}\}_{n\ge1}$ are identically
distributed, is an immediate consequence of the Doeblin condition
and the time-homogeneity of the Markov chain. Isotropy follows from
the isotropy of the construction. Exponential tails were established
in \cite[Lemma A.2]{GRZ17}.
\end{proof}
The construction leading to \eqref{independentchunks} can be applied
to pairs of paths as well, as explained at the end of \cite[Section 4.1]{GRZ17}.
Given two independent copies $W,\widetilde{W}$ of the Markov chain,
define
\[
\eta_{j}^{W,\widetilde{W}}=\eta_{j}^{W}\eta_{j}^{\widetilde{W}},
\]
and the stopping times
\begin{equation}
\sigma_{n}^{W,\widetilde{W}}=\begin{cases}
0 & n=0;\\
\min\{t\ge\sigma_{n-1}\;:\;\eta_{t}^{W,\widetilde{W}}=1\} & n\ge1.
\end{cases}\label{eq:sigmanWWdef}
\end{equation}
Then put
\[
\mathbf{W}_{n}^{W,\widetilde{W}}=W_{\sigma_{n+1}^{W,\widetilde{W}}}-W_{\sigma_{n}^{W,\widetilde{W}}},\qquad\qquad\widetilde{\mathbf{W}}_{n}^{W,\widetilde{W}}=\widetilde{W}_{\sigma_{n+1}^{W,\widetilde{W}}}-\widetilde{W}_{\sigma_{n}^{W,\widetilde{W}}}.
\]
Analogously to \eqref{Etildey}, we use the notation $\widetilde{\mathbb{P}}_{W,\widetilde{W}}^{y,\tilde{y}}=\widetilde{\mathbb{P}}_{W}^{y}\otimes\widetilde{\mathbb{P}}_{\widetilde{W}}^{\tilde{y}}$.
We have the following corollary of \lemref{Ws}.
\begin{cor}\label{cor:chunkprops}
The family $\{\mathbf{W}_{n}^{W,\widetilde{W}}\}_{n\ge0}\cup\{\widetilde{\mathbf{W}}_{n}^{W,\widetilde{W}}\}_{n\ge0}$
is a collection of independent isotropic random variables with exponential
tails.\footnote{We use the standard terminology that a random variable $X$ has exponential tails if there are constants $C,c>0$ such that $\mathbf{P}(|X|>x)\le C\e^{-cx}$ for all $x>0$.} Moreover, the elements of $\{\mathbf{W}_{n}^{W,\widetilde{W}}\}_{n\ge1}\cup\{\widetilde{\mathbf{W}}_{n}^{W,\widetilde{W}}\}_{n\ge1}$
are identically distributed.
\end{cor}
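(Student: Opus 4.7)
The plan is to deduce the corollary from Lemma~\ref{lem:Ws} by exploiting the product structure of the two independent chains. Because $W$ and $\widetilde{W}$ are independent copies of the Markov chain of Theorem~\ref{thm:BstoWs}, the Bernoulli sequences $\{\eta_j^W\}$ and $\{\eta_j^{\widetilde{W}}\}$ appearing in part~(3) of that theorem are independent of each other, so $\{\eta_j^{W,\widetilde{W}}\}=\{\eta_j^W\eta_j^{\widetilde{W}}\}$ is itself an i.i.d.\ Bernoulli sequence with success probability $p^2$, where $p$ is the individual success probability. In particular, $\sigma_n^{W,\widetilde{W}}$ is the $n$-th renewal time of this i.i.d.\ sequence, with geometrically distributed gaps.

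Next I would argue that, at each time $\sigma_n^{W,\widetilde{W}}$ with $n\ge 1$, the joint conditional law of $(w^W_{\sigma_n^{W,\widetilde{W}}},\,w^{\widetilde{W}}_{\sigma_n^{W,\widetilde{W}}})$ given the history up to $\sigma_n^{W,\widetilde{W}}$ is exactly $\overline{\pi}\otimes\overline{\pi}$. This follows from the Doeblin condition of Theorem~\ref{thm:BstoWs}(3) applied separately to each chain, which resets each of $w^W$ and $w^{\widetilde{W}}$ independently to $\overline{\pi}$, combined with the independence of the two underlying chains. Invoking the strong Markov property together with the time-homogeneity of Theorem~\ref{thm:BstoWs}(1) then shows that the pairs $(\mathbf{W}_n^{W,\widetilde{W}},\,\widetilde{\mathbf{W}}_n^{W,\widetilde{W}})$ for $n\ge 1$ are i.i.d., which simultaneously gives both the independence across $n$ and the identical-distribution claim for $n\ge 1$. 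The index $n=0$ is excluded from the identical-distribution assertion because $w_0$ has a distinct length $T-\lfloor T\rfloor$ and a different initial law.

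For the independence of $\mathbf{W}_n^{W,\widetilde{W}}$ and $\widetilde{\mathbf{W}}_n^{W,\widetilde{W}}$ at fixed $n$, I would note that $\sigma_{n+1}^{W,\widetilde{W}}$ is measurable with respect to the joint Bernoulli sequences, and conditional on the entire sequence of $\sigma$'s together with the state at $\sigma_n^{W,\widetilde{W}}$, the chunk $\mathbf{W}_n^{W,\widetilde{W}}$ is a measurable function of $W$-increments alone while $\widetilde{\mathbf{W}}_n^{W,\widetilde{W}}$ is a function of $\widetilde{W}$-increments alone, so they factorize by independence of the chains. Isotropy is inherited termwise from the isotropy of each chain via Lemma~\ref{lem:Ws}, and exponential tails follow by combining the geometric tail of the gap $\sigma_{n+1}^{W,\widetilde{W}}-\sigma_n^{W,\widetilde{W}}$ with the exponential tails of individual chunks supplied by that lemma (a geometric sum of exponentially-tailed variables is exponentially tailed). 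The main conceptual point to get right is that the Doeblin reset, when applied at a common stopping time of the two chains, does yield an independent product distribution; this is a clean consequence of the product structure of the joint probability space on which $(W,\widetilde{W})$ lives, but it is the step I would be most careful in writing out, since every other claim in the corollary cascades from it.
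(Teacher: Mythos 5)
Your proposal is correct and follows essentially the same route as the paper, which states this as an immediate consequence of \lemref{Ws}: the joint Bernoulli sequence $\eta_j^{W,\widetilde W}=\eta_j^W\eta_j^{\widetilde W}$ is i.i.d.\ with success probability $\kappa_1^2$, the simultaneous Doeblin reset at the joint regeneration times yields the product law $\overline{\pi}\otimes\overline{\pi}$ by independence of the two chains, and independence across $n$, identical distribution for $n\ge1$, isotropy, and exponential tails (via the geometric gaps combined with \cite[Lemma A.2]{GRZ17}) all cascade from this exactly as you describe. The only difference is that you spell out details the paper leaves implicit.
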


Now let us set
\begin{equation}
\kappa_{1}=\mathbb{P}(\eta_{j}^{W}=1),\qquad\qquad\kappa_{2}=\mathbb{P}(\eta_{j}^{W,\widetilde{W}}=1)=\kappa_{1}^{2}.\label{eq:kappadef}
\end{equation}
 The next proposition gives an expression for the effective diffusivity
$a$ in \eqref{homogenizedPDE} in terms of the Markov chain.
\begin{prop}[{\cite[Proposition 4.1]{GRZ17}}]
\label{prop:invariance}There is a diagonal $d\times d$ matrix
\begin{equation}
\mathbf{a}=aI_{d\times d}=\kappa_{1}\widetilde{\mathbb{E}}_{W}[\mathbf{W}_{n}^{w}(\mathbf{W}_{n}^{W})^{t}]\label{eq:adef}
\end{equation}
so that for any $t>0$, as $\eps\to0$, the process $\{\eps W_{\eps^{2}\tau}\}_{0\le\tau\le t}$ (under the measure $\widetilde{\mathbb{P}}_W$)
converges in distribution in $\mathcal{C}([0,t])$ to a Brownian motion
with covariance matrix $\mathbf{a}$.
\end{prop}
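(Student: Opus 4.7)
The plan is to reduce this functional central limit theorem to Donsker's invariance principle for i.i.d.\ sums, by exploiting the regenerative decomposition furnished by the Doeblin condition (Theorem \ref{thm:BstoWs}(3)) together with Lemma \ref{lem:Ws}. Those results tell us that the chunks $\mathbf{W}_n^W = W_{\sigma_{n+1}^W} - W_{\sigma_n^W}$ form, for $n \ge 1$, an i.i.d.\ isotropic family with exponential tails, and that the interarrival times $\sigma_{n+1}^W - \sigma_n^W$ are independent geometric random variables with success probability $\kappa_1$. Isotropy of a single chunk together with the exponential-tail bound immediately yields $\widetilde{\mathbb{E}}_W[\mathbf{W}_n^W(\mathbf{W}_n^W)^t] = \sigma^2 I_{d\times d}$ for some $\sigma^2 \in (0,\infty)$, already giving the diagonal form of $\mathbf{a}$ claimed in \eqref{adef}.

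First I would decompose, for $t = \eps^{-2}\tau$ and $N(t) = \max\{n \ge 0 \colon \sigma_{n+1}^W \le t\}$,
\[
W_t = \mathbf{W}_0^W + \sum_{n=1}^{N(t)} \mathbf{W}_n^W + \bigl(W_t - W_{\sigma_{N(t)+1}^W}\bigr).
\]
The initial piece $\mathbf{W}_0^W$ has exponential moments bounded uniformly in $T$, and the residual is dominated by the maximal fluctuation of $W$ across a single regeneration interval, which has exponential tails uniform in $\tau\in[0,t]$ by the exponential tails of both the chunks and the geometric gaps. After multiplication by $\eps$ both terms vanish in probability.

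Second, the elementary renewal theorem yields $N(\eps^{-2}\tau) = \kappa_1 \eps^{-2}\tau + o(\eps^{-2})$ with Gaussian fluctuations on scale $\eps^{-1}$. Combined with Donsker's theorem applied to the i.i.d.\ sum $\sum_n \mathbf{W}_n^W$ and an Anscombe-type argument replacing the random index $N(\eps^{-2}\tau)$ by the deterministic $\lfloor \kappa_1 \eps^{-2}\tau\rfloor$, this gives
\[
\eps \sum_{n=1}^{N(\eps^{-2}\tau)} \mathbf{W}_n^W \;\Longrightarrow\; \mathbf{B}_\tau,
\]
where $\mathbf{B}$ is a Brownian motion with covariance $\kappa_1 \sigma^2 I_{d\times d} = \mathbf{a}$. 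The factor $\kappa_1$ arises from the renewal-theoretic time change: one sums over $\approx \kappa_1 \eps^{-2}\tau$ i.i.d.\ chunks of variance $\sigma^2$ and rescales by $\eps^2$, producing variance $\kappa_1 \sigma^2 \tau$ at macroscopic time $\tau$.

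The principal obstacle is tightness in $\mathcal{C}([0,t])$, since the decomposition is indexed by the regeneration times rather than by the macroscopic time variable. The required modulus-of-continuity estimate follows from the exponential tails of the chunks and the inter-regeneration gaps: the maximal fluctuation of $\eps W_{\eps^{-2}\cdot}$ over any window of macroscopic length $\delta$ concentrates around $\sigma\sqrt{\kappa_1 \delta}$ with subexponential large-deviation tails, which together with a union bound over a polynomial mesh verifies Kolmogorov's tightness criterion. A minor additional subtlety is the $T$-dependence of the law of $\mathbf{W}_0^W$ inherited from $w_0 \in \Xi_{T - \lfloor T \rfloor}$, but this only contributes an $O(\eps)$ term that vanishes in the limit.
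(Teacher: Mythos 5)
The paper does not prove this proposition: it is imported verbatim from \cite[Proposition 4.1]{GRZ17}, so there is no in-paper argument to compare against. Your regenerative-decomposition proof is the standard route and is exactly what the surrounding machinery (the regeneration times $\sigma_n^W$, the i.i.d.\ chunks $\mathbf{W}_n^W$ of \lemref{Ws}, and the constant $\kappa_1$ of \eqref{kappadef}) is built to support; the identification $\mathbf{a}=\kappa_1\widetilde{\mathbb{E}}_W[\mathbf{W}_n^W(\mathbf{W}_n^W)^t]$ via the renewal rate $\kappa_1$ and the diagonality via isotropy are both correct. Two points are worth making explicit rather than leaving implicit: first, Donsker's theorem requires $\widetilde{\mathbb{E}}_W\mathbf{W}_n^W=0$, which you should derive from isotropy plus integrability (exponential tails); second, for tightness the clean statement is a Kolmogorov--Chentsov moment bound of the form $\widetilde{\mathbb{E}}_W|\eps W_{\eps^{-2}\tau_2}-\eps W_{\eps^{-2}\tau_1}|^4\le C|\tau_2-\tau_1|^2$, obtained from the independence and exponential tails of the chunks together with the geometric gaps (this is the fourth-moment analogue of \eqref{West}); your ``union bound over a polynomial mesh'' phrasing gestures at this but should be replaced by the moment estimate. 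With those two clarifications the argument is complete.
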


Two Brownian motions in $d\ge3$ will almost surely spend at most
a finite amount of time within distance $1$ of each other. The fact
that this is also true for the Markov chains $W,\widetilde{W}$ is
expressed in the next two propositions, and will play a crucial role
in the sequel.
\begin{prop}[{\cite[Corollary 4.4]{GRZ17}}]
\label{prop:expexpbdd}There is a $\beta_{0}>0$ and a deterministic
constant $C<\infty$ so that if $0\le\beta<\beta_{0}$ then for any
$s\ge0$, $y,\tilde{y}\in\mathbb{R}^{d}$, we have
\[
\widetilde{\mathbb{E}}_{W,\widetilde{W}}^{y,\tilde{y}}\left[\exp\left\{ \beta^{2}\mathscr{R}_{[s,\infty)}[W,\widetilde{W}]\right\} \;\middle|\;\mathcal{F}_{s}\right]\le C
\]
with probability $1$, where $\mathcal{F}_{s}$ is the $\sigma$-algebra
generated by the paths $W,\widetilde{W}$ on the time interval $[0,s]$.
\end{prop}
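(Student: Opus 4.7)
The plan is to dominate $\mathscr{R}_{[s,\infty)}[W,\widetilde{W}]$ by a random variable with uniform (in $y,\tilde{y},s$) exponential tails under $\widetilde{\mathbb{P}}_{W,\widetilde{W}}^{y,\tilde{y}}$, exploiting the transience of isotropic random walks in dimension $d\ge 3$, and then to take $\beta$ small. Since $R$ is bounded and supported on $\{|s|\le 1\}\times\{|y|\le 1\}$, one has the pointwise bound
\[
\mathscr{R}_{[s,\infty)}[W,\widetilde{W}] \le C\,\mathcal{N},\qquad \mathcal{N} := \sum_{n\ge\lfloor s\rfloor}\mathbf{1}_{\mathcal{A}_{n}},
\]
where $\mathcal{A}_{n}$ is the event that $|W_{\tau}-\widetilde{W}_{\tilde\tau}|\le 1$ for some $\tau,\tilde\tau\in[n,n+2]$. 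It thus suffices to establish $\widetilde{\mathbb{E}}[\exp(c\mathcal{N})]\le C$ for some $c>0$, uniformly in $(y,\tilde{y},s)$, and then to choose $\beta_{0}$ so that $\beta^{2}C<c$ for $\beta<\beta_{0}$.

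The exponential moment estimate on $\mathcal{N}$ will come from a regeneration argument. By \corref{chunkprops}, the differences $\Delta_{n} := W_{\sigma_{n}^{W,\widetilde{W}}}-\widetilde{W}_{\sigma_{n}^{W,\widetilde{W}}}$ form a random walk whose increments (for $n\ge 1$) are i.i.d., isotropic, and have exponential tails, while the regeneration gaps $\sigma_{n+1}^{W,\widetilde{W}}-\sigma_{n}^{W,\widetilde{W}}$ are i.i.d.\ geometric under the Bernoulli clock $\eta^{W,\widetilde{W}}$ and control---via the exponential tails on individual chunks from \lemref{Ws}---the local spatial oscillations of $W,\widetilde{W}$ on each excursion. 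For $d\ge 3$, such an isotropic random walk is transient with a uniformly bounded Green's function on any compact set; after each return of $\Delta_{n}$ to a bounded neighborhood of the origin, the strong Markov property gives a probability $p>0$ of never returning. Hence the number of returns is stochastically dominated by a geometric random variable, and (after patching in the finitely many indices $n$ near each return) so is $\mathcal{N}$. This yields the desired uniform exponential moment.

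For the conditional statement, the Markov property applied at time $s$ reduces $\widetilde{\mathbb{E}}[\,\cdot\,|\mathcal{F}_{s}]$ to an unconditional expectation with starting positions $W_{s},\widetilde{W}_{s}$, which are $\mathcal{F}_{s}$-measurable; since the preceding estimates are uniform in the starting positions, the conditional bound is immediate.

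The main technical obstacle will be making the regeneration / transience argument rigorous given that $W$ and $\widetilde{W}$ are not themselves standard random walks: one must use the Doeblin-based decomposition of \secref{GRZreview} to isolate the i.i.d.\ structure of the $\Delta_{n}$, and then separately control the path excursions inside each regeneration block, since $\mathcal{A}_{n}$ depends not merely on the positions at the block endpoints but on the whole trajectory through $[n,n+2]$. A secondary issue is the possible misalignment of the sum over $n$ with the (random) regeneration indices $\sigma_{n}^{W,\widetilde{W}}$, handled by the exponential tail on $\sigma_{n+1}^{W,\widetilde{W}}-\sigma_{n}^{W,\widetilde{W}}$.
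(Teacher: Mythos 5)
The paper does not prove this proposition at all: it is imported verbatim from \cite[Corollary 4.4]{GRZ17}, so there is no internal proof to compare against. Judged on its own terms, your strategy is the standard one for such results (and is in the spirit of the cited source): dominate $\mathscr{R}_{[s,\infty)}[W,\widetilde{W}]$ by a constant times the number of unit blocks on which the paths come within distance $1$, and control exponential moments of that count using transience of the difference walk $\Delta_n=W_{\sigma_n}-\widetilde{W}_{\sigma_n}$ in $d\ge3$ together with the regeneration structure of \corref{chunkprops}. The reduction of the conditional statement to a uniform unconditional bound is also essentially right, though the conditional law given $\mathcal{F}_{s}$ is not a function of the positions $(W_{s},\widetilde{W}_{s})$ alone (the chain lives on path chunks, so one must also carry the partial chunk containing $s$ and wait a geometric time for the next regeneration); what saves you is only that the bound you prove is uniform over all such conditionings.

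The one step that does not follow as written is ``the number of returns is stochastically dominated by a geometric random variable, and (after patching in the finitely many indices $n$ near each return) so is $\mathcal{N}$.'' The geometric domination controls only the blocks whose enclosing regeneration interval has $|\Delta_{k}|\le M$ for a \emph{fixed} cutoff $M$; but $\mathcal{A}_{n}$ can also occur when $|\Delta_{k}|$ is large, provided the intra-block oscillation $B_{k}$ exceeds $|\Delta_{k}|-1$. Since there are infinitely many blocks, ``each such event is exponentially unlikely'' does not by itself yield exponential moments of their total number: one must combine the tail bound of \lemref{exptails} with the Green's-function (superharmonicity) estimate of \lemref{endpointsok} across dyadic scales $2^{M}$, exactly as in the paper's proof of \propref{hittingprob}, and then upgrade first moments to exponential moments. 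The cleanest way to close this is Khasminskii's lemma: show that $\sup_{y,\tilde y}\widetilde{\mathbb{E}}_{W,\widetilde{W}}^{y,\tilde y}\bigl[\beta^{2}\mathscr{R}_{\infty}[W,\widetilde{W}]\bigr]\le C\beta^{2}<1$ for $\beta<\beta_{0}$ (this is where the smallness of $\beta$ naturally enters, rather than through a comparison with a $\beta$-independent exponential rate for $\mathcal{N}$), and then iterate the Markov property at regeneration times to conclude $\sup\widetilde{\mathbb{E}}\exp\{\beta^{2}\mathscr{R}_{\infty}\}\le(1-C\beta^{2})^{-1}$. With that substitution your outline becomes a complete proof.
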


We will require a slightly stronger version of \propref{expexpbdd},
which can be proved similarly.
\begin{prop}
\label{prop:expexpbdd-bridge}There is a $\beta_{0}>0$ and a deterministic
constant $C<\infty$ so that if $0\le\beta<\beta_{0}$ then for all
$r,\tilde{r}>0$, we have
\[
\widetilde{\mathbb{E}}_{W,\widetilde{W}}^{y,\tilde{y}}\left[\exp\left\{ \beta^{2}\mathscr{R}_{\infty}[W,\widetilde{W}]\right\} \;\middle|\;\mathcal{F}_{r,\tilde{r}}\right]\le C
\]
with probability $1$, where $\mathcal{F}_{r,\tilde{r}}$ is the $\sigma$-algebra
generated by the path $W$ on $[0,r]$ and the path $\widetilde{W}$
on $[0,\tilde{r}]$.
\end{prop}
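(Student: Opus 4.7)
The plan is to adapt the proof of \propref{expexpbdd} (Corollary 4.4 of \cite{GRZ17}) to the case of asymmetric conditioning times. Assume without loss of generality that $r\le\tilde r$. The essential new feature, compared to \propref{expexpbdd}, is that the conditioning specifies $W$ up to time $r$ and $\widetilde W$ only up to time $\tilde r$, and that $\mathscr{R}_\infty$ integrates over all of $[0,\infty)\times[0,\infty)$ rather than over a symmetric tail $[s,\infty)\times[s,\infty)$.

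The first step is to exploit that $R(\tau,\cdot)$ is supported in $|\tau|\le 1$, so that the integrand of $\mathscr{R}_\infty[W,\widetilde W]$ is concentrated on the diagonal strip $\{|\tau-\tilde\tau|\le 1\}$. Decompose
\[
\mathscr{R}_\infty = \mathscr{R}_{[0,\tilde r+1]\times[0,\tilde r+1]} + \mathscr{R}_{[\tilde r+1,\infty)\times[\tilde r+1,\infty)},
\]
noting that the cross terms between these two squares vanish by the support of $R$. Apply Cauchy--Schwarz to split the exponential of the sum. The "far future" piece can be handled by the tower property: conditioning further on $\mathcal F_{\tilde r+1}$ (the $\sigma$-algebra generated by both paths on $[0,\tilde r+1]$), which refines $\mathcal F_{r,\tilde r}$ by the Markov property of $W$ and $\widetilde W$, reduces the bound for $\widetilde{\mathbb{E}}[\exp\{2\beta^2\mathscr{R}_{[\tilde r+1,\infty)\times[\tilde r+1,\infty)}\}\mid \mathcal F_{\tilde r+1}]$ to a direct application of \propref{expexpbdd} with $s=\tilde r+1$.

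For the complementary "near" piece $\mathscr{R}_{[0,\tilde r+1]\times[0,\tilde r+1]}$, split it into a block measurable with respect to $\mathcal F_{r,\tilde r}$ (namely the portion supported in $[0,r]\times[0,\tilde r]$) and boundary annuli involving $W$ on $(r,\tilde r+1]$ or $\widetilde W$ on $(\tilde r,\tilde r+1]$. The support of $R$ confines these annular interactions to time strips of length $O(1)$ in each path, so that the corresponding contributions are dominated by $\|R\|_\infty$ times an $O(1)$ area; their exponential moments are then bounded using the exponential-tail estimates on the chunks $\mathbf W_n^{W,\widetilde W}$, $\widetilde{\mathbf W}_n^{W,\widetilde W}$ supplied by \corref{chunkprops}, together with the Markov/Doeblin structure of \thmref{BstoWs}.

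The main obstacle I anticipate is in organizing the Cauchy--Schwarz (or H\"older) splittings so that the deterministic-in-$\omega$ bound $C$ is preserved after all the boundary contributions are absorbed. In particular, one must check that the uniform estimate for $\exp\{\beta^2\mathscr{R}_{[0,r]\times[0,\tilde r]}\}$ and the annular pieces is compatible with the Markov-chain chunk decomposition of \corref{chunkprops}, so that the independence and exponential tails of the chunks allow each factor to be controlled uniformly in $r,\tilde r,y,\tilde y$ and in the conditioned path values. Once this bookkeeping is done, the desired bound follows by combining the three estimates (far-future via \propref{expexpbdd}, annular via $\|R\|_\infty$ and chunk tails, past via the Doeblin/regeneration structure) through H\"older's inequality at an appropriate choice of exponents, yielding a deterministic constant independent of $r,\tilde r,y,\tilde y$.
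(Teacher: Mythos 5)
The paper offers no proof of this proposition — it only remarks that it ``can be proved similarly'' to \propref{expexpbdd} — so your attempt must stand on its own. The parts of your plan that work are the reduction of the far-future block $[\tilde r+1,\infty)^2$ to \propref{expexpbdd} via the tower property over $\mathcal F_{r,\tilde r}\subset\mathcal F_{\tilde r+1}$ (with $\beta$ replaced by $\sqrt2\beta$ after Cauchy--Schwarz). Incidentally, the cross-terms between the two squares do \emph{not} vanish as you claim — they are supported in a unit square near the corner $(\tilde r+1,\tilde r+1)$ — but they contribute at most a factor $\e^{\beta^{2}\|R\|_{\infty}}$, so that slip is harmless.

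The genuine gap is in the ``boundary annuli.'' With $r\le\tilde r$, the region $\{(\tau,\tilde\tau)\,:\,r<\tau\le\tilde r+1,\ |\tau-\tilde\tau|\le1\}$ is a diagonal strip of length about $\tilde r-r$, not a set of $O(1)$ area, so its contribution to $\mathscr R_{\infty}$ is not dominated by $\|R\|_{\infty}$ times a bounded area. This piece records the interaction between the \emph{unconditioned} future of $W$ on $(r,\tilde r]$ and the \emph{conditioned} (hence fixed) path $\widetilde W|_{[0,\tilde r]}$ over an arbitrarily long window, and bounding its conditional exponential moment uniformly is exactly the new content of the bridge version relative to \propref{expexpbdd}. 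Handling it requires a one-random-path-against-one-fixed-path analogue of the hitting and occupation estimates of \subsecref{intersections} — e.g., the regeneration structure of \corref{chunkprops} together with the local CLT bound $\widetilde{\mathbb P}(W_{\sigma_n}\in B(z,A))\le CA^{d}n^{-d/2}$, uniform in the target $z$, followed by a Khas'minskii-type iteration to convert the uniformly bounded expected occupation into bounded exponential moments for $\beta$ small. None of this appears in your argument. Relatedly, you set aside $\mathscr R_{[0,r],[0,\tilde r]}$ as ``measurable with respect to $\mathcal F_{r,\tilde r}$'' and move on, but that block is an unbounded random variable; factoring $\exp\{\beta^{2}\mathscr R_{[0,r],[0,\tilde r]}\}$ out of the conditional expectation does not by itself yield a deterministic constant, so you must explain how this already-determined contribution is accounted for in the bound being claimed.
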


We also need some estimates from \cite{GRZ17} on various error terms.
\begin{lem}[{\cite[(4.30)]{GRZ17}}]
There is a constant $C$ so that
\begin{equation}
\widetilde{\mathbb{E}}_{W}|\eps W_{\eps^{-2}t_{2}}-\eps W_{\eps^{-2}t_{1}}|^{2}\le C(t_{2}-t_{1}).\label{eq:West}
\end{equation}
\end{lem}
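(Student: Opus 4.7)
The plan is to use the chunk decomposition from \eqref{independentchunks} together with \lemref{Ws}: the family $\{\mathbf{W}_n^W\}_{n\ge 0}$ is independent, statistically isotropic (hence mean-zero, for $n\ge 1$), and has exponential tails, so each chunk has bounded second moment.

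Set $s_i = \eps^{-2} t_i$. I would define $k_1 = \min\{n\ge 1 : \sigma_n^W > s_1\}$ and $k_2 = \max\{n\ge 0 : \sigma_n^W \le s_2\}$. Since $\sigma_{n+1}^W - \sigma_n^W \ge 1$, the deterministic bound $k_2 - k_1 \le \sigma_{k_2}^W - \sigma_{k_1}^W \le s_2 - s_1$ holds. Telescoping,
\begin{equation*}
W_{s_2} - W_{s_1} = (W_{s_2} - W_{\sigma_{k_2}^W}) + \sum_{n=k_1}^{k_2-1}\mathbf{W}_n^W + (W_{\sigma_{k_1}^W} - W_{s_1}).
\end{equation*}
For the middle sum, the partial sums $S_n = \sum_{k=1}^n \mathbf{W}_k^W$ form a martingale with $\widetilde{\mathbb{E}}_W|S_n|^2 \le Cn$; Doob's maximal inequality applied up to index $N = \lceil s_2 - s_1\rceil$ yields $\widetilde{\mathbb{E}}_W|S_{k_2} - S_{k_1}|^2 \le C(s_2 - s_1 + 1)$. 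Each boundary term is contained in a single chunk, so by the exponential-tail estimate in \lemref{Ws} it contributes $O(1)$, which is dominated by $C(s_2 - s_1)$ when $s_2 - s_1 \ge 1$.

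For the short-time regime $s_2 - s_1 < 1$, the chunk decomposition is too coarse, as the increment may sit entirely within one or two consecutive chunks. In this regime I would argue directly that on each chunk $w_j \in \Xi_1$ the path is a piece of Brownian motion under a bounded-density tilting, so local Gaussian moment estimates give $\widetilde{\mathbb{E}}_W|W_{s_2} - W_{s_1}|^2 \le C(s_2 - s_1)$. Combining the two regimes and multiplying by $\eps^2$ yields \eqref{West}. The main subtlety is the randomness of $k_1$ and $k_2$ in the variance computation; this is circumvented by the martingale/Doob's inequality argument together with the deterministic bound $k_2 - k_1 \le s_2 - s_1$, with the short-time case handled separately by a direct local estimate on a single chunk.
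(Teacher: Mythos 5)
The paper does not prove this lemma at all: it is imported verbatim from \cite[(4.30)]{GRZ17}, so there is no internal argument to compare against. Your regeneration-time decomposition is the natural self-contained derivation from the structure already set up in \secref{GRZreview}, and the overall strategy (telescoping over independent, isotropic, mean-zero chunks with exponential tails, plus the deterministic bound $k_2-k_1\le s_2-s_1$ coming from the fact that consecutive regeneration times differ by at least $1$, plus a separate direct estimate when $s_2-s_1<1$) is sound. Two points need tightening. First, Doob's inequality ``up to index $N=\lceil s_2-s_1\rceil$'' does not directly control $|S_{k_2}-S_{k_1}|$, since $k_1$ and $k_2$ are themselves random and typically much larger than $N$; you need to either restart the martingale at the stopping time $k_1$ (using that the post-$k_1$ chunks are again i.i.d.\ and independent of the past, by the Doeblin structure) or, more simply, condition on the whole sequence $(\eta_j)$, under which $k_1,k_2$ and the $\sigma_n$ are deterministic while the chunks remain independent, isotropic and mean-zero, so that the second moment of the sum is just the sum of the second moments of at most $s_2-s_1+1$ terms. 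Second, the boundary terms $W_{s_2}-W_{\sigma^W_{k_2}}$ and $W_{\sigma^W_{k_1}}-W_{s_1}$ are intermediate values inside a chunk, not endpoint differences, so the exponential-tail statement of \lemref{Ws} (which concerns $\mathbf{W}_n^W$ only) does not literally apply; you should invoke the within-chunk oscillation bound of \lemref{exptails} instead. With those repairs, and with the observation that the transition kernel of each length-one chunk has a density bounded above with respect to Wiener measure (which justifies your short-time Gaussian estimate), the argument goes through.
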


\begin{lem}[{\cite[Lemma A.3]{GRZ17}}]
\label{lem:cutofftail}For any $\chi>0$, there are constants $0<c,C<\infty$
so that if, for each $T$, $\mathscr{F}_{T}:\Xi_{T}\to\mathbf{R}$
is a bounded functional on $\Xi_{T}$, and $\{S_{n}\},\{T_{n}\}$
are sequences of real numbers such that $S_{n},T_{n},S_{n}-T_{n}\to+\infty$,
then
\begin{multline*}
\left|\widetilde{\mathbb{E}}_{W}\mathscr{F}_{T_{n}}[W|_{[0,T_{n}]}]-\widetilde{\mathbb{E}}_{W}\mathscr{F}_{T_{n}}[W|_{[0,T_{n}]}]\mathscr{G}(w_{S_{n}})\right|\\
\le C\left(\widetilde{\mathbb{E}}_{W}\left(\mathscr{F}_{T_{n}}[W|_{[0,T_{n}]}]\right)^{\chi}\right)^{1/\chi}\exp\left\{ -c(T_{n}\wedge(S_{n}-T_{n}))\right\} .
\end{multline*}
\end{lem}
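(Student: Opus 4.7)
The plan is to exploit the Doeblin regeneration structure of \thmref{BstoWs}: at each index $j\ge 1$, with probability $\kappa_{1}$ independently of the past, the chunk $w_{j}$ is resampled from the invariant measure $\overline{\pi}$, so after a regeneration time the chain is in stationarity and is decoupled from its past. The target bound reflects two independent sources of decay, each of which requires a regeneration, so I will extract the two exponential factors $\e^{-c(S_{n}-T_{n})}$ and $\e^{-cT_{n}}$ separately and take their minimum.

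As a preliminary observation, I would first show that $\mathbb{E}_{\overline{\pi}}[\mathscr{G}]=1$. Substituting $\mathscr{F}\equiv 1$ in \eqref{Markovchainexpectation} gives $\widetilde{\mathbb{E}}_{W}\mathscr{G}(w_{N})=1$ for every admissible $N$; since the Doeblin condition forces $w_{N}$ to converge in total variation to $\overline{\pi}$ as the chain grows, and $\mathscr{G}$ is bounded, passing to the limit yields the identity. Now, for the main estimate, define the first middle-regeneration time $\tau=\min\{j\,:\,T_{n}<j\le S_{n},\,\eta_{j}^{W}=1\}$, with $\tau=\infty$ if no such $j$ exists. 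The independence of the Bernoullis $(\eta_{j}^{W})$ gives
\[
\widetilde{\mathbb{P}}_{W}(\tau=\infty)\le(1-\kappa_{1})^{\lfloor S_{n}-T_{n}\rfloor}\le C\e^{-c(S_{n}-T_{n})}.
\]
On $\{\tau<\infty\}$, the Doeblin property gives, conditional on $\tau$, that $w_{\tau}$ is drawn from $\overline{\pi}$ independently of $\{w_{i}\,:\,i\le T_{n}\}$, and iterating the Markov kernel from $w_{\tau}$ preserves $\overline{\pi}$, so $w_{S_{n}}\sim\overline{\pi}$ independently of $W|_{[0,T_{n}]}$. Combined with $\mathbb{E}_{\overline{\pi}}[\mathscr{G}]=1$, this yields
\[
\widetilde{\mathbb{E}}_{W}\bigl[\mathscr{F}_{T_{n}}[W|_{[0,T_{n}]}]\mathscr{G}(w_{S_{n}})\mathbf{1}_{\{\tau<\infty\}}\bigr]=\widetilde{\mathbb{E}}_{W}\bigl[\mathscr{F}_{T_{n}}[W|_{[0,T_{n}]}]\mathbf{1}_{\{\tau<\infty\}}\bigr].
\]
Subtracting, the difference $\widetilde{\mathbb{E}}_{W}\mathscr{F}_{T_{n}}-\widetilde{\mathbb{E}}_{W}\mathscr{F}_{T_{n}}\mathscr{G}(w_{S_{n}})$ reduces to an integral supported on $\{\tau=\infty\}$; H\"older's inequality with exponents $\chi$ and $\chi/(\chi-1)$, together with the boundedness of $\mathscr{G}$, bounds this integral by $C\bigl(\widetilde{\mathbb{E}}_{W}\mathscr{F}_{T_{n}}^{\chi}\bigr)^{1/\chi}\e^{-c(S_{n}-T_{n})}$, with constants depending on $\chi$ (and with Jensen's inequality patching up the case $\chi\in(0,1]$ at the cost of worse constants).

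This gives the $\e^{-c(S_{n}-T_{n})}$ half of the bound. The complementary $\e^{-cT_{n}}$ factor comes from running the analogous regeneration argument ``in reverse'' on the initial segment of the chain: because $w_{0}\in\Xi_{T-[T]}$ need not itself be drawn from $\overline{\pi}$, a regeneration must occur inside $[1,T_{n}]$ in order to ``forget'' the nonstationary initial chunk before the functional $\mathscr{F}_{T_{n}}$ is read off, and the absence of such a regeneration again has probability at most $\e^{-cT_{n}}$. Taking the minimum of the two decay rates produces the stated $\e^{-c(T_{n}\wedge(S_{n}-T_{n}))}$. The main obstacle will be the careful bookkeeping of this second argument: $w_{0}$ lives in a nonstandard space $\Xi_{T-[T]}$, so one must verify that decoupling its distribution from the rest of the chain does not introduce additional multiplicative prefactors depending on $T_{n}$ or $S_{n}$, and that the H\"older step continues to produce exactly the $L^{\chi}$ norm of $\mathscr{F}_{T_{n}}$ on the right-hand side.
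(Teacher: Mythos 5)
This lemma is imported verbatim from \cite[Lemma A.3]{GRZ17}; the paper gives no proof of it (it only remarks that the quantitative rate ``comes from the proof there''), so there is no in-paper argument to compare yours against. Judged on its own merits, the first half of your proposal is the right regeneration argument and is essentially sound: the identity $\mathbb{E}_{\overline{\pi}}[\mathscr{G}]=1$ does follow from taking $\mathscr{F}\equiv 1$ in \eqref{Markovchainexpectation} together with the exponential total-variation convergence of $\Law(w_N)$ to $\overline{\pi}$ that the Doeblin condition provides; conditioning on the first regeneration index $\tau$ in $(T_n,S_n]$ does decouple $w_{S_n}$ (with law $\overline{\pi}$) from the chunks determining $\mathscr{F}_{T_n}[W|_{[0,T_n]}]$; and H\"older on the event $\{\tau=\infty\}$ then gives, for $\chi>1$, the bound $C(\widetilde{\mathbb{E}}_W\mathscr{F}_{T_n}^\chi)^{1/\chi}\e^{-c(S_n-T_n)}$ with $c$ depending on $\chi$ through the conjugate exponent. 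Note that this single bound already implies the lemma, since $\e^{-c(S_n-T_n)}\le\e^{-c(T_n\wedge(S_n-T_n))}$.

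Two genuine problems remain. First, your treatment of $\chi\in(0,1]$ is backwards: Jensen gives $(\mathbb{E}X^{\chi})^{1/\chi}\le\mathbb{E}X$ for $\chi\le 1$, so the small-$\chi$ statement is \emph{stronger} than the $\chi=1$ statement and cannot be deduced from it; your argument as written only covers $\chi>1$ (which is, admittedly, the only range the paper ever uses, via \propref{expexpbdd}). Second, the entire second half --- the ``reverse'' regeneration in $[1,T_n]$ meant to produce an $\e^{-cT_n}$ factor --- is not a coherent argument: $\mathscr{F}_{T_n}$ legitimately depends on the nonstationary initial chunk $w_0$ and appears identically in both terms of the difference, so there is nothing for a regeneration in $[1,T_n]$ to decouple, and you do not identify any quantity whose discrepancy would be controlled by the absence of such a regeneration. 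Your closing remark that ``taking the minimum of the two decay rates produces the stated bound'' also inverts the logic: $\e^{-c(T_n\wedge(S_n-T_n))}$ is the \emph{larger} (weaker) of the two exponential factors, so either individual bound suffices and no combination step is needed. The fix is simply to delete the second half and state that the $(S_n-T_n)$ bound implies the claim.
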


Here, $\mathscr{G}$ is as in \thmref{BstoWs}. The rate of convergence
is not stated explicitly in \cite[Lemma A.3]{GRZ17}, but it comes
from the proof there.
\begin{lem}[{\cite[Lemma A.2]{GRZ17}}]
\label{lem:exptails}We have constants $0<c,C<\infty$ so that
\[
\widetilde{\mathbb{P}}_{W,\widetilde{W}}^{x,\tilde{x}}\left[\max_{r,\tilde{r}\in[\sigma_{n},\sigma_{n+2}]}\left(\left|W_{r}-W_{\sigma_{n}^{W,\widetilde{W}}}\right|+\left|\widetilde{W}_{\tilde{r}}-\widetilde{W}_{\sigma_{n}^{W,\widetilde{W}}}\right|\right)>a\right]\le C\e^{-ca}.
\]
\end{lem}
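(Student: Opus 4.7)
The plan is to reduce the claim to a random-sum argument: I decompose the supremum over $r,\tilde r\in[\sigma_n^{W,\widetilde W},\sigma_{n+2}^{W,\widetilde W}]$ into contributions from individual Markov-chain chunks and then combine a uniform exponential-moment bound for each chunk with the exponential tails of the number of chunks.

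Since $r$ and $\tilde r$ are separately maximized, the quantity inside the probability equals $\max_r|W_r-W_{\sigma_n}|+\max_{\tilde r}|\widetilde W_{\tilde r}-\widetilde W_{\sigma_n}|$, so by a union bound it suffices to prove $\widetilde{\mathbb{P}}(\max_r|W_r-W_{\sigma_n}|>a)\le C\e^{-ca}$; the $\widetilde W$-term is symmetric. Let $M=\sigma_{n+2}^{W,\widetilde W}-\sigma_n^{W,\widetilde W}$. By the Doeblin construction, $\sigma_{n+1}-\sigma_n$ and $\sigma_{n+2}-\sigma_{n+1}$ are independent geometric random variables with parameter $\kappa_2=\kappa_1^2$, so $\widetilde{\mathbb{P}}(M>m)\le C\e^{-cm}$. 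Using \thmref{BstoWs} to write $W$ as a concatenation of the chunks $w_j\in\Xi_1$ and telescoping across the $M$ unit-length chunks lying in $[\sigma_n,\sigma_{n+2}]$, I obtain
\[
\max_{r\in[\sigma_n,\sigma_{n+2}]}|W_r-W_{\sigma_n}|\le\sum_{i=1}^{M}\|w_{\sigma_n+i}\|_\infty,
\]
where $\|w_j\|_\infty=\sup_{s\in[0,1]}|w_j(s)|$.

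The next step is a uniform exponential-moment bound: there exist $\lambda_0,C_0>0$ so that
\[
\sup_{j\ge 1}\sup_{w_{j-1}}\widetilde{\mathbb{E}}\bigl[\e^{\lambda\|w_j\|_\infty}\,\big|\,w_{j-1}\bigr]\le C_0\qquad\text{for }0<\lambda<\lambda_0,
\]
with $C_0\to 1$ as $\lambda\to 0^+$. This follows from the construction of the Markov chain in \cite[Section 4.1]{GRZ17}: each transition kernel $\widehat\pi(w_{j-1},\cdot)$ is obtained by tilting the law of a Brownian increment on $[0,1]$ by an a.s.-bounded exponential factor involving $\mathscr{R}_{[0,1]}$ applied to the neighboring chunks. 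Since $R$ is deterministic and compactly supported, this tilting factor is uniformly bounded above and below, giving a uniform Radon--Nikodym comparison with the Wiener measure on $\Xi_1$; the sup-norm of a Brownian path on $[0,1]$ has Gaussian tails, which yields the desired uniform exponential moment.

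Combining these ingredients by the Markov property, for $\lambda>0$ small enough that $C_0<\e^c$ (where $c$ is the exponential-tail rate for $M$), iterated conditioning gives
\[
\widetilde{\mathbb{E}}\exp\Bigl\{\lambda\sum_{i=1}^{M}\|w_{\sigma_n+i}\|_\infty\Bigr\}\le\widetilde{\mathbb{E}}\bigl[C_0^{M}\bigr]<\infty,
\]
and Chernoff's inequality yields $\widetilde{\mathbb{P}}(\max_r|W_r-W_{\sigma_n}|>a)\le C\e^{-ca}$. The main obstacle is establishing the uniform exponential moment for $\|w_j\|_\infty$ under the transition kernel $\widehat\pi$, since this requires unpacking the specific tilting construction of \cite{GRZ17} and verifying that the tilting factor admits deterministic upper and lower bounds; once this is in hand the random-sum combination and Chernoff step are routine.
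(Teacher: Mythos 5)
There is nothing in this paper to compare your argument against: the statement is imported verbatim as \cite[Lemma A.2]{GRZ17} and no proof is given here, so the ``paper's proof'' lives entirely in the appendix of that reference. Judged on its own merits, your reconstruction follows the natural route (and, as far as one can tell, the same regeneration-structure philosophy as \cite{GRZ17}): union-bound the two chains, telescope the running maximum over $[\sigma_n,\sigma_{n+2}]$ into a sum of sup-norms of unit-length chunks, control the number of chunks $M=\sigma_{n+2}-\sigma_n$ by the geometric tails coming from the i.i.d.\ Bernoulli variables $\eta_j^{W,\widetilde W}$ with parameter $\kappa_2$, and control each chunk by a uniform exponential moment under the transition kernel. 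You correctly identify the load-bearing step: the uniform Radon--Nikodym comparison of $\widehat\pi(w_{j-1},\cdot)$ with Wiener measure on $\Xi_1$, which holds because the tilting weight $\exp\{\tfrac12\beta^2\mathscr{R}\}$ over a window of length $O(1)$ is bounded above by a deterministic constant (as $R$ is bounded with unit-range temporal support) and the normalizing constant is at least $1$ since $R\ge0$. That is precisely the content one must extract from \cite[Section 4.1 and Appendix A]{GRZ17}.

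Two refinements are needed to make the final combination airtight. First, $M$ is not independent of the chunks $w_{\sigma_n+1},\ldots,w_{\sigma_n+M}$, so the inequality $\widetilde{\mathbb{E}}\exp\{\lambda\sum_{i=1}^{M}\|w_{\sigma_n+i}\|_\infty\}\le\widetilde{\mathbb{E}}[C_0^{M}]$ requires the observation that $\{M\ge i\}$ is measurable with respect to the chain (and the $\eta$'s) up to chunk $\sigma_n+i-1$, i.e.\ $M$ is a stopping time, followed by a supermartingale/optional-stopping argument; alternatively, split $\widetilde{\mathbb{P}}(\cdot>a)\le\widetilde{\mathbb{P}}(M>\delta a)+\widetilde{\mathbb{P}}(\sum_{i\le\lceil\delta a\rceil}\|w_{\sigma_n+i}\|_\infty>a)$ with $\delta$ small, which avoids the issue entirely. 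Second, the uniform moment bound must hold not just for $\widehat\pi$ but for the conditional law of a chunk given the value of its $\eta$ (which is either the minorizing measure $\overline{\pi}$ or the residual kernel); both are again dominated by a bounded multiple of Wiener measure, so this is harmless but should be said. With these points addressed, your argument is a correct proof of the cited lemma.
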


\subsection{Estimates on path intersections\label{subsec:intersections}}

These preliminaries having been completed, we now prove a fact that
will be essential for us: %
that two independent copies of the Markov chain, started at distance of order
$\eps^{-1}$ from each other, pass within distance $1$ of each other with probability $\eps^{d-2}$.
This is the same situation as for the standard Brownian motion. Explicitly, we prove the following (which does not in fact require the assumption that $\beta$ is small).
\begin{prop}
\label{prop:hittingprob}There is a constant $C$ so that
\[
\widetilde{\mathbb{P}}_{W,\widetilde{W}}^{x,\tilde{x}}\left[\inf_{\substack{r,\tilde{r}>0\\
|r-\tilde{r}|\le1
}
}|W_{r}-\widetilde{W}_{\tilde{r}}|\le1\right]\le\frac{C}{|x-\tilde{x}|^{d-2}}.
\]
\end{prop}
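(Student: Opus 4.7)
The strategy is to use the Doeblin-based decomposition of \secref{GRZreview} to identify a genuine random walk inside $(W,\widetilde{W})$, and then invoke the classical random-walk hitting estimate in $d \ge 3$. Set $\sigma_n = \sigma_n^{W,\widetilde{W}}$ and $S_n = W_{\sigma_n} - \widetilde{W}_{\sigma_n}$, so that under $\widetilde{\mathbb{P}}_{W,\widetilde{W}}^{x,\tilde{x}}$ we have $S_0 = x - \tilde{x}$ and $S_{n+1}-S_n = \mathbf{W}_n^{W,\widetilde{W}} - \widetilde{\mathbf{W}}_n^{W,\widetilde{W}}$. By \corref{chunkprops}, these increments are independent, isotropic (hence centered) with exponential tails, and identically distributed for $n \ge 1$, while the stopping times $\sigma_n$ are separated by at least $1$ by construction. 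Thus $(S_n)$ is essentially a centered isotropic random walk in $\mathbf{R}^d$ with all moments finite, started at $x-\tilde{x}$. We may assume $|x-\tilde{x}|$ is large, since the claim is otherwise trivial.

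Next I reduce the continuous-time event to a discrete one for $(S_n)$. For any $r,\tilde{r}>0$ with $|r-\tilde{r}|\le 1$, the spacing $\sigma_{n+1}-\sigma_n\ge 1$ forces the existence of an $n$ with $r,\tilde{r}\in[\sigma_n,\sigma_{n+2})$. Setting
\[
M_n = \max_{r \in [\sigma_n, \sigma_{n+2}]}|W_r - W_{\sigma_n}| \,\vee\, \max_{\tilde{r} \in [\sigma_n, \sigma_{n+2}]}|\widetilde{W}_{\tilde{r}} - \widetilde{W}_{\sigma_n}|,
\]
the triangle inequality gives $|S_n|\le 1+2M_n$ whenever the event in the statement occurs. \lemref{exptails} provides a uniform-in-$n$ exponential tail for $M_n$, and by \corref{chunkprops} the variable $M_n$ is independent of $S_n$ (the latter depending only on chunks $0,\ldots,n-1$).

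Finally I combine a dyadic split on $M_n$ with a random-walk Green's function bound. For any fixed threshold $A\ge 1$,
\begin{equation*}
\widetilde{\mathbb{P}}_{W,\widetilde{W}}^{x,\tilde{x}}\!\left[\inf_{|r-\tilde{r}|\le 1}|W_r-\widetilde{W}_{\tilde{r}}|\le 1\right] \le \widetilde{\mathbb{P}}\!\left[\exists n:\, |S_n|\le 1+2A\right] + \sum_{k:\,2^k\ge A}\sum_n \widetilde{\mathbb{P}}[M_n\ge 2^k]\,\widetilde{\mathbb{P}}[|S_n|\le 2^{k+2}+1].
\end{equation*}
For the first summand I invoke the standard hitting-probability estimate for a centered isotropic i.i.d.\ random walk with exponential moments in $d\ge 3$: the probability of ever entering the ball $B(0,R)$ from distance $r\gg R$ is at most $C(R\vee 1)^{d-2}/r^{d-2}$, a classical consequence of the local CLT and the comparison of the discrete Green's function with the Newtonian kernel $|z|^{2-d}$. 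For the double sum, the Green's-function bound $\sum_n\widetilde{\mathbb{P}}[|S_n|\le R] \le CR^d/|x-\tilde{x}|^{d-2}$ combined with $\widetilde{\mathbb{P}}[M_n\ge 2^k]\le Ce^{-c2^k}$ yields a convergent geometric series in $k$ contributing at most $Ce^{-cA}/|x-\tilde{x}|^{d-2}$. Taking $A$ to be a fixed constant then closes the argument. The principal technical obstacle is the random-walk hitting estimate itself for the non-standard step distribution produced by the chain, but the properties supplied by \corref{chunkprops} and \lemref{exptails}—isotropy, exponential moments, and non-degeneracy of the step covariance (from isotropy plus positive variance)—are precisely what the standard local-CLT-based proof requires.
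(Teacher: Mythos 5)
Your proposal is correct and follows essentially the same route as the paper: reduce to the regeneration skeleton $S_n=W_{\sigma_n}-\widetilde{W}_{\sigma_n}$, do a dyadic split on the oscillation between regeneration times using \lemref{exptails}, control the on-diagonal sum by the local CLT, and finish with a hitting estimate for the skeleton walk. The only difference is that where you cite the skeleton hitting bound as a classical Green's-function/local-CLT fact, the paper proves it self-containedly (\lemref{endpointsok}) by observing that $(|z|\vee A)^{2-d}$ is superharmonic, so $q(S_{n\wedge\omega})$ is a supermartingale under the isotropic increments, and applying optional stopping.
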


In order to prove \propref{hittingprob}, we first prove it just at regeneration times. For the rest of this section, to
economize on notation we put $\sigma_{n}\coloneqq\sigma_{n}^{W,\widetilde{W}}$ (defined in \eqref{sigmanWWdef}).
\begin{lem}
\label{lem:endpointsok}For all $A>0$, we have
\[
\widetilde{\mathbb{P}}_{W,\widetilde{W}}^{x,\tilde{x}}\left[\inf_{n\ge0}\left|W_{\sigma_{n}}-\widetilde{W}_{\sigma_{n}}\right|\le A\right]\le\frac{A^{d-2}}{|x-\tilde{x}|^{d-2}}.
\]
\end{lem}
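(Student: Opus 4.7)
The plan is to recognize the values of $W-\widetilde{W}$ at regeneration times as a random walk in $\mathbb{R}^d$ with i.i.d., isotropic, finite-variance (indeed, exponential-tail) increments, and then to invoke the classical Green's function hitting probability estimate for such walks in dimension $d\ge 3$. Set $Z_n = W_{\sigma_n} - \widetilde{W}_{\sigma_n}$, so that $Z_0=x-\tilde{x}$ and, for $n\ge 0$,
\[
Z_{n+1}-Z_n = \mathbf{W}_n^{W,\widetilde{W}} - \widetilde{\mathbf{W}}_n^{W,\widetilde{W}} \eqqcolon \xi_n.
\]
By \corref{chunkprops}, the family $\{\xi_n\}_{n\ge 0}$ is independent, each $\xi_n$ is isotropic (in particular mean-zero) and has exponential tails, and the $\xi_n$ for $n\ge 1$ are identically distributed. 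Thus $(Z_n)_{n\ge 1}$ is a (homogeneous) random walk with mean-zero isotropic increments of finite variance, started from $Z_1 = Z_0 + \xi_0$.

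Next, I would reduce to the following standard estimate: there is a constant $C$ such that, for any $A\ge 1$ and any $z\in\mathbb{R}^d$ with $|z|\ge 2A$,
\[
\widetilde{\mathbb{P}}\left[\inf_{n\ge 1}|Z_n|\le A\;\middle|\;Z_1=z\right]\le \frac{CA^{d-2}}{|z|^{d-2}}.
\]
The proof is the familiar two-sided Green's function argument. Let $G(z,w)=\sum_{n\ge 1}\widetilde{\mathbb{P}}(Z_n\in\dif w\mid Z_1=z)/\dif w$ denote the Green's density of the walk. A local CLT together with Gaussian tail bounds on the walk (available since the increments have exponential tails, and transience of the walk in $d\ge 3$) gives the pointwise bound $G(z,w)\le C|z-w|^{-(d-2)}$. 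Integrating over the ball $B_A(0)$ yields
\[
\widetilde{\mathbb{E}}_z\left[\sum_{n\ge 1}\mathbf{1}_{|Z_n|\le A}\right]\le \frac{CA^d}{|z|^{d-2}}.
\]
Conversely, for any $z'\in B_A(0)$, a variance estimate on the walk (using $|Z_{n+k}-Z_n|^2$ of order $k$) ensures that with positive probability the walk stays within $B_{2A}(0)$ for at least $cA^2$ steps, giving the lower bound $\widetilde{\mathbb{E}}_{z'}[\sum_{n\ge 1}\mathbf{1}_{|Z_n|\le A}]\ge cA^2$. Splitting at the first entry time into $B_A(0)$ and applying the strong Markov property then yields the hitting bound above.

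Finally, to conclude \lemref{endpointsok} itself, I would combine this random-walk estimate with a treatment of the initial step and of the regime $A$ comparable to $|x-\tilde{x}|$. When $A\ge |x-\tilde{x}|/2$ the conclusion is trivial since the right-hand side is of order $1$. Otherwise, using the exponential tail of $\xi_0$ from \corref{chunkprops}, one has $\widetilde{\mathbb{P}}[|Z_1|<|x-\tilde{x}|/2]\le Ce^{-c|x-\tilde{x}|}\le C/|x-\tilde{x}|^{d-2}\le CA^{d-2}/|x-\tilde{x}|^{d-2}$, and otherwise one conditions on $Z_1$ with $|Z_1|\ge|x-\tilde{x}|/2$ and applies the bound of the previous paragraph, absorbing constants. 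The main obstacle is thus the pointwise Green's function estimate for the walk $(Z_n)$; this is where one must actually use the isotropy, the exponential tails, and the independence supplied by Corollary~\ref{cor:chunkprops}, rather than any Brownian intuition. Everything else is essentially bookkeeping.
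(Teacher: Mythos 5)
Your plan is viable in outline but takes a genuinely different, and much heavier, route than the paper, and it proves a slightly weaker statement. The paper's argument is a short supermartingale computation: setting $X_n=W_{\sigma_n}-\widetilde{W}_{\sigma_n}$ and $q(z)=(|z|\vee A)^{-(d-2)}$, the isotropy and conditional independence of the increments (\corref{chunkprops}) combined with the spherical mean-value inequality for the superharmonic function $z\mapsto|z|^{2-d}$ show that $q(X_{n\wedge\omega})$ is a supermartingale up to the first entrance time $\omega$ of the ball of radius $A$; optional stopping and Fatou then give the bound immediately, with constant exactly $1$ and using nothing beyond isotropy. Your route via a pointwise Green's-function upper bound, an occupation-time lower bound of order $A^2$ inside the ball, and a first-entrance decomposition is standard for transient walks, but it requires the local CLT / transition estimates (which the paper invokes only later, in \propref{hittingprob} and \propref{hittingprob-late}, where they are genuinely needed), and note that Stone's local CLT yields ball-averaged rather than literal density bounds, so the "Green's density" step needs rephrasing. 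More importantly, this machinery can only deliver the estimate up to an unspecified constant $C$, whereas \lemref{endpointsok} is stated and used with constant $1$; a constant would be harmlessly absorbed in the application within \propref{hittingprob}, so this is not fatal, but as written your argument does not establish the inequality asserted. Finally, the isotropy of the increments, which you cite mainly as an ingredient for the local CLT, is really the crux: it is exactly what makes the one-step mean-value inequality, and hence the supermartingale property, work.
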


\begin{proof}
Let
\[
X_{n}=W_{\sigma_{n}}-\widetilde{W}_{\sigma_{n}},
\]
let $\mathcal{H}_{n}$ be the $\sigma$-algebra generated by $X_{1},\ldots,X_{n}$,
and set
\[
q(z)=\frac{1}{(|z|\vee A)^{d-2}}.
\]
For any $z\in\mathbb{R}^{d}$ with $|z|\ge A$ and $M>0$, if we let
$\dif S$ denote the surface measure on $\{|\tilde{z}-z|=M\}$, then
we have
\begin{equation}
\fint_{|\tilde{z}-z|=M}q(\tilde{z})\,\dif S(\tilde{z})\le\fint_{|\tilde{z}-z|=M}\frac{1}{|\tilde{z}|^{d-2}}\,\dif S(\tilde{z})\le\frac{1}{|z|^{d-2}}=q(z)\label{eq:qsuperharmonic}
\end{equation}
by the mean value inequality for superharmonic functions, as $z\mapsto|z|^{-d+2}$
is superharmonic. Here, the notation $\fint$ means that we normalize
the surface measure to have total mass $1$. Let $\omega$ be the
smallest $n$ so that $|X_{n}|\le A$, or $\infty$ if $|X_{n}|>A$
for all $n$. Note that $\omega$ is a stopping time with respect to the filtration
$\{\mathcal{H}_{n}\}$. Also, the distribution of $X_{n}-X_{n-1}$
is isotropic and independent of $\mathcal{H}_{n-1}$ for each $n\ge1$ by \corref{chunkprops}. Therefore, we have, whenever $n-1<\omega$,
\begin{align*}
\mathbb{\widetilde{E}}_{W,\widetilde{W}}[q(X_{n})\mid\mathcal{H}_{n-1}] & =\int_{\mathbb{R}^{d}}q({z})\,\dif\mathbb{\widetilde{P}}_{W,\widetilde{W}}(X_{n}={z}\mid\mathcal{H}_{n-1})\\
 & =\int_{\mathbb{R}}\fint_{|z-X_{n-1}|=M}q({z})\,\dif S({z})\,\dif\mathbb{\widetilde{P}}_{W,\widetilde{W}}(|X_{n}-X_{n-1}|=M)\\
 & \le\int_{\mathbb{R}}q(X_{n-1})\,\dif\mathbb{\widetilde{P}}_{W,\widetilde{W}}(|X_{n}-X_{n-1}|=M)=q(X_{n-1}),
\end{align*}
where the last inequality is by \eqref{qsuperharmonic}. Thus, the sequence $(q(X_{n\wedge\omega}))_{n}$
is a supermartingale. By the optional stopping theorem, for any $n$
we have
\[
\frac{1}{|x-\tilde{x}|^{d-2}}=q(X_{0})\ge\widetilde{\mathbb{E}}_{W,\widetilde{W}}q(X_{n\wedge\omega})\ge\frac{1}{A^{d-2}}\widetilde{\mathbb{P}}_{W,\widetilde{W}}(\omega\le n).
\]
Therefore, we have
\[
\widetilde{\mathbb{P}}_{W,\widetilde{W}}(\omega<\infty)\le\frac{A^{d-2}}{|x-\tilde{x}|^{d-2}}
\]
by Fatou's lemma.
\end{proof}
\begin{proof}[Proof of \propref{hittingprob}.]
Let
\[
B_{n}=\max_{r,\tilde{r}\in[\sigma_{n},\sigma_{n+2}]}\left(\left|W_{r}-W_{\sigma_{n}^{W,\widetilde{W}}}\right|+\left|\widetilde{W}_{\tilde{r}}-\widetilde{W}_{\sigma_{n}^{W,\widetilde{W}}}\right|\right)
\]
and
\[
\omega_{M}=\inf\left\{ n\ge0\;:\;|W_{\sigma_{n}}-\widetilde{W}_{\sigma_{n}}|\le2^{M}\right\} .
\]
We have
\begin{align}
 & \left\{ \inf_{|r-\tilde{r}|\le1}|W_{r}-\widetilde{W}_{\tilde{r}}|\le1\right\} \subseteq\bigcup_{M=0}^{\infty}\bigcup_{n=0}^{\infty}\left(\left\{ |W_{\sigma_{n}}-\widetilde{W}_{\sigma_{n}}|\le2^{M}\right\} \cap\left\{ B_{n}\ge2^{M-1}-1\right\} \right)\nonumber \\
 & \qquad\subseteq\bigcup_{M=0}^{\infty}\left[\{\omega_{M}<\infty\}\cap\left(\bigcup_{n=\omega_{M}}^{\infty}\left(\left\{ |W_{\sigma_{n}}-\widetilde{W}_{\sigma_{n}}|\le2^{M}\cap\left\{ B_{n}\ge2^{M-1}-1\right\} \right\} \right)\right)\right].\label{eq:breakup}
\end{align}
Therefore, we can estimate, abbreviating $\mathbb{P}=\widetilde{\mathbb{P}}_{W,\widetilde{W}}^{x,\tilde{x}}$
and letting the constant $C$ change from line to line,
\begin{align*}
\mathbb{P} & \left[\inf_{|r-\tilde{r}|\le1}|W_{r}-\widetilde{W}_{\tilde{r}}|\le1\right]\\
 & \le\sum_{M,\ell=0}^{\infty}\mathbb{P}(\omega_{M}=\ell)\sum_{n=\ell}^{\infty}\mathbb{P}\left[|W_{\sigma_{n}}-\widetilde{W}_{\sigma_{n}}|\le2^{M}\;\middle|\;\omega_{M}=\ell\right]\mathbb{P}\left[B_{n}\ge2^{M-1}-1\right]\\
 & \le C\sum_{M=0}^{\infty}\e^{-c(2^{M-1}-1)}\sum_{\ell=0}^{\infty}\mathbb{P}(\omega_{M}=\ell)\sum_{n=\ell}^{\infty}\frac{2^{Md}}{(n-\ell+1)^{d/2}}=C\sum_{M=0}^{\infty}\e^{-c(2^{M-1}-1)+CMd}\mathbb{P}(\omega_{M}<\infty)\\
 & \le C\sum_{M=0}^{\infty}\e^{-c(2^{M-1}-1)+CMd}\cdot\frac{2^{(d-2)M}}{|x-\tilde{x}|^{d-2}}\le\frac{C}{|x-\tilde{x}|^{d-2}},
\end{align*}
where the first inequality is by \eqref{breakup}, the second is by
\lemref{exptails} and a local central limit theorem (\cite{Sto65}
as applied in \cite[(4.36)]{GRZ17}) and the third is by \lemref{endpointsok}.
\end{proof}
We also need a slightly different version of the bound in \propref{hittingprob}:
\begin{prop}
\label{prop:hittingprob-late}There is a constant $C$ so that
\[
\widetilde{\mathbb{P}}_{W,\widetilde{W}}^{x,\tilde{x}}\left[\inf_{\substack{r,\tilde{r}>s\\
|r-\tilde{r}|\le1
}
}|W_{r}-\widetilde{W}_{\tilde{r}}|\le1\right]\le Cs^{-d/2+1}.
\]
\end{prop}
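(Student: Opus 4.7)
My plan is to restart the Markov chain pair at a joint regeneration time $\sigma_{n^{*}}$ sitting just before $s$, and then apply Proposition \ref{prop:hittingprob} to the fresh chains that emerge. The heuristic is that by time $s\gg 1$ the pair has diffused to a separation of order $\sqrt{s}$, so Proposition \ref{prop:hittingprob} applied from that new starting gap gives exactly $(\sqrt{s})^{-(d-2)}=s^{-d/2+1}$.

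Concretely, take $n^{*}=\lfloor\kappa_{2}s/2\rfloor$. By the Doeblin construction behind Theorem \ref{thm:BstoWs}, the indicators $\eta_{j}^{W,\widetilde{W}}=\eta_{j}^{W}\eta_{j}^{\widetilde{W}}$ are i.i.d.\ Bernoulli with parameter $\kappa_{2}$ and independent of the past of $(W,\widetilde{W})$, so the increments $\sigma_{n}^{W,\widetilde{W}}-\sigma_{n-1}^{W,\widetilde{W}}$ for $n\ge 1$ are i.i.d.\ geometric with parameter $\kappa_{2}$, and a standard Chernoff bound gives $\widetilde{\mathbb{P}}[\sigma_{n^{*}}>s]\le C\e^{-cs}$. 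On $\{\sigma_{n^{*}}\le s\}$, every pair $r,\tilde{r}>s$ also satisfies $r,\tilde{r}>\sigma_{n^{*}}$. Using the strong Markov property at $\sigma_{n^{*}}$ --- where both chains refresh independently to the invariant measure $\overline{\pi}$ --- the shifted pair $\bigl(W_{\sigma_{n^{*}}+u},\widetilde{W}_{\sigma_{n^{*}}+\tilde{u}}\bigr)_{u,\tilde{u}\ge 0}$ is, conditionally on $\mathcal{F}_{\sigma_{n^{*}}}$, an independent copy of the Markov chain pair started from $W_{\sigma_{n^{*}}},\widetilde{W}_{\sigma_{n^{*}}}$, so Proposition \ref{prop:hittingprob} yields
\[
\widetilde{\mathbb{P}}\left[\inf_{\substack{u,\tilde{u}>0\\|u-\tilde{u}|\le 1}}|W_{\sigma_{n^{*}}+u}-\widetilde{W}_{\sigma_{n^{*}}+\tilde{u}}|\le 1\;\middle|\;\mathcal{F}_{\sigma_{n^{*}}}\right]\le \min\left\{1,\frac{C}{|W_{\sigma_{n^{*}}}-\widetilde{W}_{\sigma_{n^{*}}}|^{d-2}}\right\}.
\]

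The remaining step, which I expect to be the main technical point, is the uniform bound $\mathbb{E}\bigl[\min\{1,C|W_{\sigma_{n^{*}}}-\widetilde{W}_{\sigma_{n^{*}}}|^{-(d-2)}\}\bigr]\le Cs^{-d/2+1}$. Writing
\[
W_{\sigma_{n^{*}}}-\widetilde{W}_{\sigma_{n^{*}}}=(x-\tilde{x})+\sum_{k=0}^{n^{*}-1}\bigl(\mathbf{W}_{k}^{W,\widetilde{W}}-\widetilde{\mathbf{W}}_{k}^{W,\widetilde{W}}\bigr),
\]
Corollary \ref{cor:chunkprops} exhibits this as a fixed shift plus a sum of $n^{*}$ independent isotropic mean-zero summands with exponential tails (i.i.d.\ for $k\ge 1$). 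The local central limit theorem of \cite{Sto65}, as used in \cite[(4.36)]{GRZ17} to prove Proposition \ref{prop:hittingprob}, supplies the uniform small-ball bound $\widetilde{\mathbb{P}}[|W_{\sigma_{n^{*}}}-\widetilde{W}_{\sigma_{n^{*}}}|\le a]\le C(a/\sqrt{n^{*}})^{d}$ for $0<a\le\sqrt{n^{*}}$, and a layer-cake computation then yields the desired $Cn^{*-d/2+1}\le Cs^{-d/2+1}$. Combining with the exponentially small probability of $\{\sigma_{n^{*}}>s\}$ completes the proof; the delicate part is that the density estimate must be uniform in the starting points $x,\tilde{x}$, which is exactly what the cited local CLT provides.
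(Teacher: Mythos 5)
Your argument is correct, but it is organized differently from the paper's. The paper does not condition at a regeneration time and reuse \propref{hittingprob}; instead it reruns the machinery of that proposition's proof directly, taking a union bound over dyadic scales $M$ and over all regeneration indices $k\ge n_{0}\asymp s$, bounding each term by the product of the small-ball estimate $\widetilde{\mathbb{P}}[|W_{\sigma_{k}}-\widetilde{W}_{\sigma_{k}}|\le 2^{M}]\le C2^{Md}k^{-d/2}$ (Stone's local CLT) and the exponential tail of the oscillation $B_{k}$ (\lemref{exptails}), and then summing $\sum_{k\ge n_{0}}k^{-d/2}\le Cn_{0}^{1-d/2}$. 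Your route instead invokes \propref{hittingprob} as a black box after restarting at $\sigma_{n^{*}}$, which replaces the double sum by a single expectation $\widetilde{\mathbb{E}}\bigl[\min\{1,C|W_{\sigma_{n^{*}}}-\widetilde{W}_{\sigma_{n^{*}}}|^{-(d-2)}\}\bigr]$, evaluated by a layer-cake integral against the same local-CLT small-ball bound; the computation does give $Cn^{*\,1-d/2}$. What your approach buys is conceptual transparency (diffusive spreading to separation $\sqrt{s}$, then apply the Green-function bound); what it costs is the extra justification that \propref{hittingprob} applies to the post-regeneration process. That step is legitimate but deserves a sentence: the conditional law after $\sigma_{n^{*}}$ is the chain restarted from the invariant measure $\overline{\pi}$, which is not literally the measure $\widetilde{\mathbb{P}}_{W,\widetilde{W}}^{y,\tilde{y}}$ appearing in the statement of \propref{hittingprob}; one must observe that the proof of that proposition (the supermartingale argument of \lemref{endpointsok} plus \corref{chunkprops}) uses only the isotropy and independence of the inter-regeneration increments, which the restarted process possesses. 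Minor remarks: your choice $n^{*}=\lfloor\kappa_{2}s/2\rfloor$ is the natural one (since $\sigma_{n}\approx n/\kappa_{2}$), and both proofs ultimately rest on the same two ingredients, the regeneration structure and the local CLT of \cite{Sto65}.
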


\begin{proof}
Recall the definition \eqref{kappadef} of $\kappa_{2}$ and put $n_{0}=\frac{s}{2\kappa_{2}}.$
Again we abbreviate $\mathbb{P}=\widetilde{\mathbb{P}}_{W,\widetilde{W}}^{x,\tilde{x}}$
and let constants change from line to line. We can estimate
\[
\mathbb{P}\left[\inf_{\substack{r,\tilde{r}>s\\
|r-\tilde{r}|\le1
}
}|W_{r}-\widetilde{W}_{\tilde{r}}|\le1\right]\le\mathbb{P}\left[\inf_{\substack{r,\tilde{r}>\sigma_{n_{0}}\\
|r-\tilde{r}|\le1
}
}|W_{r}-\widetilde{W}_{\tilde{r}}|\le1\right]+\mathbb{P}(\sigma_{n_{0}}\ge s).
\]
A simple large-deviations estimate for geometric random variables
yields
\[
\mathbb{P}(\sigma_{n_{0}}\ge s)\le C\e^{-cn_{0}}\le Cs^{1-d/2},
\]
so it suffices to show that
\[
\mathbb{P}\left[\inf_{\substack{r,\tilde{r}>\sigma_{n_{0}}\\
|r-\tilde{r}|\le1
}
}|W_{r}-\widetilde{W}_{\tilde{r}}|\le1\right]\le Cn_{0}^{1-d/2}.
\]
Define
\[
B_{k}=\max_{r,\tilde{r}\in[\sigma_{k},\sigma_{k+2}]}\left(|W_{r}-W_{\sigma_{k}}|+|\widetilde{W}_{\tilde{r}}-\widetilde{W}_{\sigma_{k}}|\right),
\]
so we have
\begin{multline*}
\mathbb{P}\left[\inf_{\substack{r,\tilde{r}>\sigma_{n_{0}}\\
|r-\tilde{r}|\le1
}
}|W_{r}-\widetilde{W}_{\tilde{r}}|\le1\right]\le\sum_{M=0}^{\infty}\sum_{k=n_{0}}^{\infty}\mathbb{P}\left[|W_{\sigma_{k}}-\widetilde{W}_{\sigma_{k}}|\le2^{M}\right]\mathbb{P}[B_{k}\ge2^{M-1}-1]\\
\le C\sum_{M=0}^{\infty}\e^{-c(2^{M-1}-1)}\sum_{k=n_{0}}^{\infty}\frac{2^{Md}}{k^{d/2}}=Cn_{0}^{1-d/2}\sum_{M=0}^{\infty}\e^{-c(2^{M-1}-1)+CMd}\le Cn_{0}^{1-d/2},
\end{multline*}
where the second inequality again uses the local limit theorem of \cite{Sto65}.
\end{proof}

\section{The stationary solution \label{sec:stationarity}}

The strategy of the proof of \thmref{stationarity} is typical for
the construction of a stationary solution to a PDE: we consider the
Cauchy problem with initial data given at time $s=-S$, and pass to
the limit $S\to+\infty$. This lets us obtain a global-in-time solution
to the problem that satisfies appropriate uniform bounds, provided
that the Lyapunov exponent $\lambda=\lambda(\beta)$ is chosen appropriately.
Let $\Psi(s,y;S)$ be the solution to
\begin{equation}
\begin{aligned}\partial_{s}\Psi(s,y;S) & =\frac{1}{2}\Delta\Psi(s,y;S)+(\beta V(s,y)-\lambda)\Psi(s,y;S),\qquad s>-S;\\
\Psi(-S,y;S) & =1.
\end{aligned}
\label{eq:PsiSproblem}
\end{equation}
The heart of the proof of \thmref{stationarity} is the following
proposition:
\begin{prop}
\label{prop:PsiS1PsiS2}If $\beta$ is sufficiently small, then there
exists $\lambda=\lambda(\beta)$ and a constant $C<\infty$ so that,
with this choice of $\lambda$ in \eqref{PsiSproblem}, for any $0\le S_{1}\le S_{2}$,
we have
\[
\mathbf{E}(\Psi(0,y;S_{2})-\Psi(0,y;S_{1}))^{2}\le CS_{1}^{-d/2+1}.
\]
\end{prop}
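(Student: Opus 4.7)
The plan is to write $\mathbf{E}(\Psi(0,y;S_2)-\Psi(0,y;S_1))^2$ as the alternating second-moment sum $\mathbf{E}\Psi(0,y;S_2)^2-2\mathbf{E}\Psi(0,y;S_1)\Psi(0,y;S_2)+\mathbf{E}\Psi(0,y;S_1)^2$ and to analyze each second moment via the Feynman--Kac representation and the Gaussian identity to integrate out $V$. For each pair $(S_j,S_k)$, the Feynman--Kac formula applied to the Cauchy problem started at time $-S_j$ with initial data $1$, together with the time-stationarity of $V$ and the Gaussian moment formula, gives
\[
\mathbf{E}\Psi(0,y;S_j)\Psi(0,y;S_k)=\e^{\alpha_{S_j}+\alpha_{S_k}}\widehat{\mathbb{E}}_{B,\widetilde B;S_j,S_k}^{y,y}\exp\{\beta^2\mathscr{R}_{S_j,S_k}[B,\widetilde B]\},
\]
in analogy with \eqref{Psicov}, where $\mathscr{R}_{S_j,S_k}[B,\widetilde B]=\int_0^{S_j}\int_0^{S_k}R(r-r',B_r-\widetilde B_{r'})\,\dif r\,\dif r'$. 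The three contributions to the alternating sum thus produce four tilted-expectation terms indexed by $(S_2,S_2),(S_1,S_2),(S_2,S_1),(S_1,S_1)$.

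The key algebraic observation is the exact identity
\[
\mathscr{R}_{S_2,S_2}-\mathscr{R}_{S_1,S_2}-\mathscr{R}_{S_2,S_1}+\mathscr{R}_{S_1,S_1}=\mathscr{R}_{[S_1,S_2],[S_1,S_2]}[B,\widetilde B],
\]
the interaction between the two paths restricted to the ``new'' FK-time block $[S_1,S_2]$. Combined with the boundary identities $\mathscr{R}_{S_1,S_2}-\mathscr{R}_{S_1,S_1}=\mathscr{R}_{[0,S_1],[S_1,S_2]}=:\mathscr{R}_{ab}$ and $\mathscr{R}_{S_2,S_1}-\mathscr{R}_{S_1,S_1}=\mathscr{R}_{[S_1,S_2],[0,S_1]}=:\mathscr{R}_{ba}$, and writing $\mathscr{R}_{bb}=\mathscr{R}_{[S_1,S_2],[S_1,S_2]}$, the alternating sum of the four exponentials factors as
\[
\e^{\beta^2\mathscr{R}_{S_1,S_1}}\Bigl[(\e^{\beta^2\mathscr{R}_{ab}}-1)(\e^{\beta^2\mathscr{R}_{ba}}-1)+\e^{\beta^2(\mathscr{R}_{ab}+\mathscr{R}_{ba})}(\e^{\beta^2\mathscr{R}_{bb}}-1)\Bigr].
\]
This cleanly isolates the ``new-block'' contribution as the source of the bound.

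Next I would use Theorem \ref{thm:BstoWs} together with \eqref{alphaconverges} and \lemref{cutofftail} to pass from the four different tilted measures $\widehat{\mathbb{P}}_{\cdot;S_j,S_k}$ and the prefactors $\e^{\alpha_{S_j}+\alpha_{S_k}}$ to a common realization in terms of the Markov-chain pair $(W,\widetilde W)$ of \eqref{independentchunks}, incurring only exponentially small (in $S_1$) corrections which are subdominant to the target $S_1^{-d/2+1}$. Because the kernel $R$ has support contained in $\{|\tau|\le 1\}\times\{|y|\le 1\}$, each of $\mathscr{R}_{ab},\mathscr{R}_{ba},\mathscr{R}_{bb}$ is pointwise bounded by a constant on its support, and is nonzero only on the event that $W_r$ and $\widetilde W_{\tilde r}$ come within distance $1$ for some pair of times $(r,\tilde r)$ with at least one coordinate at least $S_1-1$. \propref{hittingprob-late} bounds the probability of such a late close encounter by $CS_1^{-d/2+1}$. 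Applying the elementary inequality $|\e^a-1|\le|a|\e^{|a|}$ to each $\mathscr{R}$-factor, and then absorbing the $\e^{\beta^2\mathscr{R}_{S_1,S_1}}$ and $\e^{\beta^2(\mathscr{R}_{ab}+\mathscr{R}_{ba})}$ prefactors by the uniform exponential-moment bounds of \propref{expexpbdd} and \propref{expexpbdd-bridge}, both summands in the bracketed expression contribute at most $CS_1^{-d/2+1}$ after averaging, which yields the claim.

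The main technical obstacle is the bookkeeping in passing from the four genuinely distinct tilted measures $\widehat{\mathbb{P}}_{\cdot;S_j,S_k}$ and their normalizers to a single probability space on which Propositions \ref{prop:expexpbdd-bridge} and \ref{prop:hittingprob-late} apply uniformly. This is precisely what the Markov-chain framework of \secref{GRZreview} is designed to accomplish: the Doeblin property in Theorem \ref{thm:BstoWs} produces an exponential rate of equilibration, so the marginals on $[0,S_1]$ of all four tilted measures can be compared to a common ``infinite-horizon'' measure up to the boundary-tilting correction $\mathscr{G}$, whose effect is controlled by \lemref{cutofftail} and is exponentially small in $S_1$; all of these corrections are subdominant to the polynomial rate $S_1^{-d/2+1}$ governed by the path-intersection estimate.
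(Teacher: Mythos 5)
Your proposal is essentially the paper's proof: the same alternating-sum/Feynman--Kac/Gaussian computation leading to \eqref{Psidiffvariance}, the same transfer to the Markov chain with the $\mathscr{G}$ and $\e^{\alpha}$ corrections handled by \eqref{alphaconverges} and \lemref{cutofftail}, and the same final mechanism in which differences of exponentials of $\mathscr{R}$ are supported on a late close-encounter event controlled by \propref{hittingprob-late} with the exponentials absorbed via \propref{expexpbdd-bridge} (this is exactly the content of the paper's \lemref{Rsstilde}; your exact factorization of the four exponentials is just a tidier packaging of the paper's pairwise telescoping). One small correction: $\mathscr{R}_{ab}$, $\mathscr{R}_{ba}$, $\mathscr{R}_{bb}$ are \emph{not} pointwise bounded (the paths could remain close for a long time), so instead of $|\e^{a}-1|\le|a|\e^{|a|}$ you should use $0\le\e^{\beta^{2}\mathscr{R}}-1\le\e^{\beta^{2}\mathscr{R}}\mathbf{1}\{\mathscr{R}\ne0\}$ and condition on the first close-encounter time, which is how the paper argues; relatedly, to apply \lemref{cutofftail} the functional must be measurable with respect to the path strictly before the time where $\mathscr{G}$ is evaluated, which is why the paper first truncates $\mathscr{R}_{S_{2},S_{1}}$ to $\mathscr{R}_{0.9S_{2},0.9S_{1}}$ (at a cost again controlled by \lemref{Rsstilde}) before removing $\mathscr{G}$.
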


Before we prove \propref{PsiS1PsiS2}, we show how it implies \thmref{stationarity}.
\begin{proof}[Proof of \thmref{stationarity}.]
For a positive weight $w\in L^{1}(\mathbb{R}^{d})$, consider the
weighted space $L_{w}^{2}(\mathbb{R}^{d})$, with the inner product
\[
\langle f,g\rangle_{L_{w}^{2}(\mathbb{R}^{d})}=\int f(y)\overline{g(y)}w(y)\,\dif y.
\]
By \propref{PsiS1PsiS2} and the stationarity of $V$ in time, we
have
\begin{align}
\mathbf{E}\|\Psi(s,\cdot;S_{1})-\Psi(s,\cdot;S_{2})\|_{L_{w}^{2}(\mathbb{R}^{d})}^{2} & =\int\mathbf{E}|\Psi(0,y;s+S_{1})-\Psi(0,y;s+S_{2})|^{2}w(y)\,\dif y\nonumber \\
 & \le C(s+S_{1})^{-d/2+1}\|w\|_{L^{1}(\mathbb{R}^{d})},\label{eq:L2wCauchy}
\end{align}
and the right-hand side converges to $0$ as $S_{1},S_{2}\to\infty$,
locally uniformly in $s$. Hence, the family $\Psi(s,y;S)$ converges
in $L^{2}(\Omega;L_{w}^{2}(\mathbb{R}^{d}))$, locally uniformly in
$s$, to a limit $\widetilde{\Psi}$. (Here $\Omega$ denotes the
probability space on which $V$ is defined.) The stationarity of $\widetilde{\Psi}$
is standard. The convergence of $\Psi$ to $\widetilde{\Psi}$ locally
in $L^{2}(\Omega;L_{w}^{2}(\mathbb{R}^{d}))$ implies that $\widetilde{\Psi}$
satisfies \eqref{PsitildePDE} in a weak sense almost surely, hence
in a strong sense almost surely by standard parabolic regularity.

To prove the convergence claimed in \eqref{Psiconvergence}, we use
an argument similar to the above. In particular, we note that the
solution $\Psi(s,y)$ to \eqref{Psiproblem} is stationary in $y$,
as is $\widetilde{\Psi}(s,y)$, so for any fixed $y\in\mathbb{R}^{d}$
we have
\begin{align}
\mathbf{E}|\Psi(s,y)-\widetilde{\Psi}(s,y)|^{2}\int w(y')\,\dif y' & =\int\mathbf{E}|\Psi(s,y')-\widetilde{\Psi}(s,y')|^{2}w(y')\,\dif y'\nonumber \\
 & =\int\mathbf{E}|\Psi(0,y';s)-\widetilde{\Psi}(0,y')|^{2}w(y')\,\dif y,\label{eq:convergetostationary}
\end{align}
and the right-hand side is bounded by a constant times $s^{-d/2+1}$
as $s\to\infty$ by the definition of $\widetilde{\Psi}$. (In the
second equality of \eqref{convergetostationary}, we used the time-stationarity
of $(V,\widetilde{\Psi})$.)
\end{proof}
We also record the covariance kernel of the stationary solution.
\begin{cor}
\label{cor:Psicov}We have
\[
\mathbf{E}[\widetilde{\Psi}(s,y)\widetilde{\Psi}(s,\tilde{y})]=\e^{2\alpha_{\infty}}\widetilde{\mathbb{E}}_{W,\widetilde{W}}^{y,\tilde{y}}\exp\left\{ \beta^{2}\mathscr{R}_{\infty}[W,\widetilde{W}]\right\} .
\]
\end{cor}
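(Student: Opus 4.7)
The plan is to obtain the formula by passing to the limit in the identity (2.16) applied to the finite-time approximations $\Psi(s,y;S)$ that were used in the proof of Theorem 3.1.

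First, I would use the time-stationarity of $V$ to note that for each fixed $S \ge 0$, the random fields $(y,\tilde y)\mapsto \Psi(0,y;S)\Psi(0,\tilde y;S)$ and $(y,\tilde y)\mapsto\Psi(S,y)\Psi(S,\tilde y)$ have the same law. Combined with the $L^2(\Omega;L^2_w(\mathbb{R}^d))$-convergence $\Psi(0,\cdot;S)\to\widetilde\Psi(0,\cdot)$ established in the proof of Theorem 3.1, together with the $y$-stationarity of each $\Psi(0,\cdot;S)$ and of $\widetilde\Psi(0,\cdot)$ (arguing pointwise in $y$ as in \eqref{convergetostationary}), we get
\[
\mathbf{E}[\widetilde{\Psi}(s,y)\widetilde{\Psi}(s,\tilde{y})]=\lim_{S\to\infty}\mathbf{E}[\Psi(S,y)\Psi(S,\tilde{y})].
\]
Applying formula \eqref{Psicov} (with $s=\tilde s=S$) then gives
\[
\mathbf{E}[\Psi(S,y)\Psi(S,\tilde y)]
=\e^{2\alpha_S}\,\widehat{\mathbb{E}}_{B,\widetilde B;S,S}^{y,\tilde y}\exp\{\beta^2\mathscr{R}_{S,S}[B,\widetilde B]\}.
\]

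Next, I would translate the tilted Brownian expectation into a Markov-chain expectation using Theorem 2.1, applied independently to $B$ and $\widetilde B$:
\[
\widehat{\mathbb{E}}_{B,\widetilde B;S,S}^{y,\tilde y}\exp\{\beta^2\mathscr{R}_{S,S}[B,\widetilde B]\}
=\widetilde{\mathbb{E}}_{W,\widetilde W}^{y,\tilde y}\bigl[\exp\{\beta^2\mathscr{R}_{S,S}[W,\widetilde W]\}\,\mathscr{G}[w_{N}]\,\mathscr{G}[\widetilde w_{N}]\bigr],
\]
with $N=\lfloor S\rfloor-1$. Since $\alpha_S\to\alpha_\infty$ by \eqref{alphaconverges}, and since monotonically $\mathscr{R}_{S,S}[W,\widetilde W]\uparrow \mathscr{R}_{\infty}[W,\widetilde W]$ as $S\to\infty$, the only remaining task is to pass to the limit inside the Markov-chain expectation and to eliminate the two boundary factors $\mathscr{G}[w_N]\mathscr{G}[\widetilde w_N]$.

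To eliminate the $\mathscr{G}$ factors and justify the limit, I would fix a large auxiliary time $T_0$ and factor
\[
\exp\{\beta^2\mathscr{R}_{S,S}[W,\widetilde W]\}
=\exp\{\beta^2\mathscr{R}_{T_0,T_0}[W,\widetilde W]\}\cdot
\exp\{\beta^2(\mathscr{R}_{S,S}-\mathscr{R}_{T_0,T_0})[W,\widetilde W]\},
\]
where the first factor is measurable with respect to $W|_{[0,T_0]},\widetilde W|_{[0,T_0]}$. Proposition 2.4 gives a uniform $L^{1+\delta}$ bound on $\exp\{\beta^2\mathscr{R}_\infty[W,\widetilde W]\}$, so dominated convergence handles the tail factor once we know the expectation $\widetilde{\mathbb{E}}_{W,\widetilde W}^{y,\tilde y}[\exp\{\beta^2\mathscr{R}_{T_0,T_0}\}\cdot \mathscr{G}[w_N]\mathscr{G}[\widetilde w_N]]$ converges, as $S\to\infty$, to $\widetilde{\mathbb{E}}_{W,\widetilde W}^{y,\tilde y}[\exp\{\beta^2\mathscr{R}_{T_0,T_0}\}]$. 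That convergence follows from Lemma 2.9 applied independently to $W$ and $\widetilde W$ with $T_n=T_0$ fixed and $S_n=N\to\infty$, which bounds the error by $C\,\e^{-c(S-T_0)}$. Letting first $S\to\infty$ and then $T_0\to\infty$ yields the claimed identity.

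The main technical obstacle is the combined treatment of the edge tilting $\mathscr{G}$ and the growing integration region in $\mathscr{R}_{S,S}$: the $\mathscr{G}$ factor depends on the chunk $w_N$ near the terminal time, which is exactly where $\mathscr{R}_{S,S}$ is still picking up contributions, so a single application of Lemma 2.9 to the full integrand is not enough. The split into $\mathscr{R}_{T_0,T_0}$ plus a tail controlled by Proposition 2.4 is what decouples these two effects and makes the limit possible.
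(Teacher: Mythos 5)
Your argument is correct and is essentially the proof the paper intends (the corollary is merely ``recorded'' after Theorem \ref{thm:stationarity}, with the details left implicit): you combine the $L^2$-convergence $\Psi(0,\cdot;S)\to\widetilde\Psi(0,\cdot)$, the covariance formula \eqref{eq:Psicov}, the Markov-chain representation \eqref{eq:Markovchainexpectation}, \eqref{eq:alphaconverges}, \lemref{cutofftail}, and the tail control of \propref{expexpbdd-bridge}/\lemref{Rsstilde}, exactly the ingredients the paper deploys in the proof of \propref{PsiS1PsiS2}. Your only deviation is cutting off at a fixed auxiliary time $T_0$ rather than at $0.9S$ as in \eqref{eq:S1S2decomp}, which is a cosmetic difference.
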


In the remainder of this section, we set about proving \propref{PsiS1PsiS2}.
The proof will rely on the Feynman--Kac formula. We recall the Feynman--Kac
formula for $\Psi(s,y;S)$, which comes from \eqref{PsiFKpreliminary}
by a simple time-change:

\begin{equation}
\Psi(s,y;S)=\mathbb{E}_{B}^{y}\exp\left\{ \beta\mathscr{V}_{s;s+S}[B]-\lambda(s+S)\right\} .\label{eq:PsiSdef}
\end{equation}
We first note that spatial stationarity allow us to take $y=0$, and
then the same computation that leads to \eqref{Psicov} gives
\begin{align}
\mathbf{E} & (\Psi(0,0;S_{2})-\Psi(0,0;S_{1}))^{2}\nonumber \\
 & =\e^{2\alpha_{S_{2}}}\widehat{\mathbb{E}}_{B,\widetilde{B};S_{2}}^{0,0}\exp\left\{ \beta^{2}\mathscr{R}_{S_{2}}[B,\widetilde{B}]\right\} -2\e^{\alpha_{S_{2}}+\alpha_{S_{1}}}\widehat{\mathbb{E}}_{B,\widetilde{B};S_{2},S_{1}}^{0,0}\exp\left\{ \beta^{2}\mathscr{R}_{S_{2},S_{1}}[B,\widetilde{B}]\right\} \nonumber \\
 & \qquad+\e^{2\alpha_{S_{1}}}\widehat{\mathbb{E}}_{B,\widetilde{B};S_{2},S_{1}}^{0,0}\exp\left\{ \beta^{2}\mathscr{R}_{S_{1}}[B,\widetilde{B}]\right\} .\label{eq:Psidiffvariance}
\end{align}
Let us now explain intuitively why the right-hand side of this expression
should be small. First, we recall that $\alpha_{s}$ has a limit as
$s\to\infty$ by \eqref{alphaconverges}. Second, as we have observed
in \subsecref{intersections}, in dimension $d\ge3$ two Brownian
motions will almost surely spend at most a finite amount of time within
distance $1$ of each other. In fact, the amount of time they spend
within distance $1$ of each other has (some but not all) exponential
moments. Only such times contribute to $\mathscr{R}_{\bullet}[B,\widetilde{B}]$.
The thrust of \secref{GRZreview} above was that the tilted Brownian
motion, on large scales, again looks like a Brownian motion. This
makes it plausible that, under the tilted measure, the exponential
moments of $\mathscr{R}_{S_{1}}[B,\widetilde{B}]$, $\mathscr{R}_{S_{2},S_{1}}[B,\widetilde{B}]$,
and $\mathscr{R}_{S_{2}}[B,\widetilde{B}]$ are all close to each
other, making the right-hand side of \eqref{Psidiffvariance} small
as $S_{1},S_{2}\to\infty$.

In the rest of this section, we make this reasoning precise. We emphasize
that the computation that we will do still has content in the case
when $V$ is white in time; in this case the tilting has no effect
and $B$ and $\widetilde{B}$ are simply Brownian motions. In that
case, the approximations from \secref{GRZreview} are unnecessary
and the previous paragraph is essentially a proof. Nonetheless, the
reader may find it helpful on first reading to pretend that $B$
and $\widetilde{B}$ are Brownian motions. (In this case the computation
is very similar to that of \cite{MSZ16}.)

Our first lemma is the workhorse of the argument. It makes the above
intuition, which is standard for the Brownian motion, precise for
the case of the Markov chain.
\begin{lem}
\label{lem:Rsstilde}There exists a constant $C<\infty$ so that for
all $\beta$ sufficiently small, the following holds. If $1\le s\le s'\le\tilde{s}\le\tilde{s}'$,
then
\[
\widetilde{\mathbb{E}}_{W,\widetilde{W}}^{y,\tilde{y}}\left|\exp\left\{ \beta^{2}\mathscr{R}_{\tilde{s},\tilde{s}'}[W,\widetilde{W}]\right\} -\exp\left\{ \beta^{2}\mathscr{R}_{s,s'}[W,\widetilde{W}]\right\} \right|\le C(s-1)^{1-d/2}.
\]
\end{lem}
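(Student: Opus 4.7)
The plan is to exploit two facts: the difference $\mathscr{R}_{\tilde s, \tilde s'}[W, \widetilde{W}] - \mathscr{R}_{s,s'}[W, \widetilde{W}]$ is an integral of $R(\tau - \tilde\tau, W_\tau - \widetilde{W}_{\tilde\tau})$ over a region forced, by the temporal support of $R$, to lie in $\{\tau, \tilde\tau > s - 1\}$; and \propref{expexpbdd-bridge} lets us factor out the exponential of $\beta^2\mathscr{R}_\infty$ inside a path-functional expectation. Together these should reduce the lemma to the single time integral $\int_{s-1}^\infty \tau^{-d/2}\,\dif\tau \sim (s-1)^{1 - d/2}$.

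Concretely, since $R \ge 0$ and $[0,s]\times[0,s'] \subseteq [0,\tilde s]\times[0,\tilde s']$, the quantity $\Delta \coloneqq \mathscr{R}_{\tilde s, \tilde s'}[W, \widetilde{W}] - \mathscr{R}_{s,s'}[W, \widetilde{W}]$ is nonnegative, and the inequality $\e^b - \e^a \le (b - a)\e^b$ yields
\[
\bigl|\e^{\beta^2 \mathscr{R}_{\tilde s, \tilde s'}[W, \widetilde{W}]} - \e^{\beta^2 \mathscr{R}_{s, s'}[W, \widetilde{W}]}\bigr| \le \beta^2 \Delta\, \e^{\beta^2 \mathscr{R}_\infty[W, \widetilde{W}]}.
\]
Writing the set difference $[0,\tilde s]\times[0,\tilde s'] \setminus [0,s]\times[0,s'] = (s, \tilde s] \times [0, \tilde s'] \cup [0,s] \times (s', \tilde s']$ and using that $R(\tau - \tilde\tau, \cdot)$ vanishes when $|\tau - \tilde\tau| > 1$, every contributing pair satisfies both $\tau > s - 1$ and $\tilde\tau > s - 1$ (in the first region $\tilde\tau \ge \tau - 1 > s - 1$; in the second $\tau \ge \tilde\tau - 1 > s' - 1 \ge s - 1$). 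Thus $\Delta \le \mathscr{R}_{[s-1, \infty)}[W, \widetilde{W}]$, and it suffices to prove
\[
\widetilde{\mathbb{E}}_{W, \widetilde{W}}^{y, \tilde y}\!\left[\mathscr{R}_{[s-1,\infty)}[W, \widetilde{W}]\, \e^{\beta^2 \mathscr{R}_\infty[W, \widetilde{W}]}\right] \le C(s-1)^{1 - d/2}.
\]

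I would expand this by Fubini as a double integral over $(\tau, \tilde\tau) \in [s-1,\infty)^2$ of $\widetilde{\mathbb{E}}\bigl[R(\tau - \tilde\tau, W_\tau - \widetilde{W}_{\tilde\tau})\, \e^{\beta^2 \mathscr{R}_\infty}\bigr]$ and then bound the integrand pointwise. Since $R(\tau - \tilde\tau, W_\tau - \widetilde{W}_{\tilde\tau})$ is $\mathcal{F}_{\tau, \tilde\tau}$-measurable, \propref{expexpbdd-bridge} (valid for $\beta$ small) gives $\widetilde{\mathbb{E}}[\e^{\beta^2 \mathscr{R}_\infty} \mid \mathcal{F}_{\tau, \tilde\tau}] \le C$, reducing the integrand to $C\, \widetilde{\mathbb{E}}[R(\tau - \tilde\tau, W_\tau - \widetilde{W}_{\tilde\tau})] \le C\, \mathbf{1}_{|\tau - \tilde\tau| \le 1}\, \widetilde{\mathbb{P}}[|W_\tau - \widetilde{W}_{\tilde\tau}| \le 1]$ using the boundedness and compact support of $R$. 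A uniform local CLT for the tilted motion then bounds $\widetilde{\mathbb{P}}[|W_\tau - \widetilde{W}_{\tilde\tau}| \le 1] \le C(\tau + \tilde\tau)^{-d/2}$, after which the double integral collapses to $C\int_{s-1}^\infty \tau^{-d/2}\,\dif\tau \le C(s-1)^{1 - d/2}$.

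The main technical step is the uniform local CLT, which requires density bounds of the form $\|p_{W_\tau}^{\widetilde{\mathbb{P}}}\|_\infty \le C\tau^{-d/2}$ for $\tau \ge 1$, uniformly in the initial point, and then convolving with the analogous bound for $\widetilde{W}_{\tilde\tau}$. This should follow by representing $W_\tau$ via the regenerative structure of \corref{chunkprops} and applying Stone's local limit theorem \cite{Sto65}, much as in \cite[(4.36)]{GRZ17}, but one must deal with the initial non-stationary segment and the edge-effect functional $\mathscr{G}$ of \thmref{BstoWs} separately, e.g.\ via \lemref{cutofftail}. For $s$ close to $1$, where the claimed bound $(s-1)^{1-d/2}$ diverges, the lemma is immediate from \propref{expexpbdd-bridge}, which bounds the left-hand side by $2\widetilde{\mathbb{E}}[\e^{\beta^2 \mathscr{R}_\infty}] \le C$, absorbed into the constant.
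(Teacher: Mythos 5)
Your overall strategy is sound and runs parallel to the paper's: both arguments localize the difference of the two exponentials to path behavior after time $s-1$ (using the temporal support of $R$ and the nesting of the integration domains), both use \propref{expexpbdd-bridge} to decouple the exponential weight by conditioning on the paths up to the relevant times, and both then reduce to a transience estimate for the pair of chains. Where you diverge is in the final reduction. The paper bounds the difference by $\e^{\beta^{2}\mathscr{R}_{\infty}[W,\widetilde{W}]}$ times the \emph{indicator} of the event that $W$ and $\widetilde{W}$ come within distance $1$ at some pair of times after $s-1$, conditions on the first such pair of times to strip off the exponential, and then quotes \propref{hittingprob-late} for the probability of that event. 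You instead linearize via $\e^{b}-\e^{a}\le(b-a)\e^{b}$ and estimate the first moment of $\mathscr{R}_{[s-1,\infty)}[W,\widetilde{W}]$, which forces you to prove a density bound $\widetilde{\mathbb{P}}[|W_{\tau}-\widetilde{W}_{\tilde\tau}|\le1]\le C(\tau+\tilde\tau)^{-d/2}$ at \emph{arbitrary} times $\tau,\tilde\tau$.

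That last step — which you correctly flag as the main technical burden — is the one place your write-up is incomplete, and it is exactly the content that \propref{hittingprob-late} already packages: the paper's machinery (Stone's local limit theorem applied to the i.i.d.\ increments of \corref{chunkprops}) yields the $k^{-d/2}$ density bound only at the regeneration times $\sigma_{k}^{W,\widetilde{W}}$, and the passage to intermediate times is handled there by the dyadic decomposition over the oscillation $B_{k}$ together with \lemref{exptails}. If you switch from the first-moment bound to the indicator bound, you can invoke \propref{hittingprob-late} verbatim and skip re-deriving this. Two smaller remarks: your concern about the edge functional $\mathscr{G}$ is unnecessary here, since the lemma is stated directly under the Markov-chain measure $\widetilde{\mathbb{E}}_{W,\widetilde{W}}^{y,\tilde y}$ and no conversion from $\widehat{\mathbb{E}}_{B;T}$ is involved; and your set-theoretic verification that the extra integration region only contributes for $\tau,\tilde\tau>s-1$ is correct and matches the paper's use of the support of $R$.
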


\begin{proof}
We have
\begin{align*}
\widetilde{\mathbb{E}}_{W,\widetilde{W}}^{y,\tilde{y}} & \left|\exp\left\{ \beta^{2}\mathscr{R}_{\tilde{s},\tilde{s}'}[W,\widetilde{W}\right\} -\exp\left\{ \beta^{2}\mathscr{R}_{s,s'}[W,\widetilde{W}]\right\} \right|\\
 & \le\widetilde{\mathbb{E}}_{W,\widetilde{W}}^{y,\tilde{y}}\left|\exp\left\{ \beta^{2}\mathscr{R}_{\infty}[W,\widetilde{W}]\right\} -\exp\left\{ \beta^{2}\mathscr{R}_{s}[W,\widetilde{W}]\right\} \right|\\
 & \le\widetilde{\mathbb{E}}_{W,\widetilde{W}}^{y,\tilde{y}}\exp\left\{ \beta^{2}\mathscr{R}_{\infty}[W,\widetilde{W}]\right\} \mathbf{1}\{\mathscr{R}_{\infty}[W,\widetilde{W}]\ne\mathscr{R}_{S}[W,\widetilde{W}]\}\\
 & \le\widetilde{\mathbb{E}}_{W,\widetilde{W}}^{y,\tilde{y}}\exp\left\{ \beta^{2}\mathscr{R}_{\infty}[W,\widetilde{W}]\right\} \mathbf{1}\{(\exists r,\tilde{r}\ge s-1)\;|r-\tilde{r}|\le2\text{ and }|W_{r}-\tilde{W}_{\tilde{r}}|\le1\}.
\end{align*}
On the event that $\{\mathscr{R}_{\infty}[W,\widetilde{W}]\ne\mathscr{R}_{s}[W,\widetilde{W}]\}$,
let $\tau<\tilde{\tau}$ be the first pair of times after $s-1$ such
that $|\tau-\tilde{\tau}|\le2$ and $|W_{\tau}-\widetilde{W}_{\tilde{\tau}}|\le1$.
Then we have
\begin{align*}
\widetilde{\mathbb{E}}_{W,\widetilde{W}}^{y,\tilde{y}} & \left|\exp\left\{ \beta^{2}\mathscr{R}_{\tilde{s},\tilde{s}'}[W,\widetilde{W}\right\} -\exp\left\{ \beta^{2}\mathscr{R}_{s,s'}[W,\widetilde{W}]\right\} \right|\\
 & \le\int_{s-1}^{\infty}\int_{r}^{r+2}\widetilde{\mathbb{E}}_{W,\widetilde{W}}^{y,\tilde{y}}\left[\exp\left\{ \beta^{2}\mathscr{R}_{\infty}[W,\widetilde{W}]\right\} \;\middle|\;\tau=r,\tilde{\tau}=\tilde{r}\right]\,\dif\widetilde{\mathbb{P}}_{W,\widetilde{W}}^{y,\tilde{y}}(\tau=r,\tilde{\tau}=\tilde{r})\\
 & \le C\widetilde{\mathbb{P}}_{W,\widetilde{W}}^{y,\tilde{y}}\left((\exists r,\tilde{r}\ge s-1)\;|r-\tilde{r}|\le2\text{ and }|W_{r}-\tilde{W}_{\tilde{r}}|\le1\right)\le C(s-1)^{1-d/2},
\end{align*}
where the second inequality is by \propref{expexpbdd-bridge} and
the last is by \propref{hittingprob-late}.
\end{proof}
Now we can prove \propref{PsiS1PsiS2}. The proof combines \lemref{Rsstilde}
with various error bounds from \secref{GRZreview}.
\begin{proof}[Proof of \propref{PsiS1PsiS2}.]
We first re-write \eqref{Psidiffvariance} in terms of the Markov
chain using \eqref{Markovchainexpectation}:
\begin{align}
\mathbf{E} & (\Psi(0,0;S_{2})-\Psi(0,0;S_{1}))^{2}=\widetilde{\mathbb{E}}_{W,\widetilde{W}}\e^{2\alpha_{S_{2}}}\exp\left\{ \beta^{2}\mathscr{R}_{S_{2}}[W,\widetilde{W}]\right\} \mathscr{G}[w_{\lfloor S_{2}\rfloor-1}]\mathscr{G}[\tilde{w}_{\lfloor S_{2}\rfloor-1}]\nonumber \\
 & -2\e^{\alpha_{S_{2}}+\alpha_{S_{1}}}\widetilde{\mathbb{E}}_{W,\widetilde{W}}\exp\left\{ \beta^{2}\mathscr{R}_{S_{2},S_{1}}[W,\widetilde{W}]\right\} \mathscr{G}[w_{\lfloor S_{2}\rfloor-1}]\mathscr{G}[\tilde{w}_{\lfloor S_{1}\rfloor-1}]\nonumber \\
 & +\e^{2\alpha_{S_{1}}+\alpha_{S_{1}}}\widetilde{\mathbb{E}}_{W,\widetilde{W}}\exp\left\{ \beta^{2}\mathscr{R}_{S_{1}}[W,\widetilde{W}]\right\} \mathscr{G}[w_{\lfloor S_{1}\rfloor-1}]\mathscr{G}[\tilde{w}_{\lfloor S_{1}\rfloor-1}].\label{eq:vardiffMC}
\end{align}
For any $S_{1}\le S_{2}$ we can decompose
\begin{align}
 & \widetilde{\mathbb{E}}_{W,\widetilde{W}}\e^{\alpha_{S_{2}}+\alpha_{S_{1}}}\exp\left\{ \beta^{2}\mathscr{R}_{S_{2},S_{1}}[W,\widetilde{W}]\right\} \mathscr{G}[w_{\lfloor S_{2}\rfloor-1}]\mathscr{G}[\tilde{w}_{\lfloor S_{1}\rfloor-1}]\nonumber \\
 & \quad=\e^{2\alpha_{\infty}}\widetilde{\mathbb{E}}_{W,\widetilde{W}}\exp\left\{ \beta^{2}\mathscr{R}_{\frac{9}{10}S_{2},\frac{9}{10}S_{1}}[W,\widetilde{W}]\right\} \nonumber \\
 & \qquad+\e^{2\alpha_{\infty}}\widetilde{\mathbb{E}}_{W,\widetilde{W}}\exp\left\{ \beta^{2}\mathscr{R}_{\frac{9}{10}S_{2},\frac{9}{10}S_{1}}[W,\widetilde{W}]\right\} \left(\mathscr{G}[w_{\lfloor S_{2}\rfloor-1}]\mathscr{G}[\tilde{w}_{\lfloor S_{1}\rfloor-1}]-1\right)\nonumber \\
 & \qquad+\e^{2\alpha_{\infty}}\widetilde{\mathbb{E}}_{W,\widetilde{W}}\left(\exp\left\{ \beta^{2}\mathscr{R}_{S_{2},S_{1}}[W,\widetilde{W}]\right\} -\exp\left\{ \beta^{2}\mathscr{R}_{\frac{9}{10}S_{2},\frac{9}{10}S_{1}}[W,\widetilde{W}]\right\} \right)\mathscr{G}[w_{\lfloor S_{2}\rfloor-1}]\mathscr{G}[\tilde{w}_{\lfloor S_{1}\rfloor-1}]\nonumber \\
 & \qquad+\left(\e^{\alpha_{S_{2}}+\alpha_{S_{1}}}-\e^{2\alpha_{\infty}}\right)\widetilde{\mathbb{E}}_{W,\widetilde{W}}\exp\left\{ \beta^{2}\mathscr{R}_{S_{2},S_{1}}[W,\widetilde{W}]\right\} \mathscr{G}[w_{\lfloor S_{2}\rfloor-1}]\mathscr{G}[\tilde{w}_{\lfloor S_{1}\rfloor-1}].\label{eq:S1S2decomp}
\end{align}
Now \eqref{alphaconverges}, \lemref{Rsstilde}, and \propref{expexpbdd-bridge}
allow us to control the last term of \eqref{S1S2decomp}:
\[
\lim_{S_{1},S_{2}\to\infty}S_{2}^{d/2-1}\left(\e^{\alpha_{S_{2}}+\alpha_{S_{1}}}-\e^{2\alpha_{\infty}}\right)\widetilde{\mathbb{E}}_{W,\widetilde{W}}\exp\left\{ \beta^{2}\mathscr{R}_{S_{2},S_{1}}[W,\widetilde{W}]\right\} \mathscr{G}[w_{\lfloor S_{2}\rfloor-1}]\mathscr{G}[\tilde{w}_{\lfloor S_{1}\rfloor-1}]=0.
\]
\lemref{Rsstilde} also allows us to bound the third term of \eqref{S1S2decomp}:
\begin{align*}
 & S_{1}^{\frac{d}{2}-1}\left|\widetilde{\mathbb{E}}_{W,\widetilde{W}}\left(\exp\left\{ \beta^{2}\mathscr{R}_{S_{2},S_{1}}[W,\widetilde{W}]\right\} -\exp\left\{ \beta^{2}\mathscr{R}_{\frac{9}{10}S_{2},\frac{9}{10}S_{1}}[W,\widetilde{W}]\right\} \right)\mathscr{G}[w_{\lfloor S_{2}\rfloor-1}]\mathscr{G}[\tilde{w}_{\lfloor S_{1}\rfloor-1}]\right|\\
 & \qquad\le\|\mathscr{G}\|_{\infty}^{2}(S_{1}\wedge S_{2})^{d/2-1}\widetilde{\mathbb{E}}_{W,\widetilde{W}}\left(\exp\left\{ \beta^{2}\mathscr{R}_{S_{2},S_{1}}[W,\widetilde{W}]\right\} -\exp\left\{ \beta^{2}\mathscr{R}_{\frac{9}{10}S_{2},\frac{9}{10}S_{1}}[W,\widetilde{W}]\right\} \right)\le C,
\end{align*}
for a constant $C$ independent of $S_{1}$ and $S_{2}$. For the
second term of \eqref{S1S2decomp}, we can use \lemref{cutofftail}
to get
\[
\limsup_{S_{1},S_{2}\to\infty}S_{1}^{d/2-1}\widetilde{\mathbb{E}}_{W,\widetilde{W}}\exp\left\{ \beta^{2}\mathscr{R}_{\frac{9}{10}S_{2},\frac{9}{10}S_{1}}[W,\widetilde{W}]\right\} \left(\mathscr{G}[w_{\lfloor S_{2}\rfloor-1}]\mathscr{G}[\tilde{w}_{\lfloor S_{1}\rfloor-1}]-1\right)=0.
\]
Finally, we have that
\[
S_{1}^{d/2-1}\widetilde{\mathbb{E}}_{W,\widetilde{W}}\left[\exp\left\{ \beta^{2}\mathscr{R}_{\frac{9}{10}S_{2}}[W,\widetilde{W}]\right\} -2\exp\left\{ \beta^{2}\mathscr{R}_{\frac{9}{10}S_{2},\frac{9}{10}S_{1}}[W,\widetilde{W}]\right\} +\exp\left\{ \beta^{2}\mathscr{R}_{\frac{9}{10}S_{1}}[W,\widetilde{W}]\right\} \right]
\]
is bounded above independently of $S_{1}$ and $S_{2}$, also by \lemref{Rsstilde}.
Substituting \eqref{S1S2decomp} into \eqref{vardiffMC}, and then
applying the last four bounds, we see that
\[
\mathbf{E}(\Psi(0,y;S_{2})-\Psi(0,y;S_{1}))^{2}\le CS_{1}^{-d/2+1},
\]
as claimed.
\end{proof}

\section{The effective noise strength\label{sec:noisestrength}}

In this section, we explain how the effective noise strength parameter
$\nu$ in \eqref{EWPDE} arises from the stationary solution $\widetilde{\Psi}$
and prove \thmref{noiseexpr}.
\begin{lem}
\label{lem:varuniformconv}If $\beta$ is sufficiently small and $g\in\mathcal{C}_{\mathrm{c}}^{\infty}(\mathbb{R}^{d})$,
then we have
\[
\lim_{t\to\infty}\left|\Var\left(\eps^{-d/2+1}\int g(x)\Psi(\eps^{-2}t,\eps^{-1}x)\,\dif x\right)-\Var\left(\eps^{-d/2+1}\int g(x)\widetilde{\Psi}(0,\eps^{-1}x)\,\dif x\right)\right|=0,
\]
uniformly in $\eps>0$.
\end{lem}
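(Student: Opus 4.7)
The plan is to reduce the lemma to a pointwise bound on the difference of covariances at rescaled points. Writing $s=\eps^{-2}t$, $y=\eps^{-1}x$, $\tilde y=\eps^{-1}\tilde x$, and using the time-stationarity of $\widetilde{\Psi}$ to replace $\widetilde{\Psi}(0,\cdot)$ by $\widetilde{\Psi}(s,\cdot)$, the quantity in the lemma equals
\begin{equation*}
\eps^{-(d-2)}\left|\int\int g(x)g(\tilde x)\bigl[\Cov(\Psi(s,y),\Psi(s,\tilde y))-\Cov(\widetilde{\Psi}(s,y),\widetilde{\Psi}(s,\tilde y))\bigr]\,\dif x\,\dif\tilde x\right|.
\end{equation*}
My goal is to show that the bracket, call it $\Delta(s;y,\tilde y)$, satisfies $|\Delta(s;y,\tilde y)|\le Cs^{1-d/2}$ uniformly in $y,\tilde y$. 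Granted that, estimating the double integral crudely by $\|g\|_{L^1}^2$ and using the identity $\eps^{-(d-2)}(\eps^{-2}t)^{1-d/2}=t^{1-d/2}$ gives a bound that is uniform in $\eps$ and vanishes as $t\to\infty$, since $d\ge 3$.

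For the pointwise bound I would follow the template of the proof of \propref{PsiS1PsiS2}. First, peel off the first-moment contribution: $\mathbf{E}\Psi(s,y)=\e^{\alpha_s}$ by \eqref{EPsi} and $\mathbf{E}\widetilde{\Psi}(s,y)=\e^{\alpha_\infty}$ by the remark preceding \corref{Psicov}, so the product-of-means part of $\Delta$ is $\e^{2\alpha_\infty}-\e^{2\alpha_s}=O(\e^{-cs})$ by \eqref{alphaconverges}. For the second-moment part, combine \eqref{covariancecomputation} with \eqref{Markovchainexpectation} to get
\begin{equation*}
\mathbf{E}[\Psi(s,y)\Psi(s,\tilde y)]=\e^{2\alpha_s}\widetilde{\mathbb{E}}_{W,\widetilde W}^{y,\tilde y}\exp\{\beta^{2}\mathscr{R}_{s}[W,\widetilde W]\}\mathscr{G}[w_{\lfloor s\rfloor-1}]\mathscr{G}[\tilde w_{\lfloor s\rfloor-1}],
\end{equation*}
and use \corref{Psicov} for $\mathbf{E}[\widetilde{\Psi}(s,y)\widetilde{\Psi}(s,\tilde y)]$. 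Triangulating exactly as in \eqref{S1S2decomp} splits the difference into three pieces: (i) a term involving $\widetilde{\mathbb{E}}|\exp\{\beta^{2}\mathscr{R}_{s}\}-\exp\{\beta^{2}\mathscr{R}_{\infty}\}|$, bounded by $Cs^{1-d/2}$ via \lemref{Rsstilde}; (ii) a term with factor $\mathscr{G}\mathscr{G}-1$, bounded by $C\e^{-cs}$ via \lemref{cutofftail} combined with the uniform moment bound \propref{expexpbdd-bridge}; and (iii) a term with factor $\e^{2\alpha_{s}}-\e^{2\alpha_{\infty}}$, again $O(\e^{-cs})$. Term (i) dominates and yields the desired $Cs^{1-d/2}$ bound.

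The main obstacle is to make sure that the three error bounds are uniform in the starting points $y,\tilde y$ of the two Markov chains. In particular, step (ii) requires controlling $\widetilde{\mathbb{E}}\exp\{\beta^{2}\mathscr{R}_{\infty}[W,\widetilde W]\}$ uniformly in $y,\tilde y$, which is precisely the content of \propref{expexpbdd-bridge} (and not just \propref{expexpbdd}, where starting points are integrated against a tilting). Once this uniformity is in place, the bound on $\Delta(s;y,\tilde y)$ genuinely depends only on $s$, and the proof reduces to the arithmetic noted above. In this sense the lemma is essentially a variant of \propref{PsiS1PsiS2} in which the two cutoff times are held equal to $s$ while the two spatial starting points are allowed to differ.
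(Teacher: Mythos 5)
Your argument is correct, but it takes a much longer road than the paper does. The paper's proof is essentially two lines: after using time-stationarity to replace $\widetilde{\Psi}(0,\cdot)$ by $\widetilde{\Psi}(\eps^{-2}t,\cdot)$, it bounds the difference of the two variances via Cauchy--Schwarz/Jensen by a quantity controlled by $\sup_y\mathbf{E}|\Psi(\eps^{-2}t,y)-\widetilde{\Psi}(\eps^{-2}t,y)|^2$, and then invokes the already-established estimate \eqref{Psiconvergence} of \thmref{stationarity}, which gives exactly the rate $(\eps^{-2}t)^{-d/2+1}$ needed for the same arithmetic $\eps^{-d+2}(\eps^{-2}t)^{-d/2+1}=t^{-d/2+1}$ that closes your argument. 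You instead re-run the entire Markov-chain decomposition from the proof of \propref{PsiS1PsiS2} to obtain a pointwise bound on the difference of covariances at two distinct starting points $y,\tilde y$ with equal cutoff times; this does work --- \lemref{Rsstilde}, \propref{expexpbdd-bridge} and \lemref{cutofftail} are indeed uniform in the starting points, as you correctly emphasize, and there is no circularity since \corref{Psicov} precedes this lemma --- but it duplicates work already done in Section 3. What your route buys is a direct quantitative comparison of the off-diagonal covariances of $\Psi$ and $\widetilde{\Psi}$; for the lemma as stated this extra information is redundant, because the diagonal $L^2$ estimate \eqref{Psiconvergence} already implies the covariance comparison by Cauchy--Schwarz, which is precisely the shortcut the paper exploits.
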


\begin{proof}
We have
\begin{align*}
 & \left|\Var\left(\eps^{-d/2+1}\int g(x)\Psi(\eps^{-2}t,\eps^{-1}x)\,\dif x\right)-\Var\left(\eps^{-d/2+1}\int g(x)\widetilde{\Psi}(0,\eps^{-1}x)\,\dif x\right)\right|\\
 & \qquad\le\eps^{-d+2}\|g\|_{L^{1}(\mathbb{R}^{d})}\int|g(x)|\mathbf{E}\left|\Psi(\eps^{-2}t,\eps^{-1}x)-\widetilde{\Psi}(\eps^{-2}t,\eps^{-1}x)\right|^{2}\,\dif x\\
 & \qquad\le C\eps^{-d+2}\|g\|_{L^{1}(\mathbb{R}^{d})}^{2}(\eps^{-2}t)^{-d/2+1}\le C\|g\|_{L^{1}(\mathbb{R}^{d})}^{2}t^{-d/2+1},
\end{align*}
where the first inequality is by the time-stationarity of $\widetilde{\Psi}$
and Jensen's inequality and the second is by \eqref{Psiconvergence}.
\end{proof}
We recall from \cite[Lemmas 3.1, 3.2 and 3.3]{GRZ17} that
\begin{equation}
\lim_{\eps\to0}\Var\left(\frac{\e^{-\alpha_{t/\eps^{2}}}}{\eps^{d/2-1}}\int g(x)\Psi(\eps^{-2}t,\eps^{-1}x)\,\dif x\right)=\Var\left(\int g(x)\psi(t,x)\,\dif x\right),\label{eq:distconv}
\end{equation}
where $\psi$ is the solution to the Edwards-Wilkinson stochastic partial differential equation
\begin{equation}
\begin{aligned}\partial_{t}\psi & =\frac{1}{2}a\Delta\psi+\beta\nu\dot{W},\qquad t>0,x\in\mathbb{R}^{d};\\
\psi(0,x) & =0,
\end{aligned}
\label{eq:EWPDE-1}
\end{equation}
which is simply \eqref{EWPDE} with $\overline{u}\equiv1$.
\begin{lem}
\label{lem:spdestat}We have
\begin{equation}
\lim_{t\to\infty}\Var\left(\int g(x)\psi(t,x)\,\dif x\right)=\beta^{2}\nu^{2}\int_{0}^{\infty}\int|\overline{g}(r,x)|^{2}\,\dif x\,\dif r,\label{eq:SPDEvar1}
\end{equation}
where $\overline{g}$ is the solution of
\begin{align*}
\partial_{t}\overline{g}(t,x) & =\frac{1}{2}a\Delta\overline{g}(t,x),\qquad t>0,x\in\mathbb{R}^{d};\\
\overline{g}(0,x) & =g(x).
\end{align*}
\end{lem}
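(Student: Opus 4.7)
My plan is to give a direct computation using the mild (Duhamel) formulation of the Edwards--Wilkinson equation \eqref{EWPDE-1}. First I would write
\[
\psi(t,x) = \beta\nu\int_{0}^{t}\!\!\int_{\mathbb{R}^{d}} G_{a}(t-s,x-y)\,\dif W(s,y),
\]
which is the standard mild solution for the heat equation driven by additive space--time white noise, and is well-defined in $d\ge 3$ as a (random) distribution tested against $g\in\mathcal{C}_{c}^{\infty}(\mathbb{R}^d)$. Multiplying by $g(x)$ and integrating in $x$ and using Fubini inside the Wiener integral gives
\[
\int g(x)\psi(t,x)\,\dif x = \beta\nu\int_{0}^{t}\!\!\int_{\mathbb{R}^{d}}\left(\int_{\mathbb{R}^d}g(x)G_{a}(t-s,x-y)\,\dif x\right)\dif W(s,y) = \beta\nu\int_{0}^{t}\!\!\int_{\mathbb{R}^{d}}\overline{g}(t-s,y)\,\dif W(s,y),
\]
where I use that $\overline{g}(r,y)=\int G_{a}(r,x-y)g(x)\,\dif x$ by symmetry of the heat kernel in its spatial argument, and this is exactly the solution of the forward heat equation stated in the lemma.

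Next, the Itô isometry for space--time white noise yields
\[
\Var\!\left(\int g(x)\psi(t,x)\,\dif x\right) = \beta^{2}\nu^{2}\int_{0}^{t}\!\!\int_{\mathbb{R}^{d}}|\overline{g}(t-s,y)|^{2}\,\dif y\,\dif s = \beta^{2}\nu^{2}\int_{0}^{t}\!\!\int_{\mathbb{R}^{d}}|\overline{g}(r,y)|^{2}\,\dif y\,\dif r
\]
after the change of variables $r=t-s$. Letting $t\to\infty$ gives the claimed formula \emph{provided} the double integral over $[0,\infty)\times\mathbb{R}^d$ converges.

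To verify this (which is the only non-trivial step, and is the reason we needed $d\ge 3$), I would apply Parseval in the spatial variable: $\|\overline{g}(r,\cdot)\|_{L^{2}}^{2}=\int|\widehat{g}(\xi)|^{2}\e^{-ar|\xi|^{2}}\,\dif \xi$. Since $g\in\mathcal{C}_{c}^{\infty}(\mathbb{R}^{d})$, $\widehat{g}$ is a Schwartz function, so in particular $|\widehat{g}(\xi)|\le C$ uniformly. For small $r$ the integrand is bounded by $\|\widehat{g}\|_{L^2}^{2}$, while for large $r$ it is bounded by $|\widehat{g}(0)|^{2}\cdot C(ar)^{-d/2}$ after evaluating the Gaussian integral, which is integrable at infinity exactly when $d\ge 3$. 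By monotone convergence in $t$ the variance converges to $\beta^{2}\nu^{2}\int_{0}^{\infty}\int|\overline{g}(r,x)|^{2}\,\dif x\,\dif r$, establishing \eqref{SPDEvar1}. The only subtle point is this last convergence check; everything else is routine once the mild formulation is written down.
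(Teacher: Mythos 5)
Your proposal is correct and follows essentially the same route as the paper: the paper simply cites \cite[(3.16)]{GRZ17} for the identity $\Var\left(\int g(x)\psi(t,x)\,\dif x\right)=\beta^{2}\nu^{2}\int_{0}^{t}\int|\overline{g}(t-r,x)|^{2}\,\dif x\,\dif r$, which you rederive via the mild formulation and the It\^o isometry, and then both arguments conclude by the change of variables and passing $t\to\infty$. Your Parseval/Gaussian-decay check that the limiting integral is finite for $d\ge3$ is a worthwhile detail that the paper leaves implicit.
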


\begin{proof}
As in \cite[(3.16)]{GRZ17}, we have
\begin{equation*}
\Var\left(\int g(x)\psi(t,x)\,\dif x\right)=\beta^{2}\nu^{2}\int_{0}^{t}\int|\overline{g}(t-r,x)|^{2}\,\dif x\,\dif r =\beta^{2}\nu^{2}\int_{0}^{t}\int|\overline{g}(r,x)|^{2}\,\dif x\,\dif r,%
\end{equation*}
and then the result follows by taking $t\to\infty$. %
\end{proof}
Now we are ready to prove \thmref{noiseexpr}.
\begin{proof}[Proof of \thmref{noiseexpr}.]
Fix $\delta>0$. By \lemref[s]{varuniformconv}~and~\ref{lem:spdestat},
we can choose $t$ large enough, independently of $\eps$, so that
\[
\left|\Var\left(\int g(x)\psi(t,x)\,\dif x\right)-\beta^{2}\nu^{2}\int_{0}^{\infty}\int|\overline{g}(r,x)|^{2}\,\dif x\,\dif r\right|<\delta/3
\]
and
\[
\left|\Var\left(\eps^{-d/2+1}\int g(x)\Psi(\eps^{-2}t,\eps^{-1}x)\,\dif x\right)-\Var\left(\eps^{-d/2+1}\int g(x)\widetilde{\Psi}(\eps^{-2}t,\eps^{-1}x)\,\dif x\right)\right|<\delta/3.
\]
Then by \eqref{distconv} we can choose $\eps$ so small that
\[
\left|\Var\left(\eps^{-\alpha_{t/\eps^{2}}}\eps^{-d/2+1}\int g(x)\Psi(\eps^{-2}t,\eps^{-1}x)\,\dif x\right)-\Var\left(\int g(x)\psi(t,x)\,\dif x\right)\right|<\delta/3.
\]
Using the triangle inequality on the last three expressions, and recalling
\eqref{alphaconverges}, we obtain
\begin{equation}
\lim_{\eps\to0}\Var\left(\eps^{-d/2+1}\int g(x)\widetilde{\Psi}(0,\eps^{-1}x)\,\dif x\right)=\e^{2\alpha_{\infty}}\beta^{2}\nu^{2}\int_{0}^{\infty}\int|\overline{g}(r,x)|^{2}\,\dif x\,\dif r.\label{eq:usetrieq}
\end{equation}
The left-hand side of \eqref{usetrieq} is equal to 
\[
\lim_{\eps\to0}\int\int g(x)g(\tilde{x})\eps^{-d+2}\Cov\left(\widetilde{\Psi}(0,\eps^{-1}x),\widetilde{\Psi}(0,\eps^{-1}\tilde{x})\right)\,\dif x\,\dif\tilde{x},
\]
while the right-hand side of \eqref{usetrieq} is equal to
\begin{align*}
\e^{2\alpha_{\infty}}\beta^{2}\nu^{2} & \int\int\left(\int_{0}^{\infty}\int G_{a}(r,z-x)G_{a}(r,z-\tilde{x})\,\dif z\,\dif r\right)g(x)g(\tilde{x})\,\dif x\,\dif\tilde{x}\\
 & =\e^{2\alpha_{\infty}}\beta^{2}\nu^{2}ca^{-1}\int\int|x-\tilde{x}|^{-d+2}g(x)g(\tilde{x})\,\dif x\,\dif\tilde{x},
\end{align*}
where $G_{a}$ and $c$ are defined as in \eqref{Gadef}--\eqref{cbardef}.
Therefore, we have
\[
\nu^{2}=\frac{a\lim\limits _{\eps\to0}\int\int g(x)g(\tilde{x})\eps^{-d+2}\Cov\left(\widetilde{\Psi}(0,\eps^{-1}x),\widetilde{\Psi}(0,\eps^{-1}\tilde{x})\right)\,\dif x\,\dif\tilde{x}}{c\e^{2\alpha_{\infty}}\beta^{2}\int\int g(x)g(\tilde{x})|x-\tilde{x}|^{-d+2}\,\dif x\,\dif\tilde{x}},
\]
which is \eqref{nu2expr}.
\end{proof}

\section{The effective diffusivity\label{sec:effective-diffusivity}}

In this section we explain how to relate the effective diffusivity
$a$ to the asymptotic expansion \eqref{expansion}.

\subsection{The solvability condition}

We first explain how the effective diffusivity $a$ can be recovered
formally from the homogenization correctors for \eqref{expansion}. We define these correctors now, and for the moment we disregard the question of their existence. %
We
start with the equations \eqref{u1problem}--\eqref{u2problem} for
the terms $u_{1}$ and $u_{2}$ in the formal asymptotic expansion
\eqref{expansion} for $u^{\eps}$. We will replace $\Psi$ on the
right-hand side of these equations by the stationary solution $\widetilde{\Psi}$,
so our formal starting point is
\begin{equation}
\partial_{s}u_{1}(t,x,s,y)=\frac{1}{2}\Delta_{y}u_{1}(t,x,s,y)+(\beta V(s,y)-\lambda)u_{1}(t,x,s,y)+\nabla_{y}\widetilde{\Psi}(s,y)\cdot\nabla_{x}\overline{u}(t,x)\label{eq:u1problem-1-1}
\end{equation}
and
\begin{equation}
\begin{aligned}\partial_{s}u_{2}(t,x,s,y) & =\frac{1}{2}\Delta_{y}u_{2}(t,x,s,y)+(\beta V(s,y)-\lambda)u_{2}(t,x,s,y)+\nabla_{y}\cdot\nabla_{x}u_{1}(t,x,s,y)\\
 & \qquad+\frac{1}{2}(1-a)\widetilde{\Psi}(s,y)\Delta_{x}\overline{u}(t,x).
\end{aligned}
\label{eq:u2problem-1-1}
\end{equation}
We can now formally decompose the solution to \eqref{u1problem-1-1}
as
\begin{equation}
u_{1}(t,x,s,y)=\tilde{\boldsymbol{\omega}}(s,y)\cdot\nabla_{x}\overline{u}(t,x),\label{eq:u1decomp}
\end{equation}
where $\tilde{\boldsymbol{\omega}}(s,y)=(\tilde{\omega}^{(1)}(s,y),\ldots,\tilde{\omega}^{(d)}(s,y))$
is a space-time-stationary solution to
\begin{equation}
\partial_{s}\tilde{\omega}^{(k)}=\frac{1}{2}\Delta_{y}\tilde{\omega}^{(k)}+(\beta V-\lambda)\tilde{\omega}^{(k)}+\frac{\partial\widetilde{\Psi}}{\partial y_{k}}.\label{eq:omegaproblem}
\end{equation}
We note that, unlike the random heat equation \eqref{PsitildePDE},
the forced equation \eqref{omegaproblem} may not have stationary
solutions in all $d\ge3$. Nevertheless, the formal computation will
give us an idea of how the effective diffusivity can be approximated.
By \thmref{stationarity}, applied with time reversed (or equivalently
to the random heat equation with potential $V(-s,y)$), we also have
a stationary solution $\widetilde{\Phi}$ to the equation
\begin{equation}
-\partial_{s}\widetilde{\Phi}=\frac{1}{2}\Delta\widetilde{\Phi}+(\beta V-\lambda)\widetilde{\Phi}.\label{eq:Phitildepde}
\end{equation}
Multiplying \eqref{u2problem-1-1} by $\Phi$ and using \eqref{u1decomp}
and \eqref{Phitildepde} gives
\begin{equation}
\begin{aligned}\partial_{s}(\widetilde{\Phi}(s,y)u_{2}(t,x,s,y)) & =\frac{1}{2}\widetilde{\Phi}(s,y)\Delta_{y}u_{2}(t,x,s,y)-\frac{1}{2}u_{2}(t,x,s,y)\Delta\widetilde{\Phi}(s,y)\\
 & \qquad+\widetilde{\Phi}(s,y)\tr(\nabla_{y}\tilde{\boldsymbol{\omega}}(s,y)\cdot\Hess\overline{u}(t,x))+\frac{1}{2}(1-a)\widetilde{\Phi}(s,y)\widetilde{\Psi}(s,y)\Delta_{x}\overline{u}(t,x).
\end{aligned}
\label{eq:correctoreqn}
\end{equation}
The assumed stationarity of $u_{2}$ in $s$ and the stationarity
of $\widetilde{\Phi}$ in $s$ imply that the expectation of the left-hand
side is $0$. Stationarity of $u_{2}$ in $y$, on the other hand,
implies that
\[
\mathbf{E}\left[\widetilde{\Phi}(s,y)\Delta_{y}u_{2}(t,x,s,y)-u_{2}(t,x,s,y)\Delta\widetilde{\Phi}(s,y)\right]=0.
\]
Therefore, taking the expectation of \eqref{correctoreqn} yields
\[
\mathbf{E}\widetilde{\Phi}(s,y)\left[\tr(\nabla_{y}\tilde{\boldsymbol{\omega}}(s,y)\cdot\Hess\overline{u}(t,x))+\frac{1}{2}(1-a)\widetilde{\Psi}(s,y)\Delta_{x}\overline{u}(t,x)\right]=0.
\]
Due to the assumption of isotropy, we have
\[
\mathbf{E}\widetilde{\Phi}\nabla_{y}\tilde{\boldsymbol{\omega}}=\frac{1}{d}\tr\left(\mathbf{E}\widetilde{\Phi}\nabla_{y}\tilde{\boldsymbol{\omega}}\right)I_{d\times d}=\frac{1}{d}\mathbf{E}\widetilde{\Phi}(\nabla_{y}\cdot\tilde{\boldsymbol{\omega}})I_{d\times d},
\]
and thus
\begin{align*}
0 & =\mathbf{E}\widetilde{\Phi}(s,y)\left[\tr(\nabla_{y}\tilde{\boldsymbol{\omega}}(s,y)\cdot\Hess\overline{u}(t,x))+\frac{1}{2}(1-a)\widetilde{\Psi}(s,y)\Delta_{x}\overline{u}(t,x)\right]\\
 & =\mathbf{E}\widetilde{\Phi}(s,y)\left[\frac{1}{d}\nabla_{y}\cdot\tilde{\boldsymbol{\omega}}(s,y)+\frac{1}{2}(1-a)\widetilde{\Psi}(s,y)\right]\Delta_{x}\overline{u}(t,x),
\end{align*}
leading to
\begin{equation}
a=1+\frac{2}{d}\frac{\mathbf{E}[\widetilde{\Phi}(s,y)\nabla_{y}\cdot\tilde{\boldsymbol{\omega}}(s,y)]}{\mathbf{E}[\widetilde{\Phi}(s,y)\widetilde{\Psi}(s,y)]}.\label{eq:oura}
\end{equation}
As we have not proved that a stationary corrector $\tilde{\boldsymbol{\omega}}$
actually exists, the expression \eqref{oura} is purely formal. In
the next section, we will explain how we can use an approximate version
of $\tilde{\boldsymbol{\omega}}$ to write a rigorous version of the
computation leading to \eqref{oura}.

\subsection{An approximation of the effective diffusivity}

In this section, we will show how approximate correctors can be used
in the right-hand side of \eqref{oura} to provide a good approximation
of the effective diffusivity. Instead of trying to build a stationary
solution to the corrector equation \eqref{omegaproblem}, we take
$S>0$ and consider the the solution $\boldsymbol{\omega}(s,y;S)$
of the Cauchy problem for \eqref{omegaproblem}, with $\widetilde{\Psi}(s,y)$
replaced by $\Psi(s,y;S)$ (defined in \eqref{PsiSproblem}):
\begin{align*}
\partial_{s}\omega^{(k)}(s,y) & =\frac{1}{2}\Delta_{y}\omega^{(k)}(s,y)+(\beta V(s,y)-\lambda)\omega^{(k)}(s,y)+\frac{\partial\Psi(s,y;S)}{\partial y_{k}},\qquad s>-S,\ k=1,\ldots,d;\\
\boldsymbol{\omega}(-S,\cdot;S) & \equiv0.
\end{align*}
The solution is given by the Feynman--Kac formula
\begin{equation}
\boldsymbol{\omega}(s,y;S)=\mathbb{E}_{B}^{y}\left[\int_{0}^{s+S}\exp\left\{ \beta\mathscr{V}_{s;[0,r]}[B]-\lambda r\right\} \nabla\Psi(s-r,B_{r};S)\,\dif r\right].\label{eq:omegaSFK}
\end{equation}
We also define, similarly to the definition \eqref{PsiSproblem}/\eqref{PsiSdef}
of $\Psi(s,y;S)$, the 
\begin{equation}
\Phi(s,y;T)=\mathbb{E}_{B}^{y}\exp\left\{ \beta\mathscr{V}_{s;[s-T,0]}[B]-\lambda(T-s)\right\} ,\qquad s<T,\label{eq:PhiFK}
\end{equation}
which solves \eqref{Phitildepde} with terminal condition
\[
\Phi(T,y;T)=1.
\]
Recall that $\mathscr{V}$ was defined in \eqref{Vsadef}, so in particular
we have
\begin{align*}
\mathscr{V}_{s;[0,r]}[B] & =\int_{0}^{r}V(s-\tau,B_{\tau})\,\dif\tau;\\
\mathscr{V}_{s;[s-T,0]} & =\int_{s-T}^{0}V(s-\tau,B_{\tau})\,\dif\tau.
\end{align*}
Note that in the second expression we are evaluating $B$ at negative
times, interpreting it as a two-sided Brownian motion. Now we define
an approximate version of \eqref{oura}.
\begin{equation}
a_{S,T}(s,y)=1+\frac{2}{d}\frac{\mathbf{E}[\Phi(s,y;T)\nabla_{y}\cdot\boldsymbol{\omega}(s,y;S)]}{\mathbf{E}[\Phi(s,y;T)\Psi(s,y;S)]}.\label{eq:aSTdef}
\end{equation}
The next theorem, which is the main result of this section, shows
that the ``large $S,T$'' limit of \eqref{aSTdef} agrees with the
effective diffusivity from \eqref{adef} (established in \cite{GRZ17}).
\begin{thm}
\label{thm:aSTthm}Let $a$ be the effective diffusivity defined by
\eqref{adef}. Then we have, for each $s\in\mathbb{R}$ and $y\in\mathbb{R}^{d}$,
\[
\lim_{\substack{S\to\infty\\
T\to\infty
}
}a_{S,T}(s,y)=a.
\]
\end{thm}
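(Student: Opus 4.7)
My approach is to express both $D_{S,T}(s,y):=\mathbf{E}[\Phi\Psi]$ and $N_{S,T}(s,y):=\mathbf{E}[\Phi\nabla\cdot\boldsymbol\omega]$ via Feynman--Kac, integrate out the Gaussian noise $V$ by Wick's theorem, translate the resulting tilted-Brownian expectations into Markov-chain expectations via \thmref{BstoWs}, and identify the $S,T\to\infty$ limit of $a_{S,T}$ with $a$ using \propref{invariance}.

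The key preparatory step is a clean representation of $\boldsymbol\omega$. Starting from \eqref{omegaSFK}, substituting the Feynman--Kac formula for $\nabla\Psi$, concatenating the two Brownian paths via the Markov property, and swapping the order of time integration produces
\[
\omega^{(k)}(s,y;S) = \beta\,\mathbb{E}^y_B\left[\int_0^{s+S}\tau\,\partial_k V(s-\tau,B_\tau)\,d\tau\cdot e^{\beta\mathscr{V}_{s;[0,s+S]}[B]-\lambda(s+S)}\right].
\]
Gaussian integration by parts on Wiener space (Malliavin duality applied to the Brownian endpoint $B^{(k)}_{s+S}$) then rewrites this as
\[
\omega^{(k)}(s,y;S) = \mathbb{E}^y_B\left[(B^{(k)}_{s+S}-y_k)\,e^{\beta\mathscr{V}_{s;[0,s+S]}[B]-\lambda(s+S)}\right].
\]
Since $\mathbf{E}[\Phi(s,\cdot;T)\omega^{(k)}(s,\cdot;S)]$ is $y$-independent by joint spatial stationarity, the product rule applied to $\sum_k\partial_{y_k}(\Phi\omega^{(k)})$ yields $N_{S,T}(s,y) = -\mathbf{E}[\nabla\Phi(s,y;T)\cdot\boldsymbol\omega(s,y;S)]$.

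Next I would write $D_{S,T}$ and $N_{S,T}$ as joint expectations over two independent Brownian motions $B^{(1)}$ (from $\Phi$'s Feynman--Kac on $[s-T,0]$) and $B^{(2)}$ (from $\boldsymbol\omega$'s on $[0,s+S]$). Integrating out $V$ by Wick's theorem yields exponential factors in the self- and cross-interaction functionals $\mathscr{R}_1[B^{(1)}],\mathscr{R}_2[B^{(2)}],\mathscr{R}_{12}[B^{(1)},B^{(2)}]$, which are absorbed into the tilted measures of \secref{GRZreview}: one gets $D_{S,T}=e^{\alpha_{T-s}+\alpha_{s+S}}\widehat{\mathbb{E}}\exp\{\beta^2\mathscr{R}_{12}\}$, and an analogous expression for $N_{S,T}$ carrying the additional factor $\beta^2(B^{(2)}_{s+S}-y)\cdot\mathcal{I}_{12}$ with $\mathcal{I}_{12}=\int\int\nabla R(\sigma-\tau,B^{(1)}_\tau-B^{(2)}_\sigma)\,d\sigma\,d\tau$. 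The ``self-interaction'' piece coming from $\nabla\Phi$ integrated against $B^{(1)}$ alone vanishes by the spatial oddness of $\nabla R$ combined with the symmetry under swapping the two integration times. \thmref{BstoWs} then lets me reinterpret the tilted Brownian motions as Markov chains $W,\widetilde W$.

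Passing to the limit $S,T\to\infty$: the $\alpha$-factors converge by \eqref{alphaconverges}, \propref{expexpbdd-bridge} supplies the required uniform bounds, and the path-intersection estimates of \subsecref{intersections} show that $\mathcal{I}_{12}$ is concentrated in a bounded region near time $0$. Splitting $B^{(2)}_{s+S}-y$ into a ``near-interaction'' piece (of order $1$ by \lemref{exptails}) and a ``post-interaction'' piece (which averages to zero against any fixed vector by the isotropy of the increments from \corref{chunkprops}), the limit of $N_{S,T}/D_{S,T}$ becomes a purely Markov-chain quantity. \propref{invariance} and the explicit formula \eqref{adef} for the increment covariance of the Markov chain then identify $(2/d)\lim N_{S,T}/D_{S,T}$ with $a-1$. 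The main obstacle is this final identification step, where one must match the bounded near-interaction contribution of $(B^{(2)}_{s+S}-y)\cdot\mathcal{I}_{12}$ to the quadratic-variation formula defining $a$; the matching mimics the formal PDE computation leading to \eqref{oura}, but applied rigorously to the Markov-chain limit rather than to (possibly nonexistent) stationary correctors.
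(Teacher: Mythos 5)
Your setup is sound and in fact runs parallel to the paper's: the Girsanov/integration-by-parts representation $\omega^{(k)}(s,y;S)=\mathbb{E}_B^y[(B^{(k)}_{s+S}-y_k)\exp\{\beta\mathscr{V}_{s;[0,s+S]}[B]-\lambda(s+S)\}]$ is exactly what the paper extracts from \eqref{omegaSFKfirstcomp}, the denominator $\mathbf{E}[\Phi\Psi]=\e^{-\lambda(T+S)}\mathbb{E}_B^0\exp\{\beta^2\mathscr{R}_{[-T,S]}[B]\}$ matches \eqref{astdenominator}, and the passage to tilted measures and the Markov chain is the same. (Your two independent paths $B^{(1)},B^{(2)}$ glued at $y$ are the paper's single two-sided Brownian motion on $[-T,S]$.) The stationarity identity $\mathbf{E}[\Phi\,\nabla\cdot\boldsymbol\omega]=-\mathbf{E}[\nabla\Phi\cdot\boldsymbol\omega]$ is also legitimate.

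The genuine gap is the final identification, which you yourself flag as ``the main obstacle'' and propose to resolve by ``mimicking the formal PDE computation leading to \eqref{oura}.'' That cannot work: the formal computation presupposes a stationary corrector $\tilde{\boldsymbol{\omega}}$, whose existence is precisely what is unavailable, and the quantity you are left with --- the near-interaction contribution of $(B^{(2)}_{s+S}-y)\cdot\mathcal{I}_{12}$ under the tilted measure --- is a local functional of the tilted path near time $0$, whereas $a$ in \eqref{adef} is defined through the long-time increment covariance of the Markov chain. There is no local computation that equates the two. The paper bridges this with two ingredients you do not supply: (i) a proof that $\lim_{S,T\to\infty}a_{S,T}$ \emph{exists}, via the exponential decorrelation estimate $\widehat{\mathbb{E}}_{B;\tau_5}(B_{\tau_4}-B_{\tau_3})\cdot(B_{\tau_2}-B_{\tau_1})\le C\e^{-c(\tau_3-\tau_2)}$ coming from regeneration and isotropy (Lemma \ref{lem:limitexists}); and (ii) a telescoping identity: writing $a=\lim_U\frac{1}{dU}\widehat{\mathbb{E}}_{B;3U}^0(B_{3U}-B_0)\cdot(B_{2U}-B_U)$, chopping $B_{2U}-B_U$ into increments of length $\gamma$, and recognizing each cross term $\widehat{\mathbb{E}}(B_{3U}-B_{\tau_{j+1}})\cdot(B_{\tau_{j+1}}-B_{\tau_j})$ as $\frac{\gamma d}{2}(a_{S',T';\gamma}-1)$ by time-translation of the tilted measure, with the diagonal terms contributing $1$ (Lemma \ref{lem:smallscalesd}). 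The Ces\`aro average of these shifted $a_{S',T';\gamma}$ then converges to $\lim a_{S,T}$ by (i). Without some version of this averaging-over-the-path argument (or an equivalent global mechanism), the matching of your near-interaction functional to $\frac{d}{2}(a-1)$ remains unproved, and the proof is incomplete at its decisive step.
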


We note that if a stationary $\tilde{\boldsymbol{\omega}}$ given
by 
\[
\tilde{\boldsymbol{\omega}}(s,y)=\lim_{S\to\infty}\boldsymbol{\omega}(s,y;S)
\]
exists, then \thmref{aSTthm} verifies the formal expression \eqref{oura}.
Such large-scale approximations of the effective diffusivity have
been used in the different context of elliptic homogenization theory;
see \cite{gloriaotto7}.

Without loss of generality, we will take $s=0$ and $y=0$ in the
proof of \thmref{aSTthm}. In the course of the proof, we will denote
by $H(x)$ the standard Heaviside function $H(x)=\mathbf{1}\{x\ge0\}$
and also use its regularization
\[
H_{\gamma}(x)=\begin{cases}
0 & x\le0;\\
\gamma^{-1}x & 0\le x\le\gamma;\\
1 & x\ge\gamma,
\end{cases}
\]
as well as $J(x)=xH(x)$. While several of the following lemmas are written using this regularization, the statement of \thmref{aSTthm}
does not depend on the regularization. (Ultimately we take $\gamma\to0$.)
We begin with a Feynman--Kac formula for the numerator on the right-hand
side of \thmref{aSTthm}. \begin{comment}
In the following lemma, we use the notation from \eqref{scrRdef}\emph{ff.}
\end{comment}

\begin{lem}
\label{lem:writeitasaderiv}We have
\begin{equation}
\begin{aligned}\mathbf{E} & \left[\Phi(0,0;T)(\nabla_{y}\cdot\boldsymbol{\omega})(0,0;S)\right]\\
 & =\nabla_{\eta}|_{\eta=0}\cdot\nabla_{\xi}|_{\xi=0}\mathbf{E}\mathbb{E}_{B}^{0}\exp\left\{ \beta\int_{-T}^{S}V(-\tau,B_{\tau}+H(\tau)\eta+J(\tau)\xi)\,\dif\tau-\lambda(T+S)\right\} \\
 & =\nabla_{\eta}|_{\eta=0}\cdot\nabla_{\xi}|_{\xi=0}\mathbb{E}_{B}^{0}\exp\left\{ \frac{1}{2}\beta^{2}\mathscr{R}_{[-T,S]}[B+H\eta+J\xi]-\lambda(T+S)\right\} .
\end{aligned}
\label{eq:writeitasaderiv}
\end{equation}
\end{lem}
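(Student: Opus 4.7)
The plan is to establish both equalities by rewriting $\boldsymbol{\omega}(0,y;S)$ as a first derivative in an auxiliary parameter $\xi$ of a Feynman--Kac integrand with drift $\tau\xi$, then taking a $y$-divergence to produce a second derivative, and finally concatenating Brownian pieces together with $\Phi(0,0;T)$ to glue everything into a single two-sided Feynman--Kac formula on $[-T,S]$. The second equality will then follow from the Gaussianity of $V$.

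For the key step, start from the Feynman--Kac formula \eqref{omegaSFK} for $\boldsymbol{\omega}$ and shift the starting point to write
\[
\boldsymbol{\omega}(0,y;S)=\mathbb{E}_{B}^{0}\int_{0}^{S}\e^{\beta\int_{0}^{r}V(-\tau,B_{\tau}+y)\dif\tau-\lambda r}\nabla\Psi(-r,B_{r}+y;S)\dif r,
\]
and represent $\nabla\Psi(-r,z;S)$ by differentiating its own Feynman--Kac formula \eqref{PsiSdef} in the spatial argument. The inner expectation runs an independent Brownian motion $B'$ on $[0,S-r]$, so the strong Markov property lets us glue $B|_{[0,r]}$ with $B'$ into a single Brownian motion $\tilde{B}$ on $[0,S]$. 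Reindexing the inner integral via $u=r+\tau'$ collapses the two exponential factors into $\exp\{\beta\int_{0}^{S}V(-\tau,\tilde{B}_{\tau}+y)\dif\tau-\lambda S\}$ and turns the $\nabla V$ integral into $\beta\int_{r}^{S}\nabla V(-u,\tilde{B}_{u}+y)\dif u$. Swapping the order of the $\dif r$ and $\dif u$ integrations by Fubini turns $\int_{0}^{S}\dif r\int_{r}^{S}\dif u$ into $\int_{0}^{S}u\dif u$, yielding the crucial identity
\[
\boldsymbol{\omega}(0,y;S)=\nabla_{\xi}|_{\xi=0}\mathbb{E}_{B}^{0}\e^{\beta\int_{0}^{S}V(-\tau,B_{\tau}+y+\tau\xi)\dif\tau-\lambda S},
\]
since the factor $u$ in front of $\nabla V$ is exactly what $\partial_{\xi}|_{\xi=0}$ produces from the replacement $y\mapsto y+\tau\xi$ inside $V$.

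Next, take the $y$-divergence and rename $y$ as $\eta$ to obtain
\[
(\nabla_{y}\cdot\boldsymbol{\omega})(0,0;S)=\nabla_{\eta}|_{\eta=0}\cdot\nabla_{\xi}|_{\xi=0}\mathbb{E}_{B}^{0}\e^{\beta\int_{0}^{S}V(-\tau,B_{\tau}+\eta+\tau\xi)\dif\tau-\lambda S}.
\]
Write $\Phi(0,0;T)$ via \eqref{PhiFK} as $\mathbb{E}_{\hat{B}}^{0}\exp\{\beta\int_{-T}^{0}V(-\tau,\hat{B}_{\tau})\dif\tau-\lambda T\}$, where $\hat{B}$ is an independent Brownian motion evaluated on the negative time interval in the two-sided sense. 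By independence, $\hat{B}$ and $B$ concatenate into a single two-sided Brownian motion on $[-T,S]$. Since $H(\tau)\eta+J(\tau)\xi$ vanishes on $[-T,0]$ and equals $\eta+\tau\xi$ on $[0,S]$, the two $V$-integrals fuse into $\int_{-T}^{S}V(-\tau,B_{\tau}+H(\tau)\eta+J(\tau)\xi)\dif\tau$, yielding the first equality.

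For the second equality, note that for each realization of $B$ and each $\eta,\xi$, the quantity $\beta\mathscr{V}_{0;[-T,S]}[B+H\eta+J\xi]$ is a centered Gaussian with variance $\beta^{2}\mathscr{R}_{[-T,S]}[B+H\eta+J\xi]$; hence integrating out $V$ converts $\mathbf{E}\exp\{\beta\mathscr{V}_{0;[-T,S]}[B+H\eta+J\xi]\}$ into $\exp\{\tfrac{1}{2}\beta^{2}\mathscr{R}_{[-T,S]}[B+H\eta+J\xi]\}$. Commuting the finite-order $\eta,\xi$-derivatives with $\mathbf{E}$ and $\mathbb{E}_{B}^{0}$, justified by the smoothness of $V$ and the uniform moment bounds implicit in the Feynman--Kac setup, completes the proof. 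The main obstacle throughout is the bookkeeping for the time reversal in $V(-\tau,\cdot)$ and for the reindexing after concatenation; once the single-path representation of $\boldsymbol{\omega}$ as a $\xi$-derivative is set up, these manipulations are routine.
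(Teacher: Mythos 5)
Your proposal is correct and follows essentially the same route as the paper: representing $\boldsymbol{\omega}(0,y;S)$ as a $\xi$-derivative of a Feynman--Kac functional with linear drift $\tau\xi$ (the paper passes through the intermediate Heaviside-shift form $H(\tau-r)\xi$ and notes the identity ``by explicit differentiation,'' while you carry out the gluing and the Fubini step $\int_0^S H(\tau-r)\,\dif r=\tau$ explicitly -- the same computation), then taking the $y$-divergence as an $\eta$-derivative, concatenating with $\Phi(0,0;T)$ via the two-sided Brownian motion, and finally integrating out the Gaussian field $V$. No gaps.
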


\begin{proof}
From \eqref{omegaSFK} and \eqref{PsiSdef}, we have
\begin{align}
\boldsymbol{\omega}(0,y;S) & =\mathbb{E}_{B}^{y}\int_{0}^{S}\exp\left\{ \beta\int_{0}^{r}V(-\tau,B_{\tau})\,\dif\tau-\lambda r\right\} \nabla\Psi(-r,B_{r};S)\,\dif r\nonumber \\
 & =\nabla_{\xi}|_{\xi=0}\mathbb{E}_{B}^{y}\int_{0}^{S}\exp\left\{ \beta\int_{0}^{S}V(-\tau,B_{\tau}+H(\tau-r)\xi)\,\dif\tau-\lambda S\right\} \,\dif r.\label{eq:omegaFKexpansion}
\end{align}
One can check by explicit differentiation of both expressions that
the right-hand side of \eqref{omegaFKexpansion} can be re-written
as
\begin{equation}
\boldsymbol{\omega}(0,y;S)=\nabla_{\xi}|_{\xi=0}\mathbb{E}_{B}^{y}\exp\left\{ \beta\int_{0}^{S}V(-\tau,B_{\tau}+\tau\xi)\,\dif\tau-\lambda S\right\} .\label{eq:omegaSFKfirstcomp}
\end{equation}
Taking the divergence and setting $y=0$, we can write
\[
(\nabla_{y}\cdot\boldsymbol{\omega})(0,0;S)=\nabla_{\eta}|_{\eta=0}\cdot\nabla_{\xi}|_{\xi=0}\mathbb{E}_{B}^{0}\exp\left\{ \beta\int_{0}^{S}V(-\tau,B_{\tau}+\eta+\tau\xi)\,\dif\tau-\lambda S\right\} .
\]
Multiplying by \eqref{PhiFK} gives
\begin{align*}
\Phi(0,0;T)(\nabla_{y}\cdot\boldsymbol{\omega}) & (0,0;S)\\
 & =\nabla_{\eta}|_{\eta=0}\cdot\nabla_{\xi}|_{\xi=0}\mathbb{E}_{B}^{0}\exp\left\{ \beta\int_{-T}^{S}V(-\tau,B_{\tau}+H(\tau)\eta+J(\tau)\xi)\,\dif\tau-\lambda(T+S)\right\} .
\end{align*}
(Now we are evaluating $B$ at both positive and negative times.)
Taking the expectation \begin{comment}
and recalling the notation from \eqref{scrRdef} and the discussion
after it
\end{comment}
yields the first equality in \eqref{writeitasaderiv}. The second
inequality then arises from evaluating the expectation.\begin{comment}
\[
\mathbf{E}[\Phi(0,0;T)(\nabla_{y}\cdot\boldsymbol{\omega})(0,0;S)]=\nabla_{\eta}|_{\eta=0}\cdot\nabla_{\xi}|_{\xi=0}\mathbb{E}_{B}^{0}\exp\left\{ \frac{1}{2}\beta^{2}\mathscr{R}_{[-T,S]}[B+H\eta+J\xi]\right\} .
\]
\end{comment}
\end{proof}
It will be useful to write a regularized version of \eqref{writeitasaderiv},
which will later allow us to use the Girsanov formula.
\begin{cor}
We have
\begin{equation}
\begin{aligned}\mathbf{E} & \left[\Phi(0,0;T)(\nabla_{y}\cdot\boldsymbol{\omega})(0,0;S)\right]\\
 & =\lim_{\gamma\downarrow0}\nabla_{\eta}|_{\eta=0}\cdot\nabla_{\xi}|_{\xi=0}\mathbf{E}\mathbb{E}_{B}^{0}\exp\left\{ \int_{-T}^{S}V(-\tau,B_{\tau}+H_{\gamma}(\tau)\eta+J(\tau)\xi)\,\dif\tau-\lambda(T+S)\right\} \\
 & =\nabla_{\eta}|_{\eta=0}\cdot\nabla_{\xi}|_{\xi=0}\mathbb{E}_{B}^{0}\exp\left\{ \frac{1}{2}\beta^{2}\mathscr{R}_{[-T,S]}[B+H_{\gamma}\eta+J\xi]-\lambda(T+S)\right\}.
\end{aligned}
\label{eq:writeitasaderiv-1-2}
\end{equation}
\end{cor}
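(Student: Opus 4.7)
The plan is to split the corollary into its two equalities and handle them separately. The second equality holds for each fixed $\gamma>0$ by the exact Gaussian computation used in the proof of Lemma~\ref{lem:writeitasaderiv}: for fixed Brownian path $B$, the integrand $V(-\tau, B_\tau + H_\gamma(\tau)\eta + J(\tau)\xi)$ is a linear functional of the Gaussian field $V$, so by Fubini and the Laplace-transform formula for Gaussians we get
\begin{equation*}
\mathbf{E}\exp\left\{\beta\int_{-T}^{S}V(-\tau, B_\tau + H_\gamma(\tau)\eta + J(\tau)\xi)\,\dif\tau\right\} = \exp\left\{\tfrac{1}{2}\beta^{2}\mathscr{R}_{[-T,S]}[B+H_\gamma\eta+J\xi]\right\}
\end{equation*}
by the definition \eqref{scrRdef} of $\mathscr{R}$. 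Inserting the $-\lambda(T+S)$, taking $\mathbb{E}_{B}^{0}$, and then applying $\nabla_\eta|_{\eta=0}\cdot\nabla_\xi|_{\xi=0}$ yields the second equality in \eqref{writeitasaderiv-1-2}.

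For the first equality, Lemma~\ref{lem:writeitasaderiv} already identifies $\mathbf{E}[\Phi(0,0;T)(\nabla_y\cdot\boldsymbol{\omega})(0,0;S)]$ with the analogous expression using the true Heaviside $H$ in place of $H_\gamma$. Writing
\begin{equation*}
F_{\gamma}(\eta,\xi)=\mathbb{E}_{B}^{0}\exp\left\{\tfrac{1}{2}\beta^{2}\mathscr{R}_{[-T,S]}[B+H_\gamma\eta+J\xi]-\lambda(T+S)\right\},
\end{equation*}
with the convention $H_0 := H$, it therefore suffices to show that $\nabla_\eta\cdot\nabla_\xi|_{0}F_\gamma\to\nabla_\eta\cdot\nabla_\xi|_{0}F_0$ as $\gamma\downarrow 0$. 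Since $R(s,y)$ is smooth with compact support, explicit differentiation expresses $\nabla_\eta\cdot\nabla_\xi|_{0}F_\gamma$ as $\mathbb{E}_B^0$ of a finite sum whose building blocks are double integrals over $[-T,S]^{2}$ of bounded derivatives of $R(\tau-\tilde\tau,B_\tau-B_{\tilde\tau})$ weighted by products such as $H_\gamma(\tau)J(\tilde\tau)$ and $H_\gamma(\tau)H_\gamma(\tilde\tau)$, all multiplied by the bounded factor $\exp\{\tfrac{1}{2}\beta^{2}\mathscr{R}_{[-T,S]}[B]-\lambda(T+S)\}$. Because $H_\gamma\to H$ pointwise on $\mathbb{R}\setminus\{0\}$, $|H_\gamma|\le 1$, and $J$ is bounded on $[-T,S]$, dominated convergence applied first pathwise in $\tau,\tilde\tau$ and then under $\mathbb{E}_{B}^{0}$ (using the uniform bound on the derivatives of $R$ together with the bound $\mathbb{E}_{B}^{0}\exp\{\tfrac{1}{2}\beta^{2}\mathscr{R}_{[-T,S]}[B]\}<\infty$) gives the desired convergence of the derivatives.

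The only nontrivial step is the interchange of $\lim_{\gamma\downarrow 0}$ with $\nabla_\eta\cdot\nabla_\xi|_{0}$; everything else is Gaussian algebra and Fubini. That interchange is justified by the uniform-in-$\gamma$ domination described above, which works precisely because $H_\gamma$ is uniformly bounded and $R$ has bounded derivatives, so no delicate estimate is needed despite the measure-zero discontinuity of the limiting Heaviside $H$.
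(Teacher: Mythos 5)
Your proposal is correct and follows essentially the same route as the paper: the second equality is the Gaussian computation from Lemma~\ref{lem:writeitasaderiv}, and the first is obtained by writing out the mixed derivative explicitly as double integrals of derivatives of $R$ weighted by increments of $H_\gamma$ and $J$, then passing to the limit $\gamma\downarrow 0$ by bounded convergence (valid since $|H_\gamma|\le 1$, $H_\gamma\to H$ a.e., and $\mathscr{R}_{[-T,S]}[B]$ is deterministically bounded). The only cosmetic slip is that the mixed $\nabla_\eta\cdot\nabla_\xi$ derivative produces $H_\gamma$--$J$ cross terms rather than $H_\gamma(\tau)H_\gamma(\tilde\tau)$ products, but this does not affect the domination argument.
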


\begin{proof}
Similarly to \eqref{writeitasaderiv}, the second equality of \eqref{writeitasaderiv-1-2} is a simple
computation, so it suffices to prove that the first expression is
equal to the third. We write out all of the gradients in the third
expression. Define $\delta f(\tau,\tilde{\tau})=f(\tau)-f(\tilde{\tau})$.
For all $\gamma\ge0$ we have
\begin{align}
\nabla_{\eta}|_{\eta=0}\cdot\nabla_{\xi}|_{\xi=0} & \e^{-\lambda(T+S)}\mathbb{E}_{B}^{y}\exp\left\{ \beta^{2}\mathscr{R}_{[-T,S]}[B+H_{\gamma}\eta+J\xi]\right\} \nonumber \\
 & =\beta^{2}\e^{-\lambda(T+S)}\mathbb{E}_{B}^{y}(g_{1;\gamma}[B]+g_{2;\gamma}[B]\cdot g_{3;\gamma}[B])\exp\left\{ \beta^{2}\mathscr{R}_{[-T,S]}[B]\right\} ,\label{eq:lotsofgs}
\end{align}
where we define
\begin{align*}
g_{1;\gamma}[B] & =\iint_{[-2,2]^{2}}\delta H_{\gamma}(\tau,\tilde{\tau})\delta J(\tau,\tilde{\tau})\Delta R(\tau-\tilde{\tau},\delta B(\tau,\tilde{\tau}))\,\dif\tau\,\dif\tilde{\tau},\\
g_{2;\gamma}[B] & =\iint_{[-S,-T]^{2}}\delta J(\tau,\tilde{\tau})\nabla R(\tau-\tilde{\tau},\delta B(\tau,\tilde{\tau}))\,\dif\tau\,\dif\tilde{\tau},\\
g_{3;\gamma}[B] & =\iint_{[-2,2]^{2}}\delta H_{\gamma}(\tau,\tilde{\tau})\nabla R(\tau-\tilde{\tau},\delta B(\tau,\tilde{\tau}))\,\dif\tau\,\dif\tilde{\tau}.
\end{align*}
Here we have used the fact that $R(s,y)=0$ whenever $s\ne[-1,1]$.
Then the bounded convergence theorem implies the right-hand side of
\eqref{lotsofgs} is continuous in $\gamma$, so
\begin{align*}
\nabla_{\eta}|_{\eta=0}\cdot\nabla_{\xi}|_{\xi=0}\lim_{\gamma\downarrow0} & \e^{-\lambda(T+S)}\mathbb{E}_{B}^{y}\exp\left\{ \beta^{2}\mathscr{R}_{[-T,S]}[B+H_{\gamma}\eta+J\xi]\right\} \\
 & =\nabla_{\eta}|_{\eta=0}\cdot\nabla_{\xi}|_{\xi=0}\e^{-\lambda(T+S)}\mathbb{E}_{B}^{y}\exp\left\{ \beta^{2}\mathscr{R}_{[-T,S]}[B+H\eta+J\xi]\right\} ,
\end{align*}
and the result follows from \lemref{writeitasaderiv}.
\end{proof}
\begin{lem}
\label{lem:aSTCM}We have
\begin{equation}
a_{S,T}(0,0)=1+2\lim_{\gamma\downarrow0}\widehat{\mathbb{E}}_{B;[-T,S]}^{0}\left(\frac{1}{\gamma d}B_{S}\cdot B_{\gamma}-1\right).\label{eq:aSTregularized}
\end{equation}
\end{lem}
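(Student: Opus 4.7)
The plan is to reduce both the numerator and denominator of $a_{S,T}(0,0)$ in \eqref{aSTdef} to expectations over a single two-sided Brownian motion, then apply the Cameron--Martin formula to handle the shifts $H_\gamma \eta$ and $J\xi$ that appear in \eqref{writeitasaderiv-1-2}, and finally compute the mixed derivative $\nabla_\eta|_{\eta=0}\cdot\nabla_\xi|_{\xi=0}$ explicitly. The denominator is immediate: multiplying the Feynman--Kac formulas \eqref{PsiSdef} and \eqref{PhiFK}, taking $\mathbf{E}$, and using independence of the two halves of the two-sided Brownian motion at time $0$ yields
\[
\mathbf{E}[\Phi(0,0;T)\Psi(0,0;S)] = \mathbb{E}_{B}^{0}\exp\left\{\tfrac{1}{2}\beta^{2}\mathscr{R}_{[-T,S]}[B] - \lambda(T+S)\right\}.
\]

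For the numerator, I would start from \eqref{writeitasaderiv-1-2} at fixed $\gamma>0$. Since both $H_\gamma(\tau)\eta$ and $J(\tau)\xi$ vanish for $\tau\le 0$, the shift affects only the positive-time portion of $B$, and the weak derivatives are $\dot{H}_\gamma(\tau)\eta = (\eta/\gamma)\mathbf{1}_{[0,\gamma]}(\tau)$ and $\dot{J}(\tau)\xi = \xi\mathbf{1}_{(0,S]}(\tau)$. The Cameron--Martin theorem then gives
\[
\mathbb{E}_{B}^{0}F[B + H_\gamma\eta + J\xi] = \mathbb{E}_{B}^{0}\!\left[F[B]\exp\left\{\tfrac{\eta\cdot B_\gamma}{\gamma} + \xi\cdot B_S - \tfrac{|\eta|^2}{2\gamma} - \tfrac{|\xi|^2 S}{2} - \eta\cdot\xi\right\}\right],
\]
with the cross term $-\eta\cdot\xi$ coming from $\int_0^\gamma(\eta/\gamma)\cdot\xi\,\dif\tau$. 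Differentiating the Girsanov density directly, one finds
\[
\nabla_\eta\big|_{\eta=0}\cdot\nabla_\xi\big|_{\xi=0}\exp\left\{\tfrac{\eta\cdot B_\gamma}{\gamma} + \xi\cdot B_S - \tfrac{|\eta|^2}{2\gamma} - \tfrac{|\xi|^2 S}{2} - \eta\cdot\xi\right\} = \tfrac{B_\gamma\cdot B_S}{\gamma} - d,
\]
where the $-d$ comes from the $\xi$-derivative of $-\eta\cdot\xi$ summed over the $d$ coordinates.

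Combining the two displays, the numerator in \eqref{aSTdef} is
\[
\lim_{\gamma\downarrow 0}\mathbb{E}_{B}^{0}\exp\left\{\tfrac{1}{2}\beta^{2}\mathscr{R}_{[-T,S]}[B] - \lambda(T+S)\right\}\left(\tfrac{B_\gamma\cdot B_S}{\gamma} - d\right).
\]
Dividing by the denominator, the $\e^{-\lambda(T+S)}$ prefactor cancels, and the remaining ratio is exactly the tilted expectation from \eqref{tilting}, producing
\[
\frac{\mathbf{E}[\Phi(0,0;T)(\nabla_y\cdot\boldsymbol{\omega})(0,0;S)]}{\mathbf{E}[\Phi(0,0;T)\Psi(0,0;S)]} = \lim_{\gamma\downarrow 0}\widehat{\mathbb{E}}_{B;[-T,S]}^{0}\left(\tfrac{B_\gamma\cdot B_S}{\gamma} - d\right).
\]
Substituting into \eqref{aSTdef} and pulling the factor $2/d$ inside the expectation gives \eqref{aSTregularized}.

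The only mildly delicate point is the interchange of the $(\eta,\xi)$-derivatives with $\mathbb{E}_B^0$, which is routine at fixed $\gamma>0$: on a compact neighborhood of the origin the Girsanov density has a polynomial-times-bounded-exponential form, and $\exp\{\tfrac12\beta^2\mathscr{R}_{[-T,S]}[B]\}$ has all moments (this is in the spirit of the bounded-convergence justification already used in the proof of \eqref{writeitasaderiv-1-2}). The regularization $\gamma>0$ is essential, since $\dot H$ would be a Dirac mass at $0$ and the stochastic integral $\int\dot h\,\dif B$ would be ill-defined; the outer limit $\gamma\downarrow 0$ is simply inherited from \eqref{writeitasaderiv-1-2}.
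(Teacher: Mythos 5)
Your proposal is correct and follows essentially the same route as the paper: starting from the regularized identity \eqref{writeitasaderiv-1-2}, applying Girsanov/Cameron--Martin to convert the shifts $H_\gamma\eta+J\xi$ into the exponential density, computing the mixed derivative to obtain $\gamma^{-1}B_S\cdot B_\gamma-d$, and dividing by the Feynman--Kac expression for $\mathbf{E}[\Phi(0,0;T)\Psi(0,0;S)]$ to recognize the tilted expectation $\widehat{\mathbb{E}}_{B;[-T,S]}^{0}$. Your version is, if anything, slightly more careful about the $\tfrac12\beta^2$ normalization in the denominator and about justifying the derivative--expectation interchange.
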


\begin{proof}
To address the numerator of \eqref{aSTdef}, continue from \eqref{writeitasaderiv-1-2}
and use the Girsanov formula, writing
\begin{align*}
\nabla_{\eta}|_{\eta=0}\cdot & \nabla_{\xi}|_{\xi=0}\mathbb{E}_{B}^{0}\exp\left\{ \beta\int_{-T}^{S}V(-\tau,B_{\tau}+H_{\gamma}(\tau)\eta+J(\tau)\xi)\,\dif\tau-\lambda(T+S)\right\} \\
 & =\nabla_{\eta}|_{\eta=0}\cdot\nabla_{\xi}|_{\xi=0}\mathbb{E}_{B}^{0}\exp\left\{ \beta\int_{-T}^{S}V(-\tau,B_{\tau})\,\dif\tau-\right.\\
 & \qquad\qquad\qquad\qquad\qquad\qquad\qquad\left.-\lambda(T+S)+\frac{1}{\gamma}B_{\gamma}\cdot\eta-\frac{1}{2\gamma}|\eta|^{2}-\xi\cdot\eta+B_{S}\cdot\xi-\frac{1}{2}|\xi|^{2}S\right\} \\
 & =\mathbb{E}_{B}^{0}\left(\frac{1}{\gamma}B_{S}\cdot B_{\gamma}-d\right)\exp\left\{ \beta\int_{-T}^{S}V(-\tau,B_{\tau})\,\dif\tau-\lambda(T+S)\right\} .
\end{align*}
Passing to the limit as $\gamma\downarrow0$ and taking expectations
shows that
\begin{equation}
\mathbf{E}[\Phi(0,0;T)(\nabla\cdot\boldsymbol{\omega})(0,0;S)]=\e^{-\lambda(T+S)}\lim_{\gamma\downarrow0}\mathbb{E}_{B}^{0}\left(\gamma^{-1}B_{S}\cdot B_{\gamma}-d\right)\exp\left\{ \beta^{2}\mathscr{R}_{[-T,S]}[B]\right\} .\label{eq:astnumerator}
\end{equation}

For the denominator of \eqref{aSTdef}, we write
\[
\Phi(0,0;T)\Psi(0,0;S)=\mathbb{E}_{B}^{0}\exp\left\{ \beta\mathscr{V}_{0;[-T,S]}[B]-\lambda(T+S)\right\} 
\]
(where again we use the interpretation of $B$ as a two-sided Brownian
motion), so
\begin{equation}
\mathbf{E}\Phi(0,0;T)\Phi(0,0;S)=\e^{-\lambda(T+S)}\mathbb{E}_{B}^{0}\exp\left\{ \beta^{2}\mathscr{R}_{[-T,S]}[B]\right\} .\label{eq:astdenominator}
\end{equation}
Dividing \eqref{astnumerator} by \eqref{astdenominator} yields \eqref{aSTregularized}.
\end{proof}
\begin{lem}
\label{lem:smallscalesd}We have
\[
\lim_{\gamma\downarrow0}\frac{1}{\gamma d}\widehat{\mathbb{E}}_{B;[-T,S]}^{0}|B_{\gamma}|^{2}=1,
\]
uniformly in $S$ and $T$.
\end{lem}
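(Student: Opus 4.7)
The plan is to derive an exact identity for $\widehat{\mathbb{E}}_{B;[-T,S]}^0 |B_\gamma|^2$ via Cameron--Martin, using the piecewise-linear shift $\tilde B_t = B_t + H_\gamma(t)\eta$ with $\eta \in \mathbb{R}^d$; this is the $\xi=0$ specialization of the two-parameter shift already used in the proof of \lemref{aSTCM}. Because $H_\gamma$ vanishes for $t \le 0$ and has $\dot H_\gamma(t) = \gamma^{-1}\mathbf{1}_{[0,\gamma]}(t)$, the Cameron--Martin Radon--Nikodym density is $\exp\{\gamma^{-1}\eta \cdot B_\gamma - |\eta|^2/(2\gamma)\}$, giving
\begin{equation*}
\mathbb{E}_B^0 \exp\left\{\tfrac{\beta^2}{2}\mathscr{R}_{[-T,S]}[B + H_\gamma \eta]\right\} = \mathbb{E}_B^0 \exp\left\{\tfrac{\beta^2}{2}\mathscr{R}_{[-T,S]}[B] + \tfrac{1}{\gamma}\eta \cdot B_\gamma - \tfrac{|\eta|^2}{2\gamma}\right\}.
\end{equation*}

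Next I would apply $\sum_{i=1}^d \partial_{\eta_i}^2|_{\eta=0}$ to both sides and divide by $Z_{[-T,S]}$. Direct differentiation of the Gaussian factor on the right produces $\widehat{\mathbb{E}}_{B;[-T,S]}^0[|B_\gamma|^2/\gamma^2 - d/\gamma]$, while the chain rule applied to the left yields
\begin{equation*}
\widehat{\mathbb{E}}_{B;[-T,S]}^0 \sum_{i=1}^d \left[\tfrac{\beta^4}{4}\bigl(\partial_{\eta_i}\mathscr{R}_{[-T,S]}[B+H_\gamma\eta]|_{\eta=0}\bigr)^2 + \tfrac{\beta^2}{2}\partial_{\eta_i}^2 \mathscr{R}_{[-T,S]}[B+H_\gamma\eta]|_{\eta=0}\right].
\end{equation*}
After multiplying by $\gamma/d$, the claim $\widehat{\mathbb{E}}|B_\gamma|^2/(\gamma d) \to 1$ therefore reduces to bounding this expectand by a constant $C$ independent of $T$, $S$, $\gamma$, and the path $B$, so that the entire quantity is $O(\gamma)$ uniformly.

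The key step will be establishing this uniform bound, which rests on the compact support of $R(\tau-\tilde\tau,\cdot)$ in the time variable. Writing out
\begin{equation*}
\partial_{\eta_i}\mathscr{R}_{[-T,S]}[B+H_\gamma\eta]|_{\eta=0} = \int_{-T}^S \int_{-T}^S \bigl(H_\gamma(\tau) - H_\gamma(\tilde\tau)\bigr) \partial_{y_i} R(\tau - \tilde\tau, B_\tau - B_{\tilde\tau}) \, \dif\tau \, \dif\tilde\tau,
\end{equation*}
and analogously for $\partial_{\eta_i}^2\mathscr{R}$ (with $(H_\gamma(\tau) - H_\gamma(\tilde\tau))^2 \partial_{y_i}^2 R$ in the integrand), I observe that the integrand vanishes unless both $|\tau - \tilde\tau| \le 1$ (support of $R$ in time) and $H_\gamma(\tau) \neq H_\gamma(\tilde\tau)$ (so $(\tau, \tilde\tau)$ lies close to the transition layer $[0,\gamma]$ of $H_\gamma$). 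Together these constraints confine the integration to a bounded subset of $\mathbb{R}^2$ independent of $T$ and $S$, and since $H_\gamma \in [0,1]$ and $R$ is smooth with bounded derivatives, both $\partial_{\eta_i}\mathscr{R}$ and $\partial_{\eta_i}^2 \mathscr{R}$ are bounded pathwise by a constant depending only on $R$. This gives the $O(\gamma)$ rate and hence the lemma; the only mild technical point is the geometric verification that the support of the integrand does not grow with $T$ or $S$, which is precisely where the assumption of finite-range time correlations enters.
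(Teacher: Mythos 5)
Your proof is correct, but it takes a genuinely different route from the paper's. You derive an exact identity by applying $\sum_i\partial_{\eta_i}^2|_{\eta=0}$ to both sides of the Cameron--Martin relation for the shift $H_\gamma\eta$, so that $\widehat{\mathbb{E}}_{B;[-T,S]}^0\bigl[\gamma^{-2}|B_\gamma|^2-d\gamma^{-1}\bigr]$ equals the tilted expectation of $\sum_i\bigl[\frac{\beta^4}{4}(\partial_{\eta_i}\mathscr{R})^2+\frac{\beta^2}{2}\partial_{\eta_i}^2\mathscr{R}\bigr]$, and you then bound that expectand pathwise by a constant using the finite range of $R$ in time together with the fact that $H_\gamma(\tau)\ne H_\gamma(\tilde\tau)$ confines $(\tau,\tilde\tau)$ to a bounded set; this gives the $O(\gamma)$ rate uniformly in $S,T$. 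The paper instead argues by decoupling: it introduces a Brownian motion $\widetilde B$ agreeing with $B$ outside $[0,\gamma]$ but with independent increments on $[0,\gamma]$, so that $\mathscr{R}_{[-T,S]}[\widetilde B]$ is independent of $B_\gamma$ and the free value $\mathbb{E}_B^0|B_\gamma|^2=\gamma d$ can be subtracted exactly; the residual $\exp\{\frac{\beta^2}{2}\mathscr{R}[B]\}-\exp\{\frac{\beta^2}{2}\mathscr{R}[\widetilde B]\}$ is then controlled by Cauchy--Schwarz, again using that $B$ and $\widetilde B$ differ only near time $0$ and that $R$ has range $1$ in time. Both arguments rest on the same locality of $R$; yours has the advantage of meshing directly with the Girsanov computation already performed in \lemref{aSTCM} (your derivative terms are essentially the functionals $g_{1;\gamma},g_{3;\gamma}$ appearing there) and of producing an explicit formula for the $O(\gamma)$ correction, while the paper's avoids differentiating under the expectation. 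The one point you should make explicit is the justification of that interchange of $\partial_\eta^2$ with $\mathbb{E}_B^0$ on both sides; it is routine dominated convergence given your pathwise bounds (note $|\mathscr{R}[B+H_\gamma\eta]-\mathscr{R}[B]|\le C$ uniformly, and $|B_\gamma|^2$ has moments under the tilted measure), and is of the same nature as the interchange the paper performs in the corollary following \lemref{writeitasaderiv}.
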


\begin{proof}
We have
\begin{align*}
\widehat{\mathbb{E}}_{B;[-T,S]}^{0}|B_{\gamma}|^{2}-\mathbb{E}_{B}^{0}|B_{\gamma}|^{2} & =\mathbb{E}_{B}^{0}|B_{\gamma}|^{2}\left(\frac{1}{Z_{[-T,S]}}\exp\left\{ \frac{1}{2}\beta^{2}\mathscr{R}_{[-T,S]}[B]\right\} -1\right)\\
 & =\frac{1}{Z_{[-T,S]}}\mathbb{E}_{B}^{0}|B_{\gamma}|^{2}\left(\exp\left\{ \frac{1}{2}\beta^{2}\mathscr{R}_{[-T,S]}[B]\right\} -\exp\left\{ \frac{1}{2}\beta^{2}\mathscr{R}_{[-T,S]}[\widetilde{B}]\right\} \right),
\end{align*}
where $\widetilde{B}$ is a Brownian motion whose increments on $[-T,0]$
and $[\gamma,S]$ are identical to those of $B$ and whose increments
on $[0,\gamma]$ are independent of those of $B$. (Thus the second
equality is because $\mathscr{R}_{[-T,S]}[\widetilde{B}]$ is independent
of $B_{\gamma}$.) This means that
\begin{align*}
\left|\widehat{\mathbb{E}}_{B;[-T,S]}^{0}|B_{\gamma}|^{2}-\mathbb{E}_{B}^{0}|B_{\gamma}|^{2}\right| & =\frac{1}{Z_{[-T,S]}}\mathbb{E}_{B}^{0}\left(\exp\left\{ \beta^{2}\mathscr{R}_{[-T,0]}[B]\right\} +\exp\left\{ \beta^{2}\mathscr{R}_{[\gamma,S]}[B]\right\} \right)\\
 & \qquad\times\mathbb{E}_{B}^{0}|B_{\gamma}|^{2}\left|\exp\left\{ 2\beta^{2}\int_{-1}^{\gamma}\int_{\tau\vee0}^{1}R(\tau-\tilde{\tau},B_{\tau}-B_{\tilde{\tau}})\,\dif\tilde{\tau}\,\dif\tau\right\} \right.\\
 & \qquad\qquad\qquad\qquad\left.-\exp\left\{ 2\beta^{2}\int_{-1}^{\gamma}\int_{\tau\vee0}^{1}R(\tau-\tilde{\tau},\widetilde{B}_{\tau}-\widetilde{B}_{\tilde{\tau}})\,\dif\tilde{\tau}\,\dif\tau\right\} \right|\\
 & \le C(\mathbb{E}_{B}^{0}|B_{\gamma}|^{4})^{1/2}(\mathbb{E}_{B}^{0}(\exp\{4\beta^{2}\max_{0\le s\le\gamma}|B_{s}-\widetilde{B}_{s}|\}-1)^{2})^{1/2}\le C\gamma^{2},
\end{align*}
where $C$ is a constant that may depend on $\beta$ and $R$. Since
$\mathbb{E}_{B}^{0}|B_{\gamma}|^{2}=\gamma d$, this proves the lemma.
\end{proof}
\begin{cor}
\label{cor:aSTregularize}We have
\[
a_{S,T}(0,0)=\lim_{\gamma\downarrow0}a_{S,T;\gamma},
\]
where
\begin{equation}
a_{S,T;\gamma}=1+\frac{2}{d\gamma}\widehat{\mathbb{E}}_{B;[-T,S]}^{0}(B_{S}-B_{\gamma})\cdot B_{\gamma}.\label{eq:aSTgdef}
\end{equation}
\end{cor}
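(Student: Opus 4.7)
The corollary is essentially an algebraic rearrangement of the expression from \lemref{aSTCM} using \lemref{smallscalesd}, so the plan is to split the inner product $B_S\cdot B_\gamma$ into a ``cross'' piece and a ``diagonal'' piece and use the small-scale asymptotics to control the latter. Specifically, I write
\[
\frac{1}{\gamma d}B_S\cdot B_\gamma - 1 = \frac{1}{\gamma d}(B_S - B_\gamma)\cdot B_\gamma + \left(\frac{1}{\gamma d}|B_\gamma|^2 - 1\right),
\]
and apply the tilted expectation $\widehat{\mathbb{E}}_{B;[-T,S]}^0$ to both sides.

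The plan is then as follows. First I would insert this identity into the formula from \lemref{aSTCM}, obtaining
\[
a_{S,T}(0,0) = 1 + 2\lim_{\gamma\downarrow 0}\widehat{\mathbb{E}}_{B;[-T,S]}^0\left[\frac{1}{\gamma d}(B_S-B_\gamma)\cdot B_\gamma\right] + 2\lim_{\gamma\downarrow 0}\left[\frac{1}{\gamma d}\widehat{\mathbb{E}}_{B;[-T,S]}^0|B_\gamma|^2 - 1\right].
\]
By \lemref{smallscalesd} the last limit is $0$, so the remaining limit (if it exists) equals $\lim_{\gamma\downarrow 0}[a_{S,T;\gamma} - 1]/2 \cdot 2 = \lim_{\gamma\downarrow 0}(a_{S,T;\gamma}-1)$, which gives the claim after reinstating the $+1$.

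There is essentially no obstacle: the only subtlety is making sure the two limits can be separated (i.e., that each piece individually has a limit, not just the sum). This follows because \lemref{aSTCM} guarantees the existence of the limit of the sum, while \lemref{smallscalesd} gives the limit of the diagonal piece on its own; hence the cross piece $\frac{1}{\gamma d}\widehat{\mathbb{E}}_{B;[-T,S]}^0(B_S-B_\gamma)\cdot B_\gamma$ also converges, and equals $\tfrac12(a_{S,T}(0,0)-1)$. Rearranging yields the identity $a_{S,T}(0,0) = \lim_{\gamma\downarrow 0}a_{S,T;\gamma}$ with $a_{S,T;\gamma}$ as defined in \eqref{aSTgdef}.
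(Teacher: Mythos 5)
Your proposal is correct and is exactly the argument the paper intends: the paper's proof simply states that the corollary follows from \lemref{aSTCM} and \lemref{smallscalesd}, and your decomposition $B_S\cdot B_\gamma=(B_S-B_\gamma)\cdot B_\gamma+|B_\gamma|^2$ together with the observation that the diagonal piece tends to $0$ by \lemref{smallscalesd} (so the cross piece inherits the limit guaranteed by \lemref{aSTCM}) is precisely how that deduction goes.
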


\begin{proof}
This is a simple consequence of \lemref{aSTCM} and \lemref{smallscalesd}.
\end{proof}
\begin{lem}
\label{lem:limitexists}The limit
\begin{equation}
\lim_{\substack{T\to\infty\\
S\to\infty
}
}a_{S,T}(0,0)\label{eq:aSTlimit}
\end{equation}
exists.
\end{lem}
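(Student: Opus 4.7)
The plan is to prove existence of the limit \eqref{aSTlimit} by combining the representation of the tilted path measure as a Markov chain (developed in \secref{GRZreview}) with the regularization provided by \corref{aSTregularize}. The strategy is to show that $a_{S,T;\gamma}$ is Cauchy as $S,T\to\infty$ uniformly in $\gamma$ small, and then to invoke \lemref{smallscalesd} to interchange the $\gamma\downarrow 0$ limit with the $S,T\to\infty$ limit.

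First, I would rewrite the quantity $\widehat{\mathbb{E}}_{B;[-T,S]}^{0}(B_{S}-B_{\gamma})\cdot B_{\gamma}$ in \eqref{aSTgdef} using \thmref{BstoWs}, splitting the tilted two-sided Brownian motion on $[-T,S]$ into a forward Markov chain on $[0,S]$ and a (distributionally identical, by stationarity of $V$) backward chain on $[-T,0]$, coupled only through edge functionals $\mathscr{G}$ acting on a bounded interval near the boundaries. The quantities $B_{\gamma}$ and $B_{S}$ become functionals of this coupled pair of chains. Because the Doeblin condition (item 3 of \thmref{BstoWs}) and \lemref{Ws} guarantee that after finitely many regeneration times the chain sits at its stationary distribution $\overline{\pi}$, and because \lemref{cutofftail} shows that the edge-tilting functional $\mathscr{G}$ may be inserted or removed at exponential cost, the expectation has the form
\begin{equation*}
\widehat{\mathbb{E}}_{B;[-T,S]}^{0}(B_{S}-B_{\gamma})\cdot B_{\gamma}=A(\gamma)+\mathcal{E}(S,T;\gamma),
\end{equation*}
where $A(\gamma)$ is defined using stationary chunks and edge functionals independent of $S,T$, and $\mathcal{E}(S,T;\gamma)\to 0$ exponentially as $S\wedge T\to\infty$.

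Second, to make the Cauchy argument work uniformly in $\gamma$, I would use \lemref{exptails} to control the tail of the Markov-chain chunks containing times $0$ and $\gamma$, together with \lemref{smallscalesd} which shows that $\gamma^{-1}\widehat{\mathbb{E}}_{B;[-T,S]}^{0}|B_{\gamma}|^{2}\to 1$ uniformly in $S,T$. Combined with Cauchy--Schwarz to handle the cross term $(B_{S}-B_{\gamma})\cdot B_{\gamma}$, these imply that the error $\mathcal{E}(S,T;\gamma)/\gamma$ decays as $S\wedge T\to\infty$ uniformly in small $\gamma$. Hence $\{a_{S,T;\gamma}\}_{S,T}$ is Cauchy uniformly in $\gamma$, so $\lim_{S,T\to\infty} a_{S,T;\gamma}$ exists for each $\gamma>0$ and the convergence is uniform in $\gamma\in(0,\gamma_0]$. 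By \corref{aSTregularize} we may then interchange limits to conclude that $\lim_{S,T\to\infty}a_{S,T}(0,0)=\lim_{\gamma\downarrow 0}\lim_{S,T\to\infty}a_{S,T;\gamma}$ exists.

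The principal technical obstacle is the interaction between the microscopic scale $\gamma\downarrow 0$ (at which $B_{\gamma}$ fluctuates) and the edge effects of the tilting $\widehat{\mathbb{E}}_{B;[-T,S]}^{0}$ near time $0$. The Markov-chain representation decouples times far from the boundary excellently, but $B_{\gamma}$ sits inside the first regeneration block; the uniform-in-$S,T$ control in \lemref{smallscalesd} is what allows us to replace the tilted small-time increment with its Brownian counterpart, at the cost of an error that is negligible after dividing by $\gamma$. Once this uniformity is established, the existence of the limit follows from elementary Cauchy-sequence considerations.
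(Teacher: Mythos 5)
Your overall strategy is the same as the paper's: pass to the Markov-chain representation, use the regeneration structure to show that the $S,T$-dependence of $\widehat{\mathbb{E}}_{B;[-T,S]}^{0}(B_{S}-B_{\gamma})\cdot B_{\gamma}$ is exponentially small, and conclude via \corref{aSTregularize}. The paper packages the decorrelation as the single covariance bound \eqref{farawaydoesntmatter}: for $\tau_{1}<\tau_{2}<\tau_{3}<\tau_{4}\le\tau_{5}$ one has $\widehat{\mathbb{E}}_{B;\tau_{5}}(B_{\tau_{4}}-B_{\tau_{3}})\cdot(B_{\tau_{2}}-B_{\tau_{1}})\le C\e^{-c(\tau_{3}-\tau_{2})}$, proved by removing the even functional $\mathscr{G}$ and observing that, conditional on a regeneration occurring in $(\tau_{2},\tau_{3})$, the post-regeneration increment is isotropic, so its conditional expectation against $(B_{\tau_{2}}-B_{\tau_{1}})$ vanishes \emph{exactly} (this is \eqref{incBincW}).

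The one step of your sketch that would fail as written is the claim that ``Cauchy--Schwarz to handle the cross term $(B_{S}-B_{\gamma})\cdot B_{\gamma}$'' yields control of $\mathcal{E}(S,T;\gamma)/\gamma$ uniformly in small $\gamma$. Any estimate that separates off the factor $B_{\gamma}$ in absolute value pays $(\widehat{\mathbb{E}}_{B;[-T,S]}^{0}|B_{\gamma}|^{2})^{1/2}\sim(\gamma d)^{1/2}$ --- this is precisely what \lemref{smallscalesd} tells you --- so after dividing by $\gamma$ you retain a factor $\gamma^{-1/2}$, and a bound of the form $C\gamma^{-1/2}\e^{-c(S\wedge T)}$ is useless along the $\gamma\downarrow 0$ limit that defines $a_{S,T}(0,0)$; your Moore--Osgood interchange then has nothing to work with. (And this uniformity really is needed: since $a_{S,T}(0,0)=\lim_{\gamma\downarrow0}a_{S,T;\gamma}$, the Cauchy property for $a_{S,T}$ requires a bound on $a_{S,T;\gamma}-a_{S',T';\gamma}$ that survives $\gamma\downarrow0$.) What rescues the argument is not mixing plus an absolute-value estimate but the exact cancellation above: once a regeneration occurs after the chunk containing $[0,\gamma]$, all later increments contribute zero to the cross term, so the entire dependence on $S$ (and, symmetrically, on $T$) is confined to the exponentially unlikely event that no regeneration has occurred by time $S$. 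You should therefore replace the Cauchy--Schwarz step by the identity \eqref{incBincW} before taking any absolute values; with that substitution, the rest of your outline (the decomposition into $A(\gamma)$ plus an exponentially small remainder, the removal of $\mathscr{G}$ via \lemref{cutofftail}, and the limit interchange) matches the paper's argument.
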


\begin{proof}
We have, for any $\tau_{1}<\tau_{2}<\tau_{3}<\tau_{4}\le\tau_{5}$,
\begin{align}
\widehat{\mathbb{E}}_{B;\tau_{5}}(B_{\tau_{4}}-B_{\tau_{3}})\cdot(B_{\tau_{2}}-B_{\tau_{1}}) & =\widehat{\mathbb{E}}_{W}(W_{\tau_{4}}-W_{\tau_{5}})\cdot(W_{\tau_{2}}-W_{\tau_{1}})\mathscr{G}(w_{\lfloor\tau_{5}\rfloor-1})\nonumber \\
 & =\widehat{\mathbb{E}}_{W}(W_{\tau_{4}\wedge\sigma}-W_{\tau_{3}\wedge\sigma})\cdot(W_{\tau_{2}}-W_{\tau_{1}}),\label{eq:incBincW}
\end{align}
where $\sigma$ is the first regeneration time after $\tau_{4}$ and
the second equality comes from the fact that $\mathscr{G}$ is even
and the increments of $W$ after a regeneration time are isotropic.
This makes it clear that there are constants $0<c,C<\infty$ so that
\begin{equation}
\widehat{\mathbb{E}}_{B;\tau_{5}}(B_{\tau_{4}}-B_{\tau_{3}})\cdot(B_{\tau_{2}}-B_{\tau_{1}})\le C\e^{-c(\tau_{3}-\tau_{2})},\label{eq:farawaydoesntmatter}
\end{equation}
since the increments of $W$ have exponential tails and, conditional
on there being a regeneration time in $(\tau_{2},\tau_{3})$, the
expectation of the right-hand side of \eqref{incBincW} is $0$. Then
it follows from \corref{aSTregularize} that $a_{S,T}$ is Cauchy
in $S$ and also in $T$, and thus the limit \eqref{aSTlimit} exists.
\end{proof}
Now we prove \thmref{aSTthm}.
\begin{proof}[Proof of \thmref{aSTthm}.]
We have, using \eqref{adef}, \eqref{farawaydoesntmatter}, and \lemref{cutofftail},
that
\begin{equation}
a=\lim_{U\to\infty}\frac{1}{dU}\widetilde{\mathbb{E}}_{W}(W_{3U}-W_{0})\cdot(W_{2U}-W_{U})=\lim_{U\to\infty}\frac{1}{dU}\widehat{\mathbb{E}}_{B;3U}^{0}(B_{3U}-B_{0})\cdot(B_{2U}-B_{U}).\label{eq:aU}
\end{equation}
Define
\[
\tau_{j}^{(\gamma)}=(U+j\gamma)\wedge2U
\]
and note that
\[
B_{2U}-B_{U}=\sum_{j=1}^{\lceil U/\gamma\rceil-1}(B_{\tau_{j+1}^{(\gamma)}}-B_{\tau_{j}^{(\gamma)}}).
\]
Substituting this into \eqref{aU} yields
\begin{multline*}
a=\lim_{U\to\infty}\frac{1}{dU}\lim_{\gamma\downarrow0}\widehat{\mathbb{E}}_{B;3U}^{0}(B_{3U}-B_{0})\cdot\sum_{j=0}^{\lceil U/\gamma\rceil-1}(B_{\tau_{j+1}^{(\gamma)}}-B_{\tau_{j}^{(\gamma)}})\\
=\lim_{U\to\infty}\frac{1}{dU}\lim_{\gamma\downarrow0}\sum_{j=0}^{\lceil U/\gamma\rceil-1}\widehat{\mathbb{E}}_{B;3U}^{0}\left((B_{3U}-B_{\tau_{j+1}^{(\gamma)}})+(B_{\tau_{j+1}^{(\gamma)}}-B_{\tau_{j}^{(\gamma)}})+(B_{\tau_{j}^{(\gamma)}}-B_{0})\right)\cdot(B_{\tau_{j+1}^{(\gamma)}}-B_{\tau_{j}^{(\gamma)}}).
\end{multline*}
Now by \lemref{smallscalesd}, we have
\[
\lim_{U\to\infty}\frac{1}{dU}\lim_{\gamma\downarrow0}\sum_{j=0}^{\lceil U/\gamma\rceil-1}\widehat{\mathbb{E}}_{B;3U}^{0}(B_{\tau_{j+1}^{(\gamma)}}-B_{\tau_{j}^{(\gamma)}})\cdot(B_{\tau_{j+1}^{(\gamma)}}-B_{\tau_{j}^{(\gamma)}})=1.
\]
Moreover, we have by \eqref{aSTgdef} that
\begin{align*}
\widehat{\mathbb{E}}_{B;3U}^{0}(B_{3U}-B_{\tau_{j+1}^{(\gamma)}})\cdot(B_{\tau_{j+1}^{(\gamma)}}-B_{\tau_{j}^{(\gamma)}}) & =\frac{\gamma d}{2}(a_{3U-\tau_{j}^{(\gamma)},\tau_{j}^{(\gamma)};\tau_{j+1}^{(\gamma)}-\tau_{j}^{(\gamma)}}-1);\\
\widehat{\mathbb{E}}_{B;3U}^{0}(B_{\tau_{j}^{(\gamma)}}-B_{0})\cdot(B_{\tau_{j+1}^{(\gamma)}}-B_{\tau_{j}^{(\gamma)}}) & =\frac{\gamma d}{2}(a_{\tau_{j+1}^{(\gamma)},3U-\tau_{j+1}^{(\gamma)};\tau_{j+1}^{(\gamma)}-\tau_{j}^{(\gamma)}}-1).
\end{align*}
Therefore,
\begin{align*}
a & =1+\lim_{U\to\infty}\frac{1}{dU}\lim_{\gamma\downarrow0}\sum_{j=0}^{\lceil U/\gamma\rceil-1}\left(\frac{\gamma d}{2}(a_{3U-\tau_{j}^{(\gamma)},\tau_{j}^{(\gamma)};\tau_{j+1}^{(\gamma)}-\tau_{j}^{(\gamma)}}-1)+\frac{\gamma d}{2}(a_{\tau_{j+1}^{(\gamma)},3U-\tau_{j+1}^{(\gamma)};\tau_{j+1}^{(\gamma)}-\tau_{j}^{(\gamma)}}-1)\right)\\
 & =\lim_{U\to\infty}\frac{1}{U}\lim_{\gamma\downarrow0}\frac{\gamma}{2}\sum_{j=0}^{\lceil U/\gamma\rceil-1}(a_{3U-\tau_{j}^{(\gamma)},\tau_{j}^{(\gamma)};\tau_{j+1}^{(\gamma)}-\tau_{j}^{(\gamma)}}+a_{\tau_{j+1}^{(\gamma)},3U-\tau_{j+1}^{(\gamma)};\tau_{j+1}^{(\gamma)}-\tau_{j}^{(\gamma)}})\\
 & =\lim_{\substack{T\to\infty\\
S\to\infty
}
}a_{S,T}(0,0),
\end{align*}
where the last equality is by \lemref{limitexists}.
\end{proof}

\section{Strong convergence of the leading term\label{sec:convergence}}

In this section, we prove \thmref{convergence}: convergence of the
leading term in the homogenization expansion \eqref{expansion}.
We begin by deriving an expression for the error in \eqref{convergence}
using the Feynman--Kac formula. We will use the Fourier transform
for the initial condition $u_{0}\in\mathcal{C}_{c}^{\infty}(\mathbb{R}^{d})$,
which we normalize as
\[
\widehat{u_{0}}(\omega)=\int\e^{-\i\omega\cdot x}u_{0}(x)\frac{\dif x}{(2\pi)^{d}},\qquad u_{0}(x)=\int\e^{\i\omega\cdot x}\widehat{u_{0}}(\omega)\,\dif\omega.
\]
In this section $\omega$ and $\tilde{\omega}$ denote Fourier variables;
the function $\boldsymbol{\omega}$ from the previous section makes
no appearance.
\begin{prop}
We have that
\begin{equation}
\mathbf{E}|u^{\eps}(t,x)-\Psi^{\eps}(t,x)\overline{u}(t,x)|^{2}=\e^{2\alpha_{\eps^{-2}t}}\int\int\e^{\i(\omega+\tilde{\omega})\cdot x}\widehat{u_{0}}(\omega)\widehat{u_{0}}(\tilde{\omega})\widehat{\mathbb{E}}_{B,\widetilde{B};\eps^{-2}t}\mathscr{A}_{t;\omega,\tilde{\omega}}^{\eps}[B,\widetilde{B}]\,\dif\omega\,\dif\tilde{\omega},\label{eq:errorexpr}
\end{equation}
where
\begin{align}
\mathscr{A}_{t;\omega,\tilde{\omega}}^{\eps}[B,\widetilde{B}] & =\exp\left\{ \beta^{2}\mathscr{R}_{\eps^{-2}t}[B,\widetilde{B}]\right\} \mathscr{E}_{t,\omega}^{\eps}[B]\mathscr{E}_{t,\tilde{\omega}}^{\eps}[\widetilde{B}];\label{eq:scradef}\\
\mathscr{E}_{t,\omega}^{\eps}[B] & =\e^{\i\omega\cdot\eps(B_{\eps^{-2}t}-B_{0})}-\e^{-\frac{1}{2}at|\omega|^{2}}.\label{eq:scredef}
\end{align}
\end{prop}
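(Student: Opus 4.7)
The plan is a direct Feynman--Kac and Fourier computation. First, I would write out the Feynman--Kac representations. Since the initial condition of \eqref{uPDE} is $u_0(\eps y)$, a scaling gives
\[
u^{\eps}(t,x) = \mathbb{E}_B^{\eps^{-1}x}\left[u_0(\eps B_{\eps^{-2}t})\exp\{\beta \mathscr{V}_{\eps^{-2}t}[B]-\lambda\eps^{-2}t\}\right],
\]
while \eqref{PsiFKpreliminary} gives
\[
\Psi^{\eps}(t,x)=\mathbb{E}_B^{\eps^{-1}x}\exp\{\beta\mathscr{V}_{\eps^{-2}t}[B]-\lambda\eps^{-2}t\}.
\]
Using the Fourier inversion $u_0(y)=\int e^{\i\omega\cdot y}\widehat{u_0}(\omega)\,\dif\omega$ and the fact that $\overline u(t,x)=\int e^{\i\omega\cdot x-\frac{1}{2}at|\omega|^2}\widehat{u_0}(\omega)\,\dif\omega$, I would pull the Fourier integral outside of the Brownian expectation and, using $\eps B_0 = x$, combine the two resulting integrals to obtain the single representation
\[
u^{\eps}(t,x)-\Psi^{\eps}(t,x)\overline u(t,x) = \int e^{\i\omega\cdot x}\widehat{u_0}(\omega)\,\mathbb{E}_B^{\eps^{-1}x}\!\left[\mathscr{E}_{t,\omega}^{\eps}[B]\exp\{\beta\mathscr{V}_{\eps^{-2}t}[B]-\lambda\eps^{-2}t\}\right]\dif\omega,
\]
with $\mathscr{E}^\eps_{t,\omega}[B]$ as in \eqref{scredef}. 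This is the key algebraic step: the $\overline u$ term replaces the ``true'' increment $e^{\i\omega\cdot\eps(B_{\eps^{-2}t}-B_0)}$ by its diffusive Gaussian expectation $e^{-\frac{1}{2}at|\omega|^2}$.

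Next, squaring this identity (the left-hand side is real, so the square equals $|\cdot|^2$) yields a double Fourier integral in which the inner expectation is over two independent Brownian motions $B,\widetilde B$, both started at $\eps^{-1}x$. I would then interchange the order of expectations (Fubini is justified since all the Gaussian exponentials have bounded moments of every order) and apply the Gaussian moment-generating computation already used in \eqref{covariancecomputation}:
\[
\mathbf{E}\exp\{\beta(\mathscr{V}_{\eps^{-2}t}[B]+\mathscr{V}_{\eps^{-2}t}[\widetilde B])\}=\exp\!\left\{\tfrac{\beta^2}{2}\mathscr{R}_{\eps^{-2}t}[B]+\beta^2\mathscr{R}_{\eps^{-2}t}[B,\widetilde B]+\tfrac{\beta^2}{2}\mathscr{R}_{\eps^{-2}t}[\widetilde B]\right\}.
\]

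Finally, I would absorb the two single-path exponentials into the normalizing constants of the tilted measure \eqref{tilting}. Using $Z_{\eps^{-2}t}=e^{\alpha_{\eps^{-2}t}+\lambda\eps^{-2}t}$, the factor $e^{-2\lambda\eps^{-2}t}Z_{\eps^{-2}t}^{2}=e^{2\alpha_{\eps^{-2}t}}$ emerges, and the remaining cross term $e^{\beta^2\mathscr{R}_{\eps^{-2}t}[B,\widetilde B]}$ combines with the $\mathscr{E}[B]\mathscr{E}[\widetilde B]$ factor to give $\mathscr{A}^\eps_{t;\omega,\tilde\omega}[B,\widetilde B]$ in \eqref{scradef}. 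The starting points $\eps^{-1}x$ can be dropped from the notation because $\mathscr{R}_{\eps^{-2}t}[B,\widetilde B]$ depends only on the increments (a translation in both $B$ and $\widetilde B$ leaves $B_\tau-\widetilde B_{\tilde\tau}$ invariant), and $\mathscr{E}^\eps_{t,\omega}[B]$ only involves the increment $B_{\eps^{-2}t}-B_0$.

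Nothing deep is needed; the main thing to be careful about is the bookkeeping of the renormalization, namely tracking $-2\lambda\eps^{-2}t$ through the normalizing constants $Z_{\eps^{-2}t}$ so that the correct factor $e^{2\alpha_{\eps^{-2}t}}$ appears, and verifying that the Fourier manipulations make sense (which follows from $\widehat{u_0}\in\mathcal{S}(\mathbb{R}^d)$ and the finite exponential moments of $\mathscr{V}$ and $\mathscr{R}$).
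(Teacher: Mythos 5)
Your proposal is correct and follows essentially the same route as the paper: the Feynman--Kac representations for $u^\eps$ and $\Psi^\eps$, the Fourier-inversion identity $u_0(\eps B_{\eps^{-2}t})-\overline u(t,x)=\int \e^{\i\omega\cdot x}\mathscr{E}^\eps_{t,\omega}[B]\widehat{u_0}(\omega)\,\dif\omega$, squaring with two independent copies $B,\widetilde B$, and the Gaussian covariance computation that produces the cross term $\e^{\beta^2\mathscr{R}_{\eps^{-2}t}[B,\widetilde B]}$ and the factor $\e^{2\alpha_{\eps^{-2}t}}$ via the tilted measure. No gaps.
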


\begin{proof}
We start with the Feynman--Kac formula for \eqref{uepsPDE}--\eqref{uepsIC}:
\begin{equation}
u^{\eps}(t,x)=\mathbb{E}_{B}^{\eps^{-1}x}\exp\left\{ \beta\mathscr{V}_{\eps^{-2}t}[B]-\lambda\eps^{-2}t\right\} u_{0}(\eps B_{\eps^{-2}t}),\label{eq:uepsFK}
\end{equation}
and note that
\[
u_{0}(\eps B_{\eps^{-2}t})=\int\e^{\i\omega\cdot\eps B_{\eps^{-2}t}}\widehat{u_{0}}(\omega)\,\dif\omega,\qquad\overline{u}(t,x)=\int\e^{\i\omega\cdot x-\frac{1}{2}at|\omega|^{2}}\widehat{u_{0}}(\omega)\,\dif\omega,
\]
so, if $B_{0}=\eps^{-1}x$, then
\begin{equation}
u_{0}(\eps B_{\eps^{-2}t})-\overline{u}(t,x)=\int\e^{\i\omega\cdot x}\mathscr{E}_{t,\omega}^{\eps}[B]\widehat{u_{0}}(\omega)\,\dif\omega.\label{eq:uubarFT}
\end{equation}
The Feynman--Kac formula also shows that
\begin{equation}
\Psi^{\eps}(t,x)=\mathbb{E}_{B}^{\eps^{-1}x}\exp\left\{ \beta\mathscr{V}_{\eps^{-2}t}[B]-\lambda\eps^{-2}t\right\} .\label{eq:PsiepsFK}
\end{equation}
This is simply \eqref{uepsFK} with initial condition $u_{0}\equiv1$;
we also saw the unrescaled version before in \eqref{PsiFKpreliminary}.
Combining \eqref{uepsFK}, \eqref{uubarFT}, and \eqref{PsiepsFK}
yields
\[
u^{\eps}(t,x)-\Psi^{\eps}(t,x)\overline{u}(t,x)=\mathbb{E}_{B}^{\eps^{-1}x}\exp\left\{ \beta\mathscr{V}_{\eps^{-2}t}[B]-\lambda\eps^{-2}t\right\} \int\e^{\i\omega\cdot x}\mathscr{E}_{t,\omega}^{\eps}[B]\widehat{u_{0}}(\omega)\,\dif\omega.
\]
We finish the proof of the lemma by simply computing the second moment:
\begin{align*}
\mathbf{E} & (u^{\eps}(t,x)-\Psi^{\eps}(t,x)\overline{u}(t,x))^{2}=\mathbf{E}\left(\mathbb{E}_{B}^{\eps^{-1}x}\exp\left\{ \beta\mathscr{V}_{\eps^{-2}t}[B]-\lambda\eps^{-2}t\right\} \int\e^{\i\omega\cdot x}\mathscr{E}_{t,\omega}^{\eps}[B]\widehat{u_{0}}(\omega)\,\dif\omega\right)^{2}\\
 & =\int\int\e^{\i(\omega+\tilde{\omega})\cdot x}\widehat{u_{0}}(\omega)\widehat{u_{0}}(\tilde{\omega})\mathbb{E}_{B,\widetilde{B}}^{\eps^{-1}x,\eps^{-1}x}\mathbf{E}\exp\left\{ \mathscr{V}_{\eps^{-2}t}[B]+\mathscr{V}_{\eps^{-2}t}[\widetilde{B}]-2\lambda\eps^{-2}t\right\} \mathscr{E}_{t,\omega}^{\eps}[B]\mathscr{E}_{t,\tilde{\omega}}^{\eps}[\widetilde{B}]\,\dif\omega\,\dif\tilde{\omega}\\
 & =\e^{2\alpha_{\eps^{-2}t}}\int\int\e^{\i(\omega+\tilde{\omega})\cdot x}\widehat{u_{0}}(\omega)\widehat{u_{0}}(\omega)\widehat{\mathbb{E}}_{B,\widetilde{B};\eps^{-2}t}\mathscr{A}_{t;\omega,\tilde{\omega}}^{\eps}[B,\widetilde{B}]\,\dif\omega\,\dif\tilde{\omega}.\qedhere
\end{align*}
\end{proof}
To prove \thmref{convergence}, we will bound the expression on the
right-hand side of \eqref{errorexpr} using the techniques of \cite{GRZ17}
recalled in \secref{GRZreview}. On first reading, the reader may
again wish to consider the case when $V$ is white in time, so
the tilting of the Markov chain can be ignored and $B$ and $\widetilde{B}$
are simply Brownian motions. The key idea is that with high probability,
the only contributions to $\exp\left\{ \beta^{2}\mathscr{R}_{\eps^{-2}t}[B,\widetilde{B}]\right\} $
come from times close to $0$, so the expectation of \eqref{scradef}
``almost'' splits into a product of the expectations of $\mathscr{E}_{t,\omega}^{\eps}[B]$
and $\mathscr{E}_{t,\omega}^{\eps}[\widetilde{B}]$. Since the Markov
chain has effective diffusivity $a$, each of the latter expectations
is approximately $0$. (In the white-in-time case, $a=1$, and each
of the latter expectations is exactly $0$.)

Our first lemma is that the correction $\mathscr{G}$ appearing in
\eqref{Markovchainexpectation} does not matter.
\begin{lem}
\label{lem:enddoesntmatter}We have
\[
\lim_{\eps\to0}\left|\widehat{\mathbb{E}}_{B,\tilde{B};\eps^{-2}t}\mathscr{A}_{t;\omega,\tilde{\omega}}^{\eps}[B,\tilde{B}]-\widetilde{\mathbb{E}}_{W,\widetilde{W}}\mathscr{A}_{t;\omega,\tilde{\omega}}^{\eps}[W,\widetilde{W}]\right|=0.
\]
\end{lem}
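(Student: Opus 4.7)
The plan is to use Theorem~\ref{thm:BstoWs} to rewrite the left-hand difference in terms of the Markov chains and then bound it in three moves: (i) truncate $\mathscr{A}^{\eps}$ to depend only on the paths up to an intermediate time $T_\eps$ close to $\eps^{-2}t$; (ii) apply Lemma~\ref{lem:cutofftail} to the truncated functional to discard the $\mathscr{G}$ corrections; (iii) show that the truncation error is negligible.

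Since $\widehat{\mathbb{P}}_{B,\widetilde{B};\eps^{-2}t}^{y,\tilde y}$ is a product measure, applying \eqref{Markovchainexpectation} in each coordinate yields
\[
\widehat{\mathbb{E}}_{B,\widetilde{B};\eps^{-2}t}\mathscr{A}_{t;\omega,\tilde\omega}^{\eps}[B,\widetilde{B}]
=\widetilde{\mathbb{E}}_{W,\widetilde{W}}\bigl[\mathscr{A}_{t;\omega,\tilde\omega}^{\eps}[W,\widetilde{W}]\,\mathscr{G}[w_{N}]\mathscr{G}[\tilde w_{N}]\bigr],
\]
with $N=\lfloor\eps^{-2}t\rfloor-1$, so it suffices to show that $\widetilde{\mathbb{E}}_{W,\widetilde{W}}[\mathscr{A}^{\eps}(\mathscr{G}[w_{N}]\mathscr{G}[\tilde w_{N}]-1)]\to 0$. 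Choose a cutoff $L_{\eps}\to\infty$ with $\eps^{2}L_{\eps}\to 0$ (e.g.\ $L_{\eps}=\eps^{-1}$) and set $T_{\eps}=\eps^{-2}t-L_{\eps}$; in particular $T_{\eps}\to\infty$ and $\eps^{-2}t-T_{\eps}=L_{\eps}\to\infty$. Define the truncated functional
\[
\mathscr{A}^{\eps,\sharp}[W,\widetilde{W}]=\exp\{\beta^{2}\mathscr{R}_{T_{\eps}}[W,\widetilde{W}]\}\bigl(\e^{\i\omega\cdot\eps W_{T_{\eps}}}-\e^{-\frac12 at|\omega|^{2}}\bigr)\bigl(\e^{\i\tilde\omega\cdot\eps \widetilde{W}_{T_{\eps}}}-\e^{-\frac12 at|\tilde\omega|^{2}}\bigr),
\]
which depends only on $W|_{[0,T_{\eps}]}$ and $\widetilde{W}|_{[0,T_{\eps}]}$.

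For the truncation error, the endpoint replacement uses $|\e^{\i\omega\cdot\eps W_{\eps^{-2}t}}-\e^{\i\omega\cdot\eps W_{T_{\eps}}}|\le|\omega|\,\eps|W_{\eps^{-2}t}-W_{T_{\eps}}|$ together with the increment bound \eqref{West}, giving $\widetilde{\mathbb{E}}|W_{\eps^{-2}t}-W_{T_\eps}|^{2}\le CL_\eps$, whence $\widetilde{\mathbb{E}}|\mathscr{E}^{\eps}_{t,\omega}[W]-(\e^{\i\omega\cdot\eps W_{T_{\eps}}}-\e^{-\frac12 at|\omega|^{2}})|^{2}\le C|\omega|^{2}\eps^{2}L_{\eps}\to 0$. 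Combining with Cauchy--Schwarz and Proposition~\ref{prop:expexpbdd-bridge} (for small $\beta$, to control $\widetilde{\mathbb{E}}\exp\{2\beta^{2}\mathscr{R}_{T_\eps}\}$), the endpoint swap contributes $o(1)$ to $\widetilde{\mathbb{E}}|\mathscr{A}^{\eps}-\mathscr{A}^{\eps,\sharp}|$. Separately, replacing $\mathscr{R}_{\eps^{-2}t}$ by $\mathscr{R}_{T_{\eps}}$ costs at most $4\,\widetilde{\mathbb{E}}|\exp\{\beta^{2}\mathscr{R}_{\eps^{-2}t}[W,\widetilde{W}]\}-\exp\{\beta^{2}\mathscr{R}_{T_{\eps}}[W,\widetilde{W}]\}|$, which is $O(T_{\eps}^{1-d/2})=o(1)$ by Lemma~\ref{lem:Rsstilde} (applied with $s=s'=T_\eps$, $\tilde s=\tilde s'=\eps^{-2}t$). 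Both errors remain $o(1)$ after multiplication by the bounded factor $\mathscr{G}[w_{N}]\mathscr{G}[\tilde w_{N}]-1$.

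Finally, with $\mathscr{A}^{\eps,\sharp}$ depending only on the paths up to time $T_{\eps}$, Lemma~\ref{lem:cutofftail} applied to each coordinate (with $T_n=T_\eps$ and $S_n=\eps^{-2}t$) yields
\[
\bigl|\widetilde{\mathbb{E}}_{W,\widetilde{W}}\mathscr{A}^{\eps,\sharp}\mathscr{G}[w_{N}]\mathscr{G}[\tilde w_{N}]-\widetilde{\mathbb{E}}_{W,\widetilde{W}}\mathscr{A}^{\eps,\sharp}\bigr|\le C\exp\{-c(T_{\eps}\wedge L_{\eps})\}\to 0,
\]
where the relevant $L^\chi$-moments are bounded by another application of Proposition~\ref{prop:expexpbdd-bridge}. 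Combining the three displays proves the lemma. The only real issue is the competing demands on $L_\eps$: the endpoint shift estimate in Step~(iii) forces $\eps^{2}L_{\eps}\to 0$, while Lemma~\ref{lem:cutofftail} in Step~(ii) requires $L_{\eps}\to\infty$; any choice like $L_{\eps}=\eps^{-1}$ reconciles both, and it is this scale separation that is the true content of the proof.
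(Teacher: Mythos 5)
Your proposal is correct and follows essentially the same route as the paper: both rewrite the tilted expectation via \eqref{Markovchainexpectation}, truncate at a mesoscopic time $T_\eps=\eps^{-2}t-L_\eps$ with $L_\eps\to\infty$ but $\eps^2L_\eps\to 0$ (the paper takes $L_\eps=\eps^{\gamma-2}$, $\gamma\in(0,2)$), control the endpoint shift via \eqref{West} and H\"older together with \propref{expexpbdd}, control the change in $\mathscr{R}$ via the tail/intersection estimates, and finally discard the $\mathscr{G}$ factors with \lemref{cutofftail}. The scale separation you identify as the crux is indeed the paper's key point.
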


As this lemma is a technical point, we defer its proof to the end
of this section. Now we note that, for $r,\tilde{r}\ge0$, we have
\begin{align}
\frac{\partial^{2}}{\partial r\partial\tilde{r}}\exp\left\{ \beta^{2}\mathscr{R}_{r,\tilde{r}}[W,\widetilde{W}]\right\}  & =\frac{\partial}{\partial r}\left[\left(\beta^{2}\int_{0}^{r}R(\tau-\tilde{r},W_{\tau}-\widetilde{W}_{\tilde{r}})\,\dif\tau\right)\exp\left\{ \beta^{2}\mathscr{R}_{r,\tilde{r}}[W,\widetilde{W}]\right\} \right]\nonumber \\
 & =\mathscr{Q}_{r,\tilde{r}}[W,\widetilde{W}]\exp\left\{ \beta^{2}\mathscr{R}_{r,\tilde{r}}[W,\widetilde{W}]\right\} ,\label{eq:partialderiv}
\end{align}
where
\begin{equation}
\mathscr{Q}_{r,\tilde{r}}[W,\widetilde{W}]=\beta^{2}R(r-\tilde{r},W_{r}-\widetilde{W}_{\tilde{r}})+\beta^{4}\int_{[\tilde{r}-2,r]}R(\tau-\tilde{r},W_{\tau}-\widetilde{W}_{\tilde{r}})\,\dif\tau\int_{[r-2,\tilde{r}]}R(r-\tilde{\tau},W_{r}-\widetilde{W}_{\tilde{\tau}})\,\dif\tilde{\tau}.\label{eq:Qdef}
\end{equation}
We note that, for each $r,\tilde{r}$,
\begin{equation}
\mathscr{Q}_{r,\tilde{r}}[W,\widetilde{W}]\ge0\label{eq:Qpositive}
\end{equation}
almost surely, since $R$ was assumed nonnegative. Now if we define
the shorthand
\[
\mathscr{E}_{t;\omega,\tilde{\omega}}^{\eps}[W,\widetilde{W}]=\mathscr{E}_{t,\omega}^{\eps}[W]\mathscr{E}_{t,\tilde{\omega}}^{\eps}[\widetilde{W}],
\]
then we can write
\begin{align}
\widetilde{\mathbb{E}}_{W,\widetilde{W}}\mathscr{A}_{t;\omega,\tilde{\omega}}^{\eps}[W,\widetilde{W}] & =\widetilde{\mathbb{E}}_{W,\widetilde{W}}\mathscr{E}_{t;\omega,\tilde{\omega}}^{\eps}[W,\widetilde{W}]\exp\left\{ \beta^{2}\mathscr{R}_{\eps^{-2}t}[W,\widetilde{W}]\right\} \nonumber \\
 & =\int_{0}^{\eps^{-2}t}\int_{0}^{\eps^{-2}t}\widetilde{\mathbb{E}}_{W,\widetilde{W}}\mathscr{E}_{t;\omega,\tilde{\omega}}^{\eps}[W,\widetilde{W}]\mathscr{Q}_{r,\tilde{r}}[W,\widetilde{W}]\exp\left\{ \beta^{2}\mathscr{R}_{r,\tilde{r}}[W,\widetilde{W}]\right\} \,\dif r\,\dif\tilde{r}.\label{eq:doubleintegral}
\end{align}

The next lemma gives an estimate for the contribution to the integral
\eqref{doubleintegral} from each $r,\tilde{r}$. The key point is
that, if $B$ is a Brownian motion with diffusivity $\sigma^{2}$,
then $\exp\left\{ \i\omega\cdot B_{t}+\frac{1}{2}t\sigma^{2}|\omega|^{2}\right\} $
is a martingale. Since $W$ is converging to a Brownian motion with
diffusivity $a$, the contribution to the integrand in \eqref{doubleintegral}
from $\mathscr{E}_{t;\omega,\tilde{\omega}}^{\eps}[W,\widetilde{W}]$
should be small except for the contribution from time interval $[0,r\vee\tilde{r}]$,
on which the term $\exp\left\{ \beta^{2}\mathscr{R}_{r,\tilde{r}}[W,\widetilde{W}]\right\} $
could have an effect. But for fixed $r,\tilde{r}$, this time interval
is microscopic, and thus does not contribute in the limit.\begin{comment}
But since $W$ and $\widetilde{W}$ almost surely intersect for only
finitely much time almost surely, the time interval $[0,r\vee\tilde{r}]$
is effectively microscopic, and so its effect is also small.
\end{comment}

\begin{lem}
\label{lem:ptwiseconvergence}For fixed $r,\tilde{r}\ge0$, we have
\[
\lim_{\eps\to0}\widetilde{\mathbb{E}}_{W,\widetilde{W}}\mathscr{E}_{t;\omega,\tilde{\omega}}^{\eps}[W,\widetilde{W}]\mathscr{Q}_{r,\tilde{r}}[W,\widetilde{W}]\exp\left\{ \beta^{2}\mathscr{R}_{r,\tilde{r}}[W,\widetilde{W}]\right\} =0.
\]
\end{lem}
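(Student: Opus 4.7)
The plan is to condition on a ``microscopic'' $\sigma$-algebra and reduce the statement to a conditional form of the invariance principle. Set $T_0=(r\vee\tilde{r})+2$ and let $\mathcal{F}_{T_0}$ denote the $\sigma$-algebra generated by $(W,\widetilde{W})$ on $[0,T_0]$. Because $R(s,y)$ vanishes for $|s|>1$, both $\mathscr{Q}_{r,\tilde{r}}[W,\widetilde{W}]$ and $\mathscr{R}_{r,\tilde{r}}[W,\widetilde{W}]$ are $\mathcal{F}_{T_0}$-measurable, so by the tower property the expression in question equals
$$
\widetilde{\mathbb{E}}_{W,\widetilde{W}}\!\left[\mathscr{Q}_{r,\tilde{r}}\,\e^{\beta^{2}\mathscr{R}_{r,\tilde{r}}}\,\widetilde{\mathbb{E}}_{W,\widetilde{W}}\!\bigl[\mathscr{E}_{t;\omega,\tilde{\omega}}^{\eps}[W,\widetilde{W}]\mid\mathcal{F}_{T_0}\bigr]\right].
$$
I would then show that the inner conditional expectation tends to $0$ almost surely and invoke dominated convergence.

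The inner conditional expectation factors, because $W$ and $\widetilde{W}$ are independent under $\widetilde{\mathbb{P}}_{W,\widetilde{W}}=\widetilde{\mathbb{P}}_{W}\otimes\widetilde{\mathbb{P}}_{\widetilde{W}}$, so it suffices to prove $\widetilde{\mathbb{E}}[\mathscr{E}_{t,\omega}^{\eps}[W]\mid\mathcal{F}_{T_0}]\to 0$ almost surely. For this, let $\sigma>T_0$ be the first regeneration time of $W$: by \thmref{BstoWs}(3) the post-$\sigma$ chain is distributed as the stationary regenerated chain, independently of $\mathcal{F}_{T_0}$. Decomposing
$$
\eps(W_{\eps^{-2}t}-W_0)=\eps W_{\sigma}+\eps(W_{\eps^{-2}t}-W_{\sigma}),
$$
the first summand tends to $0$ almost surely because $\sigma-T_0$ is geometric with parameter $\kappa_1$ and the chunks have exponential tails by \lemref{exptails}, while the second summand, by the invariance principle \propref{invariance} applied to the post-regeneration chain, converges in distribution to $\mathcal{N}(0,at\,I)$. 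Consequently $\widetilde{\mathbb{E}}[e^{\i\omega\cdot\eps W_{\eps^{-2}t}}\mid\mathcal{F}_{T_0}]\to \e^{-\frac{1}{2}at|\omega|^{2}}$ almost surely, and this limit is exactly the constant being subtracted in the definition \eqref{scredef} of $\mathscr{E}_{t,\omega}^{\eps}$, yielding the desired conditional convergence to zero.

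The dominated convergence step is straightforward: $|\mathscr{E}_{t;\omega,\tilde{\omega}}^{\eps}|\le 4$ uniformly in $\eps$; $\mathscr{Q}_{r,\tilde{r}}[W,\widetilde{W}]$ is pointwise bounded by a deterministic constant depending only on $r,\tilde{r}$ and $\|R\|_{\infty}$ (the integrations in \eqref{Qdef} are over bounded intervals with bounded integrand); and $\e^{\beta^{2}\mathscr{R}_{r,\tilde{r}}[W,\widetilde{W}]}\le\e^{\beta^{2}\mathscr{R}_{\infty}[W,\widetilde{W}]}$, whose expectation is uniformly bounded by \propref{expexpbdd}. The main obstacle will be justifying the conditional form of the invariance principle: \propref{invariance} is stated under the unconditional measure $\widetilde{\mathbb{P}}_{W}$, so one must route the argument through a regeneration time $\sigma>T_0$ in order to decouple the future of the chain from $\mathcal{F}_{T_0}$, and then check that the resulting shift $\eps W_{\sigma}$ is harmless because $\sigma-T_0$ has $\eps$-independent exponential tails. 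Once this is in place, the rest of the argument is bookkeeping.
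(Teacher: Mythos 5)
Your argument is essentially the paper's own proof: condition on a $\sigma$-algebra making $\mathscr{Q}_{r,\tilde r}\e^{\beta^{2}\mathscr{R}_{r,\tilde r}}$ measurable, use the regeneration structure to decouple the future of the chain and apply the invariance principle (\propref{invariance}) so that the conditional expectation of $\mathscr{E}^{\eps}_{t;\omega,\tilde\omega}$ tends to $0$ almost surely, then conclude by dominated convergence with \propref{expexpbdd}. The only cosmetic difference is that the paper conditions directly at the first joint regeneration time $\sigma_{j_0}^{W,\widetilde W}\ge r\vee\tilde r$, whereas you condition at the deterministic time $T_0$ and then route through the next regeneration time; both are correct.
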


\begin{proof}
In this proof we will treat $r$ and $\tilde{r}$ as fixed, and suppress
them from the notation of the objects we define. We abbreviate $\sigma_{j}=\sigma_{j}^{W,\widetilde{W}}$
from \eqref{sigmanWWdef} and recall the definition \eqref{kappadef}
of $\kappa_{2}$. Let $j_{0}\in\{j\ge0\mid\sigma_{j}\ge r\vee\tilde{r}\}$
and let the $\sigma$-algebra $\mathcal{F}_{j_{0}}$ be generated
by the collection of random variables
\[
\{\eta_{n}^{W,\tilde{W}}\mid n<\sigma_{j_{0}}\}\cup\{w_{n}\mid n<\sigma_{j_{0}}\}\cup\{\tilde{w}_{n}\mid n<\sigma_{j_{0}}\},
\]
with notation as in \thmref{BstoWs}. We note that the random variable
\[
\mathscr{Q}_{r,\tilde{r}}[W,\widetilde{W}]\exp\left\{ \beta^{2}\mathscr{R}_{r,\tilde{r}}[W,\widetilde{W}]\right\} 
\]
is $\mathcal{F}_{j_{0}}$-measurable. Therefore, we have
\begin{align}
\widetilde{\mathbb{E}}_{W,\widetilde{W}} & \mathscr{E}_{t;\omega,\tilde{\omega}}^{\eps}[W,\widetilde{W}]\mathscr{Q}_{r,\tilde{r}}[W,\widetilde{W}]\exp\left\{ \beta^{2}\mathscr{R}_{r,\tilde{r}}[W,\widetilde{W}]\right\} \nonumber \\
 & =\widetilde{\mathbb{E}}_{W,\widetilde{W}}\left(\widetilde{\mathbb{E}}_{W,\widetilde{W}}\left[\mathscr{E}_{t;\omega,\tilde{\omega}}^{\eps}[W,\widetilde{W}]\;\middle|\;\mathcal{F}_{j_{0}}\right]\exp\left\{ \beta^{2}\mathscr{R}_{r,\tilde{r}}[W,\widetilde{W}]\right\} \mathscr{Q}_{r,\tilde{r}}[W,\widetilde{W}]\right)\nonumber \\
 & =\widetilde{\mathbb{E}}_{W,\widetilde{W}}\left(\e^{\i\omega\cdot\eps W_{j_{0}}}\widetilde{\mathbb{E}}_{W}\left[\e^{\i\omega\cdot\eps(W_{\eps^{-2}t}-W_{j_{0}})}\;\middle|\;\mathcal{F}_{j_{0}}\right]-\e^{-\frac{1}{2}at|\omega|^{2}}\right)\cdot\nonumber \\
 & \qquad\qquad\cdot\left(\e^{\i\tilde{\omega}\cdot\eps\widetilde{W}_{j_{0}}}\widetilde{\mathbb{E}}_{W}\left[\e^{\i\tilde{\omega}\cdot\eps(\widetilde{W}_{\eps^{-2}t}-\widetilde{W}_{j_{0}})}\;\middle|\;\mathcal{F}_{j_{0}}\right]-\e^{-\frac{1}{2}at|\tilde{\omega}|^{2}}\right)\mathscr{Q}_{r,\tilde{r}}[W,\widetilde{W}]\exp\left\{ \beta^{2}\mathscr{R}_{r,\tilde{r}}[W,\widetilde{W}]\right\} .\label{eq:conditioning}
\end{align}
Observe that
\[
\widetilde{\mathbb{E}}_{W}\left[\e^{\i\omega\cdot\eps(W_{\eps^{-2}t}-W_{j_{0}})}\;\middle|\;\mathcal{F}_{j_{0}}\right]=\widetilde{\mathbb{E}}_{W}\left[\e^{\i\omega\cdot\eps(W_{\eps^{-2}t}-W_{j_{0}})}\;\middle|\;j_{0}\right]\to\e^{-\frac{1}{2}at|\omega|^{2}}
\]
almost surely as $\eps\to0$ by \propref{invariance}, and similarly
for $\widetilde{\mathbb{E}}_{\widetilde{W}}\left[\e^{\i\tilde{\omega}\cdot\eps(\widetilde{W}_{\eps^{-2}t}-\widetilde{W}_{j_{0}})}\;\middle|\;\mathcal{F}_{j_{0}}\right]$.
In addition, we have
\[
\e^{\i\tilde{\omega}\cdot\eps\widetilde{W}_{j_{0}}}\to1
\]
almost surely as $\eps\to0$. The statement of the lemma then follows
from the bounded convergence theorem applied to \eqref{conditioning}.
\end{proof}
Now we upgrade the pointwise convergence to convergence of the integral.
\begin{lem}
\label{lem:AWWtozero}We have
\[
\lim_{\eps\to0}\widetilde{\mathbb{E}}_{W,\widetilde{W}}\mathscr{A}_{t;\omega,\tilde{\omega}}^{\eps}[W,\widetilde{W}]=0.
\]
\end{lem}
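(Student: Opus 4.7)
The strategy is dominated convergence applied to the double-integral representation in \eqref{doubleintegral}, combined with the pointwise vanishing established in Lemma \ref{lem:ptwiseconvergence}. Starting from \eqref{partialderiv}, the fundamental theorem of calculus together with the boundary values $\mathscr{R}_{0,\tilde r}[W,\widetilde W]=\mathscr{R}_{r,0}[W,\widetilde W]=0$ yields the exact decomposition
\[
\widetilde{\mathbb{E}}_{W,\widetilde W}\mathscr{A}^\eps_{t;\omega,\tilde\omega}[W,\widetilde W]=\widetilde{\mathbb{E}}_{W,\widetilde W}\mathscr{E}^\eps_{t;\omega,\tilde\omega}[W,\widetilde W]+\int_{0}^{\eps^{-2}t}\!\!\int_{0}^{\eps^{-2}t}F^\eps(r,\tilde r)\,\dif r\,\dif\tilde r,
\]
where $F^\eps(r,\tilde r):=\widetilde{\mathbb{E}}_{W,\widetilde W}\,\mathscr{E}^\eps_{t;\omega,\tilde\omega}[W,\widetilde W]\,\mathscr{Q}_{r,\tilde r}[W,\widetilde W]\exp\{\beta^{2}\mathscr{R}_{r,\tilde r}[W,\widetilde W]\}$. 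The additive ``boundary'' term $\widetilde{\mathbb{E}}\mathscr{E}^\eps$ factors under the independence of $W$ and $\widetilde W$ into $\widetilde{\mathbb{E}}_W\mathscr{E}^\eps_{t,\omega}[W]\cdot\widetilde{\mathbb{E}}_{\widetilde W}\mathscr{E}^\eps_{t,\tilde\omega}[\widetilde W]$, and each factor tends to $0$ by the invariance principle Proposition \ref{prop:invariance} applied to $\eps W_{\eps^{-2}t}$.

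For the double integral, Lemma \ref{lem:ptwiseconvergence} already gives $F^\eps(r,\tilde r)\to 0$ pointwise. To apply dominated convergence (handling the $\eps$-dependent domain by extending the integrand by zero outside $[0,\eps^{-2}t]^{2}$), I would combine the crude bound $|\mathscr{E}^\eps_{t,\omega}[W]|\le 2$ with the positivity \eqref{Qpositive} to obtain the $\eps$-uniform pointwise estimate
\[
|F^\eps(r,\tilde r)|\le 4\,\widetilde{\mathbb{E}}_{W,\widetilde W}\,\mathscr{Q}_{r,\tilde r}[W,\widetilde W]\exp\{\beta^{2}\mathscr{R}_{r,\tilde r}[W,\widetilde W]\}.
\]
Integrating over $(r,\tilde r)\in[0,\infty)^{2}$, applying Fubini, and using \eqref{partialderiv} together with the same boundary calculation yields
\[
\int_{0}^{\infty}\!\!\int_{0}^{\infty}4\,\widetilde{\mathbb{E}}_{W,\widetilde W}\,\mathscr{Q}_{r,\tilde r}\,e^{\beta^{2}\mathscr{R}_{r,\tilde r}}\,\dif r\,\dif\tilde r=4\,\widetilde{\mathbb{E}}_{W,\widetilde W}\bigl(e^{\beta^{2}\mathscr{R}_{\infty}[W,\widetilde W]}-1\bigr),
\]
which is finite for $\beta<\beta_{0}$ by Proposition \ref{prop:expexpbdd}. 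Dominated convergence then forces the double integral to vanish as $\eps\to 0$, finishing the proof.

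The central difficulty has already been dispatched by Proposition \ref{prop:expexpbdd}: it is the integrability of the exponential functional of the intersection ``time'' between two independent tilted paths, which is precisely where the $d\ge 3$ and small-$\beta$ hypotheses enter. Everything else is a clean combination of the derivative-of-exponential identity \eqref{partialderiv}, the trivial uniform bound on $\mathscr{E}^\eps$, and pointwise convergence from Lemma \ref{lem:ptwiseconvergence}. The only subtlety worth stressing in the write-up is that the bound $e^{\beta^2\mathscr R_{r,\tilde r}}\le e^{\beta^2\mathscr R_\infty}$ (by monotonicity) is not even needed if one keeps $e^{\beta^2\mathscr R_{r,\tilde r}}$ and exploits the exact identity above.
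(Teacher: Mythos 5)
Your proof is correct and follows essentially the same route as the paper: the derivative identity \eqref{partialderiv}, the uniform bound $|\mathscr{E}_{t;\omega,\tilde{\omega}}^{\eps}[W,\widetilde{W}]|\le4$ combined with the positivity \eqref{Qpositive} to produce an $\eps$-independent dominating function whose total integral is controlled by \propref{expexpbdd}, and dominated convergence against the pointwise limit of \lemref{ptwiseconvergence}. Your write-up is in fact slightly more careful than the paper's: the display \eqref{doubleintegral} is stated as an exact equality but silently drops the additive boundary term $\widetilde{\mathbb{E}}_{W,\widetilde{W}}\mathscr{E}_{t;\omega,\tilde{\omega}}^{\eps}[W,\widetilde{W}]$ produced by the fundamental theorem of calculus, which you correctly retain, factor using the product structure of $\widetilde{\mathbb{P}}_{W,\widetilde{W}}$, and send to zero via \propref{invariance}.
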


\begin{proof}
Using \eqref{Qpositive}, we have
\[
\left|\widetilde{\mathbb{E}}_{W,\widetilde{W}}\mathscr{E}_{t,\omega,\tilde{\omega}}^{\eps}[W,\widetilde{W}]\mathscr{Q}_{r,\tilde{r}}[W,\widetilde{W}]\exp\left\{ \beta^{2}\mathscr{R}_{r,\tilde{r}}[W,\widetilde{W}]\right\} \right|\le4\widetilde{\mathbb{E}}_{W,\widetilde{W}}\mathscr{Q}_{r,\tilde{r}}[W,\widetilde{W}]\exp\left\{ \beta^{2}\mathscr{R}_{r,\tilde{r}}[W,\widetilde{W}]\right\} .
\]
Using \eqref{partialderiv}, we have that
\begin{align*}
\int_{0}^{\tilde{q}} & \int_{0}^{q}\widetilde{\mathbb{E}}_{W,\widetilde{W}}\mathscr{Q}_{r,\tilde{r}}[W,\widetilde{W}]\exp\left\{ \beta^{2}\mathscr{R}_{r,\tilde{r}}[W,\widetilde{W}]\right\} \,\dif r\,\dif\tilde{r}\\
 & =\widetilde{\mathbb{E}}_{W,\widetilde{W}}\exp\left\{ \beta^{2}\mathscr{R}_{q,\tilde{q}}[W,\widetilde{W}]\right\} \le\widetilde{\mathbb{E}}_{W,\widetilde{W}}\exp\left\{ \beta^{2}\mathscr{R}_{\infty}[W,\widetilde{W}]\right\} <\infty,
\end{align*}
where the last equality is by \propref{expexpbdd}. The dominated
convergence theorem applied to the integral \eqref{doubleintegral},
in light of the pointwise convergence established in \lemref{ptwiseconvergence},
then implies the result.
\end{proof}
We are now ready to prove \thmref{convergence}.
\begin{proof}[Proof of \thmref{convergence}.]
Combining \lemref[s]{enddoesntmatter}~and~\lemref{AWWtozero},
we see that the integrand in \eqref{errorexpr} converges pointwise
to $0$ as $\eps\to0$. On the other hand, by \propref{expexpbdd},
as long as $\beta<\beta_{0}$, there is a constant $C$ so that
\[
\left|\widehat{\mathbb{E}}_{B,\tilde{B};\eps^{-2}t}\mathscr{A}_{t;\omega,\tilde{\omega}}^{\eps}[B,\widetilde{B}]\right|\le C
\]
independently of $\eps,\omega,\tilde{\omega}$. As $u_{0}\in\mathcal{C}_{\mathrm{c}}^{\infty}(\mathbb{R}^{d})$,
the dominated convergence theorem and \eqref{alphaconverges} imply
that
\[
\mathbf{E}|u^{\eps}(t,x)-\Psi^{\eps}(t,x)\overline{u}(t,x)|^{2}\to0
\]
as $\eps\to0$.
\end{proof}
It remains to prove \lemref{enddoesntmatter}.
\begin{proof}[Proof of \lemref{enddoesntmatter}.]
We have
\begin{align}
\widehat{\mathbb{E}}_{B,\widetilde{B};\eps^{-2}t}\mathscr{A}_{t;\omega,\tilde{\omega}}^{\eps}[B,\widetilde{B}] & =\widehat{\mathbb{E}}_{B,\widetilde{B};\eps^{-2}t}\mathscr{E}_{t;\omega,\tilde{\omega}}^{\eps}[B,\widetilde{B}]\exp\left\{ \beta^{2}\mathscr{R}_{\eps^{-2}t}[B,\widetilde{B}]\right\} \nonumber \\
 & =\widetilde{\mathbb{E}}_{W,\widetilde{W}}\mathscr{G}[w_{\lfloor\eps^{-2}t\rfloor-1}]\mathscr{G}[\tilde{w}_{\lfloor\eps^{-2}t\rfloor-1}]\mathscr{E}_{t;\omega,\tilde{\omega}}^{\eps}[W,\widetilde{W}]\exp\left\{ \beta^{2}\mathscr{R}_{\eps^{-2}t}[W,\widetilde{W}]\right\} .\label{eq:movetomc}
\end{align}
Let $\gamma\in(0,2)$ be arbitrary. Then
\begin{align}
\widetilde{\mathbb{E}}_{W,\widetilde{W}} & \left|\mathscr{E}_{t;\omega,\tilde{\omega}}^{\eps}[W,\widetilde{W}]\exp\left\{ \beta^{2}\mathscr{R}_{\eps^{-2}t}[W,\widetilde{W}]\right\} -\mathscr{E}_{t-\eps^{\gamma};\omega,\tilde{\omega}}^{\eps}[W,\widetilde{W}]\exp\left\{ \beta^{2}\mathscr{R}_{\eps^{-2}(t-\eps^{\gamma})}[W,\widetilde{W}]\right\} \right|\nonumber \\
 & \le\widetilde{\mathbb{E}}_{W,\widetilde{W}}\left|\mathscr{E}_{t;\omega,\tilde{\omega}}^{\eps}[W,\widetilde{W}]-\mathscr{E}_{t-\eps^{\gamma};\omega,\tilde{\omega}}^{\eps}[W,\widetilde{W}]\right|\exp\left\{ \beta^{2}\mathscr{R}_{\eps^{-2}t}[W,\widetilde{W}]\right\} \nonumber \\
 & \qquad+\widetilde{\mathbb{E}}_{W,\widetilde{W}}\left|\mathscr{E}_{t-\eps^{\gamma};\omega,\tilde{\omega}}^{\eps}[W,\widetilde{W}]\right|\left|\exp\left\{ \beta^{2}\mathscr{R}_{\eps^{-2}t}[W,\widetilde{W}]\right\} -\exp\left\{ \beta^{2}\mathscr{R}_{\eps^{-2}(t-\eps^{\gamma})}\right\} \right|.\label{eq:breakdiff}
\end{align}
We begin by addressing the first term of \eqref{breakdiff}. By \eqref{West},
we have
\[
\widetilde{\mathbb{E}}_{W}\left|\eps W_{\eps^{-2}t}-\eps W_{\eps^{-2}(t-\eps^{\gamma})}\right|^{2}\le C\eps^{\gamma},
\]
which in particular means that
\begin{equation}
\lim_{\eps\to0}\left|\eps W_{\eps^{-2}t}-\eps W_{\eps^{-2}(t-\eps^{\gamma})}\right|=0\label{eq:Wdiffconvergesinprob}
\end{equation}
in probability. The same statement of course holds for $\widetilde{W}$.
We then have, using Hölder's inequality, that for $\delta>0$ sufficiently
small there is a constant $C_{\delta}$ so that
\begin{equation}\label{eq:firstlimit}
\begin{split}
\lim_{\eps\to0}\widetilde{\mathbb{E}}_{W,\widetilde{W}}  \left|\mathscr{E}_{t;\omega,\tilde{\omega}}^{\eps}[W,\widetilde{W}]-\mathscr{E}_{t-\eps^{\gamma};\omega,\tilde{\omega}}^{\eps}[W,\widetilde{W}]\right|\exp\left\{ \beta^{2}\mathscr{R}_{\eps^{-2}t}[W,\widetilde{W}]\right\}\\
  \le C_{\delta}\lim_{\eps\to0}\left(\widetilde{\mathbb{E}}_{W,\widetilde{W}}\left|\mathscr{E}_{t;\omega,\tilde{\omega}}^{\eps}[W,\widetilde{W}]-\mathscr{E}_{t-\eps^{\gamma};\omega,\tilde{\omega}}^{\eps}[W,\widetilde{W}]\right|^{1/\delta+1}\right)=0
\end{split}
\end{equation}
by \propref{expexpbdd} and the bounded convergence theorem in light
of \eqref{Wdiffconvergesinprob}.

Finally, we consider the second term of \eqref{breakdiff}, which
is easier. Here, we have
\begin{equation}\label{eq:secondlimit}
\begin{split}
\lim_{\eps\to0}\widetilde{\mathbb{E}}_{W,\widetilde{W}}\left|\mathscr{E}_{t-\eps^{\gamma};\omega,\tilde{\omega}}^{\eps}[W,\widetilde{W}]\right|  \left|\exp\left\{ \beta^{2}\mathscr{R}_{\eps^{-2}t}[W,\widetilde{W}]\right\} -\exp\left\{ \beta^{2}\mathscr{R}_{\eps^{-2}(t-\eps^{\gamma})}[W,\widetilde{W}]\right\} \right| \\
  \le4\lim_{\eps\to0}\widetilde{\mathbb{E}}_{W,\widetilde{W}}\left|\exp\left\{ \beta^{2}\mathscr{R}_{\eps^{-2}t}[W,\widetilde{W}]\right\} -\exp\left\{ \beta^{2}\mathscr{R}_{\eps^{-2}(t-\eps^{\gamma})}[W,\widetilde{W}]\right\} \right|=0
\end{split}
\end{equation}
by the dominated convergence theorem, again in light of \eqref{West}.
Applying \eqref{firstlimit} and \eqref{secondlimit} to \eqref{breakdiff}
implies that
\begin{equation}
\lim_{\eps\to0}\widehat{\mathbb{E}}_{W,\widetilde{W}}\left|\mathscr{E}_{t;\omega,\tilde{\omega}}^{\eps}[W,\widetilde{W}]\exp\left\{ \beta^{2}\mathscr{R}_{\eps^{-2}t}[W,\widetilde{W}]\right\} -\mathscr{E}_{t-\eps^{\gamma};\omega,\tilde{\omega}}^{\eps}[W,\widetilde{W}]\exp\left\{ \beta^{2}\mathscr{R}_{\eps^{-2}(t-\eps^{\gamma})}[W,\widetilde{W}]\right\} \right|=0.\label{eq:mcenddoesntmatter}
\end{equation}
Combining \eqref{movetomc}, \eqref{mcenddoesntmatter}, and \lemref{cutofftail},
and recalling that $\mathscr{G}$ is bounded from above and away from
zero, completes the proof of the lemma.
\end{proof}

\section{The second term of the expansion\label{sec:weakconvergence}}

In this section we will prove \thmref{weakconvergence}. We first
introduce some notation. Fix $\gamma\in(1,2)$ and $t>0$; all constants
in this section will depend on $\gamma$ and $t$. We define a discrete
set of times
\begin{equation}
r_{k}=\begin{cases}
0 & k=0;\\
t-\eps^{-\gamma}(\lfloor\eps^{\gamma}t\rfloor-(k-1)) & k>0,
\end{cases}\label{eq:rkdef}
\end{equation}
and set
\begin{equation}
\mathscr{I}_{t}^{\eps}[B]=\sum_{k=0}^{K_{t}^{\eps}}(\eps B_{r_{k+1}}-\eps B_{r_{k}})\cdot\nabla\overline{u}(t-\eps^{2}r_{k},\eps B_{r_{k}}),\label{eq:Idef}
\end{equation}
with
\begin{equation}
K_{t}^{\eps}=\lfloor\eps^{\gamma-2}t\rfloor.\label{eq:Kdef}
\end{equation}
The next lemma gives a Feynman--Kac formula for the corrector $u_{1}^{\eps}$
defined in \eqref{u1epsdef}.
\begin{lem}
We have
\begin{equation}
u_{1}^{\eps}(t,x)=\frac{1}{\eps}\mathbb{E}_{B}^{\eps^{-1}x}\exp\left\{ \mathscr{V}_{\eps^{-2}t}[B]-\lambda\eps^{-2}t\right\} \mathscr{I}_{t}^{\eps}[B].\label{eq:u1epsfk}
\end{equation}
\end{lem}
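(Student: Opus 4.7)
The proof follows from unfolding the Feynman--Kac formulas for the building blocks $\theta_j^{(k)}$ and $u_{1;j}$, concatenating them via the Markov property of Brownian motion, and then converting the resulting integral of $\nabla_y\Psi$ into a Brownian increment via an It\^o calculation. Duhamel's formula applied to \eqref{thetajproblem} yields
\begin{equation*}
\theta_j^{(k)}(\eps^{-\gamma}j,y)=\int_{\eps^{-\gamma}(j-1)}^{\eps^{-\gamma}j}\mathbb{E}_{B'}^{y}\Bigl[\exp\bigl\{\beta\mathscr{V}_{\eps^{-\gamma}j;[0,\eps^{-\gamma}j-\tau]}[B']-\lambda(\eps^{-\gamma}j-\tau)\bigr\}\,\partial_{y_k}\Psi(\tau,B'_{\eps^{-\gamma}j-\tau})\Bigr]\,\dif\tau,
\end{equation*}
and the standard Feynman--Kac formula applied to \eqref{u1jproblem} expresses $u_{1;j}(\eps^{-2}t,\eps^{-1}x)$ as an expectation over an outer Brownian motion of length $\eps^{-2}t-\eps^{-\gamma}j$ times the initial data $\sum_k\theta_j^{(k)}\,\partial_{x_k}\overline{u}$. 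Using the Markov property to concatenate the outer and inner Brownian motions into a single Brownian motion $B$ of length $\eps^{-2}t-\tau$, a straightforward change of the integration variable in the time argument of $V$ shows that the product of the two exponential weights collapses into $\exp\{\beta\mathscr{V}_{\eps^{-2}t;[0,\eps^{-2}t-\tau]}[B]-\lambda(\eps^{-2}t-\tau)\}$. An analogous Duhamel identity (with no outer evolution) applies to $\theta_{K_t^\eps+1}^{(k)}$. Summing over $j=1,\dots,K_t^\eps$, adding the $\theta_{K_t^\eps+1}$ contribution, substituting $\rho=\eps^{-2}t-\tau$, and noting that the evaluation point $(t-\eps^2 r_k,\eps B_{r_k})$ of $\nabla\overline{u}$ is constant on each mesoscopic interval $[r_k,r_{k+1})$, I arrive at
\begin{equation*}
u_1^\eps(t,x)=\sum_{k=0}^{K_t^\eps}\int_{r_k}^{r_{k+1}}\mathbb{E}_B^{\eps^{-1}x}\Bigl[\e^{\beta\mathscr{V}_{\eps^{-2}t;[0,\rho]}[B]-\lambda\rho}\,\nabla_y\Psi(\eps^{-2}t-\rho,B_\rho)\cdot\nabla\overline{u}(t-\eps^2 r_k,\eps B_{r_k})\Bigr]\,\dif\rho.
\end{equation*}

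The core of the proof is converting the $\nabla_y\Psi$-integral on each $[r_k,r_{k+1}]$ into a Brownian increment. The PDE \eqref{Psiproblem} for $\Psi$ together with It\^o's formula shows that $M_\rho:=\exp\{\beta\mathscr{V}_{\eps^{-2}t;[0,\rho]}[B]-\lambda\rho\}\,\Psi(\eps^{-2}t-\rho,B_\rho)$ is a (local, hence true) martingale satisfying $\dif M_\rho=\exp\{\beta\mathscr{V}_{\eps^{-2}t;[0,\rho]}[B]-\lambda\rho\}\,\nabla\Psi(\eps^{-2}t-\rho,B_\rho)\cdot\dif B_\rho$. Applying It\^o's product rule to $B^l_\rho M_\rho$, the cross variation term is exactly $\exp\{\beta\mathscr{V}_{\eps^{-2}t;[0,\rho]}[B]-\lambda\rho\}\partial_{y_l}\Psi(\eps^{-2}t-\rho,B_\rho)\,\dif\rho$. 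Integrating over $[r_k,r_{k+1}]$, multiplying by the $\mathcal{F}_{r_k}$-measurable factor $\partial_{x_l}\overline{u}(t-\eps^2 r_k,\eps B_{r_k})$, and taking expectation annihilates the two stochastic integrals and leaves the equality of that expectation with $\mathbb{E}[\partial_{x_l}\overline{u}(t-\eps^2 r_k,\eps B_{r_k})(B^l_{r_{k+1}}M_{r_{k+1}}-B^l_{r_k}M_{r_k})]$. Decomposing $B^l_{r_{k+1}}M_{r_{k+1}}-B^l_{r_k}M_{r_k}=(B^l_{r_{k+1}}-B^l_{r_k})M_{r_{k+1}}+B^l_{r_k}(M_{r_{k+1}}-M_{r_k})$, the martingale property of $M$ kills the second piece, and the tower property $\mathbb{E}[M_{\eps^{-2}t}\mid\mathcal{F}_{r_{k+1}}]=M_{r_{k+1}}$ together with $\Psi(0,\cdot)\equiv 1$ --- so that $M_{\eps^{-2}t}=\exp\{\beta\mathscr{V}_{\eps^{-2}t}[B]-\lambda\eps^{-2}t\}$ --- replaces $M_{r_{k+1}}$ by the full exponential weight. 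Summing over the spatial coordinate $l$ and the index $k$ and recognizing $\eps\,\mathscr{I}_t^\eps[B]=\sum_k \eps(B_{r_{k+1}}-B_{r_k})\cdot\nabla\overline{u}(t-\eps^2 r_k,\eps B_{r_k})$ produces \eqref{u1epsfk}.

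The only genuine technicalities are (i) the combinatorial bookkeeping of the telescoping of exponential weights under Markov concatenation in the first paragraph and the correct identification of the discrete evaluation points of $\nabla\overline{u}$ on the $r_k$-grid, which is straightforward once the time substitution $\rho=\eps^{-2}t-\tau$ is made; and (ii) verifying that $M_\rho$ and the integrands $B^l_\rho\,\dif M_\rho$ and $M_\rho\,\dif B^l_\rho$ have enough integrability on $[0,\eps^{-2}t]$ to make both the vanishing of the stochastic integrals under expectation and the tower step rigorous, which follows routinely from the smoothness and compact support of $V$ and the standard pointwise Feynman--Kac bounds on $\Psi$ and $\nabla\Psi$ on the compact time interval $[0,\eps^{-2}t]$.
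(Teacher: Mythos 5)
Your proof is correct, and the outer skeleton (Feynman--Kac for $\theta_j^{(k)}$ and $u_{1;j}$, concatenation by the Markov property, summation over $j$ plus the leftover $\theta$-term) matches the paper's. Where you genuinely diverge is in the central identity that converts the forcing integral $\int E_\rho\,\nabla\Psi(\eps^{-2}t-\rho,B_\rho)\,\dif\rho$ into a Brownian increment weighted by the \emph{full} exponential $\exp\{\mathscr{V}_{\eps^{-2}t}[B]-\lambda\eps^{-2}t\}$. The paper writes $\nabla\Psi$ as $\nabla_\xi|_{\xi=0}$ of a shifted expectation, absorbs the piecewise-linear shift into the Brownian path via Girsanov, and reads off the increment $B_{T_0}-y$ by differentiating the Cameron--Martin density at $\xi=0$ (the same device as in \lemref{writeitasaderiv}). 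You instead observe that $M_\rho=E_\rho\Psi(\eps^{-2}t-\rho,B_\rho)$ is the Feynman--Kac martingale with $\dif M_\rho=E_\rho\nabla\Psi\cdot\dif B_\rho$, apply the It\^o product rule to $B^l_\rho M_\rho$, and use the martingale/tower properties together with $\Psi(0,\cdot)\equiv1$ to replace $M_{r_{k+1}}$ by $E_{\eps^{-2}t}$. The two mechanisms are of course dual (the Girsanov derivative at zero is exactly the cross-variation term you isolate), but your route makes transparent \emph{why} the full weight up to time $\eps^{-2}t$ emerges --- it is literally $\mathbb{E}[M_{\eps^{-2}t}\mid\mathcal{F}_{r_{k+1}}]$ --- and it sidesteps the interchange of $\nabla_\xi|_{\xi=0}$ with the expectation and the $r$-integral; the price is that you must verify square-integrability of the stochastic integrands so that their expectations against the $\mathcal{F}_{r_k}$-measurable factor $\partial_{x_l}\overline{u}(t-\eps^2r_k,\eps B_{r_k})$ vanish, which, as you note, is routine for fixed smooth $V$ on the compact interval $[0,\eps^{-2}t]$. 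Your bookkeeping of the grid (the $\nabla\overline{u}$ evaluation point being frozen on each $[r_k,r_{k+1})$) is consistent with the paper's final display.
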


\begin{proof}
The Feynman--Kac formula applied to \eqref{thetajproblem}, in the
same way as \eqref{omegaSFKfirstcomp}, gives the following expression
for the solution $\theta_{j}(s,y)$ to that equation:
\begin{align*}
\theta_{j}(s,y) & =\mathbb{E}_{B}^{y}\int_{0}^{s-\eps^{-\gamma}(j-1)}\exp\left\{ \int_{0}^{r}[\beta V(s-\tau,B_{\tau})-\lambda]\,\dif\tau\right\} \nabla\Psi(s-r,B_{r})\,\dif r\\
 & =\mathbb{E}_{B}^{y}\nabla_{\xi}|_{\xi=0}\int_{0}^{s-\eps^{-\gamma}(j-1)}\exp\left\{ \int_{0}^{s}[\beta V(s-\tau,B_{\tau}+H(\tau-r)\xi)-\lambda]\,\dif\tau\right\} \,\dif r\\
 & =\nabla_{\xi}|_{\xi=0}\mathbb{E}_{B}^{y}\exp\left\{ \int_{0}^{s}[\beta V(s-\tau,B_{\tau}+(\tau\wedge(s-\eps^{-\gamma}(j-1)))\xi)-\lambda]\,\dif\tau\right\} ,
\end{align*}
where $H$ is the Heaviside function. The Girsanov formula then yields
\begin{align*}
\theta_{j}(s,y) & =\nabla_{\xi}|_{\xi=0}\mathbb{E}_{B}^{y}\exp\left\{ \mathscr{V}_{s}[B]-\lambda s+(B_{s-\eps^{-\gamma}(j-1)}-y)\cdot\xi-\frac{s-\eps^{-\gamma}(j-1)}{2}|\xi|^{2}\right\} \\
 & =\mathbb{E}_{B}^{y}(B_{s-\eps^{-\gamma}(j-1)}-y)\exp\left\{ \mathscr{V}_{s}[B]-\lambda s\right\} .
\end{align*}
Given this expression for $\theta_{j}$, we can then write the Feynman--Kac
formula for \eqref{u1jproblem}:
\begin{align*}
u_{1;j}(s,y) & =\mathbb{E}_{B}^{y}\exp\left\{ \int_{0}^{s-\eps^{-\gamma}j}[\beta V(s-\tau,B_{\tau})-\lambda]\,\dif\tau\right\} \theta_{j}(\eps^{-\gamma}j,B_{s-\eps^{-\gamma}j})\cdot\nabla\overline{u}(\eps^{2-\gamma}j,\eps B_{s-\eps^{-\gamma}j})\\
 & =\mathbb{E}_{B}^{y}(B_{s-\eps^{-\gamma}(j-1)}-B_{s-\eps^{-\gamma}j})\cdot\nabla\overline{u}(\eps^{2-\gamma}j,\eps B_{s-\eps^{-\gamma}j})\exp\{\mathscr{V}_{s}[B]-\lambda s\}.
\end{align*}
Finally, by \eqref{u1epsdef} we have
\[
u_{1}^{\eps}(t,x)=u_{1}(\eps^{-2}t,\eps^{-1}x),
\]
where
\begin{align*}
u_{1}(s,y) & =\sum_{j=1}^{\lfloor\eps^{\gamma}s\rfloor}\mathbb{E}_{B}^{y}(B_{s-\eps^{-\gamma}(j-1)}-B_{s-\eps^{-\gamma}j})\cdot\nabla\overline{u}(\eps^{2-\gamma}j,\eps B_{s-\eps^{-\gamma}j})\exp\{\mathscr{V}_{s}[B]-\lambda s\}\\
 & \qquad+\mathbb{E}_{B}^{y}(B_{s-\eps^{-\gamma}\lfloor\eps^{\gamma}s\rfloor}-y)\exp\{\mathscr{V}_{s}[B]-\lambda s\}\cdot\nabla\overline{u}(\eps^{2}s,\eps y)\\
 & =\mathbb{E}_{B}^{y}\exp\left\{ \mathscr{V}_{s}[B]-\lambda s\right\} \sum_{k=0}^{\lfloor\eps^{\gamma}s\rfloor}(B_{r_{k+1}}-B_{r_{k}})\cdot\nabla\overline{u}(\eps^{2}(s-r_{k}),\eps B_{r_{k}}),
\end{align*}
with $r_{k}$ defined in \eqref{rkdef}; this yields \eqref{u1epsfk}.
\end{proof}
Next we consider the error term
\[
q^{\eps}(t,x)=u^{\eps}(t,x)-\Psi^{\eps}(t,x)\overline{u}(t,x)-\eps u_{1}^{\eps}(t,x).
\]
Combining \eqref{uepsFK}, \eqref{PsiepsFK}, and \eqref{u1epsfk}
gives the expression
\[
q^{\eps}(t,x)=\mathbb{E}_{B}^{\eps^{-1}x}\left[u_{0}(\eps B_{\eps^{-2}t})-\overline{u}(t,x)-\mathscr{I}_{t}^{\eps}[B]\right]\exp\left\{ \mathscr{V}_{\eps^{-2}t}[B]-\lambda\eps^{-2}t\right\} ,
\]
with expectation
\[
\mathbf{E}q^{\eps}(t,x)=\e^{\alpha_{\eps^{-2}t}}\widehat{\mathbb{E}}_{B;\eps^{-2}t}^{\eps^{-1}x}[u_{0}(\eps B_{\eps^{-2}t})-\overline{u}(t,x)-\mathscr{I}_{t}^{\eps}[B]].
\]
Taking covariances, we obtain
\begin{align}
\mathbf{E} & q^{\eps}(t,x)q^{\eps}(t,\tilde{x})-\mathbf{E}q^{\eps}(t,x)\mathbf{E}q^{\eps}(t,\tilde{x})\nonumber \\
 & =\e^{2\alpha_{\eps^{-2}t}}\widehat{\mathbb{E}}_{B,\widetilde{B};\eps^{-2}t}^{\eps^{-1}x,\eps^{-1}x}(u_{0}(\eps B_{\eps^{-2}t})-\overline{u}(t,x)-\mathscr{I}_{t}^{\eps}[B])(u_{0}(\eps\widetilde{B}_{\eps^{-2}t})-\overline{u}(t,\tilde{x})-\mathscr{I}_{t}^{\eps}[\widetilde{B}])\cdot\nonumber \\
 & \qquad\qquad\qquad\qquad\qquad\cdot\left(\exp\left\{ \beta^{2}\mathscr{R}_{\eps^{-2}t}[B,\widetilde{B}]\right\} -1\right)\nonumber \\
 & =\e^{2\alpha_{\eps^{-2}t}}\widetilde{\mathbb{E}}_{W,\widetilde{W}}^{\eps^{-1}x,\eps^{-1}\tilde{x}}(u_{0}(\eps W_{\eps^{-2}t})-\overline{u}(t,x)-\mathscr{I}_{t}^{\eps}[W])(u_{0}(\eps\widetilde{W}_{\eps^{-2}t})-\overline{u}(t,x)-\mathscr{I}_{t}^{\eps}[\widetilde{W}])\cdot\nonumber \\
 & \qquad\qquad\qquad\qquad\qquad\qquad\cdot\left(\exp\left\{ \beta^{2}\mathscr{R}_{\eps^{-2}t}[W,\widetilde{W}]\right\} -1\right)\mathscr{G}[w_{\lfloor\eps^{-2}t\rfloor-1}]\mathscr{G}[\tilde{w}_{\lfloor\eps^{-2}t\rfloor-1}].\label{eq:qcovcomp}
\end{align}
In the last equality of \eqref{qcovcomp} we used \thmref{BstoWs}.

In line with the framework of \secref{GRZreview}, we will proceed
to approximate the times $r_{k}$ by nearby regeneration times of
the Markov chain. Thus, we define
\begin{equation}
\sigma^{W}(k)=(\eps^{-2}t)\wedge\min\{r\ge r_{k}\mid\eta_{r}^{W}=1\},\label{eq:sigmaWkdef}
\end{equation}
where $\eta_{r}^{W}$ is as in \thmref{BstoWs}. Before we begin our
argument in earnest, we record bounds on the relevant error terms.
Put
\[
Y=\max_{0\le k\le K_{t}^{\eps}}(\sigma^{W}(k)-r_{k}),\qquad F(\tau)=\max_{r\in[0,\eps^{-2}t-\tau]}|W_{r+\tau}-W_{r}|,\qquad Z=\eps^{\gamma/2}F(\eps^{-\gamma}+Y).
\]

\begin{lem}
\label{lem:errorterms}We have constants $0<c,C<\infty$ so that,
for all $\xi\ge0$, we have
\begin{gather}
\widetilde{\mathbb{P}}_{W}(Y\ge C|\log\eps|+\xi)\le C\e^{-c\xi},\label{eq:Yexptail}\\
\widetilde{\mathbb{P}}_{W}(F(Y)\ge C|\log\eps|+\xi)\le C\e^{-c\xi},\label{eq:FYexptail}
\end{gather}
and
\begin{equation}
\widetilde{\mathbb{P}}_{W}(Z\ge C|\log\eps|+\xi)\le C\e^{-c\xi}.\label{eq:Zexptail}
\end{equation}
\end{lem}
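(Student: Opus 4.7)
The plan is to prove the three bounds in order, each leveraging the regeneration structure of the Markov chain $W$ recorded in Theorem~\ref{thm:BstoWs}, Lemma~\ref{lem:Ws}, and Lemma~\ref{lem:exptails}.

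For \eqref{Yexptail}, note that $\sigma^W(k) - r_k$ is the waiting time until the first $j \ge \lceil r_k\rceil$ with $\eta_j^W = 1$; since the $\eta_j^W$ are i.i.d.\ Bernoulli with success probability $\kappa_1 > 0$, this waiting time is stochastically dominated by a Geometric$(\kappa_1)$ random variable and so has exponential tails uniformly in $k$. A union bound over $k \in \{0, \ldots, K_t^\eps\}$, together with $K_t^\eps \le C\eps^{\gamma-2}$, gives $\widetilde{\mathbb{P}}_W(Y \ge A) \le C\eps^{\gamma-2}e^{-cA}$. Taking $A = C'|\log\eps| + \xi$ with $C'$ large enough to absorb the $\eps^{\gamma-2}$ prefactor yields \eqref{Yexptail}.

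The bounds \eqref{FYexptail} and \eqref{Zexptail} both rest on a Bernstein-type concentration inequality for increments of the Markov chain: combining the decomposition $W_{r_2} - W_{r_1} = \sum_n \mathbf{W}_n^W$ over regenerations in $[r_1, r_2]$ (up to boundary blocks controlled by Lemma~\ref{lem:exptails}) into i.i.d.\ isotropic chunks with exponential tails (Lemma~\ref{lem:Ws}) with the classical Bernstein inequality for sub-exponential sums yields
$$\widetilde{\mathbb{P}}_W\bigl(|W_{r_2} - W_{r_1}| \ge a\bigr) \le C\exp\bigl(-c\min\{a^2/(r_2 - r_1),\, a\}\bigr).$$
I would handle the randomness in $Y$ by the split $\widetilde{\mathbb{P}}_W(F(Y) \ge A) \le \widetilde{\mathbb{P}}_W(Y > L) + \widetilde{\mathbb{P}}_W(\sup_{\tau \le L}F(\tau) \ge A)$ (and its analogue for $Z$) with $L = C_1|\log\eps|$: the first summand is controlled by \eqref{Yexptail}, while the second reduces, via a discretization of the continuous parameter $r$ on scale $1$, to a union bound on Bernstein estimates. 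For \eqref{FYexptail} the target $a = C|\log\eps| + \xi$ with window length $L = C_1|\log\eps|$ gives $a^2/L \ge c(|\log\eps| + \xi)$, so each per-pair estimate is of order $\eps^{cC} e^{-c\xi}$ and the $O(\eps^{-2})$ union bound over starting points is absorbed by choosing $C$ large. For \eqref{Zexptail} the window length is $\tau \asymp \eps^{-\gamma}$ and $a = \eps^{-\gamma/2}(C|\log\eps| + \xi)$, so $a^2/\tau \asymp (C|\log\eps| + \xi)^2$ is super-polynomial in $|\log\eps|$, trivially dominating the $O(\eps^{\gamma - 2})$ union bound.

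The main technical obstacle I anticipate is the careful passage from the discrete regeneration structure to the continuous-time supremum defining $F(\tau)$, together with handling the boundary blocks $w_0$ and $w_{N+1}$ of Theorem~\ref{thm:BstoWs}, whose distributions differ slightly from the stationary bulk. Both are routine: the boundary blocks contribute $O(1)$ to any displacement (with exponential tails via Lemma~\ref{lem:exptails}), and the continuous-time supremum is bounded by a discrete-time supremum plus oscillations within chunks (again exponentially controlled), so these may be absorbed into the constants by a standard chaining / union-bound argument of the kind already used in the proof of Proposition~\ref{prop:hittingprob}.
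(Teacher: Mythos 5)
Your argument is correct and is precisely the fleshed-out version of what the paper intends: the paper gives no proof beyond the remark that the bounds are ``simple consequences of the regeneration structure of the Markov chain'' and of the exponential-tail estimates from \cite{GRZ17}, and your combination of geometric waiting times for \eqref{Yexptail}, a Bernstein bound for sums of the i.i.d.\ sub-exponential regeneration increments, the split on $\{Y\le C_1|\log\eps|\}$, and polynomial union bounds absorbed by the $C|\log\eps|$ term is exactly the standard instantiation of that remark. No gaps.
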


These bounds are simple consequences of the regeneration structure
of the Markov chain described in \secref{GRZreview} and of \cite[Lemma A.1]{GRZ17}.
We begin our approximation procedure by replacing the deterministic
times $r_{k}$ in the definition \eqref{Idef} of $\mathscr{I}_{t,x}^{\eps}$
by the regeneration time approximations.
\begin{lem}
Let
\begin{equation}
\widetilde{\mathscr{I}}_{t}^{\eps}[W]=\sum_{k=0}^{K_{t}^{\eps}}(\eps W_{\sigma^{W}(k+1)}-\eps W_{\sigma^{W}(k)})\cdot\nabla\overline{u}(t-\eps^{2}\sigma^{W}(k),\eps W_{\sigma^{W}(k)}).\label{eq:Itildedef}
\end{equation}
For any $1\le p<\infty$ and any $\zeta<\gamma-1$ there exists a
constant $C=C(p,\zeta,t,\|u_{0}\|_{\mathcal{C}^{2}(\mathbb{R}^{d})})<\infty$
so that
\begin{equation}
\left(\widetilde{\mathbb{E}}_{W}^{x}|\mathscr{I}_{t}^{\eps}[W]-\widetilde{\mathscr{I}}_{t}^{\eps}[W]|^{p}\right)^{1/p}\le C\eps^{\zeta}.\label{eq:Idiffbd}
\end{equation}
\end{lem}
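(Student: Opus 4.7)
The plan is to control $\mathscr{I}_t^\eps[W] - \widetilde{\mathscr{I}}_t^\eps[W]$ pathwise, term by term in $k$, and then take $L^p$ norms using \lemref{errorterms}. Writing $\sigma_k = \sigma^W(k)$, I split the $k$-th summand of the difference as $T_1^{(k)} + T_2^{(k)}$, where
\begin{align*}
T_1^{(k)} &= \eps[(W_{r_{k+1}} - W_{\sigma_{k+1}}) - (W_{r_k} - W_{\sigma_k})] \cdot \nabla\overline{u}(t - \eps^2 r_k, \eps W_{r_k}), \\
T_2^{(k)} &= \eps(W_{\sigma_{k+1}} - W_{\sigma_k}) \cdot [\nabla\overline{u}(t - \eps^2 r_k, \eps W_{r_k}) - \nabla\overline{u}(t - \eps^2 \sigma_k, \eps W_{\sigma_k})].
\end{align*}
Here $T_1^{(k)}$ shifts the path endpoints from the deterministic times $r_k,r_{k+1}$ to the nearby regeneration times $\sigma_k,\sigma_{k+1}$ while keeping $\nabla\overline{u}$ frozen, and $T_2^{(k)}$ accounts for the corresponding shift of the argument of $\nabla\overline{u}$.

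For $T_1^{(k)}$, since $|\sigma_k - r_k|, |\sigma_{k+1} - r_{k+1}| \le Y$, each of $|W_{r_k} - W_{\sigma_k}|$ and $|W_{r_{k+1}} - W_{\sigma_{k+1}}|$ is at most $F(Y)$, so $|T_1^{(k)}| \le 2\eps F(Y)\|\nabla\overline{u}\|_\infty$. For $T_2^{(k)}$, the increment $W_{\sigma_{k+1}} - W_{\sigma_k}$ is bounded in norm by $F(\eps^{-\gamma}+Y) = \eps^{-\gamma/2} Z$, and a first-order Taylor expansion of $\nabla\overline{u}$ in both of its arguments controls the bracket by $\|\partial_t\nabla\overline{u}\|_\infty \eps^2 Y + \|\nabla^2\overline{u}\|_\infty \eps F(Y)$; the requisite derivative bounds on $\overline{u}$ follow from smoothness of $u_0$ together with the heat equation. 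Using $K_t^\eps + 1 \le C\eps^{\gamma - 2}$, summing over $k$ produces the deterministic estimate
\[
|\mathscr{I}_t^\eps[W] - \widetilde{\mathscr{I}}_t^\eps[W]| \le C\eps^{\gamma-1} F(Y) + C\eps^{\gamma/2} Z F(Y) + C\eps^{\gamma/2+1} Z Y.
\]

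Taking $L^p$ norms, H\"older's inequality together with \lemref{errorterms} (which bounds each of $F(Y)$, $Y$, and $Z$ in $L^q$ by $C|\log\eps|$ for every $q \in [1,\infty)$) yields $\|\mathscr{I}_t^\eps[W] - \widetilde{\mathscr{I}}_t^\eps[W]\|_p \le C\eps^{\gamma-1}|\log\eps|^2 + C\eps^{\gamma/2}|\log\eps|^2$. For $\gamma \in (1,2)$ we have $\gamma - 1 < \gamma/2$, so the first term dominates, and absorbing the logarithmic factor into the exponent gives \eqref{Idiffbd} for any $\zeta < \gamma - 1$. There is no substantive obstacle here beyond careful bookkeeping; the point of the definition of $Z$ is precisely to absorb the $\eps^{-\gamma/2}$ Brownian scaling of a mesoscopic increment, so that after multiplying by the $\eps^{\gamma - 2}$ number of blocks one retains a positive power of $\eps$ out in front.
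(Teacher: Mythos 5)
Your proof is correct and follows essentially the same route as the paper: the same two-term decomposition of each summand (shifting the increment endpoints with $\nabla\overline{u}$ frozen, then shifting the argument of $\nabla\overline{u}$), the same pathwise bounds in terms of $F(Y)$, $Y$, and $Z$, the same resulting estimate $C\eps^{\gamma-1}F(Y)+C\eps^{\gamma/2}ZF(Y)+C\eps^{\gamma/2+1}ZY$, and the same conclusion via \lemref{errorterms}. No substantive differences.
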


\begin{proof}
We have
\begin{align*}
 & \mathscr{I}_{t}^{\eps}[W]-\widetilde{\mathscr{I}}_{t}^{\eps}[W]=\sum_{k=0}^{K_{t}^{\eps}}\left[\vphantom{\sigma^{W}(k),\eps W_{\sigma^{W}(k)}}(\eps W_{r_{k+1}}-\eps W_{r_{k}})\cdot\nabla\overline{u}(t-\eps^{2}r_{k},\eps W_{r_{k}})\right.\\
 & \qquad\qquad\qquad\qquad\qquad\qquad\left.-(\eps W_{\sigma^{W}(k+1)}-\eps W_{\sigma^{W}(k)})\cdot\nabla\overline{u}(t-\eps^{2}\sigma^{W}(k),\eps W_{\sigma^{W}(k)})\right],
\end{align*}
hence
\begin{multline}
|\mathscr{I}_{t}^{\eps}[W]-\widetilde{\mathscr{I}}_{t}^{\eps}[W]|\le\sum_{k=0}^{K_{t}^{\eps}}|(\eps W_{r_{k}+1}-\eps W_{r_{k}})-(\eps W_{\sigma^{W}(k+1)}-\eps W_{\sigma^{W}(k)})|\cdot|\nabla\overline{u}(t-\eps^{2}r_{k},\eps W_{r_{k}})|\\
+\sum_{k=0}^{K_{t}^{\eps}}|\eps W_{\sigma^{W}(k+1)}-\eps W_{\sigma^{W}(k)}|\cdot|\nabla\overline{u}(t-\eps^{2}r_{k},\eps W_{r_{k}})-\nabla\overline{u}(t-\eps^{2}\sigma^{W}(k),\eps W_{\sigma^{W}(k)})|.\label{eq:Idiff}
\end{multline}
We bound above the first term on the right-hand side by 
\begin{equation}
|(\eps W_{r_{k+1}}-\eps W_{r_{k}})-(\eps W_{\sigma^{W}(k+1)}-\eps W_{\sigma^{W}(k)})|\cdot|\nabla\overline{u}(t-\eps^{2}r_{k})|\le2F(Y)\eps\|\overline{u}\|_{\mathcal{C}^{1}(\mathbb{R}^{d})},\label{eq:Idiff1}
\end{equation}
and the second by
\begin{multline}
|\eps W_{\sigma^{W}(k+1)}-\eps W_{\sigma^{W}(k)}|\cdot|\nabla\overline{u}(t-\eps^{2}r_{k},\eps W_{r_{k}})-\nabla\overline{u}(t-\eps^{2}\sigma^{W}(k),\eps W_{\sigma^{W}(k)})|\\
\le\eps F(Y+\eps^{-\gamma})\|\overline{u}\|_{\mathcal{C}^{2}(\mathbb{R}^{d})}(\eps^{2}Y+\eps F(Y))=\eps^{1-\gamma/2}Z\|\overline{u}\|_{\mathcal{C}^{2}(\mathbb{R}^{d})}(\eps^{2}Y+\eps F(Y)).\label{eq:Idiff2}
\end{multline}
Combining \eqref{Idiff}, \eqref{Idiff1}, and \eqref{Idiff2}, and
recalling the definition \eqref{Kdef} of $K_{t}^{\eps}$, gives us
\[
|\mathscr{I}_{t}^{\eps}[W]-\widetilde{\mathscr{I}}_{t}^{\eps}[W]|\le\eps^{\gamma-2}t\left[2F(Y)\eps\|\overline{u}\|_{\mathcal{C}^{1}(\mathbb{R}^{d})}+\eps^{1-\gamma/2}Z\|\overline{u}\|_{\mathcal{C}^{2}(\mathbb{R}^{d})}(\eps^{2}Y+\eps F(Y))\right],
\]
which in light of \lemref{errorterms} implies \eqref{Idiffbd}.
\end{proof}
\begin{lem}
For any power $1\le p<\infty$, there exists a $C=C(p,t,\zeta,\|u_{0}\|_{\mathcal{C}^{3}(\mathbb{R}^{d})})<\infty$
so that
\begin{equation}
\left(\widetilde{\mathbb{E}}_{W}^{\eps^{-1}x}\left|u_{0}(\eps W_{\eps^{-2}t})-\overline{u}(t,x)-\widetilde{\mathscr{I}}_{t}^{\eps}[W]\right|^{p}\right)^{1/p}\le C\eps^{\zeta}\label{eq:boundwithItilde}
\end{equation}
for any $\zeta<1-\gamma/2$.
\end{lem}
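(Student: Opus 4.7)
The plan is to use a telescoping identity across the anchor points $(t_k,x_k) := (t - \eps^2\sigma^W(k), \eps W_{\sigma^W(k)})$ combined with a Taylor expansion of $\overline{u}$ to second order in $(\tau,\xi)=(-\eps^2\Delta\sigma_k,\eps\Delta W_k)$, exploiting the PDE $\partial_t\overline{u}=\tfrac{1}{2}a\Delta\overline{u}$ to combine the first-order time correction with the second-order Hessian term. Writing $\Delta W_k = W_{\sigma^W(k+1)} - W_{\sigma^W(k)}$ and $\Delta\sigma_k = \sigma^W(k+1) - \sigma^W(k)$, the telescoping identity (using $\overline{u}(0,\cdot)=u_0$) together with an edge correction at the endpoints $\sigma^W(0), \sigma^W(K_t^\eps+1)$, controlled in $L^p$ by $\|\overline{u}\|_{\mathcal{C}^1}(\eps Z + \eps^2 Y + \eps F(Y)) = O(\eps^{1-\gamma/2}|\log\eps|)$ via \lemref{errorterms}, gives
\[
u_0(\eps W_{\eps^{-2}t}) - \overline{u}(t,x) - \widetilde{\mathscr{I}}_t^\eps[W] = \sum_{k=0}^{K_t^\eps} M_k + \sum_{k=0}^{K_t^\eps}\mathscr{R}_k + \mathscr{E}_{\mathrm{edge}},
\]
where the first-order spatial contribution $\eps\Delta W_k\cdot\nabla\overline{u}(t_k,x_k)$ has cancelled against the $k$th summand of $\widetilde{\mathscr{I}}_t^\eps[W]$,
\[
M_k := \tfrac{\eps^2}{2}\sum_{i,j}(\Hess\overline{u})_{ij}(t_k,x_k)\, Z_k^{(ij)}, \qquad Z_k^{(ij)} := (\Delta W_k)_i(\Delta W_k)_j - a\,\Delta\sigma_k\,\delta_{ij},
\]
and $\mathscr{R}_k$ collects the third- and fourth-order Taylor terms.

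The crucial observation is that $(M_k)_{k<K_t^\eps}$ is a martingale-difference sequence with respect to $(\mathcal{F}_{\sigma^W(k)})$. Since $\sigma^W(k)$ is a regeneration time, by the strong Markov property and the Doeblin structure (\thmref{BstoWs}) the future after $\sigma^W(k)$ decomposes into i.i.d.\ isotropic chunks $\mathbf{W}_n^W$ (\lemref{Ws}), and the matrix-valued process $\mathbb{W}_n\mathbb{W}_n^t - aI\,\mathbb{T}_n$ (with $\mathbb{W}_n=\sum_{m\le n}\mathbf{W}_m^W$ and $\mathbb{T}_n$ the accumulated inter-regeneration times) is a martingale in the chunk index by the identity $aI = \kappa_1\widetilde{\mathbb{E}}_W[\mathbf{W}_n(\mathbf{W}_n)^t]$ of \propref{invariance} and the mean-$\kappa_1^{-1}$ inter-regeneration times. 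Doob's optional stopping at the chunk count that first brings the running time past $r_{k+1}$ yields $\widetilde{\mathbb{E}}_W[Z_k^{(ij)}\mid\mathcal{F}_{\sigma^W(k)}]=0$. Applying the Burkholder--Davis--Gundy inequality and then Minkowski in $L^{p/2}$,
\[
\Bigl\|\sum_{k<K_t^\eps}M_k\Bigr\|_p \le C\Bigl(\sum_k\|M_k\|_p^2\Bigr)^{1/2};
\]
the exponential-tail bounds give $\|M_k\|_p \le C\eps^{2-\gamma}\|\overline{u}\|_{\mathcal{C}^2}$, and combined with $K_t^\eps = O(\eps^{\gamma-2})$ this produces a bound of order $\eps^{1-\gamma/2}$.

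The terminal $M_{K_t^\eps}$ (where truncation at $\eps^{-2}t$ may break the martingale structure) is bounded directly in $L^p$ by $C\eps^{2-\gamma}$. For the Taylor remainder: the third-order contributions are mean zero conditional on $\mathcal{F}_{\sigma^W(k)}$ by isotropy of $\Delta W_k$, and BDG yields an $O(\eps^{2-\gamma})$ bound; the fourth-order contribution is handled by direct Minkowski and gives $O(\eps^{\gamma-2}\cdot\eps^{4-2\gamma}) = O(\eps^{2-\gamma})$. All contributions are thus $O(\eps^{1-\gamma/2}|\log\eps|^{C})$, which is majorized by $C\eps^{\zeta}$ for any $\zeta<1-\gamma/2$, as required. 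The main obstacle is verifying the martingale-difference identity $\widetilde{\mathbb{E}}_W[Z_k^{(ij)}\mid\mathcal{F}_{\sigma^W(k)}]=0$ via optional stopping on $\mathbb{W}_n\mathbb{W}_n^t - aI\mathbb{T}_n$ at a stopping time depending on accumulated time rather than on the chunk count, together with the bookkeeping of edge effects at the chain endpoints; once this identity is in hand, the remaining estimates are routine Rosenthal/BDG moment bounds using the exponential-tail inputs of \secref{GRZreview}.
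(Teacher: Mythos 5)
Your proposal is correct and follows essentially the same route as the paper: telescope $\overline{u}$ along the regeneration-time anchors, Taylor-expand to second order, use $\partial_t\overline{u}=\tfrac12 a\Delta\overline{u}$ together with the identity $aI=\kappa_1\widetilde{\mathbb{E}}_W[\mathbf{W}_n^W(\mathbf{W}_n^W)^t]$ to center the quadratic terms as a martingale-difference sequence, apply Burkholder--Gundy, and bound the Taylor remainder and edge/truncation terms directly via the exponential-tail estimates. The only (harmless) differences are organizational: you make explicit, via optional stopping of $\mathbb{W}_n\mathbb{W}_n^t-aI\,\mathbb{T}_n$, the conditional-centering step that the paper states more tersely, and you split the remainder into odd (BDG) and even (direct) parts where the paper simply bounds the whole remainder crudely, which already suffices at the $\eps^{1-\gamma/2}$ threshold.
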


\begin{proof}
To ease the notation, in this proof we will abbreviate $\sigma=\sigma^{W}$.
(Recall the definition \eqref{sigmaWkdef}.) We write the Taylor expansion
\begin{equation}
\begin{aligned}\overline{u}( & t-\eps^{2}\sigma(k+1),\eps W_{\sigma(k+1)})-\overline{u}(t-\eps^{2}\sigma(k),\eps W_{\sigma(k)})\\
 & =-\eps^{2}(\sigma(k+1)-\sigma(k))\partial_{t}\overline{u}(t-\eps^{2}\sigma(k),\eps W_{\sigma(k)})+\eps(W_{\sigma(k+1)}-W_{\sigma(k)})\cdot\nabla\overline{u}(t-\eps^{2}\sigma(k),\eps W_{\sigma(k)})\\
 & \qquad+\frac{1}{2}\eps^2\mathrm{Q}\overline{u}(t-\eps^{2}\sigma(k),\eps W_{\sigma(k)})(W_{\sigma(k+1)}-W_{\sigma(k)})+\mathscr{Y}_{k}[W],
\end{aligned}
\label{eq:ubartaylor}
\end{equation}
where $\mathrm{Q}\overline{u}(t,x)$ is the quadratic form associated
to the Hessian of $\overline{u}$ at $(t,x)$ (so $\mathrm{Q}\overline{u}(t,x)(V)=\Hess\overline{u}(t,x)(V,V)$)
and $\mathscr{Y}_{k}[W]$ is the remainder term. By Taylor's theorem,
we have
\begin{equation}
|\mathscr{Y}_{k}[W]|\le C\|\overline{u}\|_{\mathcal{C}^{3}(\mathbb{R}^{d})}\left(\eps^{4}|\sigma(k+1)-\sigma(k)|^{2}+\eps^{3}|W_{\sigma(k+1)}-W_{\sigma(k)}|^{3}\right).\label{eq:Ejbound}
\end{equation}
Note that the second term of the second line of \eqref{ubartaylor}
appears in the definition \eqref{Itildedef} of $\widetilde{\mathscr{I}}_{t}^{\eps}$.
Thus, we can telescope the left side of \eqref{ubartaylor} to obtain
\begin{equation}
\widetilde{\mathscr{I}}_{t}^{\eps}[W]=u_{0}(\eps W_{\eps^{-2}t})-\overline{u}(t-\eps^{2}\sigma(0),\eps W_{\sigma(0)})+\sum_{k=0}^{K_{t}^{\eps}}\left(\eps^{2}\mathscr{X}_{k}[W]+\mathscr{Y}_{k}[W]\right),\label{eq:Itildeexpr}
\end{equation}
where
\begin{multline*}
\mathscr{X}_{k}[W]=(\sigma(k+1)-\sigma(k))\partial_{t}\overline{u}(t-\eps^{2}\sigma(k),\eps W_{\sigma(k)})-\frac{1}{2}\mathrm{Q}\overline{u}(t-\eps^{2}\sigma(k),\eps W_{\sigma(k)})(W_{\sigma(k+1)}-W_{\sigma(k)})\\
=(\sigma(k+1)-\sigma(k))\frac{1}{2}a\Delta\overline{u}(t-\eps^{2}\sigma(k),\eps W_{\sigma(k)})-\frac{1}{2}\mathrm{Q}\overline{u}(t-\eps^{2}\sigma(k),\eps W_{\sigma(k)})(W_{\sigma(k+1)}-W_{\sigma(k)}).
\end{multline*}
We now deal with each piece of this expression in term.

\emph{The drift terms}. We first define
\[
\widetilde{\mathscr{X}}_{k}=(\tilde{\sigma}(k+1)-\tilde{\sigma}(k))\frac{1}{2}a\Delta\overline{u}(t-\eps^{2}\sigma(k),\eps W_{\sigma(k)})-\frac{1}{2}\mathrm{Q}\overline{u}(t-\eps^{2}\sigma(k),\eps W_{\sigma(k)})(W_{\tilde{\sigma}(k+1)}-W_{\tilde{\sigma}(k)}),
\]
where $\tilde{\sigma}(k)=\min\{r\ge r_{k}\mid\eta_{r}^{W}=1\}$ differs
from $\sigma(k)$ by not being restricted to be less than $\eps^{-2}t$.
Using the relation \eqref{adef} between the effective diffusivity
$a$ and the variance of the increments $W_{\sigma_{n+1}^{W}}-W_{\sigma_{n}^{W}}$,
as well as the isotropy of $W$, we see that
\begin{equation}
\widetilde{\mathbb{E}}_{W}\widetilde{\mathscr{X}}_{k}[W]=0\label{eq:Xtildekexp0}
\end{equation}
for each $k$. We also note the simple bound
\[
|\widetilde{\mathscr{X}}_{k}[W]|\le a\|\overline{u}\|_{\mathcal{C}^{2}(\mathbb{R}^{d})}(\eps^{-\gamma}+Y)+\|\overline{u}\|_{\mathcal{C}^{2}(\mathbb{R}^{d})}(F(\eps^{-\gamma}+Y))^{2}\le a\|\overline{u}\|_{\mathcal{C}^{2}(\mathbb{R}^{d})}(\eps^{-\gamma}+Y)+\|\overline{u}\|_{\mathcal{C}^{2}(\mathbb{R}^{d})}\eps^{-\gamma}Z^{2}.
\]
Therefore, by \lemref{errorterms}, we have
\begin{equation}
\widetilde{\mathbb{E}}_{W}|\widetilde{\mathscr{X}}_{k}[W]|^{p}\le C\eps^{-p\xi}\label{eq:Xkmoment}
\end{equation}
for any $\xi>\gamma$. We further define
\[
M_{\ell}=\sum_{k=0}^{\ell}\widetilde{\mathscr{X}}_{k}[W].
\]
For each $\ell\ge0$, define $\mathcal{G}_{\ell}$ to be the $\sigma$-algebra
generated by $\{W_{t}\mid t\le\sigma(\ell)\}\cup\{\eta_{t}^{W}\mid t\le\sigma(\ell)\}$.
Then, according to \eqref{Xtildekexp0}, $\{M_{\ell}\}$ is
a martingale with respect to the filtration $\{\mathcal{G}_{\ell}\}$.
An $L^{p}$-version of the Burkholder--Gundy inequality as in \cite{DFJ68}
(see also \cite[Theorem 9]{Bur66}) implies that
\begin{equation}
\left(\widetilde{\mathbb{E}}_{W}|\eps^{2}M_{K_{t}^{\eps}}|^{p}\right)^{1/p}\le C\eps^{2}\left[(K_{t}^{\eps}+1)^{p/2-1}\sum_{k=0}^{K_{t}^{\eps}}\widetilde{\mathbb{E}}_{W}|\widetilde{\mathscr{X}}_{k}[W]|^{p}\right]^{1/p}\le C\eps^{\zeta}\label{eq:drifttermbound}
\end{equation}
for any $\zeta<1-\gamma/2$, where in the second inequality we used
\eqref{Kdef} and \eqref{Xkmoment}.

On the other hand, we note that $\mathscr{X}_{k}[W]-\widetilde{\mathscr{X}}_{k}[W]$
can be nonzero for at most one $k$, so we have
\[
\left|\sum_{k=0}^{K_{t}^{\eps}}(\mathscr{X}_{k}[W]-\widetilde{\mathscr{X}}_{k}[W])\right|=\max_{k=0}^{K_{t}^{\eps}}\left|\mathscr{X}_{k}[W]-\widetilde{\mathscr{X}}_{k}[W]\right|\le C\|\overline{u}\|_{\mathcal{C}^{2}(\mathbb{R}^{d})}\eps^{-\gamma/2}Z,
\]
so
\begin{equation}
\left(\widetilde{\mathbb{E}}_{W}\left|\eps^{2}\sum_{k=0}^{K_{t}^{\eps}}(\mathscr{X}_{k}[W]-\widetilde{\mathscr{X}}_{k}[W])\right|^{p}\right)^{1/p}\le C\eps^{\zeta}\label{eq:XkXktildebound}
\end{equation}
for any $\zeta<2-\gamma/2$.

\emph{The error term}. By \eqref{Ejbound}, we have a constant $C$ so that
\begin{multline*}
\left|\sum_{k=0}^{K_{t}^{\eps}}\mathscr{Y}_{j}^{\eps}[W]\right|  \le C\|\overline{u}\|_{\mathcal{C}^{3}(\mathbb{R}^d)}\sum_{j\ge0}^{K_{t}^{\eps}}\left(\eps^{4}|\sigma(k+1)-\sigma(j)|^{2}+|\eps W_{\sigma(k+1)}-\eps W_{\sigma(k)}|^{3}\right)\\
  \le C\|\overline{u}\|_{\mathcal{C}^{3}(\mathbb{R}^d)}K_{t}^{\eps}\left(\eps^{4}(\eps^{-\gamma}+Y)^{2}+|\eps F(\eps^{-\gamma}+Y)|^{3}\right)\le C\|\overline{u}\|_{\mathcal{C}^{3}(\mathbb{R}^d)}\left(\eps^{2-\gamma}(1+\eps^{\gamma}Y)^{2}+\eps^{1-\gamma/2}Z^{3}\right),
\end{multline*}
so by \lemref{errorterms} we have
\begin{equation}
\left(\widetilde{\mathbb{E}}_{W}\left|\sum_{k=0}^{K_{t}^{\eps}}\mathscr{Y}_{j}^{\eps}[W]\right|^{p}\right)^{1/p}\le C\eps^{\zeta}\label{eq:errortermbound}
\end{equation}
for any $\zeta<1-\gamma/2$.%

\emph{The initial term.} Finally, we observe that
\begin{equation}
\left(\widetilde{\mathbb{E}}_{W}\left|\overline{u}(t-\eps^{2}\sigma(0),\eps W_{\sigma(0)})-\overline{u}(t,x)\right|^{p}\right)^{1/p}\le\|\overline{u}\|_{\mathcal{C}^{1}(\mathbb{R}^d)}\left(\widetilde{\mathbb{E}}_{W}(\eps^{2}\sigma(0)+\eps|W_{\sigma(0)}|)^{p}\right)^{1/p}\le C\eps^{\zeta}\label{eq:initialtermbound}
\end{equation}
for any $\zeta<1-\gamma/2$.

Applying the bounds \eqref{drifttermbound}, \eqref{XkXktildebound},
\eqref{errortermbound}, and \eqref{initialtermbound} to \eqref{Itildeexpr}
gives us \eqref{boundwithItilde}.
\end{proof}
\begin{cor}
\label{cor:triangleineq}For any $1\le p<\infty$ and $\zeta<(\gamma-1)\wedge(1-\gamma/2)$
there exists $C=C(p,t,\zeta,\|u_{0}\|_{\mathcal{C}^{3}(\mathbb{R}^d)})$ so that
\[
\left(\widetilde{\mathbb{E}}_{W}^{\eps^{-1}x}|u_{0}(\eps W_{\eps^{-2}t})-\overline{u}(t,x)-\mathscr{I}_{t}^{\eps}[W]|^{p}\right)^{1/p}\le C\eps^{\zeta}.
\]
\end{cor}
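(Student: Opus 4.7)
The statement is essentially a triangle inequality combining the two preceding lemmas. The plan is to insert $\pm\widetilde{\mathscr{I}}_{t}^{\eps}[W]$ inside the absolute value, so that
\[
\bigl|u_{0}(\eps W_{\eps^{-2}t})-\overline{u}(t,x)-\mathscr{I}_{t}^{\eps}[W]\bigr|\le\bigl|u_{0}(\eps W_{\eps^{-2}t})-\overline{u}(t,x)-\widetilde{\mathscr{I}}_{t}^{\eps}[W]\bigr|+\bigl|\widetilde{\mathscr{I}}_{t}^{\eps}[W]-\mathscr{I}_{t}^{\eps}[W]\bigr|,
\]
and then take $L^{p}(\widetilde{\mathbb{P}}_{W}^{\eps^{-1}x})$ norms on both sides and apply Minkowski's inequality.

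The first term on the right is controlled by \eqref{boundwithItilde}, which yields a bound of order $\eps^{\zeta_{1}}$ for any $\zeta_{1}<1-\gamma/2$, with constant depending on $p,t,\|u_{0}\|_{\mathcal{C}^{3}(\mathbb{R}^{d})}$. The second term is controlled by \eqref{Idiffbd}, which yields a bound of order $\eps^{\zeta_{2}}$ for any $\zeta_{2}<\gamma-1$, with constant depending on $p,t,\|u_{0}\|_{\mathcal{C}^{2}(\mathbb{R}^{d})}$ (hence also on $\|u_{0}\|_{\mathcal{C}^{3}}$).

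Given $\zeta<(\gamma-1)\wedge(1-\gamma/2)$, I would simply choose $\zeta_{1},\zeta_{2}\in(\zeta,\,(\gamma-1)\wedge(1-\gamma/2))$ and absorb both $\eps^{\zeta_{i}}\le\eps^{\zeta}$ into one constant $C$ (possible since $\eps\le 1$, after enlarging $C$ if needed to handle $\eps$ close to $1$). There is no real obstacle here; the corollary is essentially a bookkeeping step recording that the two approximation errors from the preceding lemmas combine by the triangle inequality, and the final allowed range of $\zeta$ is the intersection of the ranges from the two lemmas.
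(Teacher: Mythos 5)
Your proposal is exactly the paper's argument: the paper proves this corollary by noting it is ``a simple consequence of the $L^{p}$ triangle inequality applied to the results of the last two lemmas,'' which is precisely the decomposition via $\pm\widetilde{\mathscr{I}}_{t}^{\eps}[W]$ and Minkowski that you describe. Your additional bookkeeping about choosing $\zeta_1,\zeta_2$ and absorbing constants is correct and unobjectionable.
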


\begin{proof}
This is a simple consequence of the $L^{p}$ triangle inequality applied
to the results of the last two lemmas.
\end{proof}
We will also need the following auxiliary lemma.
\begin{lem}
\label{lem:intersectionmoment}There is a $\beta_{0}>0$ so that if
$\chi>1$, $\beta>0$ are such that $\chi\beta^{2}<\beta_{0}^{2}$,
then there is a constant $C=C(\chi,\beta)<\infty$ so that for any
$\eps>0$ and $x,\tilde{x}\in\mathbb{R}^{2}$ we have
\[
\widetilde{\mathbb{E}}_{W,\widetilde{W}}^{\eps^{-1}x,\eps^{-1}\tilde{x}}\left(\exp\left\{ \beta^{2}\mathscr{R}_{\eps^{-2}t}[W,\widetilde{W}]\right\} -1\right)^{\chi}\le C\left(\frac{\eps}{|x-\tilde{x}|}\wedge1\right)^{d-2}.
\]
\end{lem}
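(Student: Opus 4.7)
The plan is to exploit the fact that a nonzero contribution to $\mathscr{R}_{\eps^{-2}t}$ requires the two Markov chains to come within unit space-time distance of each other, and to control this rare event via \propref{hittingprob} while controlling the exponential moment itself by \propref{expexpbdd-bridge}.

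Since $R\ge 0$ (because $\mu$ and $\nu$ are nonnegative), $\mathscr{R}_{\eps^{-2}t}[W,\widetilde{W}]\ge 0$ almost surely, and the trivial inequality $(e^a-1)^\chi\le e^{\chi a}\,\mathbf{1}\{a>0\}$ for $a\ge 0$ together with $\mathscr{R}_{\eps^{-2}t}\le\mathscr{R}_\infty$ yields
$$
(e^{\beta^{2}\mathscr{R}_{\eps^{-2}t}[W,\widetilde{W}]}-1)^{\chi}\le e^{\chi\beta^{2}\mathscr{R}_{\infty}[W,\widetilde{W}]}\,\mathbf{1}\{\mathscr{R}_{\eps^{-2}t}[W,\widetilde{W}]>0\}.
$$
Since $\supp R\subseteq\{(s,y):|s|\le 1,\,|y|\le 1\}$, the indicator is dominated by $\mathbf{1}_A$, where
$$
A=\Bigl\{\inf_{r,\tilde{r}>0,\,|r-\tilde{r}|\le 1}|W_{r}-\widetilde{W}_{\tilde{r}}|\le 1\Bigr\}.
$$
If $|x-\tilde{x}|\le\eps$, the target bound is just an absolute constant, and the estimate follows from $\widetilde{\mathbb{E}}_{W,\widetilde{W}}^{\eps^{-1}x,\eps^{-1}\tilde{x}}[e^{\chi\beta^{2}\mathscr{R}_{\infty}}]\le C$, which is \propref{expexpbdd-bridge} applied with $\sqrt{\chi}\beta$ in place of $\beta$ (legal because $\chi\beta^{2}<\beta_{0}^{2}$).

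When $|x-\tilde{x}|>\eps$ the two paths start at distance $\eps^{-1}|x-\tilde{x}|>1$, so on $A$ there is a well-defined ``first coming-close'' pair $(\tau^{*},\tilde{\tau}^{*})$ (for concreteness, the lexicographically smallest pair with $|\tau-\tilde{\tau}|\le 1$ and $|W_{\tau}-\widetilde{W}_{\tilde{\tau}}|\le 1$), necessarily with $\tau^{*},\tilde{\tau}^{*}>0$ by continuity. Disintegrating over the joint law of $(\tau^{*},\tilde{\tau}^{*})$, exactly as in the proof of \lemref{Rsstilde},
$$
\widetilde{\mathbb{E}}_{W,\widetilde{W}}^{\eps^{-1}x,\eps^{-1}\tilde{x}}\bigl[e^{\chi\beta^{2}\mathscr{R}_{\infty}}\mathbf{1}_{A}\bigr]=\iint\widetilde{\mathbb{E}}\bigl[e^{\chi\beta^{2}\mathscr{R}_{\infty}}\bigm|\tau^{*}=r,\tilde{\tau}^{*}=\tilde{r}\bigr]\,\dif\widetilde{\mathbb{P}}(\tau^{*}=r,\tilde{\tau}^{*}=\tilde{r})\le C\,\widetilde{\mathbb{P}}(A),
$$
where each inner conditional expectation is bounded by the constant from \propref{expexpbdd-bridge} (with $\sqrt{\chi}\beta$), and \propref{hittingprob} provides $\widetilde{\mathbb{P}}(A)\le C(\eps/|x-\tilde{x}|)^{d-2}$. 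This completes the proof.

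The principal technical point is transferring the almost-sure bound on the conditional expectation given $\mathcal{F}_{r,\tilde{r}}$ at deterministic $(r,\tilde{r})$, supplied by \propref{expexpbdd-bridge}, to a bound at the random pair $(\tau^{*},\tilde{\tau}^{*})$. This is precisely the maneuver already carried out in \lemref{Rsstilde}: one integrates against the joint distribution of the first intersection pair so that the pointwise almost-sure bound may be applied on each level set (after a routine discretization or stopping-time approximation), and the same argument transports verbatim here.
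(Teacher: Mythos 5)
Your argument is correct and is essentially the paper's own proof: both bound $(\e^{\beta^2\mathscr{R}}-1)^\chi$ by $\e^{\chi\beta^2\mathscr{R}_\infty}\mathbf{1}\{\mathscr{R}>0\}$, factor out the probability of the paths coming within unit space-time distance via \propref{hittingprob}, and control the remaining conditional exponential moment (with $\chi\beta^2<\beta_0^2$) uniformly via \propref{expexpbdd}/\propref{expexpbdd-bridge}. Your explicit disintegration over the first near-intersection pair is just a more detailed rendering of the paper's supremum over conditional expectations given the paths up to the first such time.
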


\begin{proof}
Since $\mathscr{R}_{t}[W,\widetilde{W}]\ge0$, we have
\begin{align*}
&\widetilde{\mathbb{E}}_{W,\widetilde{W}}^{\eps^{-1}x,\eps^{-1}\tilde{x}}\left(\exp\left\{ \beta^{2}\mathscr{R}_{\eps^{-2}t}[W,\widetilde{W}]\right\} -1\right)^{\chi}\\
&\qquad\le\widetilde{\mathbb{P}}_{W,\widetilde{W}}^{\eps^{-1}x,\eps^{-1}\tilde{x}}\left(\mathscr{R}_{\eps^{-2}t}[W,\widetilde{W}]>0\right)\\&\qquad\qquad\qquad\times\sup_{r>0,W|_{[0,r]},\widetilde{W}|_{[0,r]}}\widetilde{\mathbb{E}}_{W,\widetilde{W}}^{\eps^{-1}x,\eps^{-1}\tilde{x}}\left[\exp\left\{ \chi\beta^{2}\mathscr{R}_{[r,\eps^{-2}t]}[W,\widetilde{W}]\right\}\;\middle|\;W|_{[0,r]},\widetilde{W}|_{[0,r]}\right] \\
&\qquad\le C\left(\frac{\eps}{|x-\tilde{x}|}\wedge1\right)^{d-2}
\end{align*}
by \propref{hittingprob} and \propref{expexpbdd}, as long as $\chi\beta^{2}$
is sufficiently small.
\end{proof}
\begin{prop}
\label{prop:uniformconv}For all $\chi>1$, $\zeta<(1-\gamma/2)\wedge(\gamma-1)$, and $t>0$,
there exists a constant $C=C(\chi,\zeta,t,\|u\|_{\mathcal{C}^3(\mathbb{R}^d)})$ so that
\[
\left|\mathbf{E}q^{\eps}(t,x)q^{\eps}(t,\tilde{x})-\mathbf{E}q^{\eps}(t,x)\mathbf{E}q^{\eps}(t,\tilde{x})\right|\le C|x-\tilde{x}|^{-\frac{d-2}{\chi}}\eps^{2\zeta+\frac{d-2}{\chi}}.%
\]
\end{prop}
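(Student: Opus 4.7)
The plan is to apply Hölder's inequality directly to the covariance representation \eqref{qcovcomp}. That expression has a transparent product structure: a marginal factor $A(W) := u_0(\eps W_{\eps^{-2}t}) - \overline{u}(t,x) - \mathscr{I}_t^\eps[W]$ depending only on $W$, an analogous factor $B(\widetilde{W})$ depending only on $\widetilde{W}$, the cross-interaction factor $\exp\{\beta^2 \mathscr{R}_{\eps^{-2}t}[W,\widetilde{W}]\} - 1$, and the uniformly bounded corrections $\mathscr{G}[w_{\lfloor\eps^{-2}t\rfloor-1}]\mathscr{G}[\tilde{w}_{\lfloor\eps^{-2}t\rfloor-1}]$. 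The key observation is that \corref{triangleineq} controls each marginal factor in $L^p$ at rate $\eps^\zeta$, while \lemref{intersectionmoment} controls the interaction factor in $L^\chi$ at rate $(\eps/|x-\tilde{x}|)^{(d-2)/\chi}$. These two smallnesses multiply to give the bound in the proposition.

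Concretely, I would use the three-function Hölder inequality with exponents $p,p,\chi$ constrained by $\tfrac{2}{p} + \tfrac{1}{\chi} = 1$, i.e.\ $p = \tfrac{2\chi}{\chi - 1}$. Bounding the $\mathscr{G}$ factors by $\|\mathscr{G}\|_\infty^2$ and $\e^{2\alpha_{\eps^{-2}t}}$ by a constant via \eqref{alphaconverges}, this gives
\[
\bigl|\mathbf{E}q^\eps(t,x)q^\eps(t,\tilde{x}) - \mathbf{E}q^\eps(t,x)\mathbf{E}q^\eps(t,\tilde{x})\bigr| \le C\bigl(\widetilde{\mathbb{E}}_W^{\eps^{-1}x}|A(W)|^p\bigr)^{1/p}\bigl(\widetilde{\mathbb{E}}_{\widetilde{W}}^{\eps^{-1}\tilde{x}}|B(\widetilde{W})|^p\bigr)^{1/p}\,\Xi^{1/\chi},
\]
where $\Xi := \widetilde{\mathbb{E}}_{W,\widetilde{W}}^{\eps^{-1}x,\eps^{-1}\tilde{x}}\bigl|\e^{\beta^2 \mathscr{R}_{\eps^{-2}t}[W,\widetilde{W}]} - 1\bigr|^\chi$. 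Now \corref{triangleineq}, which is valid for every $1 \le p < \infty$, bounds each of the marginal $L^p$ factors by $C\eps^\zeta$. \lemref{intersectionmoment}, provided $\chi\beta^2 < \beta_0^2$ (ensured by taking $\beta$ small enough depending on $\chi$), bounds $\Xi^{1/\chi}$ by $C(\eps/|x-\tilde{x}| \wedge 1)^{(d-2)/\chi}$. Multiplying the three estimates gives $C\eps^{2\zeta}\cdot\eps^{(d-2)/\chi}|x-\tilde{x}|^{-(d-2)/\chi}$, which is exactly the stated bound.

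There is no real obstacle here: all the analytic heavy lifting has been carried out in the preceding lemmas of \secref{weakconvergence}, and the remaining step is purely organizational. The strategy exploits the independence of $W$ and $\widetilde{W}$ under $\widetilde{\mathbb{P}}_{W,\widetilde{W}}$ together with the product structure of the integrand in \eqref{qcovcomp}: the $\eps^{2\zeta}$ smallness comes from the two marginal approximation estimates, while the $|x-\tilde{x}|^{-(d-2)/\chi}$ decorrelation is supplied entirely by the transience-type intersection bound \propref{hittingprob} packaged in \lemref{intersectionmoment}. The only substantive choice is the pair of Hölder exponents, which is forced by the constraint $2/p + 1/\chi = 1$ and by the requirement $\chi\beta^2 < \beta_0^2$.
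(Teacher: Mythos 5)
Your proposal is correct and coincides with the paper's own proof: the same Hölder application with exponents $p,p,\chi$ satisfying $2/p+1/\chi=1$ to the covariance formula \eqref{qcovcomp}, with the marginal factors controlled by \corref{triangleineq} and the interaction factor by \lemref{intersectionmoment}. Nothing to add.
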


\begin{proof}
Take $p\ge1$ so that $1/\chi+2/p=1$. We go back to \eqref{qcovcomp}
and apply Hölder's inequality, as well as \corref{triangleineq} and
\lemref{intersectionmoment}, to get the bound
\begin{align*}
\left|\mathbf{E}q^{\eps}(t,x)q^{\eps}(t,\tilde{x})-\mathbf{E}q^{\eps}(t,x)\mathbf{E}q^{\eps}(t,\tilde{x})\right| & \le\|\mathscr{G}\|_{\infty}^{2}\left(\sup_{x\in\mathbb{R}^{d}}\widetilde{\mathbb{E}}_{W}^{\eps^{-1}x}|u_{0}(\eps W_{\eps^{-2}t})-\overline{u}(t,x)-\mathscr{I}_{t}^{\eps}[W]|^{p}\right)^{2/p}\\
 & \qquad\times\left(\widetilde{\mathbb{E}}_{W,\widetilde{W}}^{\eps^{-1}x,\eps^{-1}\tilde{x}}\left(\exp\left\{ \beta^{2}\mathscr{R}_{\eps^{-2}t}[W,\widetilde{W}]\right\} -1\right)^{\chi}\right)^{1/\chi}\\
 & \le C\eps^{2\zeta}\left(\frac{\eps}{|x-\tilde{x}|}\right)^{\frac{d-2}{\chi}}.\qedhere
\end{align*}
\end{proof}
We are finally ready to prove \thmref{weakconvergence}.
\begin{proof}[Proof of \thmref{weakconvergence}.]
By \propref{uniformconv}, we have, for any $\zeta<(1-\gamma/2)\wedge(\gamma-1)$
and any $\chi>1$, that
\begin{align*}
\eps^{-(d-2)} & \mathbf{E}\left(\int g(x)q^{\eps}(t,x)\,\dif x-\mathbf{E}\int g(x)q^{\eps}(t,x)\,\dif x\right)^{2}\\
 & =\eps^{-(d-2)}\int\int g(x)g(\tilde{x})\left[\mathbf{E}q^{\eps}(t,x)q^{\eps}(t,\tilde{x})-\mathbf{E}q^{\eps}(t,x)\mathbf{E}q^{\eps}(t,\tilde{x})\right]\,\dif x\,\dif\tilde{x}\\
 & \le C\eps^{(d-2)(1/\chi-1)+2\zeta}\int\int g(x)g(\tilde{x})|x-\tilde{x}|^{-\frac{d-2}{\chi}}\,\dif x\,\dif\tilde{x}.
\end{align*}
The integral in the last line is finite because $g$ is smooth and
compactly supported. Now by taking $\chi$ sufficiently close to $1$ and reducing $\zeta$ slightly, we achieve \eqref{weakfluctsbound}.
\end{proof}

\bibliographystyle{habbrv}
\bibliography{citations}

\begin{thebibliography}{10}
\expandafter\ifx\csname url\endcsname\relax
  \def\url#1{\texttt{#1}}\fi
\expandafter\ifx\csname doi\endcsname\relax
  \def\doi#1{\burlalt{doi:#1}{http://dx.doi.org/#1}}\fi
\expandafter\ifx\csname urlprefix\endcsname\relax\def\urlprefix{URL }\fi
\expandafter\ifx\csname href\endcsname\relax
  \def\href#1#2{#2}\fi
\expandafter\ifx\csname burlalt\endcsname\relax
  \def\burlalt#1#2{\href{#2}{#1}}\fi

\bibitem{armstrong}
S.~Armstrong, T.~Kuusi, and J.-C. Mourrat.
\newblock {The additive structure of elliptic homogenization}.
\newblock {\em Invent. Math}, 208(3):999–1154, 2017.

\bibitem{armstrong1}
S.~Armstrong, T.~Kuusi, and J.-C. Mourrat.
\newblock {\em {Quantitative stochastic homogenization and large-scale
  regularity}}, volume 352 of {\em {Grundlehren Math. Wiss.}}
\newblock Springer, Cham, 2019.

\bibitem{Bur66}
D.~L. Burkholder.
\newblock {Martingale transforms}.
\newblock {\em Ann. Math. Statist.}, 37:1494–1504, 1966.

\bibitem{CSZ17}
F.~Caravenna, R.~Sun, and N.~Zygouras.
\newblock {Universality in marginally relevant disordered systems}.
\newblock {\em Ann. Appl. Probab.}, 27(5):3050–3112, 2017.

\bibitem{CSZ18}
F.~Caravenna, R.~Sun, and N.~Zygouras.
\newblock {On the moments of the {$(2+1)$}-dimensional directed polymer and
  stochastic heat equation in the critical window}.
\newblock {\em Comm. Math. Phys.}, 372(2):385–440, 2019.

\bibitem{CometsMukh}
F.~Comets, C.~Cosco, and C.~Mukherjee.
\newblock {Fluctuation and Rate of Convergence for the Stochastic Heat Equation
  in Weak Disorder}, arXiv:
  \burlalt{1807.03902v2}{http://arxiv.org/abs/1807.03902v2}.

\bibitem{CCM19}
F.~Comets, C.~Cosco, and C.~Mukherjee.
\newblock {Space-time fluctuation of the Kardar-Parisi-Zhang equation in $d\geq
  3$ and the Gaussian free field}, arXiv:
  \burlalt{1905.03200v4}{http://arxiv.org/abs/1905.03200v4}.

\bibitem{DS80}
D.~A. Dawson and H.~Salehi.
\newblock {Spatially homogeneous random evolutions}.
\newblock {\em J. Multivariate Anal.}, 10(2):141–180, 1980.

\bibitem{DFJ68}
S.~W. Dharmadhikari, V.~Fabian, and K.~Jogdeo.
\newblock {Bounds on the moments of martingales}.
\newblock {\em Ann. Math. Statist.}, 39:1719–1723, 1968.

\bibitem{gloriaotto}
M.~Duerinckx, A.~Gloria, and F.~Otto.
\newblock {The structure of fluctuations in stochastic homogenization}.
\newblock {\em Comm. Math. Phys.}, 377(1):259–306, 2020.

\bibitem{duerinckx2019higher}
M.~Duerinckx and F.~Otto.
\newblock {Higher-order pathwise theory of fluctuations in stochastic
  homogenization}.
\newblock {\em Stoch. Partial Differ. Equ. Anal. Comput.}, 2019.

\bibitem{gloriaotto1}
A.~Gloria and F.~Otto.
\newblock {An optimal variance estimate in stochastic homogenization of
  discrete elliptic equations}.
\newblock {\em Ann. Probab.}, 39(3):779–856, 2011.

\bibitem{gloriaotto7}
A.~Gloria and F.~Otto.
\newblock {An optimal error estimate in stochastic homogenization of discrete
  elliptic equations}.
\newblock {\em Ann. Appl. Probab.}, 22(1):1–28, 2012.

\bibitem{gloriaotto2}
A.~Gloria and F.~Otto.
\newblock {Quantitative results on the corrector equation in stochastic
  homogenization}.
\newblock {\em J. Eur. Math. Soc. (JEMS)}, 19(11):3489–3548, 2017.

\bibitem{gu2017high}
Y.~Gu.
\newblock {High order correctors and two-scale expansions in stochastic
  homogenization}.
\newblock {\em Probab. Theory Related Fields}, 169(3-4):1221–1259, 2017.

\bibitem{gumourrat}
Y.~Gu and J.-C. Mourrat.
\newblock {Scaling limit of fluctuations in stochastic homogenization}.
\newblock {\em Multiscale Model. Simul.}, 14(1):452–481, 2016.

\bibitem{GRZ17}
Y.~Gu, L.~Ryzhik, and O.~Zeitouni.
\newblock {The {E}dwards-{W}ilkinson limit of the random heat equation in
  dimensions three and higher}.
\newblock {\em Comm. Math. Phys.}, 363(2):351–388, 2018.

\bibitem{KP79}
H.~Kesten and G.~C. Papanicolaou.
\newblock {A limit theorem for turbulent diffusion}.
\newblock {\em Comm. Math. Phys.}, 65(2):97–128, 1979.

\bibitem{KP80}
H.~Kesten and G.~C. Papanicolaou.
\newblock {A limit theorem for stochastic acceleration}.
\newblock {\em Comm. Math. Phys.}, 78(1):19–63, 1980/81.

\bibitem{magnen2017diffusive}
J.~Magnen and J.~Unterberger.
\newblock {The scaling limit of the {KPZ} equation in space dimension 3 and
  higher}.
\newblock {\em J. Stat. Phys.}, 171(4):543–598, 2018.

\bibitem{MT09}
S.~Meyn and R.~L. Tweedie.
\newblock {\em {Markov chains and stochastic stability}}.
\newblock Cambridge University Press, Cambridge, second edition, 2009.

\bibitem{mukherjee2017central}
C.~Mukherjee.
\newblock {Central limit theorem for Gibbs measures on path spaces including
  long range and singular interactions and homogenization of the stochastic
  heat equation}, arXiv:
  \burlalt{1706.09345v3}{http://arxiv.org/abs/1706.09345v3}.

\bibitem{MSZ16}
C.~Mukherjee, A.~Shamov, and O.~Zeitouni.
\newblock {Weak and strong disorder for the stochastic heat equation and
  continuous directed polymers in {$d\geq 3$}}.
\newblock {\em Electron. Commun. Probab.}, 21:Paper No. 61, 12, 2016.

\bibitem{Sto65}
C.~Stone.
\newblock {A local limit theorem for nonlattice multi-dimensional distribution
  functions}.
\newblock {\em Ann. Math. Statist.}, 36:546–551, 1965.

\bibitem{TZ98}
G.~Tessitore and J.~Zabczyk.
\newblock {Invariant measures for stochastic heat equations}.
\newblock {\em Probab. Math. Statist.}, 18(2, Acta Univ. Wratislav. No.
  2111):271–287, 1998.

\end{thebibliography}

\end{document}